\documentclass[11pt]{article}

\usepackage{graphicx,,psfrag}

\usepackage{amssymb,amsmath,amsthm,enumerate,bm}
\usepackage{fullpage}

\usepackage{tikz}

\renewcommand{\star}{{\mathrm{F}}}

\usepackage[colorlinks=true,linkcolor=blue,citecolor=red,urlcolor=blue,pdfborder={0 0 0},pagebackref=true]{hyperref}

\newtheorem{lemma}{Lemma}[section]
\newtheorem{remark}[lemma]{Remark}
\newtheorem{proposition}[lemma]{Proposition}
\newtheorem{definition}[lemma]{Definition}
\newtheorem{theorem}[lemma]{Theorem}
\newtheorem{corollary}[lemma]{Corollary}

\newcommand{\cA}{\mathcal{A}}

\newcommand{\EE}{{\mathbf{E}}}

\newcommand{\tr}{{\rm tr}}
\newcommand{\Tr}{{\rm Tr}}

\newcommand{\dE}{\mathbf {E}}
\newcommand{\dP}{\mathbf{P}}
\newcommand{\dF}{\mathrm{F}}

\newcommand{\dU}{\mathrm {U}}
\newcommand{\dS}{\mathrm{S}}

\newcommand{\dO}{\mathrm{O}}
\newcommand{\dSO}{\mathrm{SO}}
\newcommand{\dR}{\mathbb {R}}
\newcommand{\dC}{\mathbb {C}}

\newcommand{\cB}{\mathcal{B}}

\newcommand{\cW}{\mathcal {W}}
\newcommand{\cP}{\mathcal {P}}

\newcommand{\cM}{\mathcal {M}}

\newcommand{\FULL}{ \mathrm{full}}

\newcommand{\ABS}[1]{{{\left| #1 \right|}}} 

\newcommand{\BRA}[1]{{{\left\{#1\right\}}}}
\newcommand{\SBRA}[1]{{{\left[#1\right]}}} 
\newcommand{\NRM}[1]{{{\left\| #1\right\|}}} 
\newcommand{\PAR}[1]{{{\left(#1\right)}}} 


\newcommand{\AND}{\quad \mathrm{and} \quad}

\newcommand{\uB}{\underline{B}}

\newcommand{\prof}{\mathfrak{p}}

\newcommand{\1}{1\!\!{\sf I}}
\newcommand{\IND}{\1}
\newcommand{\veps}{\varepsilon}

\newcommand{\INT}[1]{{{\left[\hspace{-1.4pt}\left[#1\right]\hspace{-1.4pt}\right]}}}

\definecolor{darkred}{rgb}{0.9,0,0.3}

\begin{document}

\title{Norm of matrix-valued polynomials in random unitaries and permutations}
\author{Charles Bordenave and Beno\^\i{}t Collins}

\maketitle

\begin{abstract}
We consider a non-commutative polynomial in several independent $N$-dimensional random unitary matrices, uniformly distributed over the unitary, orthogonal or symmetric groups, and assume that the coefficients are $n$-dimensional matrices. 
The main purpose of this paper is to study the operator norm of this random non-commutative polynomial. We compare it with its counterpart, where the random unitary matrices are replaced by the unitary generators of the free group von Neumann algebra. 

Our first result concerns the upper bound of the sequence: it is bounded above by an arbitrarily small inflation of its free counterpart with an overwhelming probability 
 in the large $N$ limit, and this estimate is uniform over all matrix coefficients as long as $n \le\exp (N^\alpha)$ for
some explicit $\alpha >0$. 
Such results had been obtained by very different techniques for various regimes, all falling in the category $n\ll N$. Our result provides a new proof of the Peterson-Thom conjecture.

Our second result is a universal quantitative lower bound for the operator norm of polynomials in independent $N$-dimensional random unitary and permutation matrices with coefficients in an arbitrary $C^*$-algebra. A variant of this result for permutation matrices generalizes the Alon-Boppana lower bound in two directions: firstly, it applies to arbitrary polynomials and not only linear polynomials, and secondly, it applies to coefficients of an arbitrary $C^*$-algebra with non-negative joint moments and not only for non-negative real numbers. 

\end{abstract}

\section{Introduction}

\subsection{Motivation}
\label{subsec:motiv}

Fix an integer $d \geq 1$ and consider a sequence of $d$-tuple $(X_{N,1},\ldots X_{N,d})$ of matrices in $M_N(\dC)$ of growing dimension $N \to \infty$. For ease of exposition we remove the explicit dependence in $N$ and assume that the set $X = \{X_1,\ldots X_d\}$ is stable by conjugation, i.e., $X_i^* \in X$ for all $i \in \{1,\ldots,d\}$. To understand the algebra generated by these matrices 
in the large $N$ limit, it is necessary
to understand for any non-commutative polynomial $P \in \dC[\dF_d]$, the operator norm  of 
\begin{equation}\label{eq:PNintro}
P_N = P(X_1,\ldots,X_d),
\end{equation}
as the dimension $N$ goes to infinity (here and below $\dF_d$ is the free group with $d$ free generators). 
In operator algebraic random matrix theory, one says that $(X_1,\ldots,X_d)$ converges strongly toward $(x_1,\ldots,x_d)$, elements in a unital $C^*$-algebra $\cA \subset \cB(H)$ of bounded operators on some Hilbert set $H$, if for all non-commutative polynomials $P \in \dC[\dF_d]$, the operator norm of   $P(X_1,\ldots,X_d)$ converges toward the operator norm of $P(x_1,\ldots,x_d)$.

Thanks to a linearization trick, establishing the strong convergence of $(X_1,\ldots,X_d)$  is
equivalent to proving that for any integer $n \geq 1$ and matrices $(a_0,\ldots,a_d)$ in $M_n(\dC)$, setting $X_0 = 1_N$, the operator norm of 
\begin{equation}\label{eq:ANintro}
A_N =  \sum_{i=0}^d a_i \otimes X_i,
\end{equation}
converges toward the operator norm of 
\begin{equation*}
A  =  \sum_{i=0}^d a_i \otimes x_i,
\end{equation*}
with $x_0 = 1_H$ (more precisely, by a result of Pisier \cite{MR1401692} this is equivalent when the $X_i's$ are unitaries, see Subsection \ref{subsec:strong} below. In the general case, one needs to compare the spectrum of $A_N$ and $A$, we refer to \cite{MR3585560}). 

Establishing strong convergence in its form \eqref{eq:PNintro} or \eqref{eq:ANintro} for non-trivial sequences of matrices always has very powerful consequences both for the matrices themselves and for the $C^*$-algebra $\cA$. The latter was the motivation of the seminal work of Haagerup and Thorbj{\o}rnsen \cite{MR2183281}, which proves the strong convergence of $d$ independent GUE random matrices toward free semi-circular variables. Among other references, we may cite, for example, the Peterson-Thom conjecture \cite{MR2827095,MR4448584,belinschi2022strong},  matrix concentration inequalities \cite{bandeira2023matrix,brailovskaya2022universality}, the Alon's generalized spectral gap conjecture in random graphs \cite{MR4024563}, the cut-off phenomenon for finite Markov chains \cite{MR4476123}, Buser's conjecture on the largest possible spectral gap of the closed hyperbolic surface of high genus \cite{hide2023near,louder2023strongly}, representation theory of classical Lie groups \cite{BC2}, quantum information theory \cite{MR3452274,MR2995183} or operator space version of Grothendieck's inequality \cite{MR3273440}. However, this quest for strong convergence is only beginning. In all examples where strong convergence has been proved, the matrices $(X_1,\ldots,X_d)$ are taken random and independent, and they converge strongly toward free operators.

To our knowledge, no known non-trivial examples of strongly convergent deterministic sequences of matrices exist. Toward the objective of reducing the role of randomness and capturing some `intrinsic freeness' (to quote \cite{bandeira2023matrix}),  one may consider quantitative versions where, for $A_N$ in \eqref{eq:ANintro}, the matrices $(a_0,\ldots,a_d)$ are also of growing dimension $n \to \infty$. This question is also directly motivated by the above-mentioned applications in operator algebra as in \cite{MR3273440,MR4448584,belinschi2022strong}. This problem is the central matter in  \cite{MR3273440,MR4445344,MR4408503,bandeira2023matrix,brailovskaya2022universality,belinschi2022strong,parraud2023asymptotic}.

Let us give a partial account on this body of works for the matrix $A_N$ in \eqref{eq:ANintro}. As $N \to \infty$, for $n = O(1)$, Haagerup  and Thorbj{\o}rnsen 
 \cite{MR2183281} prove that $\|A_N\| = (1+o(1)) \| A\|$ with $(X_1,\ldots,X_d)$ independent GUE random matrices and $(x_1,\ldots,x_d)$ free semi-circular elements. This was extended for independent Haar unitaries by Collins and Male \cite{MR3205602} (with $(x_1,\ldots,x_d)$ free Haar unitaries) and for independent random permutation matrices by the present authors in \cite{MR4024563}. For GUE random matrices (actually Ginibre), Pisier \cite{MR4448584} proves that $\|A_N\| \leq (1+o(1)) \| A \|$ if $n = o(N^{1/4})$ (and $(x_1,\ldots,x_d)$ free circular elements). Among other refinements, Collins, Guionnet, and Parraud \cite{MR4445344} introduce a new strategy of proof and obtain $\|A_N\| \leq (1+o(1)) \| A\|$ for $n = o(N^{1/3})$ and Parraud \cite{MR4408503} proves the analogous result for independent Haar distributed unitary matrices. Taking a very different perspective, Bandeira, Boedihardjo, and van Handel \cite{bandeira2023matrix} establish, as a byproduct of a powerful general theorem, that $\|A_N\| \leq (1+o(1)) \| A \|$  for independent GUE matrices when $n = o (N / (\log N)^3)$ and for a specific random choice of matrices $(a_0,\ldots,a_d)$ which is dictated by Hayes' criterion \cite{MR4448584} for the Peterson-Thom conjecture (see Subsection \ref{subsec:PT} below). In \cite{parraud2023asymptotic}, Parraud improves his previous work on independent Haar unitaries and obtains the bound $\|A_N\| \leq (1+o(1)) \| A \|$ as soon as $n = o (N/(\log N)^{5/2})$ for the choice of matrices $(a_0,\ldots,a_d)$ dictated by Hayes' criterion. A few months earlier, this conjecture was, however, settled by Belinschi and Capitaine \cite{belinschi2022strong}: improving on  \cite{MR4445344}, they could reach $\|A_N\| \leq (1+o(1)) \| A \|$ for $n = O(N)$ and the specific random choice of matrices $(a_0,\ldots,a_d)$ identified by Hayes \cite{MR4448584}.

From this list of recent results on independent GUE matrices and Haar unitaries of dimension $N$, one might guess that $n = N^{1+o(1)}$ is a barrier for having a strong asymptotic freeness for all choices of matrices $(a_0,\ldots,a_d) \in M_n(\dC)$. On the other hand, inspired by  \cite{MR1710376},  Pisier \cite[Corollary 4.7]{MR3273440}  proves that, for independent Gaussian random matrices, $\| A_N \| \leq (2+o(1)) \| A \|$ (with $x_i$ free circular elements) as soon as $n =  \exp (o(N) )$: we trade a factor $2$ for an exponentially larger threshold in $n$ valid for all matrices $(a_0,\ldots,a_d)$ in $M_n(\dC)$! However, Pisier also shows that for any $C >1$ and $d$ large enough, $\|A_N\|$ cannot be smaller that  $ C \|A \|$ for a carefully designed family of matrices $(a_0,\ldots,a_d)$ of dimension $n =  \exp( \Theta(N^2))$. Hence from this perspective, the regime $n =  \exp( \Theta(N^\alpha))$ with $\alpha >0$ is the pertinent scaling.

The first objective of this paper is to fill this huge gap between the regimes $n = O(N)$ where strong asymptotic freeness has been proven for very specific choices of $(a_0,\ldots, a_d)$ and the regime $n = \exp ( \Theta(N^\alpha))$ identified by Pisier.  Theorem \ref{th:main1} proves strong asymptotic freeness in a very quantitative way for independent Haar unitaries for all $(a_0,\ldots,a_d)$ in $M_n (\dC)$ with $n \leq \exp ( N^{\alpha})$ and an explicit constant $\alpha >0$. Among other consequences, this result gives an alternative proof of Hayes criterion  \cite{MR4448584} for the Peterson-Thom conjecture, see Subsection \ref{subsec:PT}. In the more challenging case of independent random permutations, Theorem \ref{th:main3} proves a similar quantitative result when $n \leq N^{(\log N)^{1-o(1)}}$ which is already much larger than what was available for GUE or Haar unitary matrices.  This result gives the first quantitative version of the main result in \cite{MR4024563} and has an interesting consequence for random actions of the Cartesian product of free groups on finite sets, see Subsection \ref{subsec:PT}.

The second objective  of this paper is to provide also matching lower bounds of the form $\| A_N \| \geq (1-o(1)) \| A \|$. As pointed out in \cite{bandeira2023matrix}, even if the community has, with reason, focused on upper bounds on $\|A_N\|$, it is not straightforward to prove lower bounds when $n,N$ goes to infinity.  Our main finding is that there is a universal lower bound of the form $\| A_N \| \geq (1-o(1)) \| A \|$ which is uniform in $n$, see Theorem \ref{th:LBU1}. As we shall see, this is ultimately a consequence of the exactness of the free group reduced $C^*$-algebra. There is, thus, a sharp contrast between lower and upper bounds in this perspective. In the specific case of permutations, this lower bound can be interpreted as a universal Alon-Boppana lower bound, see Theorem \ref{th:AB}, which is of independent interest.

Finally, in Section \ref{sec:poly}, we will also explain how to transfer all these estimates on $\|A_N\|$ into estimates on $\|P_N\|$ with $P_N$ as in \eqref{eq:PNintro} by a quantitative version of the linearization trick. We also recall how we can easily deduce from this the convergence of the spectrum of $P_N$ to the spectrum of $P$ quantitatively when $P_N$ and $P$ are self-adjoint. 

\subsection{Models and main results}

\label{sec:model}

Let $\cA_1,\cA_2$ be unital $C^*$-algebras equipped with faithful normal   states $\tau_1,\tau_2$. We denote by $\cA_1 \otimes  \cA_2$ the minimal tensor product of $\cA_1$ and $\cA_2$.  The state $\tau = \tau_1 \otimes \tau_2$ is a faithful normal   state on $\cA_1 \otimes \cA_2$.

We are mainly interested in two specific examples for $\cA_2$. The first is $\cA_2 = M_N(\dC)$, $N \geq 1$ integer and $\tau_2 = \tr_N$ is the normalized trace $\tr_N = \frac 1 N \Tr$. The second example is $\cA_2 = \cA_{\star}$ the left group algebra of the free group $\dF_d$ with free generators $g_1, \ldots ,g_d$ and their inverses. The algebra $\cA_\star$ is the reduced $C^*$-algebra generated by the convolution operators $\lambda(g)$ on $\ell^2(\dF_d)$ defined for $g ,h \in \dF_d$ by
$$
\lambda(g) \delta_h = \delta_{gh},
$$
where $\delta_g$ is the unit coordinate vector at $g \in \dF_d$. Note that $\lambda(g)$ is unitary and $\lambda(g^{-1}) = \lambda(g)^*$. For  integer $i \in  \INT{d} = \{1,\ldots , d\}$, we set $i^* = i+d$ and for $d+1 \leq i \leq 2d$, we set $i^* = i-d$ and $g_{i} = g_{i - d}^{-1}$. By construction, $i \mapsto i^* $ is an involution of $\INT{2d}$.

In $\cA_1 \otimes \cA_\star$, we define 
\begin{equation}\label{eq:defA*}
A_\star = a_0 \otimes 1  + \sum_{i=1}^{2d} a_i \otimes \lambda(g_i),
\end{equation}
with $a_0, a_i \in \cA_1$ for all $i \in \INT{2d}$. The operator $A_\star$ is self-adjoint if the $a_i$'s are such that for all $i \in \INT{2d}$, 
\begin{equation}\label{eq:symai}
a_i ^ * = a_{i^*} \quad \hbox{ and } \quad a_0^* = a_0.
\end{equation}

Next, for any integer $N \geq 1$, let $(U_1,\ldots, U_d)$ be independent and Haar-distributed elements on $\dU_N$ or $\dO_N$, we define the operator in $\cA_1 \otimes M_N(\dC)$,
\begin{equation}\label{eq:defA}
A_N = a_0 \otimes 1  + \sum_{i=1}^{2d} a_i \otimes U_i,
\end{equation}
where for $d +1 \leq i \leq 2d$, we have set $U_{i} = U_{i^*}^*$ and $a_i \in \cA_1$ are as above. Under the condition  \eqref{eq:symai}, the operator $A_N$ is self-adjoint. We will also consider below the case when $(U_1,\ldots,U_d)$ are independent uniform permutation matrices of size $N$.

As mentioned in the introduction, the central objective of this paper is to establish that, under rather weak conditions, on $\cA_1$, the norm of $A_N$ is close to the norm of $A_\star$ up to a factor close to $1$.  For $p \geq 1$, we set, with $T \in \cA_1 \otimes \cA_2$ and $|T| = \sqrt{ T T^*}$,  
$$
\| T \|_p = \PAR{ \tau (|T|^{p} ) }^{\frac 1 {p}}.
$$
Gelfand's theorem implies that for any $T \in \cA_1$, $\| T \|_p$ converges  as $p \to \infty$ to the operator norm denoted by $\| T \|$. Haagerup's inequality on the free group algebra or, more generally, the rapid decay property quantifies this convergence; see \S \ref{subsubsec:haagerup} below. If $\cA_1 = M_n(\dC)$, $\cA_{2} = M_N(\dC)$, we have 
\begin{equation}\label{eq:TtoTp}
\| T \| \leq (nN)^{1/p} \| T \|_p.
\end{equation}

Our first main result is the following universal upper bound on $\| A_N \|_p$. 
\begin{theorem}
\label{th:main1}
Let $(a_0,\ldots,a_{2d})$ in $\cA_1$ and $A_\star$, $A_N$ be as in \eqref{eq:defA*}-\eqref{eq:defA} with $d^{70} \leq N$ and  $(U_1,\ldots, U_d)$ be independent and Haar-distributed elements on $\dU_N$ or $\dO_N$. There exists a numerical constant $c >0$ such that for  $p = N^{1/(16d + 80)}$,
$$
  \dE \|A_N \|_p  \leq   \| A_\star \| \PAR{ 1 + \frac{c}{ \sqrt N}}.
$$
\end{theorem}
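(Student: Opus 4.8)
The plan is to bound $\dE\|A_N\|_p$ by comparing moments of $A_N$ with moments of $A_\star$. Since $\|A_N\|_p^p = \dE\,\tau(|A_N|^{2p})$... wait, actually $\|A_N\|_p = (\tau(|A_N|^p))^{1/p}$, so I'd work with $\tau((A_NA_N^*)^{p/2})$ or, more conveniently, take $p$ even and bound $\dE\,\tau_1\otimes\tr_N\big((A_NA_N^*)^{p/2}\big)$. Expanding this trace gives a sum over words in the generators $U_i$ of length $\le p$, with coefficients that are traces of products of the $a_i$'s. The key is that $\dE\,\tr_N(U_{w})$ for a word $w$ in the Haar unitaries is governed by Weingarten calculus, and the leading contribution comes exactly from words that reduce to the identity in $\dF_d$ — these reproduce $\tau_\star$-moments of $A_\star$ — while all other words contribute only lower-order terms in $N$. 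So the first step is an expansion identity of the form $\dE\,\tau(|A_N|^p)\le \|A_\star\|_p^p + (\text{error})$, where the error collects the non-reduced words.

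Let me sketch the steps in order. First, reduce to $p$ an even integer (or use $\|\cdot\|_p\le\|\cdot\|_{p'}$ for $p\le p'$) and write $M_p:=\dE\,\tau((A_NA_N^*)^{p/2})$ as a sum over closed walks of length $p$ on the Cayley-type structure, grouping terms by the reduced word they represent. Second, invoke the earlier material on Haagerup's inequality / rapid decay (\S\ref{subsubsec:haagerup}) to control the contribution of the reduced-to-identity words by $\|A_\star\|^p$ up to a polynomial-in-$p$ factor, which after taking $p$-th roots and using $p = N^{1/(16d+80)}$ becomes $\|A_\star\|(1+O(p\log p / \ldots))$ — here the precise exponent $1/(16d+80)$ should be exactly what makes this factor $1+O(N^{-1/2})$. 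Third, and this is the technical heart, bound the total contribution of all \emph{non-reduced} words: each cancellation of a generator with its inverse in the word must be `paid for' by a Weingarten factor of size $O(1/N)$ (or $O(1/N^2)$ in the genuinely non-planar cases), and one must check that the number of words of a given length, weighted by the operator norms $\|a_i\|\le\|A_\star\|$ (via $\|\sum a_i\otimes\lambda(g_i)\|$-type estimates), does not overwhelm this gain. The condition $d^{70}\le N$ is presumably what guarantees that the combinatorial explosion in $d$ (number of words grows like $(2d)^p$) is dominated by the $N$-power savings, given that $p$ is only a tiny power of $N$.

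The main obstacle I expect is the third step: making the Weingarten estimates uniform over \emph{all} coefficient tuples $(a_0,\ldots,a_{2d})$ in an \emph{arbitrary} $C^*$-algebra $\cA_1$, with $n$ allowed to be exponentially large (indeed infinite, in this abstract formulation). One cannot afford any factor of $n$; every bound must be phrased intrinsically in terms of $\|A_\star\|$ and operator norms of products of the $a_i$'s, using the $C^*$-identity and operator-space estimates (e.g.\ $\|\sum a_i\otimes e_{1i}\|$ and $\|\sum a_i\otimes e_{i1}\|$ bounded by $\|A_\star\|$, cf.\ Pisier/Haagerup row-column estimates) rather than any entrywise expansion. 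Organizing the Weingarten sum so that each word's coefficient is controlled by such a norm — rather than by a sum of $\|a_i\|$'s that would cost a factor growing with the word length or with $d$ — is the delicate bookkeeping that the proof will need. I would set up a transfer-matrix / non-backtracking-walk formalism on $\dF_d$, as in the authors' earlier work \cite{MR4024563}, to package the admissible words, and then feed in the Weingarten weights, carefully tracking that the error after the $p$-th root is $O(N^{-1/2})$ for the stated choice of $p$.
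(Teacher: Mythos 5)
Your overall strategy---even-moment expected trace, Weingarten weights for $\dE\,\tr_N$ of words in the $U_i$'s, non-backtracking bookkeeping over $\dF_d$, and the recognition that every bound must be dimension-free in $\cA_1$---is exactly the paper's strategy, but the proposal stops precisely where the actual proof begins. The step you label ``delicate bookkeeping'' is the missing idea, and it cannot be done word-by-word: if each word's coefficient is bounded by $\|a_{i_k}\cdots a_{i_1}\|$, or by row/column operator-space norms, the triangle inequality over the words attached to a fixed path structure loses factors growing with $d$ and the word length (and, for $\cA_1=M_n(\dC)$, at least a factor $\sqrt n$), which is fatal in the regime $n\le\exp(N^\alpha)$ the theorem is designed for. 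What the paper does instead, and what your plan lacks, is: (i) the decomposition $A_N^\ell=\sum_{m}B^{(\ell,m)}$ of Lemma \ref{th:powerIB}, in which the coefficient attached to a non-backtracking path $\gamma$ is the \emph{single} operator $a(\ell,\gamma)=(A_\star^\ell)_{g(\gamma)\o}$ rather than a word in the $a_i$'s, so that the $m=0$ term equals $\|A_\star\|_\ell^\ell$ exactly; (ii) the grouping of paths into ``profile'' equivalence classes on which the Weingarten weight $w(\gamma)$ is genuinely constant (Lemma \ref{le:newclass}), with counting estimates (Lemmas \ref{le:Plclass}, \ref{le:Plclass2}); (iii) the bound on $\bigl\|\sum_{\gamma'\hat\sim\gamma}a(\ell,\gamma')\bigr\|$ with the norm \emph{outside} the sum, obtained by factorizing $a(\ell,\gamma)$ along the kernel graph into blocks $(C_\star^{(k,i)})_{g\o}$ (Lemma \ref{le:pathdecomFd}), applying the new operator-valued Cauchy--Schwarz inequality of Theorem \ref{th:NCCS}, and controlling the resulting factors via $\|((C_\star^{(k,i)})^*C_\star^{(k,i)})_{\o\o}\|\le\|A_\star\|^{2k}$ (Lemma \ref{le:normCk}); this is Lemma \ref{le:sumagamma}, the technical heart of Theorem \ref{th:FKBlm}. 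None of these ingredients appears in your proposal, so the third step of your plan is not merely unverified: as formulated (norms of individual word coefficients fed into the Weingarten sum) it would fail.

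Two smaller corrections. Haagerup's inequality is not what controls the main term of the \emph{upper} bound: $\|A_\star\|_p\le\|A_\star\|$ suffices, and in the paper the $m=0$ contribution is deterministically $\|A_\star\|_\ell^\ell$; rapid decay enters only in the lower bounds. Also the Weingarten gain is not a clean $O(1/N)$ per cancellation: the estimate actually used (Lemma \ref{cor:WG2}) is organized by edge multiplicities of the path graph, with an overall $N^{-m/2}$ against $N^{v}$ choices of vertices and a factor $\eta^{e_1}=(mN^{-1/4})^{e_1}$ on multiplicity-one edges, which must beat the $(2d)^{e_1/2}$ and $m^{O(d)}$ factors produced by summing operator coefficients over singleton edges; it is exactly this matching that dictates $p=N^{1/(16d+80)}$ and the hypothesis $d^{70}\le N$.
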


When $\cA_1 = M_n(\dC)$, we may use \eqref{eq:TtoTp} and deduce, for example, the following corollary.
\begin{corollary}\label{cor:main1}
With the assumptions of Theorem \ref{th:main1}, assume further that $\cA_1 = M_n(\dC)$ with $n \leq \exp ( N^{1/(32d + 160)} )$, then, for some numerical constant $c >0$, 
$$
\dE \| A_N \|  \leq \| A_\star \| \PAR{ 1 + c N^{-\frac{1}{32 d + 160} }}.
$$
\end{corollary}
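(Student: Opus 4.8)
The plan is to read off Corollary \ref{cor:main1} from Theorem \ref{th:main1} by feeding the $L^p$-bound of the theorem into the elementary comparison \eqref{eq:TtoTp} between the operator norm and the normalized $L^p$-norm. For each realization of $(U_1,\dots,U_d)$ the operator $A_N$ lives in $M_n(\dC)\otimes M_N(\dC)$, so \eqref{eq:TtoTp} applies pointwise and gives
$$
\|A_N\| \;\le\; (nN)^{1/p}\,\|A_N\|_p .
$$
Taking expectations and then invoking Theorem \ref{th:main1} with its prescribed (deterministic) value $p = N^{1/(16d+80)}$, we obtain
$$
\dE\|A_N\| \;\le\; (nN)^{1/p}\,\dE\|A_N\|_p \;\le\; (nN)^{1/p}\,\|A_\star\|\,\Bigl(1+\frac{c}{\sqrt N}\Bigr).
$$

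It then remains to show that the prefactor satisfies $(nN)^{1/p} = 1 + O\bigl(N^{-1/(32d+160)}\bigr)$. The point is the numerical identity $32d+160 = 2(16d+80)$, which says exactly that $N^{1/(32d+160)} = p^{1/2}$. Hence the hypothesis $n \le \exp\bigl(N^{1/(32d+160)}\bigr)$ is the same as $\log n \le p^{1/2}$, so that $\log n / p \le p^{-1/2} = N^{-1/(32d+160)}$; and the elementary bound $\log x \le \beta^{-1} x^{\beta}$ applied with $\beta = 1/(32d+160)$ gives $\log N \le (32d+160)\,N^{1/(32d+160)}$, whence $\log N / p \le (32d+160)\,N^{-1/(32d+160)}$. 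Thus $\log(nN)/p = O\bigl(N^{-1/(32d+160)}\bigr)$, and since this quantity is bounded we may use $e^{x}\le 1+2x$ to conclude
$$
(nN)^{1/p} = \exp\!\Bigl(\tfrac{\log(nN)}{p}\Bigr) \le 1 + O\bigl(N^{-1/(32d+160)}\bigr).
$$

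Plugging this into the previous display and absorbing the factor $1+c/\sqrt N$ — which costs only an extra $O(N^{-1/2}) = O\bigl(N^{-1/(32d+160)}\bigr)$ since $\tfrac12 \ge \tfrac1{32d+160}$ — yields $\dE\|A_N\| \le \|A_\star\|\bigl(1 + c'\,N^{-1/(32d+160)}\bigr)$, which is the claim. There is no genuine obstacle here: the whole content sits in Theorem \ref{th:main1}, and the only thing one must be mildly careful about is that the $\log N$ term in $\log(nN)/p$ does not degrade the exponent of the error — this is precisely why the threshold on $n$ is taken with exponent equal to half of the exponent defining $p$ in Theorem \ref{th:main1} (and why, if $d$ is not treated as fixed, one keeps a harmless polynomial-in-$d$ dependence in $c'$ and assumes $N$ large enough in $d$ for $e^{x}\le 1+2x$ to be applicable).
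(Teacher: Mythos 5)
Your argument is exactly the paper's: Corollary \ref{cor:main1} is deduced by applying \eqref{eq:TtoTp} pointwise and in expectation, invoking Theorem \ref{th:main1} at $p=N^{1/(16d+80)}$, and observing that $N^{1/(32d+160)}=\sqrt{p}$ so that $\log n/p\le N^{-1/(32d+160)}$ while $\log N/p$ and $c/\sqrt N$ are lower-order. Your treatment of the residual $\log N/p$ term is the honest one — it is dominated by a \emph{numerical} constant times $N^{-1/(32d+160)}$ only when $\log N\gtrsim d\log d$ (equivalently, when the claimed error term is genuinely small), a caveat the paper's one-line deduction glosses over as well — so your proof matches the intended argument.
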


The constants and exponents in Theorem \ref{th:main1} are obviously not optimal. 
As explained in introduction, for $\cA_1 = M_n(\dC)$, the order of magnitude in $\log n$ is correct in Corollary \ref{cor:main1}.
A more precise bound will be given in Theorem \ref{th:FKBlm} below. For example, for $p = N^{o(1)}$ and $d = O(1)$, we can replace $c/\sqrt N$ by $N^{-1+o(1)}$. It will also imply that $\|A_N\|_p$ is upper bounded with high probability. The dependency in $d$ in the expression of $p$ in Theorem \ref{th:main1} can be improved under some assumptions on the elements $(a_0,\ldots,a_{2d})$. The next theorem illustrates two such examples.  

\begin{theorem}
\label{th:main2}
Let $(a_0,\ldots,a_{2d})$ in $\cA_1$ and $A_\star$, $A_N$ be as in \eqref{eq:defA*}-\eqref{eq:defA} with  $(U_1,\ldots, U_d)$ be independent and Haar-distributed elements on $\dU_N$ or $\dO_N$. We assume either that (i) the elements $(a_0,\ldots,a_{2d})$ are bistochastic matrices in $M_n(\dR_+)$ or (ii) $\cA_1 = M_m(\dC) \otimes \cA_0$ and for all $ i \in \INT{ 2d}$, $a_i = b_i \otimes u_i$ with $b_i \in M_m(\dC)$ and $u_i \in \cA_0$ unitary. In case (i), we set $\theta =1$, in case (ii), we set $\theta = m$. We assume that $d^{40} \theta^{30} \leq N$. There exists a numerical constant $c >0$ such that for  $p =  N^{1/80}$,
$$
\dE \| A_N \|_p  \leq  \| A_\star \| \PAR{ 1 + \frac{c}{\sqrt N}}.
$$
\end{theorem}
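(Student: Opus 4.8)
The plan is to reduce the statement to an estimate on a single polynomial moment and then to re-run the combinatorial expansion behind Theorem \ref{th:main1}, using the structural hypotheses (i) and (ii) to remove the dependence on $d$ from the admissible range of $p$. We may assume $p$ is an even integer, since replacing $p$ by $2\lceil p/2\rceil$ only increases $\|A_N\|_p$. As $x\mapsto x^{1/p}$ is concave, Jensen's inequality gives $\dE\|A_N\|_p \le (\dE\,\tau(|A_N|^p))^{1/p}$, where $|A_N|^p = (A_NA_N^*)^{p/2}$ is a $*$-polynomial in the $U_i$. Since $\tau(|A_\star|^p) = \|A_\star\|_p^p \le \|A_\star\|^p$, and $(1+c/\sqrt N)^p - 1 \ge cp/\sqrt N$ by Bernoulli, it suffices to prove
$$
\dE\,\tau(|A_N|^p) \;\le\; \tau(|A_\star|^p) + \|A_\star\|^p\,\frac{cp}{\sqrt N}
$$
and then take $p$-th roots. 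I would obtain this by expanding $(A_NA_N^*)^{p/2}$ as a sum over words in $U_1,\dots,U_{2d}$ and the distinguished letter carrying $a_0$, averaging over the Haar measure of $\dU_N$ or $\dO_N$ by the Weingarten calculus exactly as in the proof of Theorem \ref{th:main1}, and organizing the result by its ``defect'' (the power of $N^{-1}$): the defect-zero part reproduces $\tau(|A_\star|^p)$, while each positive unit of defect is paid for by a negative power of $N$; the crux is that the combinatorial overhead attached to a term of defect $g$ must be shown to be polynomial, not exponential, in $p$, $d$, $\theta$.

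In case (ii) I would absorb the fixed unitaries into the random part: with $V_i = u_i\otimes U_i$ and $w_i = u_i\otimes\lambda(g_i)$, unitaries in $\cA_0\otimes M_N(\dC)$ and $\cA_0\otimes\cA_\star$, one has $A_N = a_0\otimes 1 + \sum_{i=1}^{2d} b_i\otimes V_i$ and $A_\star = a_0\otimes 1 + \sum_{i=1}^{2d} b_i\otimes w_i$, now with $M_m(\dC)$-valued coefficients $b_i$. Two facts drive the improvement. First, a reduced word in the $w_i$ factors as $u_{i_1}^{\veps_1}\cdots u_{i_\ell}^{\veps_\ell}\otimes\lambda(g_{i_1}^{\veps_1}\cdots g_{i_\ell}^{\veps_\ell})$, and since $w_iw_i^*=1$ encodes exactly the reductions in $\dF_d$, its trace equals $\1[g_{i_1}^{\veps_1}\cdots g_{i_\ell}^{\veps_\ell}=e]$; hence $(w_1,\dots,w_d)$ is a free Haar family and $A_\star$ is, for the purposes of the moment expansion, a matrix-valued polynomial in free Haar unitaries with $m$-dimensional coefficients, the $a_0$ term being a single distinguished letter contributing only a bounded factor. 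Second, the Haar average touches only the $U_i$, while the scalar strings $u_{i_1}^{\veps_1}\cdots u_{i_\ell}^{\veps_\ell}$ enter solely through traces of modulus at most $1$. Therefore, in the bound on a defect-$g$ term, the coefficient complexity that in Theorem \ref{th:main1} had produced the exponent $16d+80$ --- each of the $d$ colours contributing an \emph{independent} multiplicative overhead --- is now governed by $m=\theta$ alone, while the leftover $d$-dependence collapses into the polynomial constraint $d^{40}\theta^{30}\le N$; this is what allows $p$ to be as large as $N^{1/80}$.

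In case (i) I would invoke Birkhoff--von Neumann: each bistochastic $a_i$ is a convex combination of permutation matrices, so $\|a_i\|\le 1$, every joint moment $\tr_n(a_{i_1}\cdots a_{i_\ell})$ lies in $[0,1]$, and more generally every coefficient trace appearing in the expansion is dominated in modulus by its value when all the $a_i$ are replaced by the scalar $1$; the common all-ones eigenvector of the $a_i$ also makes transparent the identification of the defect-zero part with $\tau(|A_\star|^p)$ and its bound by $\|A_\star\|^p$. Thus the coefficients behave exactly as in the scalar model, i.e.\ the case $\theta=1$, and the same defect analysis applies under $d^{40}\le N$.

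The main obstacle is the quantitative bookkeeping of this last step. One must re-run the defect expansion of the proof of Theorem \ref{th:main1} \emph{uniformly in $N$}, with $p=N^{1/80}$ and the weakened hypothesis $d^{40}\theta^{30}\le N$, and show that the sum of all terms of defect $g\ge 1$ is at most $\|A_\star\|^p\,cp/\sqrt N$ (in fact of order $\|A_\star\|^p\,p^{O(1)}/N$). The delicate part is to make legitimate the replacement of ``$d$ in the exponent of $p$'' by ``$d$ in a polynomial prefactor'': once the $u_i$ are unitary (case (ii)) or the $a_i$ are nonnegative bistochastic (case (i)), one must verify that the $d$ colours no longer generate independent multiplicative errors, so that the overhead of a defect-$g$ term is bounded by $(Cp^{a}d^{b}\theta^{c}/N)^{g}$ with absolute exponents $a,b,c$ compatible with $d^{40}\theta^{30}\le N$, making the geometric series summable and of the required size.
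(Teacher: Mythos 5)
Your reduction to moments and your overall plan (re-run the defect expansion of Theorem \ref{th:main1} and show that the overhead per unit of defect is polynomial in $p,d,\theta$) is consistent with what the paper does, but the proposal stops exactly where the new idea is required, and the mechanisms you offer in its place would not supply it. In the proof of Theorem \ref{th:main1} the $d$ in the exponent of $p$ does not come from ``each colour contributing an independent multiplicative overhead'' in the coefficients; it comes from the enumeration of the \emph{refined} path-equivalence classes $\hat\sim$ (the edge profiles), which costs a factor $(2d\,m^{d})^{6\chi+4}$ in Lemma \ref{le:Plclass2}. That refinement is forced because the Weingarten weight $w(\gamma)$ is not constant on the coarse classes, while Lemma \ref{le:sumagamma} must keep the entire class-sum $\sum_{\gamma'\hat\sim\gamma}a(\ell,\gamma')$ inside a single operator norm. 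The paper's actual fix under hypotheses (i)/(ii) is to pass to a \emph{coarser} equivalence $\check\sim$ recording only the terminal colour of each kernel edge (cost $(2d)^{3\chi+2}$, no $m^{d}$), and to compensate for the non-constancy of $w$ by inserting the scalars $\delta(\gamma')=w(\gamma')/\max|w|\in[-1,1]$ inside the operator sum; this insertion is legitimate precisely because the coefficients are $\theta$-controlled in the sense of Definition \ref{def:Deltacontrol}, a strengthening of Theorem \ref{th:NCCS} proved for bistochastic coefficients via the common fixed vector (Lemma \ref{le:discr4}, $\theta=1$) and for $a_i=b_i\otimes u_i$ with $u_i$ unitary (Lemma \ref{le:NCCSuni}, $\theta=m$), at the price of a factor $\theta^{\hat e}$ in Theorem \ref{th:FKBlmtheta}. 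Your proposal contains no substitute for this step.

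Moreover, the substitute mechanisms you do propose would fail if pursued literally. The observations that in case (ii) ``the scalar strings enter solely through traces of modulus at most $1$'' and in case (i) ``every coefficient trace is dominated by its value when all $a_i$ are replaced by $1$'' are statements about individual monomials of the moment expansion; exploiting them means bounding the expansion monomial-by-monomial (absolute values inside the sum), which is exactly the triangle-inequality route the whole method is designed to avoid: there are of order $(2d+1)^{p}$ monomials against a target of size $\|A_\star\|^{p}\asymp(2\sqrt{2d-1})^{p}$ already in the scalar case, and the Weingarten weights are signed, so termwise domination by the scalar model does not transfer. Likewise, absorbing the $u_i$ into $V_i=u_i\otimes U_i$ does not reduce case (ii) to Theorem \ref{th:main1} with $m$-dimensional coefficients: $V_i$ is not Haar-distributed on any unitary group, the Weingarten average only sees the $U_i$, and the $u_i$-strings reappear attached to the coefficient side of the class-sums, so one is back to $\cA_1=M_m(\dC)\otimes\cA_0$; the absorption identity for $\|A_\star\|$ is correct but is not what removes $d$ from the exponent. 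Concretely, what is missing is the $\theta$-controlled Cauchy--Schwarz inequality together with the coarser class counting of Theorem \ref{th:FKBlmtheta}.
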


We next consider an extension of the previous results to the more delicate case when the random Haar unitary or orthogonal matrices are replaced by random Haar permutations: $(U_1,\ldots,U_d)$ are independent and uniformly distributed permutation matrices in $\dS_N$. The operator, $A_N$, is defined as in \eqref{eq:defA}. If $\cA_1 \subset \cB(H_1)$ is a $C^*$-algebra of bounded operators on a vector space $H_1$ and $\IND_N \in \dC^N$ is the unit vector with all coordinates equal to $1/\sqrt N$, the vector space $H_1 \otimes \dC \IND_N$ is invariant by $A_N$ and its adjoint. The restriction of $A_N$ to $H_1 \otimes \dC \IND_N$ is isomorphic to 
$$A_1 = a_0 + \sum_{i=1}^{2d} a_i  \in \cA_1.$$
We are then interested in the norm of $A_N \Pi_N = \Pi_N A_N$ where 
\begin{equation}\label{eq:defPiN}
\Pi_N = 1_{H_1} \otimes \PAR{1_{\dC^N} -  \IND_N \otimes \IND_N}
\end{equation}
is the orthogonal projection onto $H_1 \otimes \IND_N^\perp$, the orthogonal of $H_1 \otimes \dC \IND_N$. 

\begin{theorem}
\label{th:main3}
Let $N \geq 3$, $(a_0,\ldots,a_{2d})$ in $\cA_1$ and $A_\star$, $A_N$ be as in \eqref{eq:defA*}-\eqref{eq:defA} with $2 \leq d \leq \log ( N) / (20 \log \log N)$ and $(U_1,\ldots, U_d)$ be independent and uniform permutation matrices on $\dS_N$. For any $0 < \veps < 1$, there exists a constant $c(\veps) >0$ such that with probability at least $1 - c(\veps) N^{\veps -1}$, for any $1 \leq p  \leq (\log ( N)) ^2 / (c(\veps) (\log d ) \log \log N)$, 
$$
 \| A_N\Pi_N \|_{p}  \leq  \| A_\star \| \PAR{ 1 + c(\veps)\frac{(d \log d) (\log \log N) }{\log N}}.
$$
\end{theorem}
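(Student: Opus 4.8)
\emph{Proof strategy.} Since $p\mapsto\NRM{T}_p$ is non-decreasing for a faithful normal state, it is enough to establish the bound for $p$ equal to the largest admissible \emph{even} integer $p_0=p_0(N,d,\veps)$; monotonicity then extends the conclusion to every $1\le p\le p_0$ on the same event, and a Markov inequality at the end will produce the failure probability $c(\veps)N^{\veps-1}$. Thus everything reduces to a first-moment estimate on $\NRM{A_N\Pi_N}_{p_0}^{p_0}=\tau\bigl((A_N\Pi_N A_N^{*})^{q}\bigr)$ with $q=p_0/2$ and $\tau=\tau_1\otimes\tr_N$. Since $A_N$ is not assumed self-adjoint here, working with $|A_N\Pi_N|^2=A_N\Pi_NA_N^{*}$ (rather than an odd power) is the natural choice.

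I would first simplify $\Pi_N$. Each permutation matrix fixes $\IND_N$, so $A_N(1\otimes\IND_N\IND_N^{*})A_N^{*}=A_1A_1^{*}\otimes\IND_N\IND_N^{*}$ with $A_1=\sum_{i=0}^{2d}a_i$; equivalently $A_N$ is block-diagonal for $H_1\otimes\dC\IND_N\ \oplus\ H_1\otimes\IND_N^{\perp}$ and $A_N\Pi_N$ is its restriction to the second block. Expanding the $q$-th power of $A_NA_N^{*}-A_1A_1^{*}\otimes\IND_N\IND_N^{*}$ and applying $\tau$ turns $\dE\,\tau\bigl((A_N\Pi_NA_N^{*})^{q}\bigr)$ into a sum, over words $\mathbf i$ in $\INT{2d}\cup\{0\}$ of length $p_0$ (with projector insertions), of $\tau_1(a_{i_1}a_{i_2}^{*}a_{i_3}\cdots)$ times $\tfrac1N\dE\,\Tr$ of the associated alternating product of permutation matrices and rank-one projections --- that is, a weighted count of closed walks in the random Schreier graph of $\dF_d\curvearrowright\INT{N}$.

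The comparison with the free model is then organized by the free reduction $w(\mathbf i)\in\dF_d$ of the $U$-part of each walk. On the free side $\NRM{A_\star}_p^p=\tau_\star\bigl((A_\star A_\star^{*})^{q}\bigr)$ is the same sum with each permutation weight replaced by the indicator $\mathbf 1\{w(\mathbf i)=e\}$. When $w(\mathbf i)=e$ the permutation product is $1_N$ deterministically, so these terms contribute weight $1$ on both sides and reassemble \emph{exactly} into $\tau_\star\bigl((A_\star A_\star^{*})^{q}\bigr)\le\NRM{A_\star}^{p}$; the subtraction of $A_1A_1^{*}\otimes\IND_N\IND_N^{*}$ (i.e.\ the projection $\Pi_N$) is precisely what removes the trivial subrepresentation, whose contribution $\sim\tfrac1N\NRM{A_1}^{p}$ would otherwise be unbounded in our range of $p$ since $\NRM{A_1}$ can be of order $d\max_i\NRM{a_i}$. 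It remains to bound the error
$$
\mathcal{E}\ :=\ \sum_{\mathbf i:\ w(\mathbf i)\ne e}\BABS{\tau_1\bigl(a_{i_1}a_{i_2}^{*}\cdots\bigr)}\,\cdot\,\frac1N\,\dE\,\#\bigl\{\text{fixed points of }w(\mathbf i)(U_1,\dots,U_d)\bigr\}.
$$

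Controlling $\mathcal{E}$ is the heart of the proof and the main obstacle. I would stratify the closed walks by their \emph{excess} (the first Betti number of the quotient graph, equivalently the number of independent ``coincidences'' forced among the vertices of $\INT{N}$), use the classical fact that each coincidence costs a factor $O(1/N)$ in expectation over uniform permutations, and bound the number of walk-shapes of a given excess $k$ by $(Cp_0)^{O(k)}(2d)^{O(k)}$. The delicate point is that the non-commutativity of $\cA_1$ forbids the lossy bound $\ABS{\tau_1(\cdots)}\le\prod_j\NRM{a_{i_j}}$: inside each defective walk one must locate the maximal ``free'' sub-walks and resum them, recovering genuine factors of $\NRM{A_\star}$ rather than of $\sum_i\NRM{a_i}$. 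Carried out, this yields an excess-$k$ contribution $\le\NRM{A_\star}^{p}\,(Cp_0 d)^{O(k)}N^{-k}$; under the hypotheses $d\le\log N/(20\log\log N)$ and $p_0\le(\log N)^2/(c(\veps)(\log d)\log\log N)$ one has $(Cp_0 d)^{O(1)}=N^{o(1)}$, so the series over $k\ge1$ is dominated by its $k=1$ term, of order $\NRM{A_\star}^{p}N^{\veps-1}$. Tracking the near-free slack along the $p_0$ steps of the walk --- the per-step factor being essentially $N^{1/p_0}\approx 1+(\log d)(\log\log N)/\log N$, inflated by the $2d$-regular non-backtracking combinatorics --- packages the whole estimate as $\dE\,\NRM{A_N\Pi_N}_{p_0}^{p_0}\le\NRM{A_\star}^{p}\bigl(1+O\bigl((d\log d)(\log\log N)/\log N\bigr)\bigr)^{p}$, and Markov's inequality converts this into the asserted probability $1-c(\veps)N^{\veps-1}$. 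Getting the dependence on $d$ in the exponent and in the multiplicative slack sharp enough to stay within the claimed ranges is where the bulk of the combinatorial bookkeeping lies.
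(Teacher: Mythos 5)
There is a genuine gap: your plan rests on the unconditioned first-moment estimate $\dE\,\NRM{A_N\Pi_N}_{p_0}^{p_0}\le \NRM{A_\star}^{p_0}\bigl(1+O((d\log d)(\log\log N)/\log N)\bigr)^{p_0}$ followed by Markov, but this estimate is \emph{false} in the range of $p$ you need. The theorem allows $p_0$ up to $(\log N)^2/(c(\veps)(\log d)\log\log N)\gg\log N$, and for $p\gg\log N$ the expected $p$-th moment is dominated by rare \emph{tangles}: with probability of order $N^{\veps-1}$ the Schreier graph contains, inside a ball of radius $O(\log_{2d-1}N)$, a subgraph with two cycles, and a closed walk may wind around such a subgraph up to $\sim p_0$ times. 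Such a configuration costs only one factor $N^{-1}$ (excess of the subgraph), but the resummation of walks confined to it is governed by the \emph{local} operator norm, which exceeds $\NRM{A_\star}$ by a fixed $\delta>0$; its contribution is therefore of order $N^{-1}(1+\delta)^{p_0}\NRM{A_\star}^{p_0}$, which dwarfs $\NRM{A_\star}^{p_0}(1+o(1))^{p_0}$ as soon as $p_0\gg\log N$. This is exactly where your claimed bound ``excess-$k$ contribution $\le\NRM{A_\star}^{p}(Cp_0d)^{O(k)}N^{-k}$'' breaks down: the ``free resummation'' along tangled portions does not return factors of $\NRM{A_\star}$, and the entropy/weight bookkeeping cannot repair this, since the obstruction is already present for $\cA_1=\dC$, $a_i=1$ (the paper points this out explicitly at the start of its overview of this theorem, crediting the phenomenon to Friedman's tangles).

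This is why the paper does not run a direct moment-plus-Markov argument on $A_N\Pi_N$. It first writes $\|A\Pi\|_{\ell q}^{\ell}\le\sum_{m=0}^{\ell}\|B^{(\ell,m)}\Pi\|_q$ using the non-backtracking decomposition at scale $\ell\asymp\veps\log_{2d-1}N$, replaces each $B^{(\ell,m)}$ by a deterministic tangle-free modification $\widetilde B^{(\ell,m)}$ that coincides with it on the event $E$ that the Schreier graph is $\ell$-tangle-free (an event of probability $1-c(\veps)N^{\veps-1}$, which is the true source of the failure probability, not Markov), then carefully exchanges this truncation with the projection $\Pi$ — producing the centered operators $\underline B^{(\ell,m)}$ and remainders $R_k^{(\ell,m)}$ of Lemma \ref{le:decomp} — and only then applies the expected high trace method (with the non-commutative Cauchy–Schwarz inequality to keep norms outside the sums, as you correctly anticipate) to the truncated, unconditioned operators. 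Your proposal contains the right ingredients for the ``good'' part of the computation, but it is missing the tangle-removal mechanism and the truncation-versus-projection step, without which the moment bound you aim for simply does not hold.
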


For $\cA_1 = M_n(\dC)$, we have, for example, the following corollary of Theorem \ref{th:main3}.
\begin{corollary}\label{cor:main3}
In the setting of Theorem \ref{th:main3}, assume $\cA_1 = M_n(\dC)$. For any $0 < \veps < 1$, there exists $c(\veps) >0$ such that if  $n  \leq N^{\sqrt{ \log N}}$ and $2 \leq d \leq \sqrt{\log ( N)}$ then,  with probability at least $1 - c(\veps) N^{\veps -1}$, 
$$
 \| A_N \Pi_N \| \leq \| A_\star \| \PAR{ 1 +  \frac{c(\veps)}{(\log N)^{1/4}}}.
$$
\end{corollary}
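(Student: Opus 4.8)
The plan is to deduce Corollary~\ref{cor:main3} from Theorem~\ref{th:main3} by specializing $\cA_1 = M_n(\dC)$ and converting the $\|\cdot\|_p$ bound into an operator-norm bound via the trick in \eqref{eq:TtoTp}. First, note that $A_N \Pi_N$ lives in $M_n(\dC) \otimes M_N(\dC) = M_{nN}(\dC)$, so the analogue of \eqref{eq:TtoTp} gives $\|A_N \Pi_N\| \leq (nN)^{1/p} \|A_N \Pi_N\|_p$. (Strictly, $\Pi_N$ is a projection of rank $n(N-1)$, so one can even use $(n(N-1))^{1/p}$, but $(nN)^{1/p}$ is enough.) Then I would invoke Theorem~\ref{th:main3} on the event of probability at least $1 - c(\veps) N^{\veps - 1}$, with a suitable choice of $p$, to bound $\|A_N \Pi_N\|_p \leq \|A_\star\|(1 + c(\veps)(d\log d)(\log\log N)/\log N)$, and multiply the two estimates.

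The crux is choosing $p$ so that the prefactor $(nN)^{1/p}$ is $1 + o(1)$ at the right rate while staying within the admissible range $1 \leq p \leq (\log N)^2/(c(\veps)(\log d)\log\log N)$ allowed by Theorem~\ref{th:main3}. Under the hypotheses $n \leq N^{\sqrt{\log N}}$ and $2 \leq d \leq \sqrt{\log N}$, we have $\log(nN) \leq (\sqrt{\log N} + 1)\log N \leq 2(\log N)^{3/2}$ for $N$ large, while $\log d \leq \tfrac12 \log\log N$ and $\log\log N \leq \log\log N$, so the upper limit for $p$ is at least $c'(\veps)(\log N)^2/((\log\log N)^2)$, which dominates $(\log N)^{3/2}$. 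So I can take, say, $p = (\log N)^{7/4}$: this is $\leq$ the admissible ceiling for $N$ large, and $(nN)^{1/p} = \exp(\log(nN)/p) \leq \exp(2(\log N)^{3/2}/(\log N)^{7/4}) = \exp(2(\log N)^{-1/4}) = 1 + O((\log N)^{-1/4})$. Meanwhile, with this $p$ the constraint $p \geq 1$ is trivially satisfied, and the Theorem~\ref{th:main3} error term $(d\log d)(\log\log N)/\log N$ is, under $d \leq \sqrt{\log N}$, at most $(\sqrt{\log N})(\log\log N)^2/\log N = (\log\log N)^2/\sqrt{\log N} = O((\log N)^{-1/4})$ as well (absorbing $\log d \le \log\log N$). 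Hence both contributions are $O((\log N)^{-1/4})$, which matches the stated bound $1 + c(\veps)/(\log N)^{1/4}$ after adjusting the constant.

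Finally I would combine: on the good event,
$$
\|A_N \Pi_N\| \leq (nN)^{1/p}\,\|A_N\Pi_N\|_p \leq \PAR{1 + \frac{c_1}{(\log N)^{1/4}}} \|A_\star\| \PAR{1 + \frac{c_2(\veps)}{(\log N)^{1/4}}} \leq \|A_\star\| \PAR{1 + \frac{c(\veps)}{(\log N)^{1/4}}},
$$
using $(1+x)(1+y) \leq 1 + 3\max(x,y)$ for small $x,y$ and enlarging $c(\veps)$, and for small $N$ one simply makes $c(\veps)$ large enough that the bound is vacuous (the right-hand side exceeds the trivial bound $\|A_N\Pi_N\| \le \sum_i \|a_i\|$ — or rather one absorbs the finitely many small cases into the constant, noting $\|A_\star\|$ already dominates $\|A_1\|$ and all relevant norms). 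The main obstacle is purely bookkeeping: verifying that the chosen $p$ simultaneously respects the ceiling in Theorem~\ref{th:main3} and makes $(nN)^{1/p}$ small enough, uniformly over the allowed ranges of $n$ and $d$; there is essentially no new analytic content beyond Theorem~\ref{th:main3} and \eqref{eq:TtoTp}.
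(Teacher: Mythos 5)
Your proposal is correct and follows exactly the route the paper intends for this corollary: specialize to $\cA_1=M_n(\dC)$, apply Theorem \ref{th:main3} on its high-probability event with a large admissible $p$ (any $p$ with $(\log N)^{3/2}\ll p\ll (\log N)^2/(\log\log N)^2$, e.g.\ your $p=(\log N)^{7/4}$), and convert to the operator norm via the $(nN)^{1/p}$ factor of \eqref{eq:TtoTp}, noting both $(nN)^{1/p}-1$ and the error term $(d\log d)(\log\log N)/\log N$ are $O((\log N)^{-1/4})$ under the stated ranges of $n$ and $d$. The bookkeeping and the absorption of small $N$ into $c(\veps)$ are handled correctly, so there is nothing to add.
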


This corollary improves on the main result of \cite{MR4024563} in two ways: it is quantitative and allows $n$ to grow super-polynomially in $N$ (while $n$ was fixed in \cite{MR4024563}). Note that the error bound is far from optimal: in the simplest case, $n=1$ and $a_i = 1$ for all $i \in \INT{2d}$, then polynomial error bounds are proved in the impressive works \cite{MR4135670,HuangYau}. 

There is also be an analog of Theorem \ref{th:main2} for random permutations, which allows to improve the dependency in $d$. We do not state it for the sake of brevity. 

We conclude the results of this section by lower bounds on $\|A_N\|$. It is possible to obtain lower bounds on $\| A_N \|$ when $\cA_1 = M_n(\dC)$ and $n$ is not too large (say $n \leq N^{\alpha}$ with $0 < \alpha <1$). However, it is not straightforward to obtain a universal lower bound in the sense that it does not depend on $\cA_1$. It turns out that it is possible to compute such bounds thanks to the exactness of $\cA_\star$.

 We start with the case of permutations. Let $\sigma = (\sigma_1,\ldots, \sigma_d)$ be permutations in $\dS_N$ with permutation matrices $(U_1,\ldots, U_d)$. Let $G_\sigma$ be the Schreier graph associated to $\sigma$: it is  the graph on $\INT{N}$ whose adjacency matrix is given by $\sum_{i=1}^{2d} U_i$ with $U_{i+d} = U_i^*$ for $i\in \INT{d}$. For $x \in \INT{N}$ and $h>0$, let $(G_\sigma ,x)_h$ be the sub-graph of $G_\sigma$ spanned by vertices at distance at most $h$ from $x$. 

\begin{theorem}\label{th:LBP}
Let $(a_0,\ldots,a_{2d})$ in $\cA_1$ and $A_\star$, $A_N$ be as in \eqref{eq:defA*}-\eqref{eq:defA} with  $U_1,\ldots, U_d$ permutation matrices of $\sigma = (\sigma_1,\ldots, \sigma_d)$ in $\dS_N$. Let $p \geq 1$ be an integer and assume that there exists $x,y \in \INT{N}$ such that  $(G_\sigma,x)_p$ and $(G_\sigma,y)_p$ are disjoint and without cycle. Then, for some numerical constant $c >0$,
$$
\| A_N \Pi_N \| \geq \| A_\star \| \PAR{ 1- c \frac{ \sqrt{ \log (2d)}}{\sqrt p}    }.
$$
\end{theorem}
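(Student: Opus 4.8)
The plan is to use the hypothesis that the balls $(G_\sigma,x)_p$ and $(G_\sigma,y)_p$ are disjoint trees in order to build, inside $\ell^2(\INT{N})$, two orthogonal subspaces on which the action of $(U_1,\ldots,U_d)$ mimics the left-regular representation of $\dF_d$ on a ball of the Cayley graph. Concretely, let $B_x$ (resp.\ $B_y$) denote the vertex set of $(G_\sigma,x)_p$ (resp.\ $(G_\sigma,y)_p$). Because each of these balls is acyclic, the natural map sending the identity of $\dF_d$ to $x$ (resp.\ $y$) and extending by the generators gives a graph isomorphism between $(G_\sigma,x)_p$ and the ball of radius $p$ around $e$ in the Cayley graph of $\dF_d$, and similarly for $y$. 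I would encode this by an isometry $V\colon \ell^2\big((\dF_d)_p\big)\to \ell^2(\INT{N})$ where $(\dF_d)_p=\{g\in\dF_d: |g|\le p\}$: send $\delta_g\mapsto \delta_{w(g)}$, with $w(g)$ the vertex of $G_\sigma$ reached from $x$ by reading the reduced word $g$. The acyclicity guarantees $w$ is injective on $(\dF_d)_p$, so $V$ is a genuine isometry onto $\ell^2(B_x)$.

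The key point is a \emph{local intertwining} property: if $\xi\in\ell^2((\dF_d)_{p-1})$ is supported on words of length at most $p-1$, then for each generator $g_i$ one has $U_i V\xi = V\lambda(g_i)\xi$, because moving one more step from any vertex of $(G_\sigma,x)_{p-1}$ stays inside the ball and the edge-labels match by construction. Hence, for any word $m$ in the generators of length $\ell\le p$ and any $\xi$ supported on $(\dF_d)_{p-\ell}$, we get $m(U_1,\ldots,U_d)V\xi = V\,\lambda(m)\xi$. Now write $A_\star = a_0\otimes 1+\sum_i a_i\otimes\lambda(g_i)$ acting on $\cA_1\otimes\ell^2(\dF_d)$; pick (approximately) a unit vector $\eta\in H_1$ and a unit vector $v\in\ell^2(\dF_d)$ with $v$ supported near $e$ so that $\NRM{A_\star^p(\eta\otimes v)}$ is close to $\NRM{A_\star}^p\NRM{\eta\otimes v}$ — more precisely, truncate an approximate top eigenvector of the self-adjoint compression to a ball of radius $p$, which by the rapid decay / Haagerup inequality on $\dF_d$ loses only a factor $1-O(\sqrt{\log(2d)/p})$ in the relevant $\ell^p$-versus-operator-norm comparison (this is exactly the mechanism behind the $\|T\|\le (\dim)^{1/p}\|T\|_p$ and Gelfand-type estimates recalled in §\ref{subsubsec:haagerup}). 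Applying $A_\star$ only $\lfloor p/2\rfloor$ or so times to a vector supported within distance $p/2$ of $e$ keeps everything inside $(\dF_d)_p$, so $A_N^{k}(1\otimes V)(\eta\otimes v) = (1\otimes V) A_\star^{k}(\eta\otimes v)$ for all $k$ up to that range, and since $1\otimes V$ is an isometry this forces $\NRM{A_N^{k}} \ge \NRM{A_\star^{k}(\eta\otimes v)}$.

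Finally, to convert a lower bound on $\NRM{A_N^{k}}$ into one on $\NRM{A_N}$ we use $\NRM{A_N^{k}}\le\NRM{A_N}^{k}$, extract the $k$-th root, and optimize $k\asymp p$; the combination of (a) the loss in approximating the top of the spectrum of $A_\star$ by a compactly-supported vector and (b) the root extraction is what produces the error term $c\sqrt{\log(2d)}/\sqrt p$, the $\log(2d)$ entering precisely through the spectral radius $2\sqrt{2d-1}$ of the $2d$-regular tree and the associated Haagerup constant. One subtlety to handle is the projection $\Pi_N$: since $B_x$ is a proper subset of $\INT{N}$ (it has at most $1+2d(2d-1)^{p-1}\le \,$ something $<N$ vertices when the two disjoint balls exist), the all-ones vector $\IND_N$ has small overlap with $\ell^2(B_x)$, so compressing by $\Pi_N$ changes the vectors above by $O(|B_x|/N)$, which can be absorbed — alternatively, one works with the \emph{difference} of the two balls, i.e.\ the vector $(1\otimes V_x)(\eta\otimes v)-(1\otimes V_y)(\eta\otimes v)$, which is automatically orthogonal to $H_1\otimes\IND_N$ and on which $A_N$ acts, via both copies simultaneously, exactly like $A_\star$ on $\eta\otimes v$; this is the cleanest route and is presumably why the statement requires \emph{two} disjoint acyclic balls rather than one.

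The step I expect to be the main obstacle is the quantitative control in (a): making precise, with the stated $\sqrt{\log(2d)/p}$ rate, how well the operator norm of the self-adjoint $A_\star$ is captured by $\NRM{A_\star^k w}^{1/k}$ for a vector $w$ supported in a ball of radius $O(k)$. This requires a spectral-measure argument for $A_\star$ together with Haagerup's inequality to say that the spectral mass of $A_\star$ near its edge is not concentrated "at infinity" in $\dF_d$ — equivalently, that an approximate top eigenvector can be taken almost supported on a bounded ball with an explicit, $d$-dependent rate. Everything else (the isometry, the local intertwining, the root extraction, the $\Pi_N$ bookkeeping) is essentially combinatorial or elementary once that estimate is in place.
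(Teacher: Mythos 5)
Your combinatorial half is exactly the paper's: the tree-ball isometry, the local intertwining $A_N^k(1\otimes V)=(1\otimes V)A_\star^k$ on vectors supported deep enough in the ball, and the use of the difference of the two disjoint balls (the vector built from $\delta_x-\delta_y$) to kill the overlap with $H_1\otimes\IND_N$ and to invoke Pythagoras — this is Lemma 6.8 of the paper, which yields $\| A_N\Pi_N\|\ge \|A_\star\|_p$ (a trace-norm lower bound, via the GNS representation). The problem is the analytic half, which you yourself flag as "the main obstacle": you need, \emph{uniformly in the $C^*$-algebra $\cA_1$}, an almost-norming vector (or compression) of $A_\star$ that is localized in a ball of radius $O(p)$ with loss $1-O(\sqrt{\log(2d)/p})$. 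The mechanism you propose — rapid decay/Haagerup on $\dF_d$ plus a spectral-measure argument — does not deliver this: the operator-coefficient Haagerup inequality (Lemma 6.1/Corollary 6.2 in the paper) carries a factor $\sqrt{n}$ (or $\sqrt{\sum_g\|a(g)\|^2}$), so for $\cA_1=M_n(\dC)$ with $n$ large, or for infinite-dimensional $\cA_1$, the comparison between $\|A_\star\|$ and either $\|A_\star\|_p$ or $\sup$ over localized vector states is not controlled by any function of $d$ and $p$ alone; and $A_\star$ has no eigenvectors, so there is no a priori reason an approximate top vector can be taken with most of its mass in a bounded ball at a rate independent of $\cA_1$.

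The paper closes exactly this gap with a separate, nontrivial ingredient: the quantitative exactness theorem for $\cA_\star$ (Theorem 6.4), proved via Ozawa's completely positive mass-transport/Schur-multiplier construction. It produces a projection $q$ on $H_1$ of rank at most $(2d)^{4/\veps}$ such that replacing $a_i$ by $qa_iq$ retains $(1-\veps)\|A_\star\|$; the compressed operator then has coefficients of controlled rank, so Haagerup's inequality applies with the explicit constant $(2(2d)^{4/\veps}p^3)^{1/p}$, and optimizing $\veps\asymp\sqrt{\log(2d)/p}$ gives the stated rate. Without this (or an equivalent operator-valued localization statement), your argument proves the theorem only for $\cA_1$ of dimension subexponential in $p$, not the universal statement; so the missing idea is precisely the exactness-based compression, which the paper lists as one of its key new tools for the lower bounds.
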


If $\sigma$ is uniform on $\dS_N^d$, it is standard that the conditions of the lemma are fulfilled with $p$ of order $\log_{2d-1} N$ with high probability (see, for example, \cite[Lemma 23]{MR4024563}).  A variant of the proof of Theorem \ref{th:LBP} will also give a universal Alon-Boppana lower bound; see \cite{MR1124768,MR1978881} for background. We say that $(a_0,\ldots,a_{2d})$ in $\cA_1$ have {\em non-negative joint moments}  if $\tau_1 ( a^{\veps_1}_{i_1}\cdots a^{\veps_n}_{i_n} ) \geq 0$ for all $n$ and $(i_1,\ldots,i_n) \in \{0,\ldots,2d \}^n$ and $a_{i_k}^{\veps_k} \in \{a_{i_k},a_{i_k}^*\}$. This property was notably studied in \cite{lehnerthesis}. The iconic example is $\cA_1 = M_n(\dC)$ and $a_i \in M_n (\dR_+)$.

\begin{theorem}\label{th:AB}
Let $(a_0,\ldots,a_{2d})$ in $\cA_1$ with non-negative joint moments and $A_\star$, $A_N$ be as in \eqref{eq:defA*}-\eqref{eq:defA} with  $(U_1,\ldots, U_d)$ permutation matrices. Then, for some numerical constant $c >0$,
$$
\| A_N \Pi_N \| \geq \| A_\star \| \PAR{ 1- c \frac{ \log (2d) }{\sqrt{\log N}} }.
$$
\end{theorem}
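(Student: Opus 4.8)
The plan is to re-run the mechanism behind Theorem~\ref{th:LBP}, but to replace the hypothesis that some ball of the Schreier graph is cycle-free---which may fail at every vertex---by the non-negativity of the joint moments, and to absorb the error produced by the projection $\Pi_N$ via a universal comparison of $\|A_1\|$ with $\|A_\star\|$.

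First, passing to the GNS representation of $(\cA_1,\tau_1)$ I regard $\cA_1\subset\cB(H_1)$ with a cyclic unit vector $e_1$, so that $A_N,\Pi_N$ act on $H_1\otimes\dC^N$; I fix an arbitrary vertex $x\in\INT N$, set $\Omega_x=e_1\otimes\delta_x$, and let $m_k=\langle (A_\star A_\star^*)^k(e_1\otimes\delta_e),e_1\otimes\delta_e\rangle$, so that $m_k^{1/k}\to\|A_\star\|^2$ by faithfulness of $\tau_1$ and of the trace on $\cA_\star$. The key observation is this: expanding $(A_NA_N^*)^k$ over index strings $(i_1,j_1,\dots,i_k,j_k)\in\{0,\dots,2d\}^{2k}$ (with $U_0=1_N$, $g_0=e$), the term of index $(i_1,j_1,\dots)$ has coefficient $a_{i_1}a_{j_1}^*\cdots a_{i_k}a_{j_k}^*$ and operator the permutation matrix of $\sigma_{i_1}\sigma_{j_1}^{-1}\cdots\sigma_{i_k}\sigma_{j_k}^{-1}$, so pairing against $\Omega_x$ contributes $\tau_1(a_{i_1}a_{j_1}^*\cdots a_{i_k}a_{j_k}^*)\ge 0$ times the indicator that this permutation fixes $x$; since a word $g_{i_1}g_{j_1}^{-1}\cdots g_{i_k}g_{j_k}^{-1}$ that is trivial in $\dF_d$ maps to the identity permutation, discarding the remaining non-negative terms gives, for every $k\ge1$ and every choice of permutations,
\[
\langle (A_NA_N^*)^k\Omega_x,\Omega_x\rangle \;\ge\; \sum_{g_{i_1}g_{j_1}^{-1}\cdots g_{i_k}g_{j_k}^{-1}=e}\tau_1(a_{i_1}a_{j_1}^*\cdots a_{i_k}a_{j_k}^*)\;=\;m_k ,
\]
the sum being over all the above strings whose associated word is trivial. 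This is the only place non-negativity enters (already it forces $\|A_N\|\ge\|A_\star\|$).

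Next I would pass to $B_N=(A_N\Pi_N)(A_N\Pi_N)^*$. Since $\Pi_N A_N=A_N\Pi_N$, the projection $\Pi_N$ commutes with $A_NA_N^*$, so $B_N^k=\Pi_N(A_NA_N^*)^k\Pi_N$ for $k\ge1$; and since $U_i\IND_N=\IND_N$ for all $i$, the subspace $H_1\otimes\dC\IND_N$ is invariant under $A_N$ and $A_N^*$, with $A_N$ acting there as $A_1:=a_0+\sum_{i=1}^{2d}a_i$. Writing $\Pi_N\Omega_x=\Omega_x-\tfrac1{\sqrt N}e_1\otimes\IND_N$ and $(A_NA_N^*)^k(e_1\otimes\IND_N)=((A_1A_1^*)^ke_1)\otimes\IND_N$, a direct computation yields, for $k\ge1$,
\[
\mu_k:=\langle B_N^k\Omega_x,\Omega_x\rangle \;=\; \langle (A_NA_N^*)^k\Omega_x,\Omega_x\rangle-\tfrac1N\tau_1\big((A_1A_1^*)^k\big)\;\ge\; m_k-\tfrac1N\|A_1\|^{2k}.
\]
To make the subtracted term harmless I would use the \emph{unconditional} estimate $\|A_1\|\le\sqrt{2d+1}\,\|A_\star\|$: write $A_1$ as the product of the row $(1,\dots,1)$, of norm $\sqrt{2d+1}$, with the column $(a_0,\dots,a_{2d})^{\mathsf{T}}$, of norm $\|\sum_i a_i^*a_i\|^{1/2}$, and observe that the contractive slice map $\mathrm{id}\otimes\tau_2$ sends $A_\star^*A_\star$ to $\sum_i a_i^*a_i$, so $\|\sum_i a_i^*a_i\|\le\|A_\star\|^2$. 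Finally, exactly as in the proof of Theorem~\ref{th:LBP}, I would invoke the rapid-decay estimate of \S\ref{subsubsec:haagerup}, which quantifies $m_k^{1/k}\to\|A_\star\|^2$ in the form $m_k\ge\|A_\star\|^{2k}\exp(-C_0\sqrt{k\log(2d)})$ for a numerical $C_0$.

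Putting the pieces together: with $M=\lfloor\log N/(2\log(2d+1))\rfloor$, one has $\tfrac1N\|A_1\|^{2k}\le\tfrac1N(2d+1)^k\|A_\star\|^{2k}\le\tfrac12 m_k$ for $1\le k\le M$ once $N$ exceeds an absolute constant, so $\mu_k\ge\tfrac12\|A_\star\|^{2k}\exp(-C_0\sqrt{k\log(2d)})>0$ there, while $\mu_0=\|\Omega_x\|^2=1$. The $(\mu_k)$ form a moment sequence, so by Cauchy--Schwarz the ratios $\mu_{k+1}/\mu_k$ are non-decreasing, and $\|A_N\Pi_N\|^2=\|B_N\|\ge\mu_{k+1}/\mu_k$ for every $k$ (Rayleigh quotient at $B_N^{k/2}\Omega_x$); were $\mu_M/\mu_{M-1}<\|A_\star\|^2(1-\delta)$, then $\mu_M<\mu_0(\|A_\star\|^2(1-\delta))^M\le\|A_\star\|^{2M}e^{-\delta M}$, contradicting the lower bound on $\mu_M$ as soon as $\delta M>C_0\sqrt{M\log(2d)}+\log2$. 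Hence $\|A_N\Pi_N\|^2\ge\|A_\star\|^2(1-C_0\sqrt{\log(2d)/M}-(\log 2)/M)$, and since $M\asymp\log N/\log(2d+1)$ with $\log(2d+1)\le2\log(2d)$ for $d\ge1$, this is $\ge\|A_\star\|^2(1-c\log(2d)/\sqrt{\log N})$; a square root, together with absorbing the small-$N$ range (where the asserted bound is vacuous) into $c$, finishes the argument. I expect the only genuinely delicate ingredient to be the rapid-decay estimate controlling the rate of $m_k^{1/k}\to\|A_\star\|^2$; everything else is bookkeeping around $\Pi_N$ plus the displayed non-negativity inequality.
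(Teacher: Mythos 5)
Your first half is sound and is essentially the paper's Lemma~\ref{le:AB}: the non-negative joint moments give $\langle (A_NA_N^*)^k\Omega_x,\Omega_x\rangle\ge\tau\bigl((A_\star A_\star^*)^k\bigr)=m_k$ for every choice of permutations, and your treatment of $\Pi_N$ (splitting off the invariant line $\dC\IND_N$ and controlling the correction via $\|A_1\|\le\sqrt{2d+1}\,\|A_\star\|$) is a legitimate alternative to the paper's two-vertex/disjoint-balls device. The proof breaks, however, exactly at the step you call the ``only genuinely delicate ingredient'': the dimension-free rapid-decay bound $m_k\ge\|A_\star\|^{2k}\exp(-C_0\sqrt{k\log(2d)})$ is not what \S\ref{subsubsec:haagerup} provides, and it is false in the generality of the theorem. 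Lemma~\ref{le:Haagerup} and Corollary~\ref{cor:HaagB} compare $\|A_\star\|$ with quantities built from \emph{operator norms} of the coefficients of $A_\star^{p/2}$; when turned into a comparison with $\|A_\star\|_p$ they carry a factor of the dimension of $\cA_1$ (the prefactor $2np^3$ in Corollary~\ref{cor:HaagB}). Concretely, take $\cA_1=M_n(\dC)$ with its normalized trace and $a_i=E_{11}$ (the matrix unit) for every index $i$: the joint moments are non-negative, $A_\star=E_{11}\otimes\bigl(1+\sum_{i=1}^{2d}\lambda(g_i)\bigr)$, and $m_k=\tau\bigl((A_\star A_\star^*)^k\bigr)\le\|A_\star\|^{2k}/n$, so your claimed lower bound on $m_k$ fails as soon as $n>\exp\bigl(C_0\sqrt{k\log(2d)}\bigr)$. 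Since your $k$ is at most of order $\log N$ while the theorem is uniform over all $\cA_1$ (that uniformity is its whole point), no estimate of the assumed form can close the argument; what your scheme actually delivers is essentially $\|A_N\Pi_N\|\ge\|A_\star\|_p$ with $p\asymp\log N/\log(2d)$, i.e.\ the paper's Lemma~\ref{le:AB}, which in the example above is far below $(1-o(1))\|A_\star\|$ once $n\gg N^{C}$.

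The missing idea is the quantitative exactness of $\cA_\star$ (Theorem~\ref{th:exactness}), which is how the paper converts the trace-type lower bound into a bound against the operator norm uniformly in $\cA_1$: one compresses the coefficients by a projection $q$ of rank at most $(2d)^{4/\veps}$, losing only a factor $1-\veps$ of $\|A_\star\|$, then applies Corollary~\ref{cor:HaagB} in dimension $(2d)^{4/\veps}$, and finally optimizes $\veps\asymp\log(2d)/\sqrt{\log N}$; this optimization is precisely the source of the error term $\log(2d)/\sqrt{\log N}$ in the statement. Note also that grafting such a compression onto your argument is not a routine patch: your positivity step uses the non-negative joint moments of the original family $(a_i)$ with respect to $\tau_1$, a property that has no reason to survive the replacement $a_i\mapsto q a_i q$, so the positivity argument and the exactness/Haagerup reduction have to be combined with care (the paper's deduction of Theorem~\ref{th:AB} follows the route of Theorem~\ref{th:LBP}, combining Lemma~\ref{le:AB} with Theorem~\ref{th:exactness} and Corollary~\ref{cor:HaagB}). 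Without this exactness input your proposal proves a weaker, $\cA_1$-dependent statement, not the universal bound claimed.
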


The remarkable fact is that the statement does not depend on the permutation matrices (apart from their dimension), and it does not depend on the $C^*$-algebra $\cA_1$. The classical Alon-Boppana bound is for $\cA_1 = \dC$ and $a_i = 1$; better bounds are known in this case, see \cite{MR2679612} (or more generally when $\cA_1 = M_n(\dC)$ and $n$ is not too large thanks to Haagerup inequality, see Lemma \ref{le:Haagerup} below). If $\cA_{1} = M_n(\dC)$, Theorem \ref{th:AB} can for example be used for $N$-lifts of a base graph on $n$ vertices and $d$ edges (see \cite{MR1883559,MR1978881,MR4024563}). In this case, the bound can be improved by taking into consideration the maximal degree, say $\Delta$ of the base graph: the term $\log d / \sqrt{\log N}$ can be replaced by  $\sqrt{  \log d \log \Delta} / \sqrt{\log N}$. A similar remark holds for Theorem \ref{th:LBP}.

For Haar unitaries, we will prove the following universal lower bound. 

\begin{theorem}\label{th:LBU1}
Let   $(a_0,\ldots,a_{2d})$ in $\cA_1$ and $A_\star$, $A_N$ be as in \eqref{eq:defA*}-\eqref{eq:defA} with  $(U_1,\ldots, U_d)$ are independent Haar-distributed elements on $\dU_N$. There exists a numerical constant $c >0$ such that, 
$$
\dE \| A_N \| \geq \|A_\star\|  \PAR{ 1 -  \frac{c  \log (2d)}{\log N}}.
$$
\end{theorem}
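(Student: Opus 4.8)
The plan is to obtain the lower bound $\dE\|A_N\| \geq \|A_\star\|(1-c\log(2d)/\log N)$ by exhibiting, with high probability over the Haar unitaries, an approximate version inside $M_N(\dC)$ of the local behaviour of the free generators $\lambda(g_i)$, and then invoking exactness of $\cA_\star$ to transfer a test vector. Concretely, I would first recall that since the reduced $C^*$-algebra $\cA_\star$ of $\dF_d$ is exact, for any $\veps>0$ and any finite-dimensional approximation order there is a unital completely positive (or approximately multiplicative) map realizing the generators up to a controlled error on words of bounded length; the key quantitative input is that a word of length $\ell$ in the free generators has operator norm behaviour governed by Haagerup's inequality, so the relevant length scale is $\ell \asymp \log N / \log(2d)$. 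Fix an integer $p$ of this order. The strategy is: (a) show that with high probability the tuple $(U_1,\dots,U_d)$ "looks free up to depth $p$" — i.e. there is a unit vector $\xi \in \dC^N$ (equivalently a point $x \in \INT N$ whose depth-$p$ neighbourhood in the associated Cayley-type Schreier graph is a tree) witnessing no relations among the $U_i$'s on words of length $\le p$; (b) use this to build an isometric embedding of $\ell^2(B_p)$, the ball of radius $p$ in $\dF_d$, into $\dC^n \otimes \dC^N$ intertwining $a_i \otimes \lambda(g_i)$ and $a_i \otimes U_i$ exactly on the part of $B_p$ mapped into the interior; (c) take a near-optimal unit vector $v$ for $A_\star$ — by Gelfand/the spectral theorem we may take $v$ supported (up to error $\veps$) on words of length $\le p$ after smoothing, using that $\|A_\star^k\|^{1/k} \to \|A_\star\|$ and Haagerup's inequality to control the tail — push it through the embedding, and read off $\|A_N v'\| \geq (1-\eta)\|A_\star\|$ where $\eta = O(\log(2d)/\log N)$ from the truncation cost.

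For the upper cut-off on the length scale, I would use Haagerup's inequality (Lemma \ref{le:Haagerup}, referenced in the excerpt) in the form: for $\xi \in \ell^2(\dF_d)$ supported on words of length exactly $k$, $\|\lambda(\xi)\| \le (k+1)\|\xi\|_2$, and more to the point that a degree-$k$ noncommutative polynomial $P$ in the $\lambda(g_i)$ with scalar coefficients satisfies a comparison between $\|P\|$ and $\|P\|_2$ with a polynomial-in-$k$ loss. This lets me replace $A_\star$ by a truncated resolvent-type expression, or directly argue that if $v$ is a unit vector with $\|A_\star v\| \ge (1-\veps)\|A_\star\|$, then its projection onto $\ell^2(B_p)$ still nearly achieves the norm with an error that decays like the geometric tail $(\|A_\star\|/(\text{something}))^p$ boosted by the Haagerup factor — balancing these gives $p \asymp \log N$ and loss $O(\log(2d)/\log N)$. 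The matrix coefficients $a_i \in \cA_1$ cause no essential trouble here because the embedding in step (b) is of the form $\mathrm{id}_{\cA_1} \otimes (\text{local tree embedding})$, so it commutes with everything acting on $\cA_1$ and the estimate is genuinely uniform in $\cA_1$ — this uniformity, coming from exactness rather than any concentration phenomenon, is exactly the phenomenon the authors emphasize.

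The probabilistic input in step (a) — that for Haar unitaries on $\dU_N$ the depth-$p$ neighbourhood of a typical basis vector, in the appropriate sense, contains no nontrivial relation among words of length $\le p$ — I would get from a standard moment or union-bound computation: the probability that a specific reduced word $w$ of length $\le 2p$ in $U_1,\dots,U_d$ has $\langle e_1, U_w e_1\rangle \neq 0$ in a way that reflects a collision is $O(\mathrm{poly}(p)/N)$ by the first-order asymptotics of Weingarten calculus, and there are at most $(2d)^{2p} = N^{O(1)}$ such words, so for $p \le c\log N/\log(2d)$ with $c$ small the bad event has probability $o(1)$; combined with $\|A_N\| \le \|A_N\|_\infty$ being bounded by $\|A_\star\| + o(1)$ from Theorem \ref{th:main1} to control the contribution of the exceptional event to the expectation, we get the bound in expectation. \emph{The main obstacle} I anticipate is making step (b)–(c) fully rigorous when $\cA_1$ is an arbitrary (possibly infinite-dimensional, non-nuclear) $C^*$-algebra: one cannot naively "tensor" a near-optimal vector for $A_\star$ with a local tree structure, because $A_\star$ lives in $\cA_1 \otimes \cA_\star$ with the \emph{minimal} tensor norm and there is no finite-dimensional reduction on the $\cA_1$ side. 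The correct tool is precisely that the minimal tensor norm is computed via $\cA_1 \otimes (\text{exact algebra})$ behaving well under the local slicing — formally, one restricts $A_N$ to the invariant subspace $\cA_1 \otimes \ell^2(\text{depth-}p\text{ tree at }x) \subset \cA_1 \otimes \dC^N$ and compares the \emph{compression} of $A_N$ there with the corresponding compression of $A_\star$ in $\cA_1 \otimes \ell^2(B_p)$; since the Schreier graph is locally a tree these two compressions are literally unitarily conjugate as operators on $\cA_1 \otimes \ell^2(B_p)$, and the norm of $A_\star$ is approximated from below by the norm of its compression to $\cA_1 \otimes \ell^2(B_p)$ with the Haagerup-controlled error. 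Getting this compression/unitary-conjugacy statement clean — identifying exactly which rows and columns of the local tree fail to match because of boundary effects, and bounding that defect by $O(1/p)$ of the norm — is where the real work is, and it is essentially the $C^*$-algebraic incarnation of the Alon-Boppana argument, which is why the same proof yields Theorem \ref{th:AB}.
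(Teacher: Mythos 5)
Your scheme is essentially the permutation-matrix (Alon--Boppana) mechanism, and the central step does not survive the passage to $\dU_N$ as written. For Haar unitaries there is no Schreier graph and no event ``the depth-$p$ neighbourhood is a tree'': the vectors $U(w)e_x$, $w$ reduced of length at most $p$, are a.s.\ pairwise non-orthogonal, so the two compressions you want to compare are \emph{not} ``literally unitarily conjugate''. What is exact is only the intertwining $A_N(\psi\otimes U(g)e_x)=a_0\psi\otimes U(g)e_x+\sum_i a_i\psi\otimes U(g_ig)e_x$; the map $\delta_g\mapsto U(g)e_x$ is merely an approximate isometry, and to use it you would need a high-probability bound on the whole Gram matrix of the $(2d)^{O(p)}$ vectors $U(w)e_x$ (individual inner products of size $N^{-1/2+o(1)}$, summed over exponentially many words, with $p\asymp\log N/\log(2d)$ right at the critical scale). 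Your step (a) does not provide this: the event ``$\langle e_1,U_we_1\rangle\neq 0$ reflects a collision'' is the permutation notion and is vacuous for Haar measure, where all these entries are a.s.\ nonzero and the issue is their size, not their vanishing. (A minor point: for a \emph{lower} bound on $\dE\|A_N\|$ you do not need Theorem \ref{th:main1} to handle the exceptional event; you can simply discard it.)

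The second, more structural gap is your claim that the compression of $A_\star$ to $H_1\otimes\ell^2(B_p)$ has norm at least $(1-O(1/p))\|A_\star\|$ \emph{uniformly in} $\cA_1$, ``with the Haagerup-controlled error''. Haagerup's inequality with operator coefficients (Lemma \ref{le:Haagerup}, Corollary \ref{cor:HaagB}) carries a factor $\sqrt n$ (resp.\ $n$) and therefore cannot yield anything uniform over arbitrary, possibly infinite-dimensional $\cA_1$; the uniform compression statement is precisely the quantitative exactness theorem (Theorem \ref{th:exactness}), whose proof requires the completely positive Schur-multiplier/mass-transport argument, and you assert it without proof and with the wrong attribution. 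This is the heart of the matter, since the whole point of Theorem \ref{th:LBU1} is uniformity in $\cA_1$. For comparison, the paper's proof constructs no test vectors for the unitary model at all: it uses the second half of Theorem \ref{th:exactness} to replace the coefficients by $qa_iq$ with $q$ of rank at most $(2d)^{4/\veps}$, lower bounds $\dE\|A_N\|_p$ via the two-sided moment estimate of Theorem \ref{th:FKBlm} combined with measure concentration (Lemma \ref{le: GM}), converts $\|\tilde A_\star\|_p$ into $\|\tilde A_\star\|$ by Corollary \ref{cor:HaagB} --- where the Haagerup loss is now harmless because the rank is controlled --- and optimizes $\veps\asymp\log(2d)/\log N$. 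Your tree-embedding argument is, in substance, the paper's proof of Theorems \ref{th:LBP} and \ref{th:AB} (the permutation case), not a proof of Theorem \ref{th:LBU1}; it could conceivably be repaired for $\dU_N$ by an approximate-isometry analysis, but both the Gram-matrix estimate and the quantitative exactness input would have to be supplied.
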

We note that the random variable $\| A_N \|$ is tightly concentrated around its mean (see Lemma \ref{le: GM}). For the case of the orthogonal group or for an improvement of the lower bound in Theorem \ref{th:LBU1}, see Remark \ref{rk:HaarON} (if $\cA_1 = M_n(\dC)$ and $n \leq N^{\alpha}$, $0 < \alpha < 1$, it is possible to extract polynomial error bounds from this work).

As explained in \S \ref{subsec:motiv}, all these results can be extended in a quantitative to arbitrary polynomials in the unitaries $(U_1,\ldots,U_d)$. We postpone this discussion to Section \ref{sec:poly}.

\subsection*{Comparing this manuscript with \cite{MR4024563,BC2}}
This paper is a continuation of a joint effort of the authors to understand the operator norm of matrix models build with independent unitary matrices  in large dimension through non-backtracking techniques. 
While some results of this paper solve important problems in operator algebras (the Peterson Thom conjecture and Pisier's exponential estimates), as well as in graph theory  (a universal Alon Boppana bound), we stress that from the technological point of view, this paper is not an incremental refinement of  \cite{MR4024563,BC2}. Rather, many new theoretical tools are developed, mostly on the operator algebraic side, which we enumerate below:
\begin{enumerate}
\item The non-backtracking equation (see section \ref{sec:nonback}) is completely new. Unlike the previous versions, it is completely combinatorial and relies on little to no complex analysis, and therefore allows for very efficient estimates.
\item A new tensor version of a Cauchy-Schwarz inequality is needed to overcome problems intrinsic to the high non-commutativity of high dimensional coefficients. 
\item A completely new summation method is introduced to control the moments
\item A new effective version of the linearization trick is introduced. 
\item An effective operator theoretic characterization of exactness in the case of the reduced group $C*$-algebra is introduced and used in a crucial way to ensure control on the lower bound of the matrix models.  
\end{enumerate}
We expect that these new tools will again prove useful in subsequent studies of the spectrum of matrices with tensor structure 

\subsection*{Acknolowedgements}

The authors are very grateful to Narutaka Ozawa for teaching us some operator space theoretic aspects of exactness. They thank Gilles Pisier for pointing the reference \cite{lehnerthesis}. 
They also thank Michael Magee and Will Hide for constructive comments on the first version of the paper.
BC would like to thank Wangjun Yuan, Ben Hayes, F\'elix Parraud for multiple discussions on the Peterson-Thom conjecture.  

This work was completed while CB was a visiting professor at Kyoto University in the context of a one-year RIMS program, and he acknowledges the hospitality of Kyoto University.  BC was supported by JSPS KAKENHI 17K18734 and 17H04823.

\section{Overview of the proofs for the upper bounds}

\subsection{Overview of Theorem \ref{th:main1} and Theorem \ref{th:main2}}

By a standard linearization trick, it is sufficient to prove Theorem \ref{th:main1} and Theorem \ref{th:main2} under the symmetry assumption \eqref{eq:symai}. Under this assumption, $A_N$ and $A_\star$ are self-adjoint and thus for any even integer $\ell \geq 2$, 
\begin{equation*}
(\dE \| A_N \|_\ell ) ^\ell \leq  \dE  ( \| A_N \|^\ell_\ell) = \dE \tau \PAR{  A_N^\ell }.
\end{equation*}
The goal is then to show that for $\ell$ as large as possible, 
\begin{equation}\label{eq:Ostep0}
\dE \tau \PAR{  A_N^\ell } \leq  (1 + o(1))^\ell \| A_\star\|^\ell,
\end{equation}
where $o(1)$ is an explicit vanishing bound as $N \to \infty$. This is basically the expected high trace method introduced by F\H{u}redi and Koml\'os \cite{MR637828}. For lighter notation, we now remove the explicit dependency in $N$ and write $A$ instead of $A_N$.

\paragraph{Step 1: a new non-backtracking decomposition. } Non-backtracking operators have proven to be a very powerful tool in conjunction with the expected trace method; see \cite{Friedman1996,MR2377835,MR2647136,MR4116720,BLM,FrKo,MR4203039} to cite a few related references. Roughly speaking, non-backtracking operators can be thought as discrete geodesic flows and, as in Selberg's trace formulas, there are spectral correspondences between diffusive operators and non-backtracking operators. This correspondence is usually called Ihara-Bass identity. Operator-valued non-backtracking operators were introduced in \cite{MR4024563,BC2} to study operators of the form \eqref{eq:defA}. These operators have, however, some important drawbacks: they are not self-adjoint, the spectral correspondence is established only for $\cA_1 = M_n(\dC)$, and it is difficult to extract quantitative information from this correspondence.

In Section \ref{sec:nonback} of the present paper, we overcome these limitations and introduce a new family of non-backtracking operators with two integer parameters $B^{(\ell,m)}$ which are self-adjoint and such that 
\begin{equation}\label{eq:-1decompA}
A^\ell = \sum_{m=0}^\ell B^{(\ell,m)}.
\end{equation}
The parameter $m$ encodes the number of non-backtracking steps. Indeed, specified to the free group $\dF_d$ and on the operator $A_\star$ defined in \eqref{eq:defA*} this decomposition simply amounts to writing
\begin{equation}\label{eq:OdecompA}
A_\star^\ell = \sum_{g \in \dF_d} a(\ell,g) \otimes \lambda(g)  = \sum_{m=0}^{\ell} \PAR{ \sum_{g \in S_m} a(\ell,g) \otimes \lambda(g)}  = \sum_{m=0}^\ell B_\star^{(\ell,m)},
\end{equation}
where $a(\ell,g) \in \cA_1 $, $S_0 = \{\o\}$ is the unit of $\dF_d$, and for $m \geq 1$, $S_m$ is the set of elements $g \in \dF_d$ written in reduced form as $g = g_{i_m}g_{i_{m-1}} \cdots g_{i_1}$ with $i_{t+1}^* \ne i_t$ (that is the set of elements at word distance exactly $m$ from the unit). Viewing $A_\star$ as an $\cA_1$-operator-valued operator on $\ell^2(\dF_d)$, $a(\ell,g)$ is equal to the entry $(g,\o)$ of $A_\star^\ell$. 

As a byproduct, using the linearity of the trace and the expectation, we find in \eqref{eq:Ostep0}, 
\begin{equation}\label{eq:Ostep1}
\dE \tau \PAR{  A^\ell }  = \sum_{m=0}^\ell \dE  \tau \PAR{  B^{(\ell,m)} }. 
\end{equation}
We have thus reduced our task to controlling from above the trace of an operator-valued self-adjoint non-backtracking operator.

\paragraph{Step 2: expected high trace method. } 

In Section \ref{sec:FK}, after expanding the trace  $\tau = \tau_1 \otimes \tr_N$ of $B^{(\ell,m)}$  $\tau = \tau_1 \otimes \tr_N$ over $\tr_N$, and some manipulations, we encounter the expression, for $1 \leq m \leq \ell$,
$$
\dE \tau \PAR{  B^{(\ell,m)} } =   \frac 1 N  \tau_1 \PAR{ \sum_{\gamma = (g,x)} a(\ell,g) w(\gamma)  },
$$
where the sum is over $\gamma = (g,x) \in S_m \times \INT{N}^{m+1}$, $a(\ell,g) \in \cA_1$ as in \eqref{eq:OdecompA} and, if $g = (g_{i_m}g_{i_{m-1}} \cdots g_{i_1})$ and $x = (x_0,\ldots,x_m)$, $x_0 = x_m$,
$$w(\gamma ) = \dE \prod_{t=1}^{m} (U_{i_t})_{x_{t-1} , x_{t}}$$
is an expected product of entries of the random unitary matrices. An element $\gamma$ can be thought of as a closed non-backtracking path. 

We partition the elements $\gamma$ in terms of equivalence classes $\pi$ on which $w(\gamma)$ is constant.
Calling $p(\pi)$ this common value, we get the upper bound: 
$$
\dE \tau \PAR{  B^{(\ell,m)} } \leq  \frac 1 N  \sum_{\pi} |p(\pi)| \NRM{ \sum_{\gamma \in \pi} a(\ell,g)   },
$$
where $\NRM{\cdot}$ is the norm on $\cA_1$ and the first sum is over all equivalence classes $\pi$.
The probabilistic weight $w(\gamma)$ is estimated using Weingarten calculus. This work was already done in \cite{BC2}. Our central technical contribution (Lemma \ref{le:sumagamma}) is an upper bound of the form for some explicit multiplicative factor $L (\pi)$,
\begin{equation}\label{eq:Osumagamma}
\NRM{\sum_{\gamma \in \pi} a(\ell,g)} \leq L (\pi)\| A_\star \|^\ell.
\end{equation}
We have thus obtained the upper bound
$$
\dE \tau \PAR{  B^{(\ell,m)} } \leq \PAR{\frac 1 N  \sum_{\pi} |p(\pi)| L(\pi) } \| A_\star \|^\ell.
$$
We are then able to get an estimate of the form \eqref{eq:Ostep0}. This final estimate depends on the number of equivalence classes with a given upper bound on $L(\pi)$ and $p(\pi)$. In all these bounds, it is very useful to see the elements $\gamma = (g,x)$ as paths of length $m$ on the graph spanned by the vertices $(x_0,\ldots,x_m)$ and where the edges are decorated by the generators of the free group.

The proof of \eqref{eq:Osumagamma} differs very significantly from previous related lemmas established in \cite{MR4024563,BC2}. The major difference is that in these references, 
the quantity to be controlled is
$$
\sum_{\gamma \in \pi} \NRM{ a(\ell,g)}.
$$
With the norm inside the sum, this last expression can be much larger than the left-hand side of \eqref{eq:Osumagamma}. For example, if $\cA_1 = M_n(\dC)$, there could be a factor as worth as $\sqrt n$ between the two expressions. This turns out to be crucial. By keeping the norm outside the sum in \eqref{eq:Osumagamma}, we can derive a sharp quantitative upper bound valid on any $C^*$-algebra $\cA_1$.

\paragraph{Step 3: a non-commutative Cauchy-Schwarz inequality. } At the core of the proof of \eqref{eq:Osumagamma} there is a non-commutative inequality proven in Section \ref{sec:NCCS}. This inequality is a non-commutative version of the Cauchy-Schwarz inequality written as: 
\begin{equation}\label{eq:OCS}
\ABS{ \sum_{\vec j = (j_1,\ldots,j_k) \in \INT{n}^k } x_{\vec j} \prod_{i=1}^k y_{i,j_i} z_{i,j_i}   } \leq \max_{\vec j} |x_{\vec j} |  \cdot \prod_{i=1}^k \sqrt{ \sum_{j} | y_{i,j}|^2  \cdot  \sum_{j} | z_{i,j}|^2  }, 
\end{equation}
with $x_{\vec j},y_{i,j},z_{i,j}$ in $\dC$. In Section \ref{sec:NCCS}, the variables $y_{i,j}$, $z_{i,j}$ are now in an arbitrary $C^*$-algebra $\cA$, the variables $x_{\vec j}$ are replaced by a finite collection $(x_{1,\vec j},\ldots,x_{l,\vec j})$ of elements in $\cA$, we take a product of all these elements according to some ordering, and finally we replace $\ABS{\cdot}$ by a norm. Under an assumption on the variables $x_{i,\vec j}$ and their positions in the product, we prove in Theorem \ref{th:NCCS}  that the analog of \eqref{eq:OCS} holds. In Subsection \ref{subsec:NCCStheta}, we identify situations where a form of \eqref{eq:OCS} holds irrespectively of the assumption on the variables $x_{i,\vec j}$. This improvement will ultimately lead to the improvement given by Theorem \ref{th:main2} on Theorem \ref{th:main1} when the $a_i$'s have special structures.

Interestingly, this non-commutative Cauchy-Schwarz inequality implies the non-commutative Kintchine inequality due to Lust-Piquard and Pisier \cite{MR1150376} with the optimal constant found by Buchholz \cite{MR1812816}.

\subsection{Overview of Theorem \ref{th:main3}}

The proof of Theorem \ref{th:main3} follows 
the same strategy as the proof of Theorem \ref{th:main1}. There is however the well-known caveat that $\dE \| A_N \Pi_N \|_p^p$ cannot be $(1+o(1))^p$ close to $\| A_\star  \|^p$ for $p \gg \log N$ even when $\cA_1 = \dC$ (at least for non-trivial choices of the $a_i$'s).   This obstruction is due to what Friedman \cite{MR2437174} called {\em tangles}. We refer to \cite[Subsection 4.1]{MR4024563} for a more detailed discussion. To overcome this difficulty, we follow a strategy similar to the one developed among others, in
\cite{M13,BLM,MR3792625,MR4024563} and then combine it to the proof of Theorem \ref{th:main3}.

Let us describe the steps of proof more precisely. We remove the dependency in $N$ and set $\Pi= \Pi_N$. 

\paragraph{Step 1: Powers of non-backtracking decompositions. } We may assume that the symmetry condition \eqref{eq:symai} holds.  We  compute an estimate on $\|A \Pi \|_p$ by composing $p/\ell$ steps of $\ell$:  using \eqref{eq:-1decompA} and $\Pi A \Pi = A\Pi = \Pi A$, we get for integers $\ell,q \geq 1$, $q$ even, 
$$
\| A \Pi\|_{\ell q} ^\ell = \| A^\ell \Pi\|_{ q} ^\ell = \NRM{ \sum_{m=0}^{\ell}   B^{(\ell,m)} \Pi }_q \leq \sum_{m=0}^{\ell}  \| B^{(\ell,m)} \Pi \|_q  .
$$
The parameter $\ell$ will be of order $\log_{2d -1} N$ and $q$  as large as $\log N / (\log d \log \log N)$. The goal is then to upper-bound individually each $\| B^{(\ell,m)} \Pi \|_q$. 
In contrast to the proof of Theorem \ref{th:main1}, we consider the powers of the non-backtracking operators here. 

\paragraph{Step 2: remove the tangles. } In Subsection \ref{subsec:PDP}, the next step is to identify an operator $\widetilde B^{(\ell,m)}$ which coincides with $B^{(\ell,m)}$ on an event $E$ with high probability and such that $\widetilde B^{(\ell,m)}$ when written as a sum over non-backtracking paths of length $m$ cancels on paths which visits more than one cycle (tangle-free paths). To enjoy such property $m \leq \ell$ cannot  be of size larger than $O (\log_{2d -1} (N) ) $. 

\paragraph{Step 3: orthogonal projection. } We then compute an expression for $\widetilde B^{(\ell,m)} \Pi$ in terms of the centered matrices 
$$\underline U_i = U_i - \dE U_i = U_i - \IND_N \otimes \IND_N.$$ 
This is straightforward for $ B^{(\ell,m)} \Pi$, but since we have removed the tangled paths in $\widetilde B^{(\ell,m)}$, this is now more delicate to do it efficiently. Nevertheless, this step proves that $\widetilde B^{(\ell,m)} \Pi$ is close to the operator that we would have obtained from $ B^{(\ell,m)} \Pi$ after removing the tangled paths (even though the formal operations ``removing the tangles" and ``projecting by $\Pi$" do not commute). This is achieved with Lemma \ref{le:decomp} in Subsection \ref{subsec:PDP}. A similar step was already done in \cite{M13,BLM,MR3792625,MR4024563}. 

\paragraph{Step 4: expected high trace methods. } Up to an extra control of some remainder terms, we are now
in a position to apply the strategy of proof developed for Theorem \ref{th:main1} and compute an upper bound for $\dE [ \| \widetilde B^{(\ell,m)} \Pi \|_q ^q]$. The necessary estimates on random permutations were already available from \cite{MR3792625,MR4024563}.

\section{Non-backtracking identities}
\label{sec:nonback}

In this section, we consider the general setting for operators of the form \eqref{eq:defA*}-\eqref{eq:defA}. Let $\cA_1$ and $\cA_2$ be two unital $C^*$-algebras of bounded operators on the Hilbert spaces $H_1$ and $H_2$. We define the operator in the minimal tensor product $\cA_1 \otimes \cA_2$,
\begin{equation}\label{eq:defAH}
A = a_0 \otimes 1_{H_2}  + \sum_{i=1}^{2d} a_i \otimes u_i,
\end{equation}
with $u_i \in \cA_2$ unitaries such that for all $i \in \INT{2d}$, $u_{i^*} = u_i^*$ and $a_i \in \cA_1$. Under the condition  \eqref{eq:symai}, the operator $A$ is self-adjoint.

We  prove a general identity that can be rephrased, in some cases, in an analytic identity (see Theorem  \ref{th:resIB} ).

We start by constructing an homomorphism $u$ between two $*$-algebras.
Consider the algebraic products $\cM = \cA_1 \odot \cA_2 \odot M_{2d} (\dC)$ and $\cM_\star = \cA_1 \odot \cA_\star \odot M_{2d} (\dC)$. 
To clarify, the algebraic tensor products are a $*$-subalgebra of the respective min
tensor products $\cA_1 \otimes \cA_2 \otimes M_{2d} (\dC)$ and $\cA_1 \otimes \cA_\star \otimes M_{2d} (\dC)$, consisting of finite rank tensors. They are dense. 
Identifying $H_1 \otimes H_2$ as a subspace of $H_1 \otimes H_2 \otimes \dC^{2d}$, we have $A , B, P \in \cM$. 
The unitaries $(u_1, \ldots, u_d)$ define uniquely a group  homomorphism $u $ from $\dF_d$ onto the unitaries of $\cA_2$, by setting for $u(\o) = 1_{H_2}$ and for $g \in \dF_d$, $g \ne \o$, 
\begin{equation}\label{eq-def-u}
u(g)= u_{i_k} \cdots u_{i_1},  
\end{equation}
if $g = g_{i_k} \cdots g_{i_1}$ in reduced form. The homomorphism $u$ extends to $\cM_\star \to \cM$ by setting  $u (a \otimes \lambda (g) \otimes b ) = a \otimes u(g) \otimes b$. We have by construction: 
$$
u(A_\star) = A,  \quad u (B_\star(z)) = B(z) \quad \hbox{and} \quad u(P_\star) = P ,
$$
where $B_\star(z), P_\star \in \cM_\star$ are the operators $B(z),P$ when $H = \ell^{2} (\dF_d)$ and $u_i = \lambda(g_i)$. 

For integers $0 \leq m \leq k$, we consider the operator in $\cA_1 \otimes \cA_2$,
\begin{equation}\label{eq:defBkmbis}
B^{(k,m)} = \sum_{g \in S_m} (A^k_\star)_{g\o} \otimes u(g),
\end{equation}
where $S_m \subset \dF_d$ is the subset of elements written in reduced form as a word of length $m$ (for $m=0$, $S_0 = \{\o\}$) and $u(g)$ is the homomorphism defined in Equation \eqref{eq-def-u}
We have the decomposition:

\begin{lemma}\label{th:powerIB}
For all integers $k \geq 0$, we have 
$$
A^k =  \sum_{m=0}^k B^{(k,m)}.
$$
\end{lemma}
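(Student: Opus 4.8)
The plan is to prove the identity $A^k = \sum_{m=0}^k B^{(k,m)}$ by reducing it to a purely combinatorial statement in the free group algebra $\cA_\star$, and then transporting it through the $*$-homomorphism $u$. First I would observe that it suffices to establish the corresponding identity in $\cM_\star$, namely
\begin{equation*}
A_\star^k = \sum_{m=0}^k B_\star^{(k,m)},
\end{equation*}
where $B_\star^{(k,m)} = \sum_{g \in S_m} (A_\star^k)_{g\o} \otimes \lambda(g)$. Indeed, once this is known, applying the homomorphism $u \colon \cM_\star \to \cM$ and using $u(A_\star) = A$ together with $u(\lambda(g)) = u(g)$ and linearity gives $A^k = u(A_\star^k) = \sum_m u(B_\star^{(k,m)}) = \sum_m B^{(k,m)}$, which is the claim. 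So the entire content is the free-group identity.

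For the free-group identity, the key point is that $A_\star$ acts on $\ell^2(\dF_d) \otimes H_1$ as an $\cA_1$-valued operator whose matrix entries $(A_\star)_{gh}$ (indexed by $g,h \in \dF_d$) depend only on $g h^{-1}$: explicitly $(A_\star)_{gh} = a_0$ if $g = h$, $(A_\star)_{gh} = a_i$ if $g = g_i h$ for some $i \in \INT{2d}$, and $0$ otherwise. Consequently the entries of $A_\star^k$ are also left-invariant, $(A_\star^k)_{gh} = (A_\star^k)_{gh^{-1},\o}$, so the operator $A_\star^k$ is completely determined by the first column $\{(A_\star^k)_{g\o}\}_{g \in \dF_d}$, and in fact $A_\star^k = \sum_{g \in \dF_d} (A_\star^k)_{g\o} \otimes \lambda(g)$ as an element of $\cA_1 \otimes \cA_\star$. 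Now I would argue that $(A_\star^k)_{g\o} = 0$ whenever the word length $|g| > k$: expanding $(A_\star^k)_{g\o} = \sum (A_\star)_{g,h_{k-1}}(A_\star)_{h_{k-1},h_{k-2}}\cdots(A_\star)_{h_1,\o}$, each nonzero summand corresponds to a walk $\o = h_0, h_1, \ldots, h_{k-1}, h_k = g$ in the Cayley graph of $\dF_d$ where consecutive vertices are adjacent or equal, and such a walk cannot reach a vertex at distance greater than $k$. Hence $A_\star^k = \sum_{m=0}^k \sum_{g \in S_m} (A_\star^k)_{g\o} \otimes \lambda(g) = \sum_{m=0}^k B_\star^{(k,m)}$, since $S_m$ is exactly the set of $g$ with $|g| = m$ and $\dF_d = \bigsqcup_{m \geq 0} S_m$.

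The one technical point that needs care — and which I expect to be the main (if modest) obstacle — is justifying the manipulation of the "matrix entries" of $A_\star$ and $A_\star^k$ rigorously: $A_\star \in \cA_1 \otimes \cA_\star$ is a bounded operator, not literally an infinite matrix, so I would phrase the entries via $(T)_{gh} := (\mathrm{id}_{\cA_1} \otimes \langle \delta_g, \,\cdot\, \delta_h\rangle)(T)$ and check that these are compatible with composition (a convergent-in-norm expansion, or better, just verify the identity on the dense subalgebra $\cM_\star$ of finite-rank tensors, where everything is a genuine finite sum and no convergence issue arises, then extend by density/continuity — this is consistent with the excerpt's remark that $\cM_\star$ is dense). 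Once that bookkeeping is set up, the left-invariance of the entries and the walk-length bound are immediate, and the transfer via $u$ is formal. Given that the excerpt has already introduced $u$, $\cM$, $\cM_\star$, and recorded $u(A_\star) = A$, the cleanest writeup is: state the free-group identity, prove it by the walk argument on $\cM_\star$, then apply $u$.
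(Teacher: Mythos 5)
Your proposal is correct and follows essentially the same route as the paper: the paper also first records the decomposition $A_\star^k = \sum_{m=0}^k B_\star^{(k,m)}$ in $\cA_1 \otimes \cA_\star$ (which it treats as immediate "by construction," since $A_\star^k$ is a finite sum of terms $a(w)\otimes\lambda([w])$ with $[w]$ of reduced length at most $k$) and then applies the homomorphism $u$ to transfer it to $\cA_1\otimes\cA_2$. Your writeup merely spells out the left-invariance of the entries and the walk-length bound that the paper leaves implicit, so no gap.
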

\begin{proof}
By construction, 
$$
A_\star ^k = \sum_{g \in \dF_d} (A^k_\star)_{g\o} \otimes \lambda(g) = \sum_{m=0}^k \sum_{g \in S_m} (A^k_\star)_{g\o} \otimes \lambda(g) = \sum_{m=0}^k B^{(k,m)}_\star, 
$$
where $B^{(k,m)}_\star$ is defined by \eqref{eq:defBkmbis} with $u(g) = \lambda(g)$ (that is $\cA_2 = \cA_\star$).
Next, using the homomorphism $u$ from $\dF_d$ onto the unitaries of $\cA_2$,  defined in Equation \eqref{eq-def-u}
extended to $u : \cA_1 \odot \cA_\star \to \cA_1 \odot \cA_2$, we have $ A^k  = u (A^k_\star))$ and $B^{(k,m)} = u (B^{(k,m)}_\star)$. The conclusion follows. 
\end{proof}

We now introduce a companion operator in $\cA_1 \otimes \cA_\star$ which will play a significant role in the sequel. For integer $k\geq 1$, the set $W_k$ of words of length $k$ is the set of $w = (w_1,\ldots,w_k) \in \{0,\ldots, 2d\}^k$. Set $g_0 = \o$. If $w \in W_k$ and $t \in \INT{k}$, we set $(w)_t = (w_1,\ldots,w_t)$. We also define $a(w) = a_{w_k} \cdots a_{w_1}$, $[w] = g_{w_k} \cdots g_{w_1}$ and $[w]_t = [(w)_t]$. With these notation, we may write
\begin{equation}\label{eq:Akgo0}
A_\star^{k} = \sum_{w \in W_k} a(w) \otimes \lambda([w]).
\end{equation}
For $i \in \INT{2d}$, we introduce the operator in $\cA_1 \otimes \cA_{\star} \subset \cB (H_1 \otimes \ell^2(\dF_d))$ defined by 
\begin{equation}\label{eq:defCki}
C_\star^{(k,i)} =  \sum_{w \in W_k}  \IND (  \hbox{for all $t\in \INT{k}$, $[w]_t \ne \o$} , w_1 = i) \,  a(w) \otimes  \lambda ( [w]).
\end{equation}
In other words, for all $g,h \in \dF_d$, 
$$
(C_\star^{(k,i)})_{gh} = \sum_{w \in W_k} \IND ( \hbox{for all $t\in \INT{k}$, $[w]_t \ne \o$}, w_1 = i)  \IND ( g = [w] h)\, a (w).
$$ 

We also define $A_\star^{\o}$ as the restriction of the operator $A_\star$ to $H_1 \otimes \ell^2 (V)$ where $V =  \dF_d\backslash \{\o\}$. We shall need the following decomposition lemma. Let
$V_i \subset \dF_d$  be the set of $g \in \dF_d$, $g \ne \o$ which are written in reduced form as $g = g_{i_m}\cdots g_{i_1}$ with $i_1 = i$.  

\begin{lemma}\label{le:pathdecomFd}
Let $m \geq 1$ and $g \in S_m \subset \dF_d$ which is written as a reduced word as $g = h_r  \cdots h_1 h_0$ with  $h_j \in S_{m_j}\cap V_{i_j}$, $m_j \geq 1$ and $m = \sum_j m_j$. Then, for any integer $k \geq 0$,
$$
(A_\star^k)_{g \o} = \sum_{ (k_0,\cdots ,k_r)}  (C_\star^{(k_r,i_r)})_{h_r \o } \cdots (C^{(k_1,i_1)}_\star)_{h_1 \o} (A_\star)^{k_0}_{h_0 \o},
$$
where the sum is over all $(k_0,\ldots,k_r)$ with $\sum k_j  = k$, $k_j \geq 1$ for $j \geq 1$. Moreover, if $g \in V_i \subset \dF_d$, $i \in \INT{2d}$, we have for any $ k \geq 1$,
\begin{equation*}
(C_\star^{(k,i)})_{g \o}   = ((A^{\o}_\star)^{k-1} )_{g g_i} a_i . 
\end{equation*}
\end{lemma}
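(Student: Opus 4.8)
\textbf{Proof plan for Lemma \ref{le:pathdecomFd}.}

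The plan is to read both formulas off the combinatorial expansion \eqref{eq:Akgo0}, namely $A_\star^k = \sum_{w\in W_k} a(w)\otimes\lambda([w])$, together with the simple geometric fact that a reduced word and any of its prefixes determine a unique walk in the Cayley graph of $\dF_d$. First I would unpack the matrix entry: since $\lambda([w])$ sends $\delta_\o$ to $\delta_{[w]}$, we have $(A_\star^k)_{g\o} = \sum_{w\in W_k,\ [w]=g} a(w)$. So the whole task is to organize the words $w\in W_k$ with $[w]=g$ according to where their partial products $[w]_t$ first, second, \dots, $r$-th return to (pass through) the ``milestones'' $\o$, $h_0$, $h_1h_0$, \dots determined by the prescribed factorization $g = h_r\cdots h_1 h_0$.

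For the second (auxiliary) identity I would argue directly. Fix $i\in\INT{2d}$ and $g\in V_i$, so $g$ has reduced form ending in $g_i$; write $g = g' g_i$ where $g' = g g_i^{-1}$ is reduced and does not end in $g_{i^*}$ (i.e. $g'$ avoids $g_i^{-1}$ as its last letter, equivalently $g'\ne \o$ when $m\ge 2$, and $g' = \o$ when $m=1$). By definition \eqref{eq:defCki}, $(C_\star^{(k,i)})_{g\o}$ sums $a(w)$ over words $w\in W_k$ with $w_1 = i$, $[w] = g$, and all proper prefixes $[w]_t\ne\o$ for $t\in\INT{k}$. Since $w_1 = i$, we have $a(w) = a_{w_k}\cdots a_{w_2} a_i$ and $[w] = [w_k\cdots w_2]\, g_i$, so setting $w' = (w_2,\dots,w_k)\in W_{k-1}$ we get $[w'] = g g_i^{-1} = g'$ and $a(w) = a(w')\, a_i$. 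The condition ``$[w]_t\ne\o$ for all $t\in\INT{k}$'' becomes ``$[w']_s g_i \ne \o$ for all $s$'', i.e. $[w']_s \ne g_i^{-1}$ for $s = 0,\dots,k-1$; since $g_i^{-1}\ne\o$ this is exactly the statement that the walk $s\mapsto [w']_s$ never hits the vertex $g_i^{-1}$, equivalently it stays in $V = \dF_d\setminus\{\o\}$ after translating by $g_i$ — which is precisely the matrix entry of the restricted operator: $((A_\star^\o)^{k-1})_{g' g_i,\,g_i}$ expressed in the $\o$-based coordinates is $((A_\star^\o)^{k-1})_{g g_i,\, ??}$. I would be careful here: $A_\star^\o$ is $A_\star$ restricted to $H_1\otimes\ell^2(\dF_d\setminus\{\o\})$, so $((A_\star^\o)^{k-1})_{gg_i}$ should be read as the entry from column $gg_i$, and the bookkeeping above shows it equals $\sum_{w'} a(w')$ over the admissible $w'$; combined with the trailing $a_i$ this gives $(C_\star^{(k,i)})_{g\o} = ((A_\star^\o)^{k-1})_{gg_i}\, a_i$, as claimed. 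The case $k=1$ is the base case: $C_\star^{(1,i)} = \sum_{w_1} \IND([w]_1\ne\o, w_1=i) a_{w_1}\otimes\lambda(g_{w_1}) = a_i\otimes\lambda(g_i)$ and $(A_\star^\o)^0 = 1$, consistent.

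For the main identity, I would induct on $r$. The key point is a canonical decomposition of walks: given the reduced word $g = h_r\cdots h_1 h_0$ with $h_j\in S_{m_j}\cap V_{i_j}$, any word $w\in W_k$ with $[w] = g$ has the property that its trajectory $t\mapsto [w]_t$ (starting at $\o$, ending at $g$) must pass through each of the geodesic milestones $\o = y_0,\ y_1 = h_0,\ y_2 = h_1 h_0,\ \dots,\ y_{r+1} = g$ because these are the only vertices on the geodesic from $\o$ to $g$ through which every path from $\o$ to $g$ factors, more precisely: letting $\tau_j$ be the last time the walk is at $y_j$, one has $0 = \tau_0 < \tau_1 < \dots < \tau_{r} < \tau_{r+1} = k$, and between $\tau_{j}$ and $\tau_{j+1}$ the walk goes from $y_j$ to $y_{j+1}$ without returning to $y_j$; after left-translating by $y_j^{-1}$ this segment is a walk from $\o$ to $h_{j}$ (for $j\ge 1$) whose first step is $g_{i_j}$ (since $h_j\in V_{i_j}$) and which never revisits $\o$ — exactly the combinatorial content of $(C_\star^{(k_j,i_j)})_{h_j\o}$ with $k_j = \tau_{j+1}-\tau_j$ — while the initial segment from $\o$ to $h_0$ (for $j=0$) carries no ``avoid $\o$'' constraint and contributes $(A_\star)^{k_0}_{h_0\o}$. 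Multiplying $a(w)$ splits as the ordered product $a(\text{segment } r)\cdots a(\text{segment }1)\, a(\text{segment }0)$ in the correct (reversed) order because $a(w) = a_{w_k}\cdots a_{w_1}$, matching the operator ordering $(C_\star^{(k_r,i_r)})_{h_r\o}\cdots (C_\star^{(k_1,i_1)})_{h_1\o}(A_\star)^{k_0}_{h_0\o}$. Summing over the allowed compositions $(k_0,\dots,k_r)$ of $k$ with $k_j\ge1$ for $j\ge1$ (and $k_0\ge0$, noting $k_0=0$ forces $h_0=\o$, i.e. $i_0$ absent) yields the claim.

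I expect the main obstacle to be the bookkeeping in the last paragraph: verifying rigorously that ``last visit to milestone $y_j$'' gives a \emph{bijection} between words $w\in W_k$ with $[w]=g$ and tuples (composition $(k_0,\dots,k_r)$, admissible sub-words for each segment) — in particular that the ``avoid $\o$ after the first step'' conditions defining $C_\star^{(k_j,i_j)}$ are \emph{equivalent} to ``the sub-walk does not return to the milestone $y_j$'', and that the concatenation of admissible segments is automatically a word with $[w]=g$ and partial products landing on the stated milestones at exactly the cut points. This is where one must use that $g = h_r\cdots h_1 h_0$ is a \emph{reduced} factorization with each $h_j$ starting (in reduced form) with $g_{i_j}$, so that the geodesic from $\o$ to $g$ genuinely passes through the milestones in order and the decomposition is unambiguous. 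Once this bijection is pinned down, both the product of $a$-coefficients and the length constraint split multiplicatively and the identity follows; the inductive formulation (peel off the last milestone $y_{r}$, apply the $r=1$ case, recurse on $h_{r-1}\cdots h_0$) keeps this manageable.
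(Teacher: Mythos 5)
Your proposal is correct and takes essentially the same approach as the paper: the paper likewise proves the second identity by stripping the first letter $g_i$ and recognizing the remaining sum as the walk count giving the $(g,g_i)$ entry of $(A_\star^{\o})^{k-1}$, and obtains the first identity by decomposing each length-$k$ walk from $\o$ to $g$ at its last passage times through the milestones $h_0,\,h_1h_0,\ldots$, exactly your bijection. One clarification on the point where you hesitated: the subscript in $((A_\star^{\o})^{k-1})_{g g_i}$ denotes the pair of indices $(g,g_i)$ (row $g$, column $g_i$), not the single group element $gg_i$, and this is precisely the quantity your translated walk count produces, so your argument closes as intended.
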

\begin{proof}
We start with the second statement. Since $A_\star^{\o}$ is the restriction of the operator $A_\star$ to $H_1 \otimes \ell^2 (V)$ where $V =  \dF_d\backslash \{\o\}$, we have, for $g \in V_i$ and integer $k \geq 0$,
\begin{equation*}\label{eq:Bkmgo}
((A^{\o}_\star)^{k} )_{g g_i} = \sum_{w \in W_k^i(g)} a(w)
\end{equation*}
where the sum is over all sequences $w \in W_k$  such that $g_i, [w]_1 g_i, \ldots  , [w]_k g_i$ is a walk in $V_i$ from $g_i$ to $g$.  On the other hand, for $k \geq 1$, a walk $[w]_0,\ldots, [w]_k$ with $w = (w_1,\ldots,w_k) \in W_k$ from $\o$ to $g \in V_i$ such that $[w]_t \ne \o$ for $t \geq 1$ is characterized by $w_1 = i$ and $w = (w_2,\ldots,w_k) \in W_{k-1}^i (g)$.  The identity $(C_\star^{(k,i)})_{g \o}   = ((A^{\o}_\star)^{k-1} )_{g g_i} a_i $ follows. 

Similarly, the first identity is proved by decomposing a path of length $k$ from $\o$ to $g$ along its last passage times at $h_0$, $h_1 h_0$, \ldots \,. Assuming $h_t = V_{i_t}$ for $t \in\INT{r}$, such decomposition is made of a path of length $k_0$ from $\o$ to $h_0$, followed $g_{i_1}$ and a path in $V_{i_1}$ of length $k_1-1$ from $g_{i_1}$ to $h_1g_{i_1}$ and so on. 
\end{proof}

We conclude this section with a bound on the norm of $C_\star^{(k,i)}$ in terms of the norm $ A_\star$ in the self-adjoint case.

\begin{lemma}\label{le:normCk}
Assume that the symmetry condition \eqref{eq:symai} holds. Then, for any integer $k \geq 1$, $i \in \INT{2d}$ and $g \in \dF_d$, we have
$$
\| (C_\star ^{(k,i)})_{g \o} \| \leq \|  A_\star\|^{k}\AND  \| ((C_\star ^{(k,i)})^* C_\star^{(k,i)})_{\o \o} \|  \leq \|A_\star\|^{2k}.
$$
Moreover,
$$
\| C_\star ^{(k,i)} \| \leq k \|  A_\star\|^{k}.
$$
\end{lemma}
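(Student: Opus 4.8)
The plan is to leverage the decomposition in Lemma \ref{le:pathdecomFd} together with the self-adjointness of $A_\star$. First I would establish the pointwise bound $\|(C_\star^{(k,i)})_{g\o}\| \leq \|A_\star\|^k$. By the second identity of Lemma \ref{le:pathdecomFd}, for $g \in V_i$ we have $(C_\star^{(k,i)})_{g\o} = ((A_\star^\o)^{k-1})_{gg_i}\, a_i$, which is a single matrix entry of $(A_\star^\o)^{k-1}$ post-multiplied by $a_i = (A_\star)_{g_i \o}$. Since $A_\star^\o$ is a restriction (compression) of $A_\star$ to an invariant-type subspace, $\|A_\star^\o\| \leq \|A_\star\|$; hence any entry of $(A_\star^\o)^{k-1}$ has norm at most $\|A_\star^\o\|^{k-1} \leq \|A_\star\|^{k-1}$, and any entry $a_i$ of $A_\star$ has norm at most $\|A_\star\|$. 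Composing gives $\|(C_\star^{(k,i)})_{g\o}\| \leq \|A_\star\|^k$. For $g \notin V_i$ the entry is zero, so the bound is trivial.

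Next, for the bound on $\|((C_\star^{(k,i)})^* C_\star^{(k,i)})_{\o\o}\|$: expanding the matrix product, $((C_\star^{(k,i)})^* C_\star^{(k,i)})_{\o\o} = \sum_{g} (C_\star^{(k,i)})_{g\o}^* (C_\star^{(k,i)})_{g\o}$, a sum of positive elements. The key observation is that this is exactly the $(\o,\o)$-entry of $(C_\star^{(k,i)})^* C_\star^{(k,i)}$, which is a compression of the positive operator $(C_\star^{(k,i)})^* C_\star^{(k,i)}$ to $H_1 \otimes \dC\delta_\o$; hence its norm is at most $\|C_\star^{(k,i)}\|^2$. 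But I want the cleaner bound $\|A_\star\|^{2k}$, so instead I would argue directly using Lemma \ref{le:pathdecomFd}: writing $(C_\star^{(k,i)})_{g\o} = ((A_\star^\o)^{k-1})_{gg_i} a_i$, we get $\sum_g (C_\star^{(k,i)})_{g\o}^* (C_\star^{(k,i)})_{g\o} = a_i^* \big(\sum_g ((A_\star^\o)^{k-1})_{gg_i}^* ((A_\star^\o)^{k-1})_{gg_i}\big) a_i = a_i^* \big((((A_\star^\o)^{k-1})^* (A_\star^\o)^{k-1})_{g_i g_i}\big) a_i$. The inner factor is the $(g_i,g_i)$-compression of $((A_\star^\o)^{k-1})^*(A_\star^\o)^{k-1}$, so has norm at most $\|A_\star^\o\|^{2(k-1)} \leq \|A_\star\|^{2(k-1)}$; conjugating by $a_i$ (of norm $\leq \|A_\star\|$) on each side yields the bound $\|A_\star\|^{2k}$.

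Finally, for $\|C_\star^{(k,i)}\| \leq k\|A_\star\|^k$: here I would use that $C_\star^{(k,i)}$ decomposes as a sum over the "first-step index and return-time" structure. Concretely, $C_\star^{(k,i)} = \sum_{j=1}^{k} C_\star^{(k,i,j)}$ where $C_\star^{(k,i,j)}$ collects the walks whose first return to $\o$'s "length stratum" occurs at step $j$ — more precisely, from the path interpretation, $C_\star^{(k,i)}$ restricted to paths that touch each sphere $S_1$ exactly... Actually the cleanest route: observe that $C_\star^{(k,i)}$, viewed via \eqref{eq:defCki}, is a sum of $\lambda([w])$-homogeneous pieces, and one can bound $\|C_\star^{(k,i)}\|$ by splitting according to the word length $m = |[w]| \in \{1,\dots,k\}$ (each stratum $g \in S_m$), giving $C_\star^{(k,i)} = \sum_{m=1}^k D^{(m)}$ where $D^{(m)} = \sum_{g \in S_m \cap V_i}(C_\star^{(k,i)})_{g\o}\otimes \lambda(g)$; each $D^{(m)}$ is a "constant length" operator-valued convolution operator and hence satisfies $\|D^{(m)}\| \leq \|A_\star\|^k$ (this is where I would invoke the pointwise bound together with the fact that for fixed word length the associated operator on $\ell^2(\dF_d)$ has norm controlled by the max of the block norms — essentially a Haagerup-type estimate, or more elementarily the identity $D^{(m)} = B_\star^{(k,m)}$-like structure restricted to $V_i$). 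Summing over $m=1,\dots,k$ gives the factor $k$.

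The main obstacle I anticipate is the last bound $\|C_\star^{(k,i)}\| \leq k\|A_\star\|^k$: unlike the pointwise entry bounds, this requires controlling the full operator norm on $\ell^2(\dF_d)$, and the naive bound (sum of $2d$ pieces over first generator, each of norm $\|A_\star\|^k$, times something) would give a factor growing in $d$ or exponentially. The trick must be to stratify by the geodesic length $m \le k$ of the group element (only $k$ values, independent of $d$) and to note that each length-$m$ stratum, being supported on disjoint "distance spheres," assembles into an operator whose norm is bounded by the supremum of the block entry norms rather than their sum — this orthogonality across spheres is precisely what keeps the constant linear in $k$ rather than exponential. Making this orthogonality argument precise (likely by a direct $\ell^2$ computation: for $\xi \in H_1\otimes\ell^2(\dF_d)$, $\|D^{(m)}\xi\|^2$ expands over pairs $g,g' \in S_m$, and the constraint $[w]h = g$, $[w']h=g'$ forces a matching that, combined with the entry bound, telescopes back to $\|A_\star\|^{2k}\|\xi\|^2$) is the step that needs care.
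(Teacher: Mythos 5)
Your arguments for the first two inequalities are correct and essentially identical to the paper's: both use the identity $(C_\star^{(k,i)})_{g\o} = ((A_\star^{\o})^{k-1})_{g g_i}a_i$ from Lemma \ref{le:pathdecomFd}, the contraction $\|A_\star^{\o}\|\le\|A_\star\|$, and, for the second bound, the resummation $\sum_{g\in V_i} a_i^*((A_\star^{\o})^{k-1})^*_{gg_i}((A_\star^{\o})^{k-1})_{gg_i}a_i = a_i^*\bigl((((A_\star^{\o})^{k-1})^*(A_\star^{\o})^{k-1})_{g_ig_i}\bigr)a_i$.

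The third bound, however, has a genuine gap. Your stratification $C_\star^{(k,i)}=\sum_{m=1}^k D^{(m)}$ with $D^{(m)}=\sum_{g\in S_m\cap V_i}(C_\star^{(k,i)})_{g\o}\otimes\lambda(g)$ is fine, but the key claim you lean on — that an operator supported on a single word-length stratum has norm controlled by the \emph{supremum} of its block norms, thanks to ``orthogonality across spheres'' — is false. Take all blocks equal to $1$: then $\|\sum_{g\in S_m}\lambda(g)\|$ is of order $(m+1)(2d-1)^{m/2}$, not $1$; the images of distinct basis vectors interfere and no such sup bound holds. The correct elementary statement is Haagerup's inequality (Lemma \ref{le:Haagerup}), which gives $\|D^{(m)}\|\le (m+1)\bigl(\sum_{g\in S_m}\|(C_\star^{(k,i)})_{g\o}\|^2\bigr)^{1/2}$; combining this with your second inequality (which bounds the $\ell^2$-sum of all blocks by $\|A_\star\|^{2k}$) and summing over $m=1,\dots,k$ only yields a bound of order $k^2\|A_\star\|^k$, strictly weaker than the claimed $k\|A_\star\|^k$ — and indeed the paper explicitly remarks that its bound \emph{improves} on Haagerup's inequality, so no Haagerup-type stratification can give the stated constant without extra input.

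The paper's route is different and avoids this entirely: it establishes the exact operator identity
$$
A_\star C_\star^{(k,i)} = C_\star^{(k+1,i)} + a_i^*\,(C_\star^{(k,i)})_{g_i\o}\otimes 1_{\dF_d},
$$
coming from the observation that appending a letter to a walk that never returns to $\o$ either extends it (still avoiding $\o$) or closes it at $\o$, which forces the appended letter to be $i^*$ and the word to reduce to the identity. With $\|C_\star^{(1,i)}\|=\|a_i\|\le\|A_\star\|$ as the base case and the first inequality controlling the remainder term $\|(C_\star^{(k,i)})_{g_i\o}\|\le\|A_\star\|^k$, induction gives $\|C_\star^{(k+1,i)}\|\le\|A_\star\|\,\|C_\star^{(k,i)}\|+\|A_\star\|^{k+1}$, hence the linear factor $k$. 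If you want to keep your stratification idea, you would need to either accept the weaker $k^2$-type constant or find this recursion (or an equivalent cancellation) yourself; as written, the step bounding each $D^{(m)}$ does not hold.
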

\begin{proof}
We remove the subscript $\star$ for ease of notation. We start from the first inequality. We may assume $g \in V_i$ otherwise this is trivial since $C ^{(k,i)}_{g \o}  = 0$. By Lemma \ref{le:pathdecomFd}, since the operator norm is sub-multiplicative, we have for $g \in V_i$,  
$$
\| C  ^{(k,i)}_{g \o} \| \leq \| ((A^{\o})^{k-1} )_{g g_i} \| \| a_i \| \leq \| A^{\o} \|^{k-1} \| a_i \| . 
$$
Next, since $A$ and $A^{\o}$ are self-adjoint and $A^{\o}$ is the restriction of $A$ to $\ell^2(\dF_d \backslash \{\o\})$, we also have $\| A^{\o}\| \leq \| A\|$. The bound $\| C ^{(k,i)}_{g \o} \| \leq \|  A\|^{k}$ follows. 

Similarly, from Lemma \ref{le:pathdecomFd},
$$
 ((C ^{(k,i)})^* C^{(k,i)})_{\o \o} = \sum_{g \in F_d}  ((C ^{(k,i)})^*)_{\o g} C^{(k,i)}_{g \o} =  \sum_{g \in V_i} a_i^* ((A^{\o})^{k-1} )^*_{g g_i} ((A^{\o})^{k-1} )_{g g_i} a_i ,
$$
where we have use we have use that $(T^*)_{\o g} = (T_{g \o } )^*$ for $T \in \cB ( H_1 \otimes \ell^2 (\dF_d))$. 
Hence, 
\begin{eqnarray*}
\| ((C ^{(k,i)})^* C^{(k,i)})_{\o \o} \| & \leq &  \| a_i \|^2  \| \sum_{g \in V_i} ((A^{\o})^{k-1} )^*_{g g_i} ((A^{\o})^{k-1} )_{g g_i} \| \\
&  =  &   \| a_i \|^2  \| (((A^{\o})^{k-1} )^* (A^{\o})^{k-1} )_{g_i g_i} \| \\
& \leq &   \| a_i \|^2   \| A^{\o} \|^{2 (k-1)}.
\end{eqnarray*}
Using again $\| A^{\o}\| \leq \| A\|$, we obtain the second inequality. 
For the last inequality, we find the recursion, for any integer $k \geq 1$,
$$
A C^{(k,i)} = C^{(k +1,i)} +  \sum_{w \in W_{k-1}} \IND( [w] = \o)  \IND ( \hbox{for all } t\in \INT{k-1}, [w]_t g_i \ne \o ) \, a_i^* a(w) a_i \otimes 1_{\dF_d}.
$$
Indeed, if $w \in W_{k-1}$,  $g_j [w] g_i = \o$ and for all for all $ t\in \INT{k-1}, [w]_t g_i \ne \o$ then $j = i^*$ and thus $[w] = \o$.  We get 
$$
A C^{(k,i)} = C^{(k+1,i)} +  a^*_i C  ^{(k,i)}_{g_i \o} \otimes 1_{\dF_d}.
$$
Similarly, for $k =1$, we have $C^{(1,i)} = a_i \otimes \lambda(g_i)$. We deduce that $\| C^{(1,i)} \| = \| a_i \|  \leq  \| A \|$ and
\begin{eqnarray*}
\|C^{(k+1,i)} \| & \leq  & \|A \| \| C^{(k,i)} \| + \|  a_i^* C  ^{(k,i)}_{g_i \o}\| \\
& \leq & \|A \| \|C^{(k,i)}\| + \|A \|  \|C  ^{(k,i)}_{g_i \o} \|.
\end{eqnarray*}
Now, from first inequality, $\|C  ^{(k,i)}_{g_i \o} \|   \leq \| A \|^{k}$. This implies by recursion that $\| C^{(k,i)} \| \leq  k \| A \|^k$ as requested (we note that this bound improves on Haagerup's inequality \cite{MR520930}).
\end{proof}

\section{A non-commutative Cauchy-Schwarz inequality}

\label{sec:NCCS}

\subsection{Main inequality}

Let $r \geq k \geq 1$ be integers and $\pi$ be a partition of a subset $S \subseteq \INT{r}$, and $T = \INT{r} \backslash S$. We assume that $\pi$ partitions $S$ into $k$ blocks $\{B_1,\ldots,B_k\}$ and each block is either a singleton or a pair of elements in $S$. For $i \in S$, let $l_i$ be the index of the block containing $i$, that is $i \in B_{l		_i}$. 

Let $m\geq 1$ be an integer and $X= ( (X_{i,\vec j})_{i\in \INT{r},\vec j\in \INT{m}^k})$ be a collection of elements in a $C^*$-algebra $\cA$. We assume that for $i \in S$ and $\vec j = (j_1,\ldots,j_k)$, $X_{i,\vec j}$ depends only on $(i,j_{l_i})$ (more precisely, for $i \in S$, $X_{i,\vec j} = X_{i,\vec j'}$ if $j_{l_i} = j'_{l_i}$). With a slight abuse of notation, for $i \in S$ and $j \in \INT{m}$, we will sometimes write $X_{i,j}$ in place of $X_{i,\vec j}$ where $\vec j \in \INT{m}^k$ is any element such that $j_{l_i} = j$. 

With these notations, we define
$$
X_{\pi}=\sum_{\vec j  \in \INT{m}^k} X_{1,\vec j}\cdots X_{r,\vec j}.
$$

For example, if $r = 8$, $k=3$ and $\pi = (\{1,5\},\{2,7\},\{4,8\})$, then
\begin{equation}\label{eq:exNCCS}
X_{\pi} = \sum_{1 \leq j_1,j_2 ,j_3 \leq m} X_{1,j_1}  X_{2,j_2} X_{3,(j_1,j_2,j_3)} X_{4,j_3} X_{5,j_1}  X_{6,(j_1,j_2,j_3)}X_{7,j_2}   X_{8,j_3}. 
\end{equation}

Our goal is to estimate the norm of $X_{\pi}$. Our bound will require a compatibility assumption between $X$ and the partition $\pi$, that we first describe.  For $i \in T$ and $l \in \INT{k}$, we say that the block $B_l$ is {\em open} at $i$ if $\min (i' \in B_l) < i < \max (i' \in B_l)$ (thinking of a block $B_l$ as the extremal points of a discrete interval of $\INT{r}$, $B_l$ is open at $i$, if $i$ is inside this interval). We say $X$ is {\em open} for $\pi$ if for all $i \in T$, the map $\vec j  \to X_{i,\vec j}$ depends only on the blocks open at $i$ (that is, if $\vec j$ and $\vec j'$ are equal on the coordinates $j_l$ with $B_l$ open at $i$ then $X_{i,\vec j} = X_{i,\vec j'}$).  For example, in \eqref{eq:exNCCS}, $X$ will be open for $\pi$, if $X_{3,(j_1,j_2,j_3)}$ depends only on $(j_1,j_2)$ and if $ X_{6,(j_1,j_2,j_3)}$ depends only on $(j_2,j_3)$. 

To estimate the norm of $X_{\pi}$, we introduce a last notation. 
For $i\in \INT{r}$, we define $Q_{\pi,i} \in \dR_+$ as follows:
\begin{enumerate}
\item If $i \in T $,
$$Q_{\pi,i}=\max_{\vec j\in \INT{m}^k} \NRM{ X_{i,\vec j}}.$$
\item If $i \in S$ is the leftmost element of a block with two elements, 
$$Q_{\pi,i}=\NRM{ \sum_{j\in \INT{m}} X_{i,j}X_{i,j}^*}^{1/2}.$$
\item If $i \in S$ is the rightmost element of a block with two elements, 
$$Q_{\pi,i}=\NRM{ \sum_{j\in \INT{m}} X_{i,j}^*X_{i,j}}^{1/2}.$$

\item If $i \in S$ is an element of a singleton block, then 
$$Q_{\pi,i}= \NRM{\sum_{j\in \INT{m}} \sqrt{X_{i,j}X_{i,j}^*} }^{1/2} \cdot\NRM{\sum_{j\in \INT{m}} \sqrt{X_{i,j}^*X_{i,j}} }^{1/2}.$$
\end{enumerate}

\begin{theorem}\label{th:NCCS}
If $X$ is open for $\pi$ then we have 
$$
\NRM{X_{\pi}} \le \prod_{i\in \INT{r}} Q_{\pi,i}.
$$
\end{theorem}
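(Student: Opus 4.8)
The plan is to prove Theorem \ref{th:NCCS} by induction on the number $k$ of blocks, peeling off one block at a time, and using an operator Cauchy--Schwarz inequality at each step. First I would set up the induction so that at each stage one has an expression of the same shape: a sum over a reduced index vector of an ordered product of $C^*$-algebra elements, with some indices still ``summed'' (belonging to pairs or singletons) and others ``free'' (the $T$-indices, which only interact through the open-block condition). The base case $k=0$ (so $S=\emptyset$) is trivial since then $X_\pi$ is a single product and submultiplicativity of the norm gives $\NRM{X_\pi}\le\prod_{i\in T}\NRM{X_i}=\prod_i Q_{\pi,i}$.

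For the inductive step, pick a block $B_l$, say the one whose left endpoint is largest (so that inside the interval spanned by $B_l$ there are no left-endpoints of other still-active blocks; this is the standard ``innermost block'' choice that makes the factorization work). Write $\vec j = (j_l, \vec j\,')$ separating out the coordinate $j_l$ to be eliminated. If $B_l=\{a,b\}$ with $a<b$, one factors the product as $L_{j_l}\cdot M \cdot R_{j_l}$, where $L_{j_l}$ ends with $X_{a,j_l}$, $R_{j_l}$ begins with $X_{b,j_l}$, and $M$ (the part strictly between $a$ and $b$) does \emph{not} depend on $j_l$ precisely because $X$ is open for $\pi$ and $B_l$ is not open at any $i$ strictly between $a$ and $b$ once we've chosen the innermost block — wait, more carefully: $B_l$ \emph{is} open at those interior $i$, but by the innermost choice those interior $X_i$ depend only on blocks that are \emph{nested inside} $B_l$, whose coordinates live in $\vec j\,'$; so after I collect everything I can arrange $M$ to be independent of $j_l$. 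Then $\NRM{\sum_{j_l} L_{j_l} M R_{j_l}} \le \NRM{\sum_{j_l} L_{j_l}L_{j_l}^*}^{1/2}\,\NRM{M}\,\NRM{\sum_{j_l}R_{j_l}^*R_{j_l}}^{1/2}$ by the standard fact that $\NRM{\sum_k x_k y_k}\le \NRM{\sum_k x_kx_k^*}^{1/2}\NRM{\sum_k y_k^*y_k}^{1/2}$ (applied with $x_k = L_k$, $y_k = M R_k$, together with $\NRM{\sum_k y_ky_k^*}\le\NRM{M}^2\NRM{\sum R_kR_k^*}$ — actually one wants $\NRM{\sum (MR_k)^*(MR_k)} = \NRM{\sum R_k^* M^* M R_k}\le \NRM{M}^2\NRM{\sum R_k^*R_k}$). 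The factors $\NRM{\sum L_{j_l}L_{j_l}^*}^{1/2}$ and $\NRM{\sum R_{j_l}^*R_{j_l}}^{1/2}$ are bounded, using submultiplicativity inside $L$ and $R$, by $Q_{\pi,a}\prod_{i<a\text{ in }L}Q_{\pi,i}$ and $Q_{\pi,b}\prod_{i>b\text{ in }R}Q_{\pi,i}$ respectively — here one uses that the other factors in $L$ (resp.\ $R$) can be pulled out of the square-rooted sum because they don't depend on $j_l$. For a singleton block $B_l=\{a\}$, one instead writes the product as $M'\cdot X_{a,j_l}\cdot M''$ and uses the inequality $\NRM{\sum_j u\, z_j\, v}\le \NRM{u}\,\NRM{\sum_j \sqrt{z_jz_j^*}}^{1/2}\NRM{\sum_j\sqrt{z_j^*z_j}}^{1/2}\,\NRM{v}$, which follows by writing $z_j = \sqrt{z_jz_j^*}\,w_j\sqrt{z_j^*z_j}$-type polar decompositions, or more cleanly by applying the pair case to $x_j=\sqrt{z_jz_j^*}$-ish square roots; this is where the definition of $Q_{\pi,a}$ in case 4 comes from. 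After eliminating $j_l$ one is left with an expression over $\vec j\,'\in\INT{m}^{k-1}$ of exactly the inductive form (the block $B_l$ removed, its two/one contributions replaced by the interior $M$ term, all $T$-indices still only depending on remaining open blocks), and the collected scalar prefactor is $\prod_{i\in B_l}Q_{\pi,i}$ together with the $Q_{\pi,i}$ for the $T$-indices that got absorbed into $L$ or $R$; applying the induction hypothesis to what remains and multiplying finishes the bound $\prod_{i\in\INT{r}}Q_{\pi,i}$.

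The main obstacle I anticipate is the bookkeeping in the inductive step: making precise which $T$-indices get ``consumed'' when a block is peeled, verifying that the middle factor $M$ genuinely does not depend on the eliminated coordinate $j_l$ (this is exactly where the open-for-$\pi$ hypothesis and the innermost-block choice are both essential), and checking that the leftover expression after elimination still satisfies the open-for-$\pi$ hypothesis with respect to the smaller partition so the induction can proceed. A secondary technical point is establishing the singleton-block inequality with the specific constant $\NRM{\sum\sqrt{z_jz_j^*}}^{1/2}\NRM{\sum\sqrt{z_j^*z_j}}^{1/2}$; I would handle it by inserting polar decompositions $z_j = |z_j^*|^{1/2} \cdot (\text{partial isometry}) \cdot |z_j|^{1/2}$, wait — rather $z_j = v_j |z_j|$ with $|z_j| = \sqrt{z_j^*z_j}$ and $|z_j^*| = \sqrt{z_jz_j^*} = v_j|z_j|v_j^*$, so $z_j = |z_j^*|^{1/2}\,\big(|z_j^*|^{-1/2}v_j|z_j|^{1/2}\big)\,|z_j|^{1/2}$ with the middle factor a contraction, reducing to the pair case. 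These are all routine once the framework is set, but the indexing needs care.
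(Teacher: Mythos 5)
There is a genuine gap in your inductive step. You want to write, for the chosen block $B_l=\{a,b\}$, the summand as $L_{j_l}\,M\,R_{j_l}$ with $M$ independent of $j_l$, but this fails in general, and your attempted repair via the ``innermost block'' choice does not save it. First, the hypothesis ``$X$ is open for $\pi$'' explicitly \emph{allows} an interior index $i\in T$ with $a<i<b$ to depend on $j_l$, since $B_l$ is open at such $i$; nothing in the definition restricts the dependence to blocks nested inside $B_l$. Second, and more fundamentally, other blocks may \emph{cross} $B_l$ (one endpoint strictly inside $(a,b)$, the other outside), so the $S$-elements sitting between $a$ and $b$ carry coordinates that also appear outside the interval; the sum over those coordinates then couples the three segments and the product does not factor as $L_{j_l}MR_{j_l}$ at the level of operators. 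The paper's own example $\pi=(\{1,5\},\{2,7\},\{4,8\})$ already defeats every choice of block, and even the minimal crossing case $r=4$, $\pi=(\{1,3\},\{2,4\})$, $X_\pi=\sum_{j_1,j_2}X_{1,j_1}X_{2,j_2}X_{3,j_1}X_{4,j_2}$, shows the problem: applying $\NRM{\sum_j A_jB_j}\le\NRM{\sum_j A_jA_j^*}^{1/2}\NRM{\sum_j B_j^*B_j}^{1/2}$ in $j_1$ produces $\sum_{j_1}Z_{j_1}^*Z_{j_1}$ with $Z_{j_1}=\sum_{j_2}X_{2,j_2}X_{3,j_1}X_{4,j_2}$, which is not of the original form, so the induction does not close. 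Your scheme is essentially the classical nested Cauchy--Schwarz argument, which works for \emph{non-crossing} partitions only; but the theorem must cover crossing pairings (they arise both from interleaved edge visits in Lemma \ref{le:sumagamma} and from arbitrary Wick pairings in the Khintchine corollary), so this restriction cannot be assumed.

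The paper avoids the issue with a one-shot tensor trick rather than an induction: it assigns one matrix leg $M_m(\dC)$ per block, replaces each $X_{i,j}$ by $\tilde X_i\in M_m(\dC)^{\otimes k}\otimes\cA$ built from matrix units ($E_{1j}$ at the left endpoint of a block, $E_{j1}$ at the right endpoint, $E_{jj}$ on the legs of the blocks open at a $T$-index), and verifies the exact identity $E_{11}^{\otimes k}\otimes X_\pi=\prod_i\tilde X_i$. A single application of submultiplicativity plus the $C^*$-identity $\NRM{\tilde X_i}=Q_{\pi,i}$ then gives the bound, with crossings handled automatically because different blocks live on different tensor legs. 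Your treatment of singletons by polar decomposition does match the paper's, but the core pairing step needs the tensor (or an equivalent) device, not block-by-block peeling.
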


\begin{proof}
We first treat the case where $\pi$ has no singletons.
As usual, for $j,j' \in \INT{m}$, $E_{jj'} \in M_m(\dC)$ denotes the matrix with all entries equal to $0$ except entry $(j,j')$ set to $1$. In $M_m(\mathbb{C})^{\otimes k}\otimes \cA$, we introduce the  elements
$\tilde{X}_i$ as follows
\begin{enumerate}
\item If $i \in S$ is the leftmost element of its block, 
$$\tilde{X}_i=\sum_{j\in \INT{m}} 1_m^{\otimes l_i-1}\otimes E_{1j}\otimes 1_m^{\otimes k-l_i}\otimes X_{i,j}.$$
\item If $i \in S$ is the rightmost element of its block, 
$$\tilde{X}_i=\sum_{j\in \INT{m}} 1_m^{\otimes l_i-1}\otimes E_{j1}\otimes 1_m^{\otimes k-l_i}\otimes X_{i,j}.$$
\item If $i\in T$ and $L_i \subset \INT{k}$ are the indices of blocks open at $i$,
$$\tilde{X}_i=\sum_{\vec  j\in \INT{m}^{L_i}} K_{p_{1,i}(\vec j)} \otimes \cdots \otimes K_{p_{r,i}(\vec j)} \otimes X_{i,\vec j},$$
where $K_{p_{l,i} (\vec j)} \in \{ 1_m, E_{j_l j_l} \}$ is $E_{j_l j_l}$ if $l \in L_i$ and is $1_m$ otherwise. Also, $X_{i,\vec j}$ is a slight abuse of notation (since $X_{i,(j_1,\ldots,j_k)}$ depends only on $(j_{l})_{l \in L_i}$).
\end{enumerate}

The key computation is the following simple verification: 
\begin{equation}\label{eq:trick}
E_{11}^{\otimes k}\otimes X_{\pi}=\prod_{i\in \INT{r}}\tilde{X}_i.
\end{equation}

Let us check that this formula is correct in the example \eqref{eq:exNCCS} assuming that $X$ is open for $\pi$: for shorter notation, we simply write $X_{3,(j_1,j_2,j_3)} = X_{3,(j_1,j_2)}$ and $X_{6,(j_1,j_2,j_3)} = X_{6,(j_2,j_3)}$. We have 
{\small \begin{align*}
\prod_{i\in \INT{r}}\tilde{X}_i  = & \PAR{ \sum_{j_1} E_{1j_1} \otimes 1 \otimes 1 \otimes X_{1,j_1}   }\PAR{ \sum_{j_2} 1 \otimes E_{1 j_2} \otimes 1 \otimes X_{2,j_2}  } \PAR{ \sum_{j_3,j_4} E_{j_3 j_3} \otimes E_{j_4 j_4} \otimes 1 \otimes X_{3,(j_3,j_4)}  } \\
&  \cdot  \PAR{ \sum_{j_5} 1  \otimes 1 \otimes E_{1 j_5} \otimes X_{4,j_5}  }  \PAR{ \sum_{j_6} E_{j_61} \otimes 1 \otimes 1  \otimes X_{5,j_6}  }\PAR{ \sum_{j_7,j_8} 1 \otimes E_{j_7 j_7} \otimes E_{j_8 j_8} \otimes X_{6,(j_7,j_8)}  } \\
& \cdot  \PAR{ \sum_{j_9} 1  \otimes 1 \otimes E_{1 j_9}\otimes 1 \otimes X_{7,j_{9}}  }  \PAR{ \sum_{j_{10}} 1 \otimes 1 \otimes E_{j_{10} 1} X_{8,j_{10}}  }.
\end{align*}}
Expanding the products, this forces $j_1 = j_3 = j_6$, $j_2 = j_4 = j_7 = j_9$ and $j_5 = j_8 = j_{10}$. We obtain the claimed formula \eqref{eq:trick}.

Next, the conclusion follows from the sub-multiplicativity of the operator norm on $M_m(\mathbb{C})^{\otimes k}\otimes \cA$.
Indeed:

(1) Since $E_{11}^{\otimes k}$ is a selfadjoint projection, it is of operator norm $1$ and it follows that 
$$\|E_{11}^{\otimes k}\otimes X_{\pi}\|=\| X_{\pi}\|.$$

(2) It follows from the $C^*$ norm axiom that $$\|\tilde{X}_i\|=Q_{\pi,i}.$$

This concludes the proof when  $\pi$ has no singletons. In the general case, we can handle singletons by doubling them. 
For $i \in S$ in a singleton block, we write a polar decomposition $X_{ij}=H_{ij}U_{ij}$ where $H_{ij}$ positive, $U_{ij}$ unitary.
Then we write $X_{ij}=(\sqrt{H_{ij}}) (\sqrt{H_{ij}}U_{ij})$ and we are back to the case without singletons by increasing by the value of $r$ by the number of singletons of $\pi$. It remains to observe that $H_{ij} H_{ij}^* = H_{ij}^2 = \sqrt{X_{ij}X_{ij}^*} $ and $(\sqrt{H_{ij}}U_{ij})^* (\sqrt{H_{ij}}U_{ij}) = \sqrt{X_{ij}^*X_{ij}}$. \end{proof}

\begin{remark}\label{rk:NCCS}To apply Theorem \ref{th:NCCS} in concrete applications, we observe that it is possible to consider the following (seemingly) more general setting: the vectors $\vec j$ take values in $J = J_1 \times \cdots \times J_k$ where for $l \in \INT{k}$, $J_l $ is a finite set in place of $\INT{m}^k$: 
$$
X_\pi = \sum_{\vec j  \in J} X_{1,\vec j}\cdots X_{r,\vec j}.
$$
The definitions $X$ open for $\pi$ and $Q_{\pi,i}$ remain unchanged up to the modification of the index set $\INT{m}$ by $J_l$. In this setting, the conclusion of Theorem \ref{th:NCCS} still holds. Indeed, we may assume without loss of generality that $J_l = \INT{m_l}$. We set $m = \max_l m_l$ and define the elements in $\cA$, for $i \in \INT{k}$ and $\vec j \in \INT{m}^k$, $X'_{i,\vec j} = X_{i,\vec j}$  if $\vec j \in J$ and is $0$ otherwise. We then have $X'_\pi = X_{\pi}$.
\end{remark}

\subsection{Stronger forms of Theorem \ref{th:NCCS} }
\label{subsec:NCCStheta}
We do not know whether the conclusion of Theorem \ref{th:NCCS} remains true without the assumption that $X$ is open for $\pi$ (we are not aware of a counterexample). In this subsection, we will explore a few cases where a potential improvement of this inequality can be proved. To that end, we introduce two definitions. 

\begin{definition}\label{def:Deltacontrol}
Let $\theta \geq 1$. 
We say $(X,\pi)$ is $\theta$-controlled, if for any $\delta = (\delta_{\vec j}) \in [-1,1]^{\INT{m}^k}$
$$
\NRM{\sum_{\vec j  \in \INT{m}^k} \delta_{\vec j} X_{1,\vec j}\cdots X_{r,\vec j}}\leq \theta^k\prod_{i=1}^r Q_{\pi,i}.
$$
We say $(X,\pi)$ is strongly $\theta$-controlled on the left (respectively on the right) if 
$$
\NRM{ \sum_{\vec j  \in \INT{m}^k} |X_{1,\vec j}\cdots X_{r,\vec j}| } \leq \theta^k \prod_{i=1}^r Q_{\pi,i},
$$
where $|  A |  = \sqrt{A A^*}$ (respectively $|  A |  = \sqrt{A^* A}$).
\end{definition} 
Note that as its name suggests, strong $\theta$-control implies $\theta$-control. In the remainder of this section, we give necessary conditions for $(X,\pi)$ to be strongly $\theta$-controlled

\begin{lemma}\label{le:NCCSuni}
Let $n \geq 1$ be an integer and assume that $\cA = M_n(\dC) \otimes \cA_0$ where $\cA_0$ is a $C^*$-algebra. We assume that for all $i \in S$, $X_{i,j} = x_{i,j} \otimes u_{i,j}$ where $x_{i,j} \in M_n(\dC)$ and $u_{i,j} \in \cA_0$ is unitary. Then $(X,\pi)$ is strongly $n$-controlled on the left and on the right.
\end{lemma}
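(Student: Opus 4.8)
The goal is to upgrade Theorem \ref{th:NCCS} from the bound $\prod_i Q_{\pi,i}$ to the statement that $(X,\pi)$ is strongly $n$-controlled (on both sides) when $\cA = M_n(\dC)\otimes\cA_0$ and each $X_{i,j}$ with $i\in S$ factors as $x_{i,j}\otimes u_{i,j}$ with $u_{i,j}$ unitary. The idea is that the only place the ``openness'' hypothesis was used in the proof of Theorem \ref{th:NCCS} was to make the key product identity \eqref{eq:trick} collapse correctly; here we instead exploit the tensor structure $M_n(\dC)\otimes\cA_0$ to absorb the entire $M_n(\dC)$ leg into a Cauchy--Schwarz argument, paying only a factor of $n$ for each of the $k$ blocks, regardless of openness. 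So the plan is: (i) reduce to the case where $\pi$ has no singletons exactly as in the proof of Theorem \ref{th:NCCS}, by polar-decomposing $x_{i,j}$ and splitting $u_{i,j}$; note the polar parts remain of the form ``matrix $\otimes$ unitary'' since $\sqrt{H_{ij}}$ is a matrix and $\sqrt{H_{ij}}U_{ij}$, when the unitary leg is carried along, is again (matrix)$\otimes$(unitary). (ii) For the no-singleton case, since each block $B_l=\{i_l^-<i_l^+\}$ has a left end and a right end in $S$, write $X_{i_l^-,j}=x_{i_l^-,j}\otimes u_{i_l^-,j}$ and $X_{i_l^+,j}=x_{i_l^+,j}\otimes u_{i_l^+,j}$.

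\textbf{Main step.} For the quantity $\sum_{\vec j}|X_{1,\vec j}\cdots X_{r,\vec j}|$ I would write each summand, expand $|A|=\sqrt{AA^*}$ (left version), and insert the tensor-trick matrices: in $M_m(\dC)^{\otimes k}\otimes M_n(\dC)\otimes \cA_0$ build $\tilde X_i$ as in the proof of Theorem \ref{th:NCCS} for the $S$-indices (using $E_{1j}$ at the left end, $E_{j1}$ at the right end of each block, in the $l$-th $M_m$ leg), but for $i\in T$ simply put the identity $1_m^{\otimes k}$ in all the $M_m$ legs and leave $X_{i,\vec j}\in M_n(\dC)\otimes\cA_0$ alone — we do \emph{not} try to make the $T$-indices consistent with the block structure. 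The product $\prod_i\tilde X_i$ then no longer equals $E_{11}^{\otimes k}\otimes X_\pi$; instead, expanding it gives $\sum_{\vec j}(\text{rank-one matrix units})\otimes X_{1,\vec j}\cdots X_{r,\vec j}$ where the matrix-unit coefficient is $\prod_l E_{1,j_l^-}E_{j_l^+,1}$-type products that no longer collapse the indices, because the $T$-indices are free. The honest route is therefore different: I would instead fix the block variables one at a time and apply, $k$ times, the scalar Cauchy--Schwarz inequality \eqref{eq:OCS} leg by leg, using that $\|u_{i,j}\|=1$ so that $|x_{i,j}\otimes u_{i,j}|=|x_{i,j}|\otimes 1$, and that for an $n\times n$ matrix $\sum_j|x_{i_l^-,j}\otimes u|^2$ has norm controlled by $Q_{\pi,i_l^-}^2$ while the ``$x$'' legs contribute a factor $n$ from $\|\sum_j E_{1j}x_{i,j}\|\le \sqrt n\,\|\sum_j x_{i,j}x_{i,j}^*\|^{1/2}$ only if one is not careful — here the clean statement is: estimate $\|\sum_j y_j A y_j^*\|$ for a positive operator $A$ and $y_j=x_{i_l^-,j}\otimes u_{i_l^-,j}$ by $\|\sum_j y_jy_j^*\|\cdot\|A\|$ when the $y_j$ have the property that $\sum_j y_jy_j^*\le (\sum_j x_{i,j}x_{i,j}^*)\otimes 1$; the factor $n$ enters because across the two ends of a block the unitaries $u_{i_l^-,j}$ and $u_{i_l^+,j}$ need not cancel, so one controls $u_{i_l^-,j}u_{i_l^+,j}^*$ trivially in norm by $1$ but must sum over $j$ using a matrix Cauchy--Schwarz that costs $n=\dim M_n(\dC)$.

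\textbf{Cleanest execution.} Concretely I would proceed block-by-block, peeling from the outside in. Let $B_{l_0}$ be the outermost block, with ends $a<b$ in $S$. Write $X_\pi$ (with a sign sequence absorbed, which changes nothing for the strong-control version) as $\sum_{j}\,(x_{a,j}\otimes u_{a,j})\,M_j\,(x_{b,j}\otimes u_{b,j})\,R_j$ where $M_j$ is the middle chunk (depending on $j$ and the other block variables) and $R_j$ the right chunk after $b$. Then $\sum_j|X\cdots| = \sum_j \sqrt{(x_{a,j}\otimes u_{a,j})M_jX_bR_jR_j^*X_b^*M_j^*(x_{a,j}\otimes u_{a,j})^*}$; apply operator Cauchy--Schwarz (the inequality $\sum_j |c_jd_j|\le \|\sum_j c_jc_j^*\|^{1/2}\,\|\sum_j d_j^*d_j\|^{1/2}$ lifted to this tensor setting via the $M_m(\dC)$-trick of Theorem \ref{th:NCCS}, applied now only to the one index $j=j_{l_0}$) to split off $\|\sum_j (x_{a,j}\otimes u_{a,j})(x_{a,j}\otimes u_{a,j})^*\|^{1/2}=Q_{\pi,a}$ and $\|\sum_j (x_{b,j}\otimes u_{b,j})^*(x_{b,j}\otimes u_{b,j})\|^{1/2}=Q_{\pi,b}$ (here the single-block version of Theorem \ref{th:NCCS} has no openness issue since there are no $T$-indices strictly between $a$ and $b$ belonging to an \emph{unopened} block — all intermediate blocks are fully inside). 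What remains is $\|M'\|$ times $\|R'\|$ where $M'$ is the inner $k-1$ block problem with an extra $M_n(\dC)$ factor attached; here is exactly where the factor $n$ appears: absorbing the leftover $M_n(\dC)$ leg (coming from $u_{a,j}u_{a,j}^*=1$ being trivial but $u_{a,j}$ itself tangling the matrix legs of $M_j$) costs $\|\cdot\|_{M_n}\le n^{?}$ — in fact the right accounting gives one factor of $n$ per block, so by induction on $k$ one obtains $\NRM{\sum_{\vec j}|X_{1,\vec j}\cdots X_{r,\vec j}|}\le n^k\prod_i Q_{\pi,i}$. The right-handed version is symmetric, using $|A|=\sqrt{A^*A}$ and peeling blocks from the inside out. \textbf{The main obstacle} I anticipate is precisely this bookkeeping of where the $n$-factors come from: making rigorous that each Cauchy--Schwarz split across the two ends of a block, after the unitaries fail to cancel, costs exactly a factor $n$ and not $n^{1/2}$ or $n^2$ — this requires carefully choosing whether to estimate $u_{a,j}u_{b,j}^*$ by its norm (trivial, giving nothing) or by summing, and the matrix Cauchy--Schwarz $\|\sum_j e_j f_j\|\le \min(n,\,\text{dim})^{1/2}\cdots$ is where the dimension $n$ is unavoidable; getting the constant to be cleanly $n^k$ (and not worse) is the delicate part, and is the reason the hypothesis requires the $u_{i,j}$ to be genuinely \emph{unitary} rather than merely bounded.
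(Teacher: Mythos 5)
Your proposal does not close the decisive step, and its inductive structure fails for crossing partitions. The whole content of the lemma is the quantitative claim that each block costs exactly a factor $n$; you assert that ``the right accounting gives one factor of $n$ per block'' but never establish it, and you yourself flag this bookkeeping as the main obstacle, leaving open whether the cost is $n^{1/2}$, $n$ or $n^2$. Independently of that, the peeling argument (take the ``outermost'' block $B_{l_0}$ with ends $a<b$; ``all intermediate blocks are fully inside'', so a single-variable application of Theorem \ref{th:NCCS} raises no openness issue) presupposes a nested, non-crossing block structure that is not part of the hypotheses: already in the paper's example $\pi=(\{1,5\},\{2,7\},\{4,8\})$ every block crosses another, so there is no outermost block, and blocks with exactly one endpoint strictly between $a$ and $b$ do occur. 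Moreover the elements $X_{i,\vec j}$ with $i\in T$ may depend arbitrarily on \emph{all} block variables (no openness is assumed in the lemma), so the ``middle chunk'' $M_j$ in your peeling need not have the product structure your one-variable Cauchy--Schwarz step requires. As it stands, the argument is a plan with an acknowledged hole at its only nontrivial point, not a proof.

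The lemma in fact has a far more elementary proof, which is the one the paper gives and which bypasses Theorem \ref{th:NCCS} entirely: take the norm inside the sum. By the triangle inequality and submultiplicativity (and $\NRM{|A|}=\NRM{A}$, so left and right versions are the same) one gets $\NRM{\sum_{\vec j}|X_{1,\vec j}\cdots X_{r,\vec j}|}\le\sum_{\vec j}\NRM{X_{1,\vec j}}\cdots\NRM{X_{r,\vec j}}$; the classical scalar Cauchy--Schwarz inequality in each block variable then bounds this by $\prod_{i\in T}Q_{\pi,i}$ times $\bigl(\sum_{j}\NRM{X_{i,j}}^2\bigr)^{1/2}$ for each $i\in S$ in a pair (and $\sum_j\NRM{X_{i,j}}$ for singletons). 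Unitarity of $u_{i,j}$ gives $\NRM{X_{i,j}}=\NRM{x_{i,j}}$ and $\sum_j X_{i,j}^*X_{i,j}=\bigl(\sum_j x_{i,j}^*x_{i,j}\bigr)\otimes 1$, so $\sum_j\NRM{x_{i,j}}^2\le\Tr\bigl(\sum_j x_{i,j}^*x_{i,j}\bigr)\le n\,\NRM{\sum_j x_{i,j}^*x_{i,j}}=n\,Q_{\pi,i}^2$; thus each element of a pair block costs $\sqrt n$ and each singleton costs $n$ (same argument with $\NRM{x}\le\Tr\sqrt{xx^*}$), giving $n^k$ overall. So the dimension enters only through $\NRM{x}^2\le\Tr(xx^*)\le n\NRM{xx^*}$ and the linearity of the trace, and unitarity is used only to identify the $Q_{\pi,i}$ computed from the $X_{i,j}$ with those computed from the $x_{i,j}$ --- not through any operator-valued Cauchy--Schwarz across the two ends of a block, which is the machinery your proposal gets stuck on.
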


\begin{proof}
From the triangle inequality for the operator norm and its sub-multiplicativity, we get 
$$
\| X_{\pi} \| \leq \sum_{\vec j} \|X_{1,\vec j}\| \cdots \| X_{r,\vec j} \|.
$$
We deduce from the classical Cauchy-Schwarz inequality that 
$$
\| X_{\pi} \| \leq \prod_{i=1}^r \tilde Q_{\pi,i},
$$
where for $i\in T$, $\tilde Q_{\pi,i} =  Q_{\pi,i}$ and for $i \in S$ in a block of two elements,
$$
\tilde Q_{\pi,i} =  \PAR{ \sum_{j\in \INT{m}} \| X_{i,j} \|^2 }^{1/2},
$$ 
and for for $i \in S$ in a singleton block,
$$
\tilde Q_{\pi,i} =  \sum_{j\in \INT{m}} \| X_{i,j} \| ,
$$

Then, the lemma follows from the claim $\tilde Q_{\pi,i} \leq \sqrt n  Q_{\pi,i} $  for $i$ in a block of two elements and $\tilde Q_{\pi,i} \leq n  Q_{\pi,i} $ for $i$ in a singleton block (recall that $k$ is the number of blocks of $\pi$). 

We check the claim. From our assumption, $\|X_{i,j}\| =\|x_{i,j}\| \leq \sqrt{\Tr (x_{i, j} x_{i,j}^* })$. From the linearity of the trace, it follows that, for $i$ in a block of two elements, 
$$
\tilde Q_{\pi,i}  \leq \PAR{\Tr   \sum_j x_{i,j}^* x_{i,j} }^{1/2}  \leq \sqrt n \NRM{  \sum_j x_{i,j}^* x_{i,j} }^{1/2} = \sqrt n \NRM{  \sum_j X_{i,j}^* X_{i,j} }^{1/2},
$$
where the last inequality follows from the fact that $u^*_{ij} u_{ij} = 1$ and $\| T \otimes 1 \| = \| T \|$. The same bound holds with $X_{i,j}^*X_{i,j}$. We deduce that $\tilde Q_{\pi,i} \leq \sqrt n Q_{\pi,i}$, as claimed. For a singleton block, the proof is identical.
\end{proof}

The bistochastic matrices give another simple example of matrices that can be $\theta$-controlled.

\begin{lemma}
\label{le:discr4}
Assume that $\cA = M_n(\dC)$ and that there exists  a unit vector $f \in \dC^n$ such that for all $i \in S$ and $j \in \INT{m}$,  $X_{i,j} f = X_{i,j}^* f = \| X_{i,j} \| f$.  Then $(X,\pi)$ is strongly $1$-controlled on the left and on the right.
\end{lemma}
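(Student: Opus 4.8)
The statement to prove is Lemma \ref{le:discr4}: if $\cA = M_n(\dC)$ and there is a common unit eigenvector $f$ with $X_{i,j} f = X_{i,j}^* f = \| X_{i,j}\| f$ for all $i \in S$, $j \in \INT{m}$, then $(X,\pi)$ is strongly $1$-controlled on the left and on the right. The plan is to reduce the estimate to the one-dimensional (scalar) Cauchy--Schwarz inequality \eqref{eq:OCS} by projecting onto the line $\dC f$. First I would observe that the hypothesis says each $X_{i,j}$ with $i\in S$ preserves both $\dC f$ and its orthogonal complement $f^\perp$, acting on $f$ by the scalar $\| X_{i,j}\| \geq 0$; in particular on $\dC f$ the operator $X_{i,j}$ agrees with the rank-one operator $\| X_{i,j}\|\, f f^*$. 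Denote by $p = f f^*$ the orthogonal projection onto $\dC f$.

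Next I would analyze the product $X_{1,\vec j}\cdots X_{r,\vec j}$ and its absolute value. The key point is to control $\NRM{\sum_{\vec j} |X_{1,\vec j}\cdots X_{r,\vec j}|}$; since all the $Q_{\pi,i}$ for $i\in S$ are computed from quantities like $\sum_j X_{i,j}X_{i,j}^*$, and because $X_{i,j}$ and $X_{i,j}^*$ both act as $\| X_{i,j}\|$ on $f$, we get $\big(\sum_j X_{i,j}X_{i,j}^*\big) f = \big(\sum_j \| X_{i,j}\|^2\big) f$, and likewise $\sqrt{X_{i,j}X_{i,j}^*}\, f = \| X_{i,j}\|\, f$, so that each $Q_{\pi,i}$ with $i\in S$ is at least the ``$f$-component'' value appearing in the scalar inequality \eqref{eq:OCS} (for a leftmost/rightmost block element, $Q_{\pi,i} \geq (\sum_j \| X_{i,j}\|^2)^{1/2}$; for a singleton, $Q_{\pi,i} \geq \sum_j \| X_{i,j}\|$, via $\NRM{\sum_j \sqrt{X_{i,j}X_{i,j}^*}} \geq \ANG{f, \sum_j \sqrt{X_{i,j}X_{i,j}^*} f} = \sum_j \| X_{i,j}\|$). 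The $Q_{\pi,i}$ for $i\in T$ are $\max_{\vec j}\NRM{X_{i,\vec j}}$, matching the $\max|x_{\vec j}|$ term. Thus the right-hand side $\prod_i Q_{\pi,i}$ dominates the right-hand side of \eqref{eq:OCS} with the substitution $x_{\vec j} \leftarrow$ (the $T$-part), $y_{i,j}, z_{i,j} \leftarrow \| X_{i,j}\|$ (splitting each $S$-block of two, and each singleton, appropriately between $y$ and $z$).

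For the left-hand side, I would compute $\NRM{\sum_{\vec j} |X_{1,\vec j}\cdots X_{r,\vec j}|}$ by testing against $f$: I claim $\big(\sum_{\vec j} |X_{1,\vec j}\cdots X_{r,\vec j}|\big)$ is a positive operator whose norm is attained — or at least bounded below in the relevant direction — along $f$. More carefully, write $Y_{\vec j} = X_{1,\vec j}\cdots X_{r,\vec j}$; I want $\NRM{\sum_{\vec j} |Y_{\vec j}|} \leq \prod_i Q_{\pi,i}$. The decisive structural fact is that, because every $X_{i,j}$ with $i\in S$ has $f$ as a $\| X_{i,j}\|$-eigenvector for both itself and its adjoint, the line $\dC f$ is invariant under every $X_{i,j}$ and under $X_{i,j}^*$, hence under $|Y_{\vec j}|$; moreover on $\dC f$ the operator $X_{i,j}$ acts by the scalar $\| X_{i,j}\|$, so $Y_{\vec j} f$ and $Y_{\vec j}^* f$ are scalar multiples of $f$ with modulus $\prod_{i\in S} \| X_{i,j}\| \cdot \|(\text{$T$-part applied within } \dC f)\|$ — here I use that the $T$-factors, sandwiched between $S$-factors all of which project onto $\dC f$, contribute only via their $\ANG{f,\cdot f}$ matrix element, whose modulus is at most $\NRM{X_{i,\vec j}}$. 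Then $|Y_{\vec j}|$ acts on $f$ by a non-negative scalar bounded by $\prod_{i\in S}\| X_{i,j}\| \prod_{i\in T}\NRM{X_{i,\vec j}}$, and since $|Y_{\vec j}|$ also leaves $f^\perp$ invariant it suffices to bound its norm there — but on $f^\perp$ all the $S$-factors act as the scalar $0$... wait, this needs care: $X_{i,j}$ restricted to $f^\perp$ need not vanish. The correct argument: since $\dC f$ is invariant and each $S$-factor \emph{annihilates} $f^\perp$ into... no. Let me instead use that the product $Y_{\vec j}$ factors through $\dC f$: because $r \geq k \geq 1$ and $\pi$ partitions $S$ into blocks touching all ``levels'', there is at least one $i\in S$, and $X_{i,j}$ maps everything into... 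Actually the cleanest route is: $X_{i,j} = \|X_{i,j}\| p + X_{i,j}(1-p)$ with $X_{i,j}(1-p) = (1-p)X_{i,j}(1-p)$ (since $X_{i,j}$ preserves $f^\perp$) and $pX_{i,j}(1-p) = 0$, $p X_{i,j} p = \|X_{i,j}\| p$. Then expand $Y_{\vec j}$; the ``off-diagonal'' structure forces that any term in $Y_{\vec j} = X_{1,\vec j}\cdots X_{r,\vec j}$ is either supported in $\dC f$ (when all $S$-factors are taken in their $\|X_{i,j}\| p$ part and they are consistent) or supported in $f^\perp$. I would split $Y_{\vec j} = Y_{\vec j}' + Y_{\vec j}''$ accordingly, bound $\sum |Y'_{\vec j}|$ by the scalar inequality and $\sum|Y''_{\vec j}|$ by $\|\sum_{\vec j} Y''_{\vec j}\|$ restricted to $f^\perp$ and apply Theorem \ref{th:NCCS} there with the projected data (which is still ``open for $\pi$'' and whose $Q$'s only decrease). \textbf{The main obstacle} is precisely making this splitting rigorous: verifying that the $f^\perp$-component can be absorbed, i.e. that restricting all operators to $f^\perp$ strictly does not increase the $Q_{\pi,i}$'s while still allowing Theorem \ref{th:NCCS} (or a direct scalar argument) to apply, and handling the interaction of the $T$-factors — which are arbitrary matrices, not block-diagonal with respect to $\dC f \oplus f^\perp$ — with this decomposition; one likely needs to observe that a $T$-factor flanked on both sides (eventually) by $S$-factors only sees its compression $p X_{i,\vec j} p$ or its $f^\perp$-compression, with no cross terms surviving in $|Y_{\vec j}|$.
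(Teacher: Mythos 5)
There is a genuine gap. You correctly identify the half of the argument that uses the hypothesis: since $X_{i,j}f=X_{i,j}^*f=\NRM{X_{i,j}}f$, the vector $f$ is an eigenvector of $\sum_j X_{i,j}^*X_{i,j}$, of $\sum_j X_{i,j}X_{i,j}^*$ and of $\sum_j\sqrt{X_{i,j}X_{i,j}^*}$ with eigenvalues $\sum_j\NRM{X_{i,j}}^2$ (resp.\ $\sum_j\NRM{X_{i,j}}$), so that $Q_{\pi,i}\geq \bigl(\sum_j\NRM{X_{i,j}}^2\bigr)^{1/2}$ for a block of two and $Q_{\pi,i}\geq\sum_j\NRM{X_{i,j}}$ for a singleton; this is exactly the paper's key observation. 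But you never complete the other half, namely the upper bound on $\NRM{\sum_{\vec j}\bigl|X_{1,\vec j}\cdots X_{r,\vec j}\bigr|}$. Your attempted route — splitting each $S$-factor along $\dC f\oplus f^{\perp}$ and arguing that the product ``factors through $\dC f$'' — cannot work as stated, because the $T$-factors $X_{i,\vec j}$ are arbitrary matrices that do not respect this decomposition, so cross terms between $\dC f$ and $f^{\perp}$ do survive in $X_{1,\vec j}\cdots X_{r,\vec j}$ and in its absolute value; you acknowledge this obstacle yourself and leave it unresolved, so the proposal as written does not prove the lemma.

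The missing idea is that no structural analysis of the product is needed at all. Exactly as in the proof of Lemma \ref{le:NCCSuni}, bound each summand by the triangle inequality and submultiplicativity, $\NRM{\sum_{\vec j}\bigl|X_{1,\vec j}\cdots X_{r,\vec j}\bigr|}\leq\sum_{\vec j}\prod_{i}\NRM{X_{i,\vec j}}$, pull out $\max_{\vec j}\NRM{X_{i,\vec j}}=Q_{\pi,i}$ for $i\in T$, and apply the scalar Cauchy--Schwarz inequality blockwise to the remaining sum: a pair block $\{i,i'\}$ contributes $\bigl(\sum_j\NRM{X_{i,j}}^2\bigr)^{1/2}\bigl(\sum_j\NRM{X_{i',j}}^2\bigr)^{1/2}$ and a singleton contributes $\sum_j\NRM{X_{i,j}}$, i.e.\ the quantities $\tilde Q_{\pi,i}$ from that proof. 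The eigenvector hypothesis is then used only to show $\tilde Q_{\pi,i}\leq Q_{\pi,i}$ (the inequality you did establish), which yields strong $1$-control with no factor lost. In short: your lower bounds on $Q_{\pi,i}$ are right and are the heart of the paper's proof, but the left-hand side should be handled by the crude triangle-inequality bound rather than by the $\dC f\oplus f^{\perp}$ splitting, which fails.
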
 

\begin{proof}
With $\tilde Q_{\pi,i}$ as in the proof of Lemma  \ref{le:NCCSuni}, it is sufficient to check that $\tilde Q_{\pi,i} \leq Q_{\pi,i}$ (in fact there will be equal). Let us check the claim for $i$ in a block of two elements. Using the assumption, we have 
$$
\NRM{ \sum_j X_{i,j}^* X_{i,j} } \geq \NRM{ \PAR{ \sum_j X_{i,j}^* X_{i,j} } f }_2 = \tilde Q^2_{\pi,i},$$
and similarly with $X_{i,j} X_{i,j}^*$.
For a singleton block, the proof is identical.\end{proof}

\subsection{Relation to non-commutative Kintchine inequalities}

(This subsection is a digression independent of the rest of the article). Theorem \ref{th:NCCS} is connected to the non-commutative Kintchine inequalities.  Indeed, as a corollary of the proof of Theorem \ref{th:NCCS}, we obtain the following version of a classical non-commutative Kintchine-inequality due to Lust-Piquard and Pisier \cite{MR1150376} with the optimal constant found by Buchholz \cite{MR1812816}. 

\begin{corollary}
Let $(\gamma_1,\ldots,\gamma_m)$ be independent standard Gaussian variables and $(X_1,\ldots,X_m)$ be  elements in a $C^*$-algebra with faithful   state $\tau$. For any integer $k \geq 1$, we have
$$
\dE \tau \BRA{ \ABS{ \sum_j \gamma_j X_j } ^{2k}} \leq  \frac{(2k)! }{ 2^k k!}    \max\PAR{  \tau \BRA{ \PAR{ \sum_j X_j X_j^* }^k}, \tau \BRA{ \PAR{\sum_j X_j^*X_j }^k}},
$$
where $|x| = \sqrt{ x x^*}$ or $|x| = \sqrt{x^*x}$.
\end{corollary}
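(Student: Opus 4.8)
The plan is to derive this from Theorem \ref{th:NCCS} by expanding the $2k$-th moment and recognizing the Gaussian moments as a sum over pair partitions. First I would write
$$
\dE \tau \BRA{ \ABS{ \sum_j \gamma_j X_j }^{2k} } = \dE \tau \BRA{ \PAR{ \sum_j \gamma_j X_j } \PAR{ \sum_j \gamma_j X_j }^* \cdots }
$$
with $k$ factors of $\sum_j \gamma_j X_j$ alternating with $k$ factors of its adjoint (in the $|x|=\sqrt{xx^*}$ case; the other case is symmetric). Expanding all the sums over the $2k$ index variables $j_1,\ldots,j_{2k}$ and using the Wick/Isserlis formula $\dE[\gamma_{j_1}\cdots\gamma_{j_{2k}}] = \sum_{\pi} \prod_{\{a,b\}\in\pi} \IND(j_a = j_b)$, where the sum runs over the $\frac{(2k)!}{2^k k!}$ perfect matchings $\pi$ of $\INT{2k}$, I would interchange the finite sums to get
$$
\dE \tau \BRA{ \ABS{ \sum_j \gamma_j X_j }^{2k} } = \sum_{\pi} \tau\PAR{ X_\pi },
$$
where for each matching $\pi$, $X_\pi = \sum_{\vec j \in \INT{m}^k} Y_{1,\vec j} \cdots Y_{2k,\vec j}$ with $Y_{t,\vec j} = X_{j_{l_t}}$ if $t$ is in an ``unstarred'' position and $Y_{t,\vec j} = X_{j_{l_t}}^*$ if $t$ is in a starred position, and $l_t$ is the block of $t$. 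This is exactly the setup of Theorem \ref{th:NCCS}: there are no singleton blocks (every block of a perfect matching has two elements), $T = \emptyset$ so the ``open for $\pi$'' hypothesis is vacuous, and the collection $Y$ indeed depends only on $(t, j_{l_t})$ as required.

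Next I would bound $|\tau(X_\pi)| \le \NRM{X_\pi} \le \prod_{t\in\INT{2k}} Q_{\pi,t}$ by Theorem \ref{th:NCCS}. For each matched pair $\{a,b\}$ with $a<b$, the leftmost element contributes $Q_{\pi,a} = \NRM{\sum_j Y_{a,j} Y_{a,j}^*}^{1/2}$ and the rightmost contributes $Q_{\pi,b} = \NRM{\sum_j Y_{b,j}^* Y_{b,j}}^{1/2}$. Depending on whether positions $a$ and $b$ carry a star, each of $Y_{a,j}Y_{a,j}^*$ and $Y_{b,j}^*Y_{b,j}$ equals either $X_j X_j^*$ or $X_j^* X_j$, so each of the $2k$ factors is bounded by $\max\PAR{ \NRM{\sum_j X_j X_j^*}, \NRM{\sum_j X_j^* X_j} }^{1/2}$, giving $|\tau(X_\pi)| \le \max\PAR{ \NRM{\sum_j X_j X_j^*}, \NRM{\sum_j X_j^* X_j} }^{k}$. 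Summing over the $\frac{(2k)!}{2^k k!}$ matchings then yields the constant $\frac{(2k)!}{2^k k!}$.

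There is one point that needs care to reach the stated form: the right-hand side of the corollary has $\tau$-moments $\tau\BRA{(\sum_j X_jX_j^*)^k}$ and $\tau\BRA{(\sum_j X_j^*X_j)^k}$ rather than powers of operator norms $\NRM{\cdot}^k$. The main obstacle is therefore to replace the crude bound ``each factor $\le$ operator norm'' by one that keeps track of the state. The fix is to not apply $\tau$ after taking norms, but rather to keep one block per matching ``unnormed'': more precisely, after forming $E_{11}^{\otimes k}\otimes X_\pi = \prod_t \tilde Y_t$ as in the proof of Theorem \ref{th:NCCS} inside $M_m(\dC)^{\otimes k}\otimes\cA$, one applies the state $\tau' = \tr\otimes\cdots\otimes\tr\otimes\tau$ (suitably normalized) and uses that for a product of the $C^*$-algebra elements one has, by the Cauchy--Schwarz inequality for states and submultiplicativity, $|\tau'(\prod_t \tilde Y_t)| \le \prod_t \tau'(|\tilde Y_t|^{2k})^{1/(2k)}$, with $\tau'(|\tilde Y_t|^{2k})$ recognizing either $\tau\BRA{(\sum_j X_jX_j^*)^k}$ or $\tau\BRA{(\sum_j X_j^*X_j)^k}$ after tracing out the matrix legs. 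Applying the $\max$ of these two and summing over the $\frac{(2k)!}{2^k k!}$ matchings gives the claimed inequality. I would expect the bookkeeping of stars versus no-stars and of which matrix legs get $E_{1j}$ versus $E_{j1}$ to be the fiddliest part, but it is entirely parallel to the verification of \eqref{eq:trick} already carried out in the proof of Theorem \ref{th:NCCS}.
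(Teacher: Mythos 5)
Your proposal follows essentially the same route as the paper's proof: Wick expansion over the $(2k)!/(2^k k!)$ pair partitions, the amplification identity \eqref{eq:trick} from the proof of Theorem \ref{th:NCCS}, and then a H\"older-type bound for the extended trace $\Tr^{\otimes k}\otimes\tau$ whose $2k$-norms are identified with the $\tau$-moments of $\sum_j X_jX_j^*$ and $\sum_j X_j^*X_j$ -- your initial operator-norm detour is discarded exactly as it should be, since the paper pivots to the same fix. The only cosmetic difference is that the inequality you attribute to ``Cauchy--Schwarz for states plus submultiplicativity'' is the non-commutative H\"older inequality for the tracial state, which is precisely what the paper invokes at the corresponding step.
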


\begin{proof}
We write
\begin{eqnarray*}
\dE \tau   \ABS{ \sum_j \gamma_j X_j }^{2k} &  = &\dE  \tau  \PAR{ \PAR{ \sum_j \gamma_j X_j }\PAR{ \sum_j \gamma_j X^*_j } }^{k}  \\
& \leq & \sum_{(j_1,\ldots,j_{2k})} \tau \BRA{ X_{j_1} X_{j_2}^* \cdots  X_{j_{2k-1}}  X_{j_{2k}}^* }\dE \prod_{i=1}^{2k} \gamma_{j_i}.
\end{eqnarray*}
From Wick's formula, we get
\begin{eqnarray*}
\dE \tau   \ABS{ \sum_j \gamma_j X_j }^{2k}& = & \sum_{\pi} \tau \BRA{ X_{\pi} },
\end{eqnarray*}
where the sum is over all pair partitions $\pi$ of $\INT{2k}$ and $X_\pi$ is as in the previous subsections with $r =2k$, $X = (X_{1,\vec j},\ldots,X_{2k,\vec j})$  and $X_{2i-1,\vec j} = X_{j_{l_{2i-1}}}$, $X_{2i,\vec j} = X^*_{j_{l_{2i}}}$ where $l_i$ is the index of the block of $\pi$ where $i$ belongs. 

Since there $(2k)! / (2^k k!)$ pair partitions, from Equation \eqref{eq:trick} in Theorem \ref{th:NCCS}, we deduce that 
\begin{eqnarray*}
\dE \tau   \ABS{ \sum_j \gamma_j X_j }^{2k} & \leq &    \frac{(2k)! }{ (2^k k!)} \max_{\pi} \tilde \tau \{  \prod_{i\in \INT{2k}}\tilde{X}_i \},
\end{eqnarray*}
where $\tilde \tau  = \Tr_{M_{m}(\dC)^k} \otimes \tau$ and $\tilde X_i$ was defined above \eqref{eq:trick} with $X_{i,j} \in \{X_j ,X_j^*\}$ depending on the parity of $i$. Observe that 
$$
 \tilde X_i  \tilde X_i^*  =  \sum_{j\in \INT{m}} 1_m^{\otimes l_i-1}\otimes E_{11} \otimes 1_m^{\otimes k-l_i}\otimes X_{i,j}  X_{i,j}^*.
$$
Hence $$\tilde \tau \{ ( \tilde X_i  \tilde X_i^* )^k \} = \tau \{  ( \sum_j X_{i,j} X_{i,j}^*)^k \} \leq \max\PAR{  \tau \{ ( \sum_j X_j X_j^* )^k\}, \tau \{ (\sum_j X_j^*X_j )^k\}} .$$
We then apply H\"older's inequality:
$$
 \tilde \tau \{  \prod_{i\in \INT{2k}}\tilde{X}_i \}\leq \prod_{i \in \INT{2k}}  \PAR{  \tilde \tau \{  (\tilde{X}_i\tilde{X}_i^*)^k \} }^{1/(2k)}.
$$
The conclusion follows.
 \end{proof}

\section{Expected high trace method}
\label{sec:FK}

\subsection{Main technical result: proof of Theorem \ref{th:main1}}

We now fix a sequence $(a_0,a_1, \ldots, a_{2d})$  in $\cA_1$ satisfying the symmetry condition \eqref{eq:symai}. Recall the definition of $A_\star$ in \eqref{eq:defA*} and let 
$$
\rho = \| A_\star\|. 
$$

We fix an integer $N \geq 1$ and consider $(U_1,\ldots, U_d)$ be independent Haar distributed unitary matrices in $\dU_N$ or $\dO_N$ and set $U_{i+d}  = U_{i^*} = U_i^*$ for $i \in \INT{d}$. We then consider the operator $A \in \cA_1 \otimes M_N(\dC)$ as in \eqref{eq:defA} and the corresponding operators $B^{(k,m)}$ defined by \eqref{eq:defBkmbis} (we omit the dependency in $N$). We denote by $$\tau = \tau_1 \otimes \tr_N$$ the trace on $\cA_1 \otimes M_N(\dC)$ with $\tr_N = \frac 1 N \tr$.

We fix an even integer $\ell \geq 1$. From Lemma \ref{th:powerIB}, we have 
\begin{equation*}
 \| A \|_{\ell}^{\ell}  =   \tau \PAR{ A^{\ell}}  = \sum_{m=0}^{\ell} \tau \PAR{ B^{(\ell,m)} }.
\end{equation*}
For $m=0$, from \eqref{eq:defBkmbis}, we find
\begin{equation*}
\tau \PAR{ B^{(\ell,0)} }=  \tau_1 ( (A_\star^{\ell})_{\o \o} )  = \| A_\star\|_\ell^\ell.
\end{equation*}
Hence,
\begin{equation}\label{eq:normAltr}
\ABS{ \dE \SBRA{\| A \|_\ell^\ell} - \| A_\star \|_\ell^\ell }  \leq \sum_{m=1}^\ell \ABS { \tau \PAR{ B^{(\ell,m)} }}.
\end{equation}

From now on, we assume $1 \leq m \leq \ell$.  Our aim in this section is to upper bound the expected value $\dE [ \tau( B^{(\ell,m)} ) ]$ by $\rho^{\ell}$ up to an effective multiplicative factor as long as $\ell$ is not too large. More precisely, we will prove the following upper bound.

\begin{theorem}\label{th:FKBlm}
Assume that $(a_0,a_1,\ldots,a_{2d})$ satisfy  the symmetry condition \eqref{eq:symai}. Let $\ell_0$ be the largest integer $\ell$ such that 
$$
2 (2d)^{7/2} \ell^{3d+ 14} \leq N^{1/4}.
$$
Then, for all $1 \leq m \leq \ell \leq \ell_0$, for some numerical constant $c >0$, we have
$$
\ABS{ \dE \SBRA{\tau \PAR{ B^{(\ell,m)} } }} \leq  \frac{ c d^4  m^{4d +4} \ell^{11}}{N}  \rho^\ell.
$$
and, if $\ell$ is even,
$$
\ABS{ \dE \SBRA{\| A \|_\ell^\ell} - \| A_\star \|_\ell^\ell }  \leq   \frac{  c  d^4  \ell^{4 d + 16}}{ N} \rho^{\ell} \leq \frac{  c }{\sqrt N} \rho^{\ell}.
$$
\end{theorem}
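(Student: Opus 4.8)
The plan is to run the expected high trace method on the non-backtracking decomposition of Lemma~\ref{th:powerIB}. Since $\tau(B^{(\ell,0)}) = \tau_1((A_\star^\ell)_{\o\o}) = \|A_\star\|_\ell^\ell$, inequality \eqref{eq:normAltr} reduces the second assertion to the first one summed over $m$ (using that $\ell$ is even so that $\|A\|_\ell^\ell = \tau(A^\ell)$). So I focus on bounding $|\dE[\tau(B^{(\ell,m)})]|$ for $1\le m\le\ell$. Expanding $\tau=\tau_1\otimes\tr_N$ on $B^{(\ell,m)}=\sum_{w\in W_\ell,\,[w]\in S_m}a(w)\otimes u([w])$, i.e.\ writing $\tr_N$ of the product of random unitaries over the intermediate indices, gives
$$
\dE[\tau(B^{(\ell,m)})]=\frac1N\,\tau_1\Big(\sum_{\gamma=(g,x)}a(\ell,g)\,w(\gamma)\Big),
$$
the sum running over decorated closed non-backtracking paths $\gamma=(g,x)$ with $g=g_{i_m}\cdots g_{i_1}\in S_m$, $x=(x_0,\dots,x_m)\in\INT{N}^{m+1}$, $x_0=x_m$, where $a(\ell,g)=(A_\star^\ell)_{g\o}$ and $w(\gamma)=\dE\prod_{t=1}^m(U_{i_t})_{x_{t-1},x_t}$. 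By Weingarten calculus (carried out for this model in \cite{BC2}), $w(\gamma)$ depends only on finite combinatorial data attached to $\gamma$ — the isomorphism class of the quotient multigraph of the walk and the pattern of letter-matchings. I partition the paths into the corresponding classes $\pi$, write $p(\pi)$ for the common value of $w$ on $\pi$, and obtain by the triangle inequality
$$
\big|\dE[\tau(B^{(\ell,m)})]\big|\le\frac1N\sum_\pi|p(\pi)|\,\Big\|\sum_{\gamma\in\pi}a(\ell,g)\Big\|.
$$

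The crux is the estimate $\big\|\sum_{\gamma\in\pi}a(\ell,g)\big\|\le L(\pi)\,\rho^\ell$ with an explicit combinatorial weight $L(\pi)$ (this is Lemma~\ref{le:sumagamma}, sketched as \eqref{eq:Osumagamma}). I would prove it as follows. First, for fixed $\gamma\in\pi$, apply the first-passage decomposition of Lemma~\ref{le:pathdecomFd} to write $(A_\star^\ell)_{g\o}$ as a product of blocks $(C_\star^{(k_j,i_j)})_{h_j\o}$, one per branch of the quotient tree underlying $\pi$, times one block $(A_\star)^{k_0}_{h_0\o}$, with $\sum_j k_j=\ell$. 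Second, observe that as $\gamma$ ranges over $\pi$ the sum factors, along the quotient tree, into independent sums over vertex labels in $\INT{N}$ and over generator labels in $\INT{2d}$; I would then match this with the non-commutative Cauchy--Schwarz inequality, Theorem~\ref{th:NCCS} (in the index-set form of Remark~\ref{rk:NCCS}): matched indices of the walk play the role of the two-element blocks of the partition in Theorem~\ref{th:NCCS}, unmatched (``free'') indices play the role of the complement $T$, and the $C$- and $A^{k_0}$-blocks are the operators $X_{i,\vec j}$. Checking that this collection is \emph{open} for the partition — so that Theorem~\ref{th:NCCS} applies — is exactly where the compatibility between the geodesic/first-passage structure of $g$ and the Weingarten matching recorded by $\pi$ is used. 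Third, bound the resulting quantities $Q_{\pi,i}$ by powers of $\rho$ using Lemma~\ref{le:normCk}: a two-element block contributes $\|((C_\star^{(k,i)})^*C_\star^{(k,i)})_{\o\o}\|^{1/2}\le\rho^{k}$ (note this does \emph{not} grow with $N$, which is the key point), and a free position contributes $\|(C_\star^{(k,i)})_{g\o}\|\le\rho^{k}$ or $\|(A_\star)^{k_0}_{h_0\o}\|\le\rho^{k_0}$; the exponents telescope to $\ell$, and the leftover bookkeeping factors form $L(\pi)$. Keeping the operator norm \emph{outside} the $\gamma$-sum this way is precisely the improvement over \cite{MR4024563,BC2}.

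It then remains to sum over classes. Non-vanishing of $p(\pi)$ forces the generator-letters of the walk to balance, which combined with $g$ being a non-trivial reduced word forces the quotient graph to contain a cycle; the Weingarten estimates of \cite{BC2} then give $|p(\pi)|\le C(\pi)N^{-e(\pi)}$ with $e(\pi)\ge 1$, and one checks that for each contributing class $|p(\pi)|\,L(\pi)$ is bounded by a purely combinatorial factor. Counting the contributing classes $\pi$ for fixed $m$ — essentially counting decorated quotient multigraphs on $\le m$ vertices with $\le 2d$ edge-colours together with their matchings — gives a bound polynomial in $m,\ell,d$; pushing the exponents through yields the first displayed bound, $|\dE[\tau(B^{(\ell,m)})]|\le c\,d^4 m^{4d+4}\ell^{11} N^{-1}\rho^\ell$. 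Summing over $1\le m\le\ell$ (the sum being dominated up to constants by $m=\ell$, turning $m^{4d+4}\ell^{11}$ into $\ell^{4d+16}$) and invoking $2(2d)^{7/2}\ell^{3d+14}\le N^{1/4}$ to absorb $d^4\ell^{4d+16}$ into $N^{-1/2}$ gives the second bound. For $\dO_N$ instead of $\dU_N$, the Weingarten formula carries additional orthogonal-type pairings, handled by the same scheme with only a change of constants.

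The step I expect to be the main obstacle is the key estimate, and specifically arranging the non-commutative Cauchy--Schwarz so that the operator norm can be pulled out of $\sum_{\gamma\in\pi}$ while $L(\pi)$ stays small: one must fit together two a priori different tree decompositions of the same walk — the first-passage decomposition producing the $C$-blocks (Lemma~\ref{le:pathdecomFd}) and the Weingarten matching encoded by $\pi$ — and verify the openness hypothesis of Theorem~\ref{th:NCCS} in that joint structure. Once this is in place, the concluding count, though lengthy, is a routine optimization over the combinatorial parameters.
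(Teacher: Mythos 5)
Your proposal is correct and follows essentially the same route as the paper: the same expansion of $\tau(B^{(\ell,m)})$ over closed non-backtracking paths, the same partition into equivalence classes on which the Weingarten weight $w(\gamma)$ is constant, the key estimate $\NRM{\sum_{\gamma\in\pi}a(\ell,\gamma)}\le L(\pi)\rho^\ell$ obtained exactly as in Lemma \ref{le:sumagamma} by combining the decomposition of Lemma \ref{le:pathdecomFd} with Theorem \ref{th:NCCS} and the norm bounds of Lemma \ref{le:normCk}, and the same final class-counting and geometric-series optimization under the constraint defining $\ell_0$. The only points your sketch glosses over are bookkeeping details the paper handles explicitly (the profile-refined equivalence $\hat\sim$ needed to make $w$ genuinely constant, and the precise assignment of kernel edges to pair blocks, singleton blocks, and the set $T$ when invoking Theorem \ref{th:NCCS}), but these do not change the approach.
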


Assuming Theorem \ref{th:FKBlm}, Theorem \ref{th:main1} is immediate. 

\begin{proof}[Proof of Theorem \ref{th:main1}] We omit the dependency in $N$. 
We first assume that the symmetry condition \eqref{eq:symai} is met. We use $\| A_\star\|_p \leq \| A_\star\| = \rho$ for any $p \geq 1$.  From the assumption $d^{70} \leq N$, we find easily that $\ell = 2 \lceil p /2 \rceil = 2 \lceil  N^{1/(16d + 80)} /2 \rceil \leq \ell_0$ where $\ell_0$ is as in Theorem \ref{th:FKBlm}. From Jensen's inequality and Theorem \ref{th:FKBlm}, we deduce that $\dE  \SBRA{\| A \|_\ell} \leq ( 1+ c / \sqrt{N} )^{1/\ell} \rho \leq ( 1 + c' / (\sqrt N \ell) ) \rho$. The conclusion follows when the symmetry condition \eqref{eq:symai} holds. 

In the general case, we may use the standard linearization trick. Let $\check \cA_1 = M_2(\dC) \otimes \cA_1$. We consider the operator $\check A \in \check \cA_1 \otimes M_N(\dC)$ defined by \eqref{eq:defA} with $a_i$ replaced by 
$$
\check a_i = \begin{pmatrix} 0 & a_i \\ a^*_{i^*} & 0 \end{pmatrix}.
$$
In other words 
$$\check A =    \begin{pmatrix} A & 0 \\  0  & A^*\end{pmatrix} \cdot  ( u \otimes 1), \; \hbox{ with }\,  u =  \begin{pmatrix} 0 & 1  \\ 1 & 0 \end{pmatrix}.$$
The operator $\check A_\star$ in  $ \check \cA_1 \otimes \cA_\star$ is defined similarly. Since $u$ is unitary and $\| A^* \|_p = \| A \|_p$, we get $\| \check A_\star \| = \| A_\star\| $ and  $\| \check A \|_p = \| A \|_p $ for any $p \geq 1$.  We finally observe that the symmetry condition \eqref{eq:symai} is met for $(\check a_0,\ldots,\check a_{2d})$. 
\end{proof}

\subsection{Proof of Theorem \ref{th:FKBlm}}\label{subsec:th7}
From  \eqref{eq:defBkmbis}, we may write
$$
\tau \PAR{ B^{(\ell,m)} } =  \tau_1 \PAR{   \sum_{g \in S_m}  (A_\star^\ell)_{g , \o} \otimes \tr_N U (g) },
$$
with $U(g) = U_{i_1} \cdots U_{i_m}$ if $g = g_{i_m}\cdots g_{i_1} \in S_m$. 
Next, we expand the partial trace $\tr_N$ in terms of the entries of the unitary matrix $U(g)$. If $g = g_{i_m} \cdots g_{i_1}$, we find
$$
\tr_N U(g) = \frac 1 N  \sum_{x_0,\ldots,x_m} \prod_{t=1}^m (U_{i_t})_{x_{t-1} x_t},
$$
where the sum is over all $x_t \in \INT{N}$ with $x_0 = x_m$. 
Note that $S_m$ is in bijection with the set $(i_1,\ldots,i_m) \in \INT{2d}^m$ such that $i_{t} \ne i^*_{t+1}$ for all $t \in \INT{m-1}$ (if $m = 1$, this last constraint is empty). Let  $P_m$ be the set of  $\gamma = (x_0,i_1,x_1,\ldots,i_m, x_m) \in \INT{N} \times \INT{2d} \times \ldots \times \INT{2d} \times \INT{N}$ with $x_0 = x_m$ and $i_{t} \ne i^*_{t+1}$ for all $t \in \INT{m-1}$. We might think about an element of $P_m$ as a path on $\INT{N}$ of length colored by the indices in $\INT{2d}$.  We now use the linearity of the expectation and sum over all $g \in S_m$. We get
\begin{eqnarray}
N \ABS{ \dE \tau \PAR{ B^{(\ell,m)} }  } = \ABS{  \tau_1  \PAR{ \sum_{\gamma  \in P_m}   a(\ell,\gamma)  w(\gamma) } } \leq    \NRM{  \sum_{\gamma  \in P_m}   a(\ell,\gamma)  w(\gamma) },\label{eq:tr1}
\end{eqnarray}
where we have introduced the operator and probabilistic weights: 
\begin{equation}\label{eq:defapgamma}
a(\ell,\gamma) = (A_\star^\ell)_{g(\gamma) , \o} \AND w(\gamma) =   \dE \prod_{t=1}^m (U_{i_t})_{x_{t-1} x_t},
\end{equation}
with, for $\gamma = (x_0,i_1,x_1,\ldots,i_m, x_m) $,  $g(\gamma) = g_{i_m}\cdots g_{i_1} \in S_m$. 
Note that $a(\ell,\gamma)$ does not depend on the $x_t$'s; only $w(\gamma)$ does.

In the proof of Theorem \ref{th:FKBlm}, it is crucial that, in contrast to previous arguments of matrix-valued trace methods \cite{MR4024563,BC2}, we do not use the triangle inequality and upper bound \eqref{eq:tr1} by  $\sum_{\gamma  \in P_m}  \NRM{  a(\ell,\gamma)}  | w(\gamma) |$. This is why Theorem \ref{th:NCCS} will come into play.

We organize the terms on the right-hand side of \eqref{eq:tr1} by introducing a first equivalence class on elements of 
$P_{m}$. 
We write $\gamma \sim \gamma'$ if there exist a permutation $\tau$ on $\INT{N}$ and a family of permutations $(\beta_x)_{x \in \INT{n}}$ on $\INT{2d}$ such that the image of $\gamma = (x_{0},i_{1},\ldots,x_m)$ by these permutations is $\gamma' = (x_{0}',i'_{1},\ldots, x'_m)$. More precisely, $\gamma \sim \gamma'$ if for all $t \in \INT{m}$, $\tau(x_t) = x'_t$ and $\beta_{x_{t-1}}(i_{t}) = i_t' = \beta_{x_{t}} (i_t^*)^*$.  
In a combinatorial language, if 
$P_{m}$ is seen as a collection of labeled paths where the labels are the vertex entries in $\INT{N}$ and the edge colors in $\INT{2d}$, an equivalence class is the corresponding unlabeled path.
We will use colored graphs defined formally as follows. 

\begin{definition}[Colored graph]\label{def:colorgraph}
Let $X$ be a countable set and $C$ a countable set with an involution $* : C \to C$. A {\em colored edge} $[x,i,y]  $ is an equivalence 
class of $X \times C \times X$ equipped with the equivalence relation $(x,i,y) \sim (y,i^*,x)$. A {\em colored graph} $G = (V,E)$ is the 
pair formed by a vertex set $V \subset X$ and a set of colored edges $[x,i,y]$ with $x,y \in V$. The {\em degree} of $v \in V$ is
 $\sum_{e = [x,i,y] \in E}  ( \IND ( v = x )  + \IND ( v = y) )$.
\end{definition}

To each element $\gamma  = (x_{0},i_{1},\ldots,x_m) \in P_{m}$, we define the colored graph $G_\gamma = (V_\gamma,E_\gamma)$ on the color set $\INT{2d}$ as follows. We set $V_\gamma = \{ x_{t} : t \in \INT{m} \} \subseteq \INT{N}$ and 
$E_\gamma = \{ [ x_{t-1} ,i_t, x_{t}] :   t \in \INT{m}\}$. 
The {\em multiplicity} of an edge $f = [x,i,y] \in E_\gamma$ is defined as 
$$
m(f) = \sum_{t= 1}^m \IND (  [ x_{t-1} ,i_t, x_{t}] = f ).
$$
By definition
$$
\sum_{f \in E_\gamma} m(f) = m.
$$
If  $e = |E_\gamma|$, $e_1$ is the number of edges of multiplicity one and $e_{\geq 2}$ is the number of edges of multiplicity 
at least two, we deduce
$$
e = e_1 + e_{\geq 2} \AND e_1 + 2 e_{\geq 2} \leq m . 
$$
Consequently
\begin{equation}\label{eq:bdee1}
e \leq (m + e_1 )/2. 
\end{equation}
Moreover, from the boundary condition $x_0 = x_m$, $\gamma = (x_0,i_1, \ldots,x_m)$ is a closed non-backtracking path in $G_\gamma$ it contains al least one cycle.  Setting $v =|V_\gamma|$, this implies that
\begin{equation}\label{eq:genus0}
v \leq e.
\end{equation}
In particular, combining \eqref{eq:bdee1}-\eqref{eq:genus0}, for any $\gamma \in P_{m}$, we find
\begin{equation}\label{eq:genus}
\chi = m/2 + e_1/2 - v \geq 0.
\end{equation}

For integers $v, e_1$ such that \eqref{eq:genus} holds,  let $P_{m} (v,e_1)$ be the set of elements in $P_{m}$  with $v$ vertices and $e_1$ edges of multiplicity one.  Note that if $\gamma \sim \gamma'$, the number of vertices and the number of 
edges with a given multiplicity are equal. We may thus define $\cP_{m} (v,e_1)$ as the set of equivalence classes 
with $v$ vertices and $e_1$ edges of multiplicity one.  Our first lemma is a rough bound on $\cP_{m} (v,e_1)$. 

\begin{lemma}\label{le:Plclass}
If $\chi  = m/2 + e_1/2 - v $, we have, 
$$
|\cP_{m} (v,e_1) |\leq m  ^{ 6 \chi +4 }.  
$$
\end{lemma}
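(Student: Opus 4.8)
The plan is to bound the number of equivalence classes $\cP_m(v,e_1)$ by reconstructing a representative path from a small amount of combinatorial data, and counting the choices for that data. Fix a representative $\gamma = (x_0,i_1,x_1,\ldots,i_m,x_m) \in P_m(v,e_1)$; since equivalence classes are the ``unlabeled'' versions of these paths, I may as well assume the vertex set $V_\gamma$ and the colors are encoded relative to the order in which they are first discovered when traversing $\gamma$ from $t=0$ to $t=m$. So the counting reduces to: in how many ways can one traverse an abstract non-backtracking closed walk of length $m$, specifying at each step whether we move to an already-seen vertex or a new one, along an already-seen colored edge or a new one?

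The key dichotomy is between ``tree steps'' and ``excess steps''. Call step $t$ (going from $x_{t-1}$ to $x_t$ along edge $f_t = [x_{t-1},i_t,x_t]$) a \emph{first-passage tree step} if $x_t$ is a new vertex and $f_t$ is a new edge; I expect there to be exactly $v-1$ such steps if we root at $x_0$ (plus, if $x_0$ is itself only reached by returning, a careful accounting — but the walk is closed so $x_0$ is seen at time $0$). For a tree step there is essentially \emph{no choice} to record in the unlabeled class: the new vertex gets the next available label and the new edge gets the next available color-slot. All the entropy is in the remaining $m - (v-1)$ steps, the \emph{excess steps}, where either we revisit a vertex (choose which of the $\leq v$ vertices, but more sharply: the walk arrives somewhere, and the non-backtracking constraint plus the graph structure limits this) or reuse an edge. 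For each excess step I can afford a crude bound of roughly $m$ choices (pick the target vertex among $\leq v \le m$, or pick the reused edge among $\le e \le m$, together with at most a constant number of auxiliary bits). This gives $|\cP_m(v,e_1)| \le m^{O(m-v)}$, and I then need to massage $m - v$ into the stated exponent $6\chi + 4$ using $\chi = m/2 + e_1/2 - v$ and the inequalities \eqref{eq:bdee1}, \eqref{eq:genus0}.

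Concretely: from $e = e_1 + e_{\geq 2}$, $e_1 + 2e_{\geq 2} \le m$, and $v \le e$ we get $m - v \ge m - e = e_{\geq 2} - (e - e_1 - e_{\geq 2}) \ge 0$, but I want an \emph{upper} bound on the number of excess steps in terms of $\chi$. The number of steps that are \emph{not} first-passage-to-a-new-vertex is $m - (v-1)$, and I should split these further: steps reusing an old edge contribute to edge multiplicities, so the total such is $m - e_1 - e_{\geq 2}$ at the level of multiplicity excess, while steps going to an old vertex along a new edge number $e - (v-1) \le \chi$-ish after using $v \le e$. The cleanest route: every excess step is charged either to ``closing a cycle'' (at most $e - v + 1$ of these) or to ``traversing an already-used edge'' (at most $m - e$ of these, but bounded via $2\chi \ge m + e_1 - 2v \ge m - 2e + e_1$...). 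Each of the at most $\le 3\chi + 2$ or so excess steps costs a factor $\le m^2$ in choices (target plus bookkeeping), yielding $m^{6\chi+4}$; the $+4$ absorbs the initial choice of $x_0$'s slot and one or two off-by-one artifacts.

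The main obstacle I anticipate is the bookkeeping that converts ``excess steps'' into exactly the exponent $6\chi+4$ — i.e., proving that the number of steps carrying genuine combinatorial choice is at most $3\chi + 2$, and that each such step has at most $m^2$ outcomes. One has to be careful that a single excess step can simultaneously reuse an edge \emph{and} close a cycle, that the non-backtracking condition $i_t \ne i_{t+1}^*$ must be maintained (which only helps, by removing choices), and that the closed-walk condition $x_0 = x_m$ is automatically satisfied by construction rather than imposing an extra constraint one forgets to count. I would handle this by a depth-first-search / spanning-tree argument: traverse $\gamma$, maintain the discovered colored sub-forest, and observe that the walk is determined by the sequence of ``which edge do I leave along'' decisions, of which all but $v-1$ (the forced tree-growth ones when no choice exists) are free; bounding the free ones by $O(\chi)$ via Euler-characteristic counting $e - v + 1 \le \chi$ and the multiplicity bound $m - 2e_{\ge 2} - e_1 \le$ something, then being generous with constants to land on $6\chi+4$.
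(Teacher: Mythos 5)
There is a genuine gap exactly at the step you flag as the main obstacle: bounding the number of choice-carrying steps by $O(\chi)$. Your charging scheme splits the non-first-passage steps into ``closing a cycle'' (at most $e-v+1\le \chi+1$, which is fine) and ``traversing an already-used edge'', of which there are $m-e$; but $m-e$ is \emph{not} $O(\chi)$. Take the walk that goes twice around a cycle of length $k$: every edge has multiplicity $2$, $e_1=0$, $v=e=k$, $m=2k$, so $\chi=0$ while $m-e=m/2$. The inequality you invoke, $2\chi\ge m-2e+e_1$, only yields $m-e\le 2\chi+e_{\geq 2}$, and $e_{\geq 2}$ is of order $m$ in this example. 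Hence any scheme that spends even a constant factor $>1$ of entropy per reused edge produces a bound of the form $m^{\Theta(m-v)}$, which is far weaker than the claimed $m^{6\chi+4}$; your proposal never establishes the bound ``at most $3\chi+2$ steps carry genuine choice'', and the route you sketch toward it fails.

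The missing idea, which is the heart of the paper's argument, is that re-traversals of \emph{tree} edges carry no entropy at all, provided one records where each tree excursion ends. Since the walk is non-backtracking and the explored graph $T_t$ is a tree, the stretch of the walk between two consecutive non-tree steps is the \emph{unique} non-backtracking path in $T_t$ joining its endpoints; so it suffices to put the terminal data (vertex and next color) into the mark of the preceding ``important'' step. The choice-carrying steps are then exactly the traversals, counted with multiplicity, of the excess (non-tree) edges, and their number is at most $(m-1)-\bigl(2(v-1)-e_1\bigr)=2\chi+1$, because tree edges are traversed at least twice unless they have multiplicity one. Each such step is encoded by its position (at most $m$ choices) and a mark consisting of two vertex/color pairs (at most $m^2$ choices), and one extra factor $m$ for the final color $i_m$ gives $m\cdot(m^3)^{2\chi+1}=m^{6\chi+4}$. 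Note also that non-backtracking is used a second time, to guarantee that a first time is never immediately followed by a tree time, so that the decomposition into blocks (first times, an important time, tree times) is well defined and the marks suffice to reconstruct the canonical path; it does more than merely ``remove choices''.
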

\begin{proof}
We define a canonical element 
in each equivalence class  by saying that $\gamma = (x_0,i_1,\ldots,x_m) \in P_{m}$ is {\em canonical} if it is minimal 
for the lexicographic order in its equivalence class. For example, if $\gamma \in P_{m}(v, e_1)$ is canonical then 
$x_0 = x_m = 1$, $i_1 = 1$, $V_\gamma = \INT{v}$ and the vertices appear for the first time in the sequence $(\gamma_t)$ in order ($2$ before $3$ and so on).  
Our goal is then to give an upper bound on the number of canonical elements in $P_{m} (v,e_1)$. The case $m=1$ is trivial. We assume $m \geq 2$ in the sequel.

Fix such canonical element $\gamma$ in $P_{m} (v,e_1)$. Set $m_0 = m-1$. We think of $t \in \INT{m_0}$ as a time and we explore iteratively the sequence  $(x_{0},\ldots, ,x_t,i_{t+1})$ $t \in \INT{m_0}$. We build a non-decreasing sequence of colored trees $(T_t)$ as follows. Initially, for $t=0$, $T_0$ is the graph with no edge, and vertex set $\{1 \}$. At time $t \in \INT{m_0}$, if the addition to $T_{t-1}$ of the edge 
$[x_{t-1},i_t,x_{t}]$ does not create a cycle, then $T_t$ is the tree spanned by $T_t$ and $[x_{t-1},i_t,x_{t}]$, we then say that 
$[x_{t-1},i_t,x_{t}]$ is a {\em tree edge}. Otherwise $T_t = T_{t-1}$. By construction, the vertex set of $T_t$ is $ \{x_0,\ldots,x_t\} = \INT{u}$ where $u$ is the number of visited vertices so far (since $\gamma$ is canonical). In particular, $T_{m_0}$ is a spanning subtree of $\INT{v}$, and the number of tree edges is $f = v-1$. 

Next, we build a partition of $\INT{m_0}$. Let us say that $t \in \INT{m_0}$ is a {\em first time} if
$x_{t} \ne \{x_0,\ldots,x_{t-1}\}$, that is $x_t$ had not been seen so far. 
We say that 
$t \in \INT{m_0}$ is a {\em tree time} if  $[x_{t-1},i_t,x_{t}]$ is a tree edge which has already been visited. Finally, the other times $t \in \INT{m_0}$ are 
called {\em important times}. We have thus partitioned $\INT{m_0}$ into first times, tree times, and important times. By construction, $x_t \in \{x_0,\ldots,x_{t-1}\}$ for important and tree times.

We also note that all edges are visited at least twice except $e_1$ of them. We deduce that the number of important times is at most 
$$
m-1 - 2 f + e_1 \leq m - 2 v + e_1 +1 = 2 \chi +1. 
$$

Due to the non-backtracking constraint $i_{t+1} \ne i_t^*$, a first time cannot be directly followed by a tree time. The sequence $\gamma$ can thus be decomposed as 
the successive repetitions of: $(i)$ a sequence of first times (possibly empty), $(ii)$ an important time, 
$(iii)$ a sequence of tree times (possibly empty).

We mark each important time by the vector $(i_t,x_{t}, x_{\tau-1},i_{\tau})$ where $\tau > t$ is the next time which is not a tree time (if this time exists, otherwise $t$ is the last important time and we put the mark $(i_t,x_t)$).
We claim that the canonical sequence $\gamma$ is 
uniquely determined by the positions of the important times, their marks, and possibly the value of $i_m$. Indeed, the sequence $(x_{t},i_{t+1}, \ldots, x_{\tau})$ is the unique non-backtracking path in $T_t$ from $x_{t}$ to $x_\tau$ (there is a unique non-backtracking path between two vertices of a tree). Moreover, if $\sigma \geq \tau$ is the next important time or $\sigma = m$ if $t$ is the last important time, $(\tau, \ldots, \sigma-1)$ is a sequence of first times (if $\tau = \sigma$ this sequence is empty). It follows that if $u = |\{x_0,\ldots,x_{t-1}\}|$ vertices had been seen so far, we have $x_{\tau + k-1} = u+k$, for $k = 1, \cdots, \sigma- \tau$, by the minimality of canonical paths. Similarly, if $i_{\tau} \ne 1^* = d+1$, then $i_{\tau + k} = 1$ for  $k = 1, \cdots, \sigma- \tau-1$, while if $i_{\tau} = d+1$, then $i_{\tau + 1} = 2$, $i_{\tau + k} = 1$ for $k = 2, \cdots, \sigma- \tau-1$.

There are at most $m-1$ possibilities for the position of an important time and $m^2$ possibilities for its mark 
(there are at most $m$ edges in $G_\gamma$). In particular, the number of distinct canonical paths in $P_{m} (v,e_1)$ 
is generously bounded by 
$$
m (m^3)^{2 \chi+1} = m^{6\chi + 4},
$$
where the first factor $m$ accounts for the possible values of $i_m$.  \end{proof}

Let us now estimate the probabilistic weight $w(\gamma)$.  For that purpose, we rely on a result from \cite{BC2}. Let us introduce some notation. For $w \in \dC$ and $\veps \in \{\cdot,-\}$, we denote by $w^\veps$ either $w$ or $\bar w$ (if $\veps = \cdot$ or $\veps = -$). For integer $k \geq 1$ consider a sequence $x,y \in \INT{N}^k$ and $\veps \in \{\cdot , -\}^k$. For $U \in \dU_N$ Haar unitary, we are interested in estimating the average product
$$
 \EE \PAR{\prod_{t=1}^k  U_{x_{t} y_{t}}^{\varepsilon_{t}}}. 
$$
We say that the triple $(x,y,\veps)$ is {\em balanced}, if for any $u \in \INT{n}$, $\sum_{t} \IND ( \veps_t = \cdot ) \IND ( x_t = u) = \sum_{t} \IND ( \veps_t = - ) \IND ( x_t = u)$ and $\sum_{t} \IND ( \veps_t = \cdot ) \IND ( y_t = u) = \sum_{t} \IND ( \veps_t = - ) \IND ( y_t = u)$ (that is if the left and right indices in the product $\prod_t U_{x_{t} y_{t}}^{\varepsilon_{t}}$ are balanced with respect to complex conjugation). Similarly, for $\dO_N$, we say that the sequence is balanced if for  any $u \in \INT{n}$, $\sum_{t}  \IND ( x_t = u) $ and $\sum_t \IND( y_t = u)$ are both even. As above the multiplicity of a pair $(u,v) \in \INT{N}^2$ is $\sum_{t} \IND ( (x_t,y_t) = (u,v) ) $.  
 
\begin{lemma}\label{cor:WG2} For $N \geq 1$, let $U$ be a Haar unitary on $\dU_N$ or $\dO_N$. 
 Let $k$ be even with $2 k^{7/2} \leq  N^{2}$, $\veps \in \{\cdot,-\}^k$ and $x,y$ in  $\INT{N}^k$. For some numerical 
 constant $c>0$, 
$$\left| \EE \PAR{\prod_{t=1}^k  U_{x_{t} y_{t}}^{\varepsilon_{t}} }  \right| \leq c\, e^{k\eta} N^{-k/2}  \eta^{ e_1 } k^{m_4/2},$$
where $\eta =  k^{} N^{-1/4}$, $e_1$ is the number of pairs of multiplicity $1$,
$m_{4}$ is the sum of multiplicities of pairs with multiplicity at least $4$. Moreover, the above expectation is zero unless the triple $(x,y,\veps)$ is balanced.
\end{lemma}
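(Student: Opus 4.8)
The plan is to read Lemma \ref{cor:WG2} as a reformulation of a Weingarten estimate already proved by the authors in \cite{BC2}, and to reconstruct the structure of that proof. The starting point is the exact integration formula for the unitary group: with $A=\{t:\veps_t=\cdot\}$ and $B=\{t:\veps_t=-\}$,
$$\EE\PAR{\prod_{t=1}^k U_{x_t y_t}^{\veps_t}}=\sum_{\sigma,\tau}\mathrm{Wg}_N(\sigma^{-1}\tau)\prod_{a\in A}\IND\PAR{x_a=x_{\sigma(a)}}\,\IND\PAR{y_a=y_{\tau(a)}},$$
the sum being over pairs of bijections $\sigma,\tau:A\to B$ and $\mathrm{Wg}_N$ the unitary Weingarten function; this already forces $|A|=|B|=k/2$ for a non-zero answer. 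The vanishing-unless-balanced claim is then immediate: a bijection $\sigma:A\to B$ with $x_a=x_{\sigma(a)}$ for every $a$ exists if and only if each index value occurs as often among $\{x_a:a\in A\}$ as among $\{x_b:b\in B\}$, and likewise for $\tau$ and the right indices; if $(x,y,\veps)$ is unbalanced every summand vanishes. For $\dO_N$ one uses the orthogonal Weingarten formula with pairs of pair partitions of $\INT{k}$ in place of $(\sigma,\tau)$, and ``balanced'' becomes evenness of every left- and right-index multiplicity.

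For the quantitative bound I would feed in the standard uniform estimate on the Weingarten function, valid precisely in a regime of the form $2k^{7/2}\le N^2$ (this is the only use of that hypothesis): for $\pi\in S_{k/2}$ with cycles of lengths $c_1,\dots,c_r$,
$$|\mathrm{Wg}_N(\pi)|\le (1+O(\eta^2))\,N^{-k/2-|\pi|}\prod_i C_{c_i-1},\qquad |\pi|=\tfrac{k}{2}-r,\ \ \eta=kN^{-1/4},$$
together with its orthogonal analogue. The diagonal term $\sigma=\tau$ contributes $\mathrm{Wg}_N(\mathrm{id})\asymp N^{-k/2}$; the number of diagonal pairs compatible with the data is $\prod_{(u,v)}\alpha_{uv}!$, where $\alpha_{uv}$ is the number of copies of the pair $(u,v)$ carried by $A$, and any pair of odd multiplicity (in particular any multiplicity-one pair) makes this count vanish. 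Hence the surviving diagonal contribution is at most $N^{-k/2}\prod_{(u,v):\,\mathrm{mult}(u,v)\ge 4}\alpha_{uv}!\le N^{-k/2}k^{m_4/2}$, since pairs of multiplicity $2$ or $3$ contribute only a factor $\le 1$: this is where the factor $k^{m_4/2}$ enters.

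The off-diagonal part is organized by the defect $|\pi|$: each of the $|\pi|$ transpositions costs an extra $N^{-1/2}$, and a counting argument — the heart of the estimate in \cite{BC2} — shows that a multiplicity-one pair $(x_t,y_t)$ cannot be matched to a copy of itself and therefore forces the permutation to be ``far from the identity'', each such pair ultimately contributing one factor $\eta=kN^{-1/4}$ (the $N^{-1/2}$ from an extra transposition, the $k$ from the choice of its matching partner), which yields the gain $\eta^{e_1}$. The remaining summations — over which compatible pair $(\sigma,\tau)$ realizes a given $\pi$, over cycle types, and over the $O(\eta^2)$ subleading corrections in the Weingarten expansion — are geometric series in $\eta$ and, together with the Catalan factors $\prod_i C_{c_i-1}\le 4^{k/2}$, are absorbed into the prefactor $c\,e^{k\eta}$. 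The orthogonal case is handled identically with pair partitions replacing permutations.

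Since all of this is carried out in \cite{BC2}, the genuine work in this section is only to check that the hypothesis $2k^{7/2}\le N^2$ stated here is at least as strong as the threshold under which the Weingarten bounds are established there, and that $e_1$ and $m_4$ here refer to multiplicities of the \emph{pairs} $(x_t,y_t)$ rather than of individual indices. Were one to redo the estimate from scratch, the main obstacle is precisely the bookkeeping of the last two paragraphs: keeping separate the powers of $N$ coming from the Weingarten function itself and those coming from the count of compatible permutation (or pair-partition) pairs, and repackaging all the combinatorial overhead into the single clean error factor $e^{k\eta}\eta^{e_1}k^{m_4/2}$.
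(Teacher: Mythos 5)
Your proposal takes essentially the same route as the paper: the paper's proof is exactly a citation of the Weingarten results of \cite{BC2} (the formula for the vanishing/balanced claim, and the bound $N^{k/2}(1+\delta)^{-1}|\EE\prod_t U_{x_ty_t}^{\veps_t}|\le k^{m_4/2}(1+\eta)^k\eta^{e_1}$ from the proof of Corollary 14 there, with Theorem 16 for $\dO_N$), followed by $(1+\eta)^k\le e^{k\eta}$ and adjusting $c$, which is what you do. Your additional sketch of the internal BC2 argument is fine as heuristic background, though the parenthetical power counting for $\eta^{e_1}$ (``$N^{-1/2}$ per extra transposition times $k$ per matching choice'') does not literally reproduce $\eta=kN^{-1/4}$; since both you and the paper ultimately defer that estimate to \cite{BC2}, this does not affect correctness.
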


\begin{proof}
We start with the unitary group $\dU_N$. The last statement is an immediate consequence of the Weingarten formula; see \cite[Theorem 4]{BC2}. Let $\delta = 3 k ^{7/2} N^{-2}$ and $\eta = k N^{-1/4}$ be the constants appearing in \cite[Theorem 11]{BC2}. From the proof of \cite[Corollary 14]{BC2}  we have 
$$
I = N^{k/2} (1+\delta)^{-1} \left| \EE \PAR{\prod_{t=1}^k  U_{x_{i} y_{i}}^{\varepsilon_{i}} }  \right| \leq k^{ m_4/2} (1+ \eta)^k \eta^{e_1}.
$$
It remains to use $(1+a)^b \leq e^{ab}$ and adjust the constant $c$. For the orthogonal group $\dO_N$, the same argument works thanks to \cite[Theorem 16]{BC2} (see discussion below  \cite[Theorem 16]{BC2}).
\end{proof}

As a corollary, we obtain the following bound on $w(\gamma)$ defined in \eqref{eq:defapgamma}.
\begin{corollary}\label{le:pgamma} Assume $2 m^{7/2} \leq  N^{2}$.
Let $\gamma \in P_{m} (v, e_1)$ and $\chi = m/2 + e_1/2 - v$. If $v > m/2$ then $w(\gamma ) = 0$. Otherwise, 
there exists a numerical constant $c >0$ such that 
$$
w(\gamma) \leq c e^{m \eta} N^{ -  m/2}  \eta^{e_1} m  ^{ 2\chi  },
$$
with $\eta =  m N^{-1/4}$. 
\end{corollary}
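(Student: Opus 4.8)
The plan is to reduce to Lemma~\ref{cor:WG2} by using the independence of $U_1,\dots,U_d$, and then to translate the combinatorial quantities of the single-matrix estimate into quantities of the colored graph $G_\gamma$. Write $\gamma=(x_0,i_1,x_1,\dots,i_m,x_m)$ and split the $m$ factors of $\prod_{t=1}^m(U_{i_t})_{x_{t-1}x_t}$ according to the index $j\in\INT d$ of the underlying Haar matrix: a step with $i_t=j\in\INT d$ contributes $(U_j)_{x_{t-1}x_t}$, a step with $i_t=j+d$ contributes an entry of $U_j^*$, namely $\overline{(U_j)_{x_tx_{t-1}}}$ (unitary case) or $(U_j)_{x_tx_{t-1}}$ (orthogonal case). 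By independence, $w(\gamma)=\prod_{j=1}^d w_j(\gamma)$, where $w_j(\gamma)$ is an expectation of a product of $k_j$ entries of the single matrix $U_j$ and $\sum_{j=1}^d k_j=m$. The bookkeeping point is that all factors of $w_j(\gamma)$ coming from traversals of a fixed colored edge $f=[a,j,b]\in E_\gamma$ have index pair exactly $(a,b)$ (only the conjugation may vary), distinct colored edges with underlying color $\{j,j^*\}$ produce distinct index pairs, and the multiplicity of $(a,b)$ in $w_j(\gamma)$ equals $m(f)$. Hence, denoting by $e_1^{(j)}$ and $m_4^{(j)}$ the quantities $e_1$ and $m_4$ of Lemma~\ref{cor:WG2} applied to $w_j(\gamma)$, we get $\sum_j e_1^{(j)}=e_1$ and $\sum_j m_4^{(j)}=m_4$, where $m_4:=\sum_{f\in E_\gamma,\ m(f)\ge 4}m(f)$.

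We may assume $w(\gamma)\ne 0$, as otherwise both assertions are trivial. Then each $w_j(\gamma)\ne 0$, so each $k_j$ is even (otherwise $w_j(\gamma)=0$ by invariance of the Haar measure under $U_j\mapsto\zeta U_j$ with $|\zeta|=1$, resp.\ $U_j\mapsto -U_j$), and by the last assertion of Lemma~\ref{cor:WG2} the $j$-th family is balanced for every $j$. The key step is then to show that balance, together with the non-backtracking constraint, forces $\gamma$ to leave every vertex $u\in V_\gamma$ at least twice. Indeed $u$ is left at least once, since $u=x_t$ for some $t$ and $\gamma$ is closed; if it were left exactly once, it would be entered exactly once, by a step of color $c$, and left by a step of color $c'$ with $c'\ne c^*$ by non-backtracking. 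Applying the balance identity at $u$ for the underlying Haar index of the incoming step — the right-index identity when $c\in\INT d$ and the left-index identity when $c\in\{d+1,\dots,2d\}$ in the unitary case, and the corresponding even-multiplicity conditions in the orthogonal case — forces the existence of an outgoing step of color $c^*$ at $u$, which is impossible since $u$ has a unique outgoing step, of color $c'\ne c^*$. Thus $m=\sum_{u\in V_\gamma}\#\{t:x_{t-1}=u\}\ge 2v$, i.e.\ $v\le m/2$; in particular $v>m/2$ gives $w(\gamma)=0$.

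Assuming $v\le m/2$, I would assemble the quantitative bound. Since $2k_j^{7/2}\le 2m^{7/2}\le N^2$ and $k_j$ is even, Lemma~\ref{cor:WG2} applies to $w_j(\gamma)$; in the sharper form visible in its proof, $|w_j(\gamma)|\le(1+\delta_j)\,N^{-k_j/2}e^{k_j\eta_j}\eta_j^{\,e_1^{(j)}}k_j^{\,m_4^{(j)}/2}$ with $\eta_j=k_jN^{-1/4}$ and $\delta_j=3k_j^{7/2}N^{-2}$. Multiplying over $j$ and using that $t\mapsto t^{7/2}$ and $t\mapsto t^2$ are superadditive on $\dR_+$: $\prod_j(1+\delta_j)\le\exp\!\big(3N^{-2}\sum_j k_j^{7/2}\big)\le\exp(3N^{-2}m^{7/2})\le e^{3/2}$; $\prod_j N^{-k_j/2}=N^{-m/2}$; $\sum_j k_j\eta_j=N^{-1/4}\sum_j k_j^2\le N^{-1/4}m^2=m\eta$ with $\eta=mN^{-1/4}$; $\eta_j\le\eta$ gives $\prod_j\eta_j^{\,e_1^{(j)}}\le\eta^{e_1}$; and $k_j\le m$ gives $\prod_j k_j^{\,m_4^{(j)}/2}\le m^{m_4/2}$. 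Altogether $|w(\gamma)|\le e^{3/2}\,e^{m\eta}N^{-m/2}\eta^{e_1}m^{m_4/2}$.

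Finally I would replace $m^{m_4/2}$ by $m^{2\chi}$; since $m\ge 1$ this reduces to $m_4\le 4\chi$. Using $m=\sum_f m(f)$, $e_1=\#\{f:m(f)=1\}$ and $v\le e$ from \eqref{eq:genus0},
\[
4\chi=2m+2e_1-4v\ \ge\ 2m+2e_1-4e\ =\ \sum_{\,f\in E_\gamma,\ m(f)\ge 2}\bigl(2m(f)-4\bigr)\ \ge\ \sum_{\,f\in E_\gamma,\ m(f)\ge 4}m(f)\ =\ m_4,
\]
the last inequality because $2\mu-4\ge\mu$ for $\mu\ge 4$ while the omitted terms ($m(f)\in\{2,3\}$) are non-negative. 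Taking $c=e^{3/2}$ yields the claim. The two non-routine ingredients are the colored-edge/index-pair bookkeeping of the first step and the combinatorial vanishing argument of the second; everything else only recombines the single-matrix estimate of Lemma~\ref{cor:WG2} with the elementary bound \eqref{eq:genus0}.
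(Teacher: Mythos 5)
Your overall route is the same as the paper's: factor $w(\gamma)$ over the $d$ independent matrices using independence, identify the pairs and multiplicities of Lemma \ref{cor:WG2} with the colored edges of $G_\gamma$, and finish with the combinatorial inequality $m_{\geq 4}\le 4\chi$; your edge/pair bookkeeping, your superadditivity trick for keeping the constant numerical (rather than $c^d$), and your derivation of $m_{\geq 4}\le 4\chi$ from $v\le e$ are all fine, and in places slightly more careful than the printed proof. The one genuine flaw is in the vanishing step. You claim that balance plus non-backtracking force \emph{every} vertex of $V_\gamma$ to be left at least twice, arguing that a vertex visited once is entered by a step of color $c$ and left by the next step, of color $c'\ne c^*$ ``by non-backtracking''. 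This is only valid for vertices other than $x_0$: the definition of $P_m$ imposes $i_t\ne i_{t+1}^*$ only for $t\in\INT{m-1}$, so at the base point the entering step (step $m$) and the leaving step (step $1$) are not consecutive for this constraint, and nothing forbids $i_1=i_m^*$; in fact, balance at $x_0$ \emph{forces} $i_1=i_m^*$ when $x_0$ is visited only once. The claim itself is false: take $\gamma$ which goes from $x_0$ to $a$ by color $1$, then performs any balanced closed non-backtracking word based at $a$ whose first color is not $1^*$ and whose last color is not $1$ (for instance, traverse two distinct loops attached at $a$ forward and then traverse them backward), and finally returns from $a$ to $x_0$ by color $1^*$. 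This is a balanced element of $P_m$ in which $x_0$ is left exactly once, so your inequality $m\ge 2v$ is not established as written.

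The gap is easily repaired with an ingredient you already have. Your argument does show that every vertex different from $x_0$ occurs at least twice among $x_1,\ldots,x_m$, while $x_0$ occurs at least once (as $x_m$), giving $m\ge 2(v-1)+1$. Since you have already observed that $w(\gamma)\ne 0$ forces each $k_j$ to be even, $m=\sum_j k_j$ is even, and $m\ge 2v-1$ upgrades to $m\ge 2v$, i.e.\ $v\le m/2$; this is exactly the count used in the paper. With this correction, the remainder of your proof (the identities $\sum_j e_1^{(j)}=e_1$ and $\sum_j m_4^{(j)}=m_{\geq 4}$, the product of the single-matrix Weingarten bounds, and $m_{\geq 4}\le 4\chi$) goes through and yields the stated estimate.
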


\begin{proof}
We treat the case of $\dU_N$. Using the independence of the matrices $(U_1,\ldots,U_d)$, $w(\gamma ) =  |\dE \prod_{t=1}^m (U_{i_t})_{x_{t-1} x_t}|$ can be decomposed as a product of $d$ terms of the form given by Lemma \ref{cor:WG2}. 
It follows from Lemma \ref{cor:WG2} that
$w(\gamma) = 0$ unless for each vertex $x \in V_\gamma$ and each $i \in \INT{2d}$, $$\sum_{t = 1}^m \IND (i_t = i) \IND(x_{t-1} = x) = \sum_{t = 1}^m \IND (i_t = i^*) \IND (x_{t} = x).$$ Since $i_{t} \ne i^*_{t+1}$, each vertex on $G_\gamma$ different from the starting vertex $x_0$ is at least twice equal to $x_t$ with $t \in \INT{m}$ and at least once for $x_0$ (since $x_0 =x_m$). This
last condition implies that $2 (v -1) +1 \leq m $ or equivalently $v \leq m/2$ (since $m$ is even). This proves the first claim. The same argument works for $\dO_N$.

Let us now prove the second claim. By Corollary \ref{cor:WG2}, it suffices to prove that 
\begin{equation}\label{eq:m3bd}
m_{\geq 4} \leq  4 \chi ,
\end{equation}
where $m_{\geq 4} = \sum_{e} \IND(m_e \geq 4) m_e$ is the sum of multiplicities of edges of $G_\gamma$ with multiplicity at least $4$. 
Indeed, let $e_{23}$ be the number of edges of multiplicity $2$ or $3$. We have 
$$
e_1 + 2 e_{23} + m_{\geq 4} \leq m  \AND e_1 + e_{23} + \frac{m_{\geq 4}}{ 4} \geq e \geq v ,
$$
where $e = |E_\gamma|$ is the number of edges and where we have used \eqref{eq:genus0}. 
We may cancel $e_{23}$:
$$
-e_1 + \frac{m_{\geq 4}} { 2} \leq m - v. 
$$
Therefore, \eqref{eq:m3bd} is satisfied, and the proof is complete. \end{proof}

To understand better the right-hand side of \eqref{eq:tr1}, we introduce a new colored graph $\widehat G_\gamma$ associated with a path $\gamma = (x_0,i_1,\ldots, x_m) \in \cP_m(v,e_1)$. In words, $ \widehat G_\gamma = (\widehat V_\gamma , \widehat E_\gamma)$ is obtained from $G_\gamma =( V_\gamma, E_\gamma) $ by gluing together all degree $2$ vertices (different from $x_0$). Such a graph is often called the {\em kernel} of a base graph (here $G_\gamma$). 

More precisely, let $V_{\geq 3} \subseteq V_\gamma$ be the subset of vertices of degree at least $3$. We set $\widehat V_\gamma = V_{\geq 3} \cup \{x_0 \}$. Note that $V_\gamma \backslash \widehat V_\gamma$ contains only vertices of degree $2$ (since $i_{t} \ne i_{t-1}^*$ all vertices different from $x_0$ have degree at least $2$). Let us now define the edges of $\widehat G_\gamma$. We say that $\pi = (y_0,j_1,\ldots,y_k)$ with $y_t \in V_\gamma$ and $j_t \in \INT{2d}$ is a {\em path} on $G_\gamma$ of length $k$ if for all $t \in \INT{k}$, $j_{t} \ne j_{t-1}^*$ and $[y_{t-1}, j_t, y_{t}] \in E_\gamma$. We say that the path is {\em maximal} if $y_0 , y_k \in \widehat V_\gamma$ and $y_t \notin \widehat V_\gamma$ for all $t \in \INT{k-1}$.  Finally, we equip the set of maximal paths of $G_\gamma$, say $C$, with the involution $\pi^* = (y_k,j_{k}^*,y_{k-1},\ldots, j_{1}^*,y_0)$. In words, this involution maps a path to its reversed path. We may then define $\widehat G_\gamma = (\widehat V_\gamma, \widehat E_\gamma)$ as the colored graph on $C$ (as per Definition \ref{def:colorgraph}) with colored edges $[y_0, \pi, y_k ]$ with $\pi = (y_0,j_1,\ldots,y_k) \in C$ maximal path. We also note that since vertices in $V_\gamma \backslash \widehat V_\gamma$ have degree $2$, the maximal paths in $C$ may only intersect at their end vertices in $\widehat V_\gamma$. In particular, each vertex in $V_\gamma \backslash \widehat V_\gamma$ is contained in exactly one edge of $\widehat G_\gamma$.
 
Let us now introduce a finer equivalence for paths. This is motivated by the fact that $w(\gamma)$ is not constant over each equivalence class (even if we have a common upper bound given by \eqref{le:pgamma}). Consider for example $d = 5$ and paths which starts with $\pi_1 = (x_0,1,x_1,2,x_2,3,x_3,4,x_4,5,x_0)$ and $\pi_2 = (x_0,1,x_1,2,x_2,3,x_3,4,x_4,2,x_0)$ with all $x_i$'s distinct. For $i = 1,2$, let $\gamma_i$ be the concatenation of $\pi_i$ and  $\pi_i' = (x_0,1^*,x_1,\ldots)$. We have $\gamma_1 \sim \gamma_2$, but the multiplicities of colored edges are different. We may then check by hand that $w(\gamma_1) \ne w(\gamma_2)$. 

Accordingly, we introduce a finer notion of path equivalence denoted by $\hat \sim$ such that $\gamma \hat \sim  \gamma$  does imply that $\gamma \sim \gamma'$ and $w(\gamma) = w(\gamma')$. This notion of equivalence is tuned so that in the forthcoming Lemma \ref{le:sumagamma}, we have a sharp upper bound on the operator norm of $\sum_{\gamma'} a(\ell,\gamma')$ where, for a fixed $\gamma \in \cP_{m}(v,e_1)$ the sum is over all $\gamma'$ such that $\gamma' \hat \sim \gamma$ 
(recall that $a(\ell,\gamma)$ was defined in \eqref{eq:defapgamma}). This has to be done carefully to guarantee that the number of equivalence classes of $\hat \sim$ do not grow too fast. This whole procedure in the main technical step toward the proof of Theorem \ref{th:FKBlm}.

\begin{definition}[Profile of a path]\label{def:profile} Let $\gamma \in P_m$ with kernel graph $\widehat G_\gamma = (\widehat V_\gamma,\widehat E_\gamma)$.
To each edge $f = [y_0,\pi,y_k]  \in \widehat E_\gamma$, $\pi = (y_0,j_1,y_1,\ldots,j_k, y_k)$ we associate its {\em profile} as the collection of integers $(j_1,j_k,(\prof_i)_{i \in \INT{2d}}),$ where $\prof_i$ is the number of $t \in \INT{k}$ such that $j_t = i$. 
\end{definition}
 
We remark that if $\gamma' \sim \gamma$ is the image of $\gamma$ by the permutation $\tau$ on $\INT{n}$ and the family of permutations $(\beta_x)_{x \in \INT{n}}$ on $\INT{2d}$, then $\widehat G_{\gamma'}$ is the image of $\widehat G_\gamma$ by these permutations.  We may then define our new path equivalence. We say that $\gamma \hat \sim \gamma'$ if $\gamma \sim \gamma'$ and the corresponding profiles of edges are equal. 

\begin{lemma}\label{le:newclass}
If $\gamma \hat \sim \gamma'$ then 
$
w(\gamma) = w(\gamma').
$
\end{lemma}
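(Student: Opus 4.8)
The plan is to show that the profile data of a path's kernel graph determines its probabilistic weight $w(\gamma)$, by expressing $w(\gamma)$ as a product over colors and over edges of the kernel graph of local quantities that depend only on the profiles. First I would recall from the proof of Corollary \ref{le:pgamma} that, by independence of $(U_1,\ldots,U_d)$, $w(\gamma)=\prod_{i\in\INT{d}} W_i$ where $W_i$ is an expected product of entries of the single Haar matrix $U_i$, namely $W_i = \EE\prod_{t:\, i_t\in\{i,i^*\}} (U_{i_t})_{x_{t-1}x_t}$. By Weingarten calculus (Lemma \ref{cor:WG2} and \cite[Theorem 4]{BC2}), each $W_i$ is a sum over pairings of the left and right index multiset, weighted by Weingarten functions that depend only on $N$ and on the cycle structure of the pairing; crucially, $W_i$ is a function only of the \emph{multiset of pairs} $\{(x_{t-1},x_t) : i_t\in\{i,i^*\}\}$ together with the conjugation pattern $\veps$ (which pairs come with $U$ versus $\bar U$).

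Next I would argue that this multiset-of-pairs-with-signs data is invariant under $\hat\sim$. Suppose $\gamma\hat\sim\gamma'$, so $\gamma\sim\gamma'$ via a vertex permutation $\tau$ and edge-color permutations $(\beta_x)$, and the profiles of all kernel edges agree. The vertex permutation $\tau$ simply relabels the indices $x_t\mapsto x'_t=\tau(x_t)$, which does not change $W_i$ (Haar measure on $\dU_N$ is invariant under conjugation by permutation matrices, equivalently a global relabeling of rows/columns acts the same way on all entries). So it suffices to handle the case where $\gamma,\gamma'$ have the \emph{same} vertex set and the same kernel graph $\widehat G$, but the two paths traverse the maximal paths (kernel edges) possibly differently inside. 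Here the profile condition enters: for each kernel edge $f=[y_0,\pi,y_k]$ the profile records $j_1$, $j_k$, and for each color $i$ the number of steps of that color along $\pi$. Since each internal vertex of a maximal path has degree $2$ and lies on a unique kernel edge, and since a non-backtracking walk on a tree-like strand is essentially forced once endpoints and colors are fixed, the multiset of colored directed edges $\{(x_{t-1},i_t,x_t)\}$ restricted to the steps lying on $f$ is determined by the profile of $f$ together with the endpoint identities $y_0,y_k$. Summing over all kernel edges, the full multiset $\{(x_{t-1},i_t,x_t): t\in\INT{m}\}$ is determined, hence so is each $W_i$, hence so is $w(\gamma)$.

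The main obstacle will be the bookkeeping in the previous paragraph: making precise that ``a non-backtracking walk along a maximal path with prescribed endpoints and color-profile has a determined edge multiset.'' One has to be careful because a maximal path may traverse its internal vertices more than once and in both directions, and the walk $\gamma$ only visits the steps of $f$ in some interleaved order dictated by the rest of $\gamma$; what I actually need is not that the \emph{order} is determined but that the \emph{multiset of colored steps} $\{(x_{t-1},i_t,x_t)\}$ lying on $f$ is determined by the profile. Because the underlying set of colored edges of $f$ is fixed (it is literally the edge set of the strand $\pi$ in $G_\gamma$, which is part of $\sim$), the only freedom is the multiplicity with which each colored edge of $f$ is traversed in each direction; and the profile counts $\prof_i$ precisely record these multiplicities per color, while $j_1,j_k$ pin down the directions at the two ends. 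A short combinatorial lemma — that on a non-backtracking strand the per-color traversal counts plus the two end-colors determine all per-directed-edge traversal counts — closes this gap. Once that is in place, feeding the resulting common edge multiset into the Weingarten expansion yields $w(\gamma)=w(\gamma')$, and the orthogonal case $\dO_N$ follows identically using \cite[Theorem 16]{BC2}.
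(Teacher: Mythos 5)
There is a genuine gap, and it sits exactly where you defer to a ``short combinatorial lemma''. The relation $\sim$ is not just a relabelling of vertices: it also involves the family of local colour permutations $(\beta_x)$, so after you have absorbed the vertex permutation $\tau$ the two paths still need not have the same coloured graph — the colours along the interior of a kernel strand may be changed from $\gamma$ to $\gamma'$. Your assertion that ``the underlying set of colored edges of $f$ is fixed \dots which is part of $\sim$'' is therefore false (this is precisely the phenomenon behind the paper's example preceding Definition \ref{def:profile}, where $\gamma_1\sim\gamma_2$ but the coloured-edge multiplicities differ), and the profile does not repair it: the profile records only the end colours $j_1,j_k$ and the colour counts $(\prof_i)$, not the order of the colours along the strand. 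Concretely, take a kernel edge of length $4$ with internal vertices $y_1,y_2,y_3$ and colour sequence $(1,2,3,1)$ in $\gamma$; choosing $\beta_{y_1},\beta_{y_2},\beta_{y_3}$ suitably (and the identity elsewhere) produces $\gamma'\hat\sim\gamma$ whose corresponding strand is coloured $(1,3,2,1)$ — same profile, both non-backtracking — but now the coloured directed steps differ: $(y_1,2,y_2)$ and $(y_2,3,y_3)$ versus $(y_1,3,y_2)$ and $(y_2,2,y_3)$. Hence your central claim, that the multiset $\{(x_{t-1},i_t,x_t):t\in\INT{m}\}$ is a $\hat\sim$-invariant and therefore each factor $W_i$ is literally the same expectation, is not true, and the last step of the proposal collapses.

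The lemma is nonetheless correct, but for a weaker invariance than the one you use. By bi-invariance of the Haar measure ($SUT$ has the same law as $U$ for \emph{independent} row and column permutation matrices, not merely conjugation), a factor $\EE\prod_t U_{x_t y_t}^{\veps_t}$ depends only on the coincidence pattern among the left indices, the coincidence pattern among the right indices, and the signs — not on the actual index pairs. This is how the paper argues: it suffices to exhibit, for each colour $i$, a pair of bijections between the left and the right internal vertices adjacent to $i$- or $i^*$-coloured steps of $\gamma$ and of $\gamma'$; the equality of profiles, together with the facts that internal vertices have degree two and (by non-backtracking) cannot be adjacent to both $i$ and $i^*$, supplies these bijections even though the coloured edges themselves do not match. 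In my example above this amounts to sending the left index $y_1$ to $y_2$ and the right index $y_2$ to $y_3$ in the $U_2$-factor, which is legitimate exactly because those internal vertices occur nowhere else among the $U_2$-entries. To salvage your outline you would have to replace ``the coloured step multiset is determined'' by this invariance of the left/right coincidence patterns, at which point you have essentially reconstructed the paper's proof.
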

\begin{proof}
From the independence of $U_i$, $i \in \INT{d}$, $w(\gamma)$ is equal to a product of the expectations of the form $p (x,y,\veps) = \EE \PAR{\prod_{t=1}^k  U_{x_{t} y_{t}}^{\varepsilon_{t}}}$ with $U$ Haar distributed of $\dU_N$ and $\veps_t = \{\cdot,-\}^k$, $x,y \in \INT{N}^k$. For any permutation matrices, $S,T$ of size $N$, $S U T $, and $U$ have the same distribution. Hence, we have $p(x,y,\veps) = p(\sigma(x),\tau(y),\veps)$ if $\sigma,\tau$ are any permutations on $\INT{N}$. We observe that if $\gamma \hat \sim \gamma'$, there exists a bijection between the vertices of $\widehat G_\gamma$ and  $\widehat G_{\gamma'}$ and since the profiles on edges coincide, there is also a pair of bijections between the left and right internal vertices adjacent to an edge of $E_\gamma$ and $E_{\gamma'}$ with color $i$ or $i^*$ (note that due to the constraint $i_t \ne i_{t-1}^*$ and the fact that internal vertices have degree $2$ no internal vertex can be adjacent to both $i$ and $i^*$). The claim follows. 
\end{proof}

We denote by $\widehat \cP_m(v,e_1)$ the set of equivalence classes for $\hat \sim$ with $v$ vertices and $e_1$ edges of multiplicity one. The next lemma asserts that $\widehat \cP_m(v,e_1)$ is not much bigger than $\cP_m(v,e_1)$. 

\begin{lemma}\label{le:Plclass2}
If $\chi = m/2 + e_1/2 - v$, we have
$$
| \widehat \cP_m(v,e_1) | \leq (2d m^{d})^{6\chi + 4} | \cP_m(v,e_1) |.
$$
\end{lemma}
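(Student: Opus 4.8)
The plan is to estimate the size of the fibres of the natural forgetful surjection $\widehat\cP_m(v,e_1)\to\cP_m(v,e_1)$ and then multiply by $|\cP_m(v,e_1)|$. Fix a $\sim$-class $[\gamma]\in\cP_m(v,e_1)$. As observed just before Lemma~\ref{le:newclass}, if $\gamma'\sim\gamma$ then the kernel graph $\widehat G_{\gamma'}$ is the image of $\widehat G_\gamma$ under the vertex relabelling $\tau$ and the colour relabellings $(\beta_x)$; in particular the number $\hat e:=|\widehat E_\gamma|$ of kernel edges and the path-length $k_f\le m$ of each maximal path $\pi_f$ are invariants of $[\gamma]$, and $\tau,(\beta_x)$ induce a canonical bijection between the kernel edges of $\gamma$ and of any $\gamma'\in[\gamma]$. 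By definition of $\hat\sim$, two elements of $[\gamma]$ are $\hat\sim$-equivalent precisely when their profiles agree under this bijection. Hence the fibre over $[\gamma]$ has cardinality at most $\prod_{f\in\widehat E_\gamma}\#\Pi(k_f)$, where $\#\Pi(k)$ is the number of conceivable profiles of an edge of path-length $k$.

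Two ingredients remain. First, by Definition~\ref{def:profile} a profile of an edge of path-length $k$ is a tuple $(j_1,j_k,(\prof_i)_{i\in\INT{2d}})$ with $j_1,j_k\in\INT{2d}$ and $\prof_i\ge 0$, $\sum_i\prof_i=k$, so $\#\Pi(k)\le(2d)^2\binom{k+2d-1}{2d-1}$, which will be bounded by an appropriate power of $2d$ and of $k\le m$. Second, and this is the heart of the matter, $\hat e$ is linear in $\chi$. Indeed $\widehat G_\gamma$ is connected (since $G_\gamma$ is traced by a closed walk) and is obtained from $G_\gamma$ by suppressing all degree-$2$ vertices other than $x_0$, so it has the same first Betti number $\beta:=|E_\gamma|-|V_\gamma|+1$ as $G_\gamma$; every vertex of $\widehat G_\gamma$ other than $x_0$ has degree $\ge 3$ (by construction $\widehat V_\gamma\setminus\{x_0\}=V_{\ge3}$, while interior vertices of maximal paths have degree exactly $2$), and $x_0$ has degree $\ge 2$, being the common start and end of the closed non-backtracking walk. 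Summing degrees, $3(|\widehat V_\gamma|-1)+2\le 2\hat e$, and together with $\hat e=|\widehat V_\gamma|+\beta-1$ this gives $\hat e\le 3\beta-2$ (the small cases $|\widehat V_\gamma|=1$ and $\beta\le 1$ checked directly; note $\beta\ge 1$ as $G_\gamma$ contains a cycle). Finally, \eqref{eq:bdee1} and \eqref{eq:genus0} give $\beta=|E_\gamma|-|V_\gamma|+1\le\tfrac{m+e_1}{2}-v+1=\chi+1$, hence $\hat e\le 3\chi+1$.

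Combining the two ingredients, the fibre over each $[\gamma]\in\cP_m(v,e_1)$ has size at most $\big((2d)^2\binom{m+2d-1}{2d-1}\big)^{3\chi+1}$, and a careful accounting — distinguishing, if needed, small $m$ (where $\widehat\cP_m(v,e_1)$ is itself tiny and the $+4$ slack is wasteful) from the generic regime, and exploiting $\sum_{f}k_f=|E_\gamma|\le m$ — shows this is at most $(2d\,m^{d})^{6\chi+4}$; multiplying by $|\cP_m(v,e_1)|$ yields the stated inequality. I expect the two genuinely non-mechanical points to be: (i) the topological bound $\hat e\le 3\beta-2$, together with its degenerate cases and the fact that $x_0$ is the \emph{only} kernel vertex allowed to have degree $2$ (edge multiplicities playing no role in the degree count of Definition~\ref{def:colorgraph}); and (ii) arranging the numerical constants so that the powers of $2d$ and of $m$ coming out of the profile count fit inside $(2d\,m^{d})^{6\chi+4}$.
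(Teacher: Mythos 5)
Your overall strategy coincides with the paper's: bound the fibre of the forgetful map $\widehat\cP_m(v,e_1)\to\cP_m(v,e_1)$ by (number of possible profiles per kernel edge)$^{\hat e}$, and then bound $\hat e$ linearly in $\chi$. However, the structural step you yourself flag as the heart of the argument is wrong: $x_0$ need \emph{not} have degree $2$ in $G_\gamma$ (hence in $\widehat G_\gamma$). Elements of $P_m$ are non-backtracking only between consecutive steps; there is no cyclic constraint linking $i_1$ and $i_m$, so the closed walk may leave and re-enter $x_0$ through the same colored edge. Concretely, for $d\ge 2$ take $\gamma=(x_0,1,x_1,2,x_2,2,x_1,1^*,x_0)$ with $x_0,x_1,x_2$ distinct: all constraints $i_{t+1}\ne i_t^*$ hold, $m=4$, $E_\gamma=\{[x_0,1,x_1],[x_1,2,x_2],[x_2,2,x_1]\}$ with multiplicities $2,1,1$, so $v=3$, $e_1=2$, $\chi=0$, and $\deg(x_0)=1$. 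The kernel has $\widehat V_\gamma=\{x_0,x_1\}$ and two edges (the path $(x_0,1,x_1)$ and the loop $(x_1,2,x_2,2,x_1)$), so $\hat e=2>3\chi+1$. Thus your intermediate inequality $\hat e\le 3\beta-2\le 3\chi+1$ is not merely unproven but false.

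The damage is local and fixable: using only $\deg(x_0)\ge 1$ (as the paper does), the degree count over $G_\gamma$ — at least $3$ for each kernel vertex other than $x_0$, at least $1$ for $x_0$, exactly $2$ for each suppressed vertex — combined with $\hat e-\hat v=e-v$ and $e\le (m+e_1)/2$ gives $\hat e\le 3e-3v+2\le 3\chi+2$, which is tight in the example above. Since the number of conceivable profiles of a single kernel edge is at most $(2d)^2m^{2d}=(2dm^{d})^2$ (your binomial refinement is finer than needed), the fibre over each $\sim$-class has size at most $(2dm^{d})^{2\hat e}\le(2dm^{d})^{6\chi+4}$, which is exactly the stated bound; so the ``careful accounting'' you defer becomes immediate once the exponent is $3\chi+2$ rather than $3\chi+1$. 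As written, though, the claim that $x_0$ is forced to have kernel degree at least $2$, and the bound $\hat e\le 3\beta-2$ built on it, do not hold.
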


\begin{proof}
Let $\gamma \in \cP_{m}(v,e_1)$. Assume that $G_\gamma$ has $v$ vertices, $e$ edges, and $e_1$ edges of multiplicity one. Let us call $\hat v$ its number of vertices and $\hat e$ its number of edges.
By definition of $\hat \sim$, the equivalence class of $\gamma \in  \cP_{m}(v,e_1)$ is subdivided into at most 
$
((2d)^2 m^{2d})^{\hat e}
$ classes in $\widehat \cP_{m}(v,e_1)$.
Note that this estimate is very rough: if the profile of an edge is $(j_1,j_k, (\prof_i)_{i \in \INT{2d}})$, there are at most $2d$ possibilities for $j_1$ and $j_k$ and $m$ possibilities for $\prof_i$. Thus, it suffices to prove that  $\hat e \leq 3\chi +2$ and the lemma follows.

 To this end, we first notice that 
\begin{equation}\label{eq:e-v}
\hat e - \hat v =  e- v.
\end{equation}
Indeed, a maximal path $\pi = (y_0,j_1,\ldots,y_k) \in C$ has $k-1$ vertices in $V_\gamma \backslash \widehat V_\gamma$ and contains $k$ edges of $E_\gamma$. 

Next, let $u = |V_\gamma \backslash \widehat V_\gamma|$ and for $x \in V_\gamma$, let $\mathrm{deg}(x)$ be the degree of $x$ in $G_\gamma$. The vertex $x_0 \in \widehat V_\gamma$ has a degree at least $1$, all other elements of $\widehat V_\gamma$ have a degree at least $3$, and vertices of $V_\gamma \backslash \widehat V_\gamma$ have degree $2$. We deduce that
$$\hat v + u  = v \quad \hbox{ and } \quad 3 (\hat v -1) + 2 u  + 1 \leq \sum_{x \in V_\gamma} \mathrm{deg}(x) = 2 e.$$ 
By subtracting twice the first identity from the inequality, we cancel $u$ and arrive at 
 $$
 \hat v \leq 2 e - 2 v +2.
 $$
 Using  \eqref{eq:bdee1}-\eqref{eq:e-v}, we thus obtain the following bound on $\hat e$:
\begin{equation}\label{eq:bdhate}
\hat e = \hat v + (e - v) \le 3 e - 3 v + 2 \leq 3 \chi + 2.
\end{equation}
The conclusion follows.
\end{proof}

Our final lemma is the estimation of $\sum a(\ell,\gamma)$ where the sum is over an equivalence class and $a(\ell,\gamma)$ was defined in \eqref{eq:defapgamma}. 
Its proof relies crucially on Theorem \ref{th:NCCS}.

\begin{lemma}\label{le:sumagamma}
Let $\gamma \in \widehat \cP_m(v,e_1)$ and $\chi = m/2 + e_1/2 - v$. We have 
$$
\rho (\ell,\gamma) = \NRM{ \sum_{ \gamma' : \gamma' \hat \sim \gamma}  a(\ell,\gamma') }  \leq N^{v} \rho^\ell (2d)^{e_1/2}  \ell^{18 \chi +11}.
$$
\end{lemma}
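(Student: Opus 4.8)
The strategy is to fix a representative $\gamma \in \widehat\cP_m(v,e_1)$, and to parametrize all $\gamma'$ with $\gamma' \hat\sim \gamma$ by the free choices one makes when unlabeling: the images of the $\hat v \le v$ vertices of the kernel $\widehat G_\gamma$ in $\INT N$, the images of the internal vertices along each maximal path, and the way the edge colors are shuffled at each vertex by the permutations $(\beta_x)$. Writing $a(\ell,\gamma') = (A_\star^\ell)_{g(\gamma'),\o}$ and decomposing $g(\gamma')$ along the structure of the kernel as in Lemma \ref{le:pathdecomFd}, one expresses $a(\ell,\gamma')$ as a product of factors, one per maximal path $\pi$ of $\widehat G_\gamma$, where each factor is (an entry of) an operator $C_\star^{(k,i)}$ or a power of $A_\star^\o$, and these factors are indexed by the vertex-labels of $\widehat G_\gamma$ shared with neighboring factors. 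The sum over all $\gamma' \hat\sim \gamma$ then becomes a sum over these vertex-labels of a product of such factors — exactly the combinatorial shape $X_\pi = \sum_{\vec j} X_{1,\vec j}\cdots X_{r,\vec j}$ to which Theorem \ref{th:NCCS} applies, with $\INT m$ replaced by $\INT N$ (using Remark \ref{rk:NCCS}), the number of blocks $k$ equal to (essentially) $\hat v$, and the partition $\pi$ encoding which factors meet at which kernel-vertex.

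The key points to verify before invoking Theorem \ref{th:NCCS} are: (i) that the collection $X$ is \emph{open} for the partition $\pi$ — this should follow because a kernel-vertex-label enters only the factors corresponding to maximal paths incident to it, and a factor associated with a maximal path between kernel-vertices $y_0,y_k$ depends only on the labels of $y_0$ and $y_k$; (ii) that the quantities $Q_{\pi,i}$ are controlled by powers of $\rho = \|A_\star\|$. For (ii) I would use Lemma \ref{le:normCk}: a factor indexed by a singleton block (a maximal path both of whose endpoints appear only once, i.e. contributes an edge of multiplicity one in the kernel — this is where the $e_1$ count comes in) contributes $Q \le \|A_\star\|^{k}$ via the bound $\|(C_\star^{(k,i)})_{g\o}\| \le \|A_\star\|^k$; a factor indexed by the leftmost/rightmost element of a two-element block contributes $Q \le \|A_\star\|^k$ via the bound $\|((C_\star^{(k,i)})^*C_\star^{(k,i)})_{\o\o}\|^{1/2} \le \|A_\star\|^k$; and factors in $T$ (interior of blocks) contribute $\max \|\cdot\| \le \|A_\star\|^k$. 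Since the exponents $k$ across all factors sum to $\ell$ (the total path length is redistributed among the maximal segments, cf. Lemma \ref{le:pathdecomFd}), multiplying the $Q_{\pi,i}$'s gives exactly $\rho^\ell$.

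Next I would count. The factor $N^v$ is the number of ways to assign the vertex labels: there are at most $N^{\hat v}$ labelings of kernel-vertices, and then the internal vertices along the maximal paths are determined up to... actually no — summing the $C_\star^{(k,i)}$ entries over internal vertices is already absorbed inside the operators $C_\star^{(k,i)}$ and $A_\star^\o$ (that is the whole point of keeping those as closed-form operator entries rather than sums over paths), so only the $\hat v$ kernel-vertices are summed explicitly; but I also need to allow the \emph{non-kernel} labels when two distinct $\gamma'$ happen to have the same kernel — here I will just use the crude bound $N^{\hat v} \le N^v$, which is lossy but sufficient. The factor $(2d)^{e_1/2}$: for each multiplicity-one edge of the kernel there is a two-element block whose two elements sit at a color which must be consistent, giving roughly $2d$ choices of the permuted color per such edge, and $e_1$ edges of multiplicity one yield at most $e_1$ such blocks — wait, the exponent is $e_1/2$; I would track this by noting that the balance/consistency conditions pair up color-choices, so only half of them are free. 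The factor $\ell^{18\chi+11}$: by \eqref{eq:bdhate} the kernel has at most $\hat e \le 3\chi+2$ edges and (by subtracting) at most $\hat v \le 3\chi+2$... let me instead say at most $O(\chi)$ vertices and edges plus absolute constants; each maximal path has a length $k_j$ with $\sum k_j = \ell$, and distributing $\ell$ among $O(\chi)$ parts costs at most $\ell^{O(\chi)}$ choices, while the extra factors of $k \le \ell$ coming from the $\|C_\star^{(k,i)}\| \le k\|A_\star\|^k$-type bounds (needed where a full operator $C_\star^{(k,i)}$ rather than a single entry appears — at $x_0$) give a further $\ell^{O(1)}$; collecting these and being generous with the constants yields $\ell^{18\chi+11}$.

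The main obstacle I expect is \emph{bookkeeping rather than conceptual}: correctly setting up the correspondence between the edges of the kernel $\widehat G_\gamma$ and the blocks of the partition $\pi$ so that (a) $X$ is genuinely open for $\pi$ — one has to be careful that the operator entry attached to a maximal path depends on \emph{no} label other than its two kernel-endpoints, which requires that the internal structure (including the color profile, which is fixed within the $\hat\sim$-class) be frozen; and (b) every vertex of the kernel appears as the shared index of the right number of consecutive factors in a single linear ordering — the non-backtracking path $\gamma'$ visits kernel-vertices in some order, possibly revisiting them, so a kernel-vertex of degree $\delta$ in $\widehat G_\gamma$ corresponds to a block appearing in up to $\delta/2$ disjoint pairs of adjacent slots; handling degree $> 3$ kernel-vertices (which must be split into several two-element blocks, or a chain of blocks, to fit the "singleton or pair" constraint of Theorem \ref{th:NCCS}) is the delicate part, and is presumably where the assumption \eqref{eq:symai} (self-adjointness) and the involution $i \mapsto i^*$ get used to re-pair things consistently. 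Once that combinatorial dictionary is in place, the norm bound is an immediate application of Theorem \ref{th:NCCS} together with Lemma \ref{le:normCk}, and the counting is routine given Lemmas \ref{le:Plclass}, \ref{le:Plclass2} and \eqref{eq:bdhate}.
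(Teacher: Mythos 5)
There is a genuine gap, and it lies precisely in how you set up the application of Theorem \ref{th:NCCS}. You propose to take the summation indices to be the vertex labels in $\INT{N}$ and the blocks of the partition to be (essentially) the kernel vertices, ``the partition $\pi$ encoding which factors meet at which kernel-vertex''. But the summand $a(\ell,\gamma') = (A_\star^\ell)_{g(\gamma'),\o}$ depends only on the colour sequence of $\gamma'$, not on any vertex label; so the sum over vertex labelings factors out trivially as a count $N(N-1)\cdots(N-v+1)\le N^v$ (all $v$ vertices, not just the kernel ones — your remark that the internal sums are ``absorbed inside the operators $C_\star^{(k,i)}$'' is not correct, those entries are indexed by group elements, i.e.\ colours), and no non-commutative Cauchy--Schwarz over vertex labels is needed or useful. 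The non-trivial sum, the one that would cost a factor $\sqrt n$ under the triangle inequality and that Theorem \ref{th:NCCS} must handle, is the sum over the admissible colour strings along each kernel edge (the sets $J_f$ fixed by the profile of the $\hat\sim$-class). In the paper the slots of the product are the successive visits of kernel edges along $\gamma$ (first-or-last visits are the $S$-slots, intermediate visits the $T$-slots), the blocks are indexed by kernel \emph{edges} — a singleton if the edge is visited once, a pair consisting of its first and last visit otherwise — and openness holds because a $T$-slot depends only on edges already visited once and to be revisited. With your vertex-based partition the singleton-or-pair hypothesis of Theorem \ref{th:NCCS} genuinely fails at kernel vertices of degree $\geq 4$; you flag this yourself as ``the delicate part'' and suggest the symmetry condition \eqref{eq:symai} will fix it, but it does not, and no repair is given.

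Two further consequences of the misidentification: your explanation of the factor $(2d)^{e_1/2}$ (``consistency conditions pair up colour-choices, so only half of them are free'') is not the actual mechanism — in the correct setup it comes from the singleton blocks, where one pays $|S_{i,k}|^{1/2}\le (2d)^{k/2}$ via an extra Cauchy--Schwarz (Theorem \ref{th:NCCS} with $r=1$), and the lengths $k$ of the multiplicity-one kernel edges sum to $e_1$; and the bound $N^{\hat v}\le N^v$ you invoke for the labelings points the wrong way (you need $N^{\hat v}\cdot N^{v-\hat v}=N^v$ for all vertices). Your remaining ingredients — Lemma \ref{le:pathdecomFd} to factor $a(\ell,\gamma')$ into entries of $C_\star^{(\lambda,i)}$, the norm bounds of Lemma \ref{le:normCk} giving $\rho^{\lambda_t}$ (times $\lambda_t$) per slot, and the count $\ell^{O(\chi)}$ from distributing $\ell=\sum_t\lambda_t$ over the $r\le 3\hat e\le 3(3\chi+2)$ slots, yielding $\ell^{6\hat e-1}\le\ell^{18\chi+11}$ — are the right ones and match the paper, but without the edge-indexed partition the central application of Theorem \ref{th:NCCS} does not go through as written.
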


\begin{proof} 
 We use $\widehat G_\gamma$ as a backbone of the closed path $\gamma$ to upper bound $\rho(\ell,\gamma)$. Recall  the multiplicity introduced below Definition \ref{def:colorgraph}: if $[x,i,y] \in E_\gamma$, $m([x,i,y])$, is the number of times that $[x_{t-1}, i_{t}, x_{t} ] = [x,i,y]$. Let $e_t$  be the number of edges of $E_\gamma$ of multiplicity equal to $t$. We have
$
\sum_t e_t = e$ and $\sum_t t  e_t = m$.
In the sequel, we will use that
\begin{equation} \label{eq:bounde3}
\sum_{t} ( t -2 )_+ e_t = \sum_{t} ( t -2 ) e_t  + e_1 =  m - 2 e +e_1.
\end{equation}
We may also unambiguously define the  multiplicity $m([x,\pi,y])$ of an edge $[x,\pi,y]  \in \widehat E_\gamma$ with $\pi  =  (y_0,j_1,y_1, \ldots, j_{k}, y_k)$, $y_0 = x$, $y_k = y$. Indeed, since $y_s$, $s \in \INT{k-1}$, has degree $2$ and since $\gamma$ is non-backtracking ($i_t \ne i_{t-1}^*)$, all edges $[y_{s-1},i_s,y_{s}]$, $s \in \INT{k}$, have the same multiplicity.  The multiplicity $m([x,\pi,y])$ is defined as the common multiplicity $m([y_{s-1},i_s,y_{s}])$, $s \in \INT{k}$. 

Since the path $\gamma$ is non-backtracking, we may decompose it into successive visits of the edges of $\widehat E_\gamma$. 
More precisely, we decompose $\gamma $ as $ (p_{1}, p_{2}, \ldots , p_{r})$ with $$p_t =   (y_{t,0},j_{t,1},y_{t,1},\ldots, j_{t,k_t},y_{t,k_t}),$$  where either (a) $p_{t} $ 
follows an edge of $\widehat E_\gamma$ which is visited for the first or last time, or (b) $p_{t}$ follows a sequence  of edges of 
$\widehat E_\gamma$ which are not visited for the first or last time. By construction, there are at most $2\hat e$ subpaths $p_{t}$ of type (a), and thus at most $ \hat e$ subpaths of type (b) (any subpath of type (b) is preceded by a subpath of (a)). Hence, 
\begin{equation}\label{eq:ubr}
r \leq 3 \hat e.
\end{equation}
 By construction, we have 
$$
\sum_t k_t = m.
$$
This decomposition into subpaths is equivariant in each equivalence class: if $\gamma' \hat \sim \gamma$ is the image of $\gamma$ by the permutation $\tau$ on $\INT{n}$ and the family of permutations $(\beta_x)_{x \in \INT{n}}$ on $\INT{2d}$ then $\widehat G_{\gamma'}$ is the image of $\widehat G_\gamma$ by these permutations, and $\gamma'$ is decomposed into subpaths $\gamma '= (p'_1,\ldots,p'_r)$ which are the images of $(p_1,\ldots,p_r)$.

Next, we decompose the value of $a(\ell,\gamma)$ over each subpath. Using Lemma \ref{le:pathdecomFd}, we may write 
\begin{equation}\label{eq:decompalgamma}
a(\ell,\gamma) =  \sum_{(\lambda_1, \ldots ,\lambda_r) } a_{1}(\lambda_r,p_r) \cdots  a_{1}(\lambda_2,p_2) a(\lambda_1,p_1),
\end{equation}
where the sum is over all $(\lambda_1,\ldots,\lambda_r)$ such that $\sum_{t} \lambda_t = \ell$, $\lambda_t \geq 1$ and, for $p = (y_0,j_1,\ldots,y_k)$, $a_1(\lambda,p)$ is defined as
\begin{equation}\label{eq:defAi} 
a_1 (\lambda,p) =  (C_\star^{(\lambda,j_1)} )_{g(p) , \o },
\end{equation}
where $C^{(\lambda,j)}_\star$ was defined in \eqref{eq:defCki}, $g(p) = g_{j_k}\cdots g_{j_1}$ and $g(p)$ is in reduced form since $j_t \ne j_{t-1}^*$.   By Lemma \ref{le:normCk},
\begin{equation}\label{eq:defAinrm} 
\| a_1 (\lambda,p) \| \leq \rho^{\lambda}.
\end{equation} 

Next, we sum over all $\gamma'$ with $\gamma' \hat \sim \gamma$. As pointed out above, they are decomposed into subpaths $\gamma '= (p'_1,\ldots,p'_r)$. Note that the indices $j_{1,1},j_{2,1}, \ldots, j_{r,1}$ and $j_{1,k_1},j_{2,k_2},\ldots,j_{r,k_{r}}$ are common to all $\gamma \hat \sim \gamma$ since these indices belong to the profile of $\gamma$ as defined in Definition \ref{def:profile}. Using the triangle inequality in \eqref{eq:decompalgamma}, we obtain:
\begin{eqnarray}
\rho(\ell,\gamma) = \NRM{  \sum_{\gamma' \hat \sim \gamma} a(\ell,\gamma')  }  \leq \sum_{(\lambda_1, \ldots ,\lambda_r) } \NRM{ \sum_{\gamma' \hat \sim \gamma}   a_{\delta_r}(\lambda_r,p'_r)  \cdots  a_{\delta_1}(\lambda_1,p'_1) },\label{eq:nrmsumg}
\end{eqnarray}
where, we have set 
 $\delta_1 = \emptyset$, for $2 \leq t \leq r$, $\delta_t = 1$ and $a_\emptyset (\lambda,p) =a (\lambda,p)$.

We decompose the above product over paths over types (a) and (b). Let $S \subset \INT{r}$ be the indices $t$ such that $p_t$ is of type (a). The set $T = \INT{r} \backslash S$ is thus the set of $t$'s such that $p_t$ is of type (b). By construction, the edge set $\widehat E_\gamma$ defines a partition of $S$ where each block has $1$ or $2$ elements, depending on whether $f = [x,\pi,y] \in \widehat E_\gamma$ is visited once or at least twice. Indeed, for $t \in S$, $p_t$ visits an edge of $\widehat E_\gamma$ and thus $p_t \in \{\pi,\pi^*\}$ for some $f = [x,\pi,y]\in \widehat E_\gamma$.

The sum over all $\gamma' \hat \sim \gamma$ can be decomposed over the $N(N-1)\cdots (N-v+1)$ possibilities for the distinct $x_t \in \INT{N}$ and over  the product set 
$$
J = \bigotimes_{ f\in \widehat E_\gamma } J_f, 
$$
where for $f = [y_0,\pi,y_k]$ with $\pi = (y_0,j_1,\ldots,j_k,y_k)$ and with profile $(j_1,j_k,(\prof_i)_{i \in \INT{2d}})$, $J_f$ accounts for the set of possibilities for the indices $\bm j = (\bm j_1,\ldots,\bm j_k)$ such that $\bm j_t \ne \bm j_{t-1}^*$, $\bm j_ 1= j_1$, $\bm j_k = j_k$ and, for each $i \in \INT{2d}$, there are precisely $\prof_i$ $\bm j_t$'s equal to $i$.  Note that the summand in \eqref{eq:nrmsumg} does not depend on the choice of the distinct $x_t$'s. Also, by construction for $t \in T$, $p_t$ depends only on the choice of the $J_f$'s which have been visited once and which will be visited at a later time in $S$. We are thus in a position to apply Theorem \ref{th:NCCS} (see Remark \ref{rk:NCCS}). We get 
\begin{equation}\label{eq:bdwQt}
 \NRM{ \sum_{\gamma' \hat \sim \gamma}   a_{\delta_r}(\lambda_r,p'_r)  \cdots  a_{\delta_1}(\lambda_1,p'_1) } \leq N^v \prod_{t \in \INT{r}} Q_t,
\end{equation}
where $Q_t$ are defined as follows. 

\noindent{\em - $Q_t$ for $t \in T$. } For $t \in T$, i.e. $p_t$ of type (b), from \eqref{eq:defAinrm},
$$
Q_t = \max_{p' \sim p_t}\NRM{ a_{\delta_t} (\lambda_t,p')} \leq \rho^{\lambda_t}.
$$

\noindent{\em - $Q_t$ for $t \in S$ in a block of size $2$. }
For $t \in S$ which visits the edge $f = [y_0,\pi,y_k]$, $\pi = (y_0,j_1,\ldots,j_k,y_k)$, visited at least twice, we have, by Theorem \ref{th:NCCS},
$$
Q_t = \max \left( \NRM{ \sum_{\bm{j} \in J_f}  a_{\delta_t} (\lambda_t , p(\bm j)) a_{\delta_t} (\lambda_t , p(\bm j))^*}, \NRM{\sum_{\bm j \in J_f} a_{\delta_t} (\lambda_t , p(\bm j))^* a_{\delta_t} (\lambda_t , p(\bm j))} \right)^{1/2},
$$
where $\bm j = (\bm j_1, \ldots, \bm j_k)$ and $p(\bm j) = (y_0,\bm j_1, y_1,\ldots, \bm j_k,y_k)$. In the above sums, the summand is positive semi-definite. Hence, we may only increase its norm by adding more positive semi-definite elements. From \eqref{eq:defAi}, we deduce that, if $\delta_t = 1$ and $j_1 = i$, 
\begin{eqnarray*}\label{eq:Qta}
Q_{t}  & \leq & \max \left( \NRM{ \sum_{ g  \in S_{i,k} } (C^{(\lambda_t,i)}_\star) _{ g \o }     (C^{(\lambda_t,i)}_\star)^* _{ g \o } }  ,  \NRM{ \sum_{ g  \in S_{i,k} }    (C^{(\lambda_t,i)}_\star)^* _{ g \o }    (C^{(\lambda_t,i)}_\star) _{ g \o }  }   \right)^{1/2}\\
& \leq & \max \left( \NRM{ \sum_{ g  \in \dF_d } (C^{(\lambda_t,i)}_\star) _{ g \o }     (C^{(\lambda_t,i)}_\star)^* _{ g \o } }  ,  \NRM{ \sum_{ g  \in \dF_d }    (C^{(\lambda_t,i)}_\star)^* _{ g \o }    (C^{(\lambda_t,i)}_\star) _{ g \o }  }   \right)^{1/2}\\
& =  & \max \left( \NRM{  (C^{(\lambda_t,i)}_\star (C^{(\lambda_t,i)}_\star)^*)  _{ \o \o }   }  ,  \NRM{ ((C^{(\lambda_t,i)}_\star)^* C^{(\lambda_t,i)}_\star )  _{ \o \o }  }   \right)^{1/2} ,
\end{eqnarray*}
where at the first line, $S_{i,k}$ is the set of elements of the free group $\dF_d$ which are written in reduced form as $g = g_{\bm j_k}\cdots g_{\bm j_1}$ with $\bm j_1 = i$. In the second line, we use the positivity. In the last line, we have used for all $x,y \in \dF_d$, $(M_{xy})^* = (M^*)_{yx}$ for $M \in \cB(H_1 \otimes \ell^2 (\dF_d))$ and, if, in addition $M \in \cA_1 \times \cA_\star$, we have $M_{xg,yg} = M_{xy}$ for all $g \in \dF_d$ and thus
$$
\sum_{g \in \dF_d} M_{g \o} (M^*)_{\o g} = \sum_{g \in \dF_d} M_{\o g^{-1}} (M^*)_{g^{-1} \o} = \sum_{g \in \dF_d} M_{\o g} (M^*)_{g \o}  = (M M ^*)_{\o \o},
$$
(at the second step, we use that $g \to g^{-1}$ is a bijection of $\dF_d$).  By Lemma \ref{le:normCk}, we deduce finally that 
$$
Q_t \leq \lambda_t \rho^{\lambda_t}.
$$
Similarly, if $\delta_t = \emptyset$, the same conclusion holds. 

\noindent{\em - $Q_t$ for $t \in S$ in a block of size $1$. } For $t \in S$ which visits the edge $f = [y_0,\pi,y_k]$, $\pi = (y_0,j_1,\ldots,j_k,y_k)$, visited once, we have, by Theorem \ref{th:NCCS}, 
$$
Q_t = \max \left( \NRM{ \sum_{\bm{j} \in J_f}  \sqrt{ a_{\delta_t} (\lambda_t , p(\bm j)) a_{\delta_t} (\lambda_t , p(\bm j))^*}}, \NRM{\sum_{\bm j \in J_f} \sqrt{a_{\delta_t} (\lambda_t , p(\bm j))^* a_{\delta_t} (\lambda_t , p(\bm j))}} \right),
$$
where $\bm j = (\bm j_1, \ldots, \bm j_k)$ and $p(\bm j) = (y_0,\bm j_1, y_1,\ldots, \bm j_k,y_k)$. Arguing as above, we find, for  $\delta_t = 1$ and $j_1 = i$, 
$$
Q_t \leq \max \left( \NRM{ \sum_{ g  \in S_{i,k} } \sqrt{   (C^{(\lambda_t,i)}_\star) _{ g \o }     (C^{(\lambda_t,i)}_\star)^* _{ g \o }  }} ,  \NRM{ \sum_{ g  \in S_{i,k} }   \sqrt{(C^{(\lambda_t,i)}_\star)^* _{ g \o }    (C^{(\lambda_t,i)}_\star) _{ g \o }  } } \right).
$$
There are $(2d-1)^{k} \leq (2d)^{k}$ elements in $S_{k,i}$. Using Theorem \ref{th:NCCS} again (for $r = 1$, $k = 2$), we get 
$$
Q_t \leq (2d)^{k/2}\max \left( \NRM{ \sum_{ g  \in S_{i,k} } \sqrt{   (C^{(\lambda_t,i)}_\star) _{ g \o }     (C^{(\lambda_t,i)}_\star)^* _{ g \o }  }} ,  \NRM{ \sum_{ g  \in S_{i,k} }   \sqrt{(C^{(\lambda_t,i)}_\star)^* _{ g \o }    (C^{(\lambda_t,i)}_\star) _{ g \o }  } } \right)^{1/2}.
$$
We are then back to the previous case. We get 
$$
Q_t \leq (2d)^{k/2} \lambda_t \rho^{\lambda_t}.
$$
Similarly, if $\delta_t = \emptyset$, the same conclusion holds. 

We may now use the various upper bounds of $Q_t$ in \eqref{eq:bdwQt}.  By construction $e_1$, the sum of edges of $E_\gamma$ of multiplicity one is also the sum of the length of edges of $\widehat E_\gamma$ of multiplicity one. We thus have obtained the upper bound:
$$
 \NRM{ \sum_{\gamma' \hat \sim \gamma}   a_{\delta_r}(\lambda_r,p'_r)  \cdots  a_{\delta_1}(\lambda_1,p'_1) } \leq N^v (2d)^{e_1/2} \prod_{t=1}^r \lambda_t \rho^{\lambda_t}.
$$
From \eqref{eq:nrmsumg}, we find
$$
\rho(\ell,\gamma) \leq N^v (2d)^{e_1/2} \sum_{(\lambda_1, \ldots ,\lambda_r) } \lambda_t \rho^{\lambda_t}.
$$
By assumption, $\lambda_1+ \cdots + \lambda_r = \ell$, $1 \leq \lambda_t \leq \ell$ and thus 
$$
\sum_{(\lambda_1, \ldots ,\lambda_r)} \prod_t \lambda_t \leq  \sum_{(\lambda_1, \ldots ,\lambda_r)} \ell^{r} \leq \ell^{2r-1} \leq \ell^{6 \hat e-1}.
$$
Finally, the lemma follows from \eqref{eq:bdhate}. \end{proof}

All ingredients are gathered to prove Theorem \ref{th:FKBlm}. 

\begin{proof}[Proof of Theorem \ref{th:FKBlm}]
From \eqref{eq:tr1} and Lemma \ref{le:newclass}, with $\rho(\ell,\gamma)$ as in Lemma \ref{le:sumagamma} and $\widehat \cP_{m} (v,e_1)$ as in Lemma \ref{le:Plclass2}, we find 
\begin{equation}\label{eq:trBlmf}
N \ABS{ \dE \SBRA{ \tau (B^{(\ell,m)} )}} \leq   \sum_{v =1}^{m/2} \sum_{e = 1}^{m} | \widehat \cP_{m} (v,e_1)| \max_{\gamma  \in \widehat \cP_{m} (v,e_1)}  \rho(\ell,\gamma) | w(\gamma)|,
\end{equation}
where we have used \eqref{eq:genus0} and Corollary \ref{le:pgamma} $e_1 \leq m$ and $v \leq m/2$ (unless the summand is zero). Next, we use Corollary \ref{le:pgamma} and Lemma \ref{le:sumagamma}:  
\begin{equation*}
\rho(\ell,\gamma) | w(\gamma) | \leq c e^{m \eta} N^{ -  m/2}  \eta^{e_1} m  ^{ 2\chi  } N^{v} \rho^\ell (2d)^{e_1/2} \ell^{18 \chi +11},
\end{equation*}
with $\eta = m N^{-1/4}$ and $\chi = m/2 +e_1/2 - v$. We isolate the dependency in $e_1$ and set $r = m/2 - v$. We obtain 
$$
\rho(\ell,\gamma) | w(\gamma) | \leq c e^{m \eta}  \ell^{11} \rho^\ell  \Delta_1^{e_1} \Delta_2 ^r
$$
with 
$$
\Delta_1 = \eta m \sqrt{2d}   \ell^{9} \AND \Delta_2 = N^{-1} m^2  \ell^{18}.
$$
By Lemma \ref{le:Plclass} and Lemma \ref{le:Plclass2}, we find
$$
|\widehat \cP_{m} (v,e_1) |  \max_{\gamma  \in \widehat \cP_{m}(v,e_1)}  \rho(\ell,\gamma) | w(\gamma)| \leq   (2 d m^{d +1})^4 c e^{m \eta}  \ell^{11} \rho^\ell  ((2 d m^{d +1})^3 \Delta_1)^{e_1} ( (2 d m^{d +1}) ^6 \Delta_2) ^r. 
$$ 
Since $m \leq \ell$, we have, for our choice of $\ell \leq \ell_0$, 
$$
(2 d m^{d +1})^3 \Delta_1 \leq (2d)^{7/2}\ell^{3d + 5} \ell^{9} N^{-1/4} \leq 1/2. 
$$
Similarly, 
$$
(2 d m^{d +1}) ^6 \Delta_2 \leq (2d)^6 \ell^{6d + 8}\ell^{18} N^{-1} \leq 1/2.
$$
Hence,
\begin{eqnarray*}
N \ABS{ \dE \SBRA{ \tau (B^{(\ell,m)} )} } & \leq &   (2 d m^{d +1})^4 c e^{\ell^2 N^{-1/4}}  \ell^{11} \rho^\ell \sum_{r\geq 0} ((2 d m^{d +1}) ^6 \Delta_2)^{r} \sum_{e_1 \geq 0} ((2 d m^{d +1})^3  \Delta_1)^{e_1}\\
& \leq & 4 e c   (2 d m^{d +1})^4  \ell^{11} \rho^\ell.
\end{eqnarray*}
(since $\ell^2 \leq N^{1/4}$ for our choice of $\ell \leq \ell_0$). The first statement of the theorem follows.

For the second statement, we use \eqref{eq:normAltr} and get, for even $\ell$,
$$
\ABS{ \dE \SBRA{\| A \|_\ell^\ell} - \| A_\star \|_\ell^\ell } \leq  c \rho^\ell   N^{-1} d^4  \ell^{4 d + 16} .
$$
as requested. 
\end{proof}

\subsection{A variant with $\theta$-control: proof of Theorem \ref{th:main2}}

We now present a possible improvement over Theorem \ref{th:FKBlm}. 
We fix a sequence $(a_0,a_1, \ldots, a_{2d})$ satisfying the symmetry condition \eqref{eq:symai}. 
We introduce a new definition. We say that the  {\em paths generated by $(a_0,a_1, \ldots, a_{2d})$ are $\theta$-controlled} if for all $r \geq k \geq 1$, all integers $m \geq 1$, all partitions $\pi$ as in Definition \ref{def:Deltacontrol} and all families $X = (X_{i,\vec j}), i \in \INT{k}, \vec j \in \INT{m}^k$ in $\cA$, depending on $(a_0,\ldots,a_{2d})$ as described after, we have that $(X,\pi)$ is $\theta$-controlled as per Definition \ref{def:Deltacontrol}. We require that for each $i,\vec j$, $X_{i,\vec j}$ is a product of elements $Y_1\cdots Y_l$ where for $t \in \INT{l}$, $Y_t = ((A^{\o}_\star)^{\lambda_t} )_{g  g_{\delta_t} , g_{\delta_t}}$ or $Y_t = (A_{\star}^{\lambda_t})_{g_t \o}$ for some $\delta_t \in \INT{2d}$, $g_t \in \dF_d$ and $\lambda_t$ integer.

By Lemma \ref{le:discr4}, the paths generated by  $(a_0,a_1, \ldots, a_{2d})$ are $1$-controlled if all $(a_0,a_1, \ldots, a_{2d})$ are bistochatic matrices. Similarly, by Lemma \ref{le:NCCSuni}, if $\cA_1 = M_n(\dC) \otimes \cA_0$ and $a_i = b_i \otimes u_i$ with $b_i $ unitary, then  the paths generated by  $(a_0,a_1, \ldots, a_{2d})$ are $n$-controlled.

For controlled paths, we have the following variant of Theorem \ref{th:FKBlm}, which implies Theorem \ref{th:main2} as explained in the proof of Theorem \ref{th:main1}.

\begin{theorem}\label{th:FKBlmtheta}
Let $\theta \geq 1$. Assume that $(a_0,a_1,\ldots,a_{2d})$ satisfy  the symmetry condition \eqref{eq:symai} and that their paths are $\theta$-controlled. Let $\ell_0$ be the largest integer $\ell$ such that 
$$
2 (2d)^{2} \ell^{14} \theta^{3/2} \leq N^{1/4}. 
$$
Then, for all $1 \leq m \leq \ell \leq \ell_0$, for some numerical constant $c >0$, we have
$$
\ABS{ \dE \SBRA{\tau \PAR{ B^{(\ell,m)} } }}\leq c  \frac{d^2  m^{4} \ell^{11}  \theta^2 }{N} \rho^\ell.
$$
and, if $\ell$ is even,
$$
\ABS{ \dE \SBRA{\| A \|_\ell^\ell}  - \| A_\star \|_\ell^\ell } \leq   \frac{c}{\sqrt N}  \rho^{\ell}.
$$
\end{theorem}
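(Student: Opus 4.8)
The plan is to re-run the proof of Theorem~\ref{th:FKBlm} essentially verbatim, changing only the one step where the profile refinement and Lemma~\ref{le:sumagamma} produced the bad $d$- and $m$-exponents. So I would keep: the expansion \eqref{eq:tr1} of $N\,|\dE\,\tau(B^{(\ell,m)})|$ as the norm of a sum over closed non-backtracking paths $\gamma\in P_m$; the coarse equivalence $\sim$; the Weingarten bound on $w(\gamma)$ (Corollary~\ref{le:pgamma}); and the class count $|\cP_m(v,e_1)|\le m^{6\chi+4}$ of Lemma~\ref{le:Plclass}. What I would drop entirely is the finer equivalence $\hat\sim$ together with the profile bookkeeping (so the lemmas about $\widehat\cP_m(v,e_1)$ disappear). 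The reason this is possible is exactly the extra flexibility built into Definition~\ref{def:Deltacontrol}: $\theta$-control allows \emph{arbitrary} coefficients $\delta\in[-1,1]$, and this is precisely what is needed to pull the (non-constant) weight out of the sum over a $\sim$-class. Concretely, fixing a class $\pi\in\cP_m(v,e_1)$ and writing $\bar w=\max_{\gamma\in\pi}|w(\gamma)|$ (which by Corollary~\ref{le:pgamma} depends only on $v,e_1,m$), one has $w(\gamma)=\bar w\,\delta_\gamma$ with $|\delta_\gamma|\le1$, and splitting into real/imaginary parts costs only a factor $2$.

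The key step is then the $\theta$-controlled replacement of Lemma~\ref{le:sumagamma}, namely a bound of the shape
$$
\NRM{\sum_{\gamma'\sim\gamma}\delta_{\gamma'}\,a(\ell,\gamma')}\ \le\ N^{v}\,\rho^{\ell}\,(2d)^{e_1/2}\,\theta^{\,3\chi+2}\,\ell^{\,18\chi+11},\qquad\chi=\tfrac m2+\tfrac{e_1}2-v,
$$
for any $(\delta_{\gamma'})$ with $|\delta_{\gamma'}|\le1$. I would prove this by reusing the kernel-graph machinery of Lemma~\ref{le:sumagamma}: decompose $\gamma$ along $\widehat G_\gamma$ into subpaths $(p_1,\dots,p_r)$ of types (a)/(b), expand $a(\ell,\gamma)$ over compositions $(\lambda_1,\dots,\lambda_r)$ of $\ell$ via \eqref{eq:decompalgamma} and Lemma~\ref{le:pathdecomFd}, and observe that each summand is a product of factors $((A^{\o}_\star)^{\lambda})_{g g_\delta,g_\delta}$ and $(A_\star^{\lambda})_{g\o}$ (recall $a_1(\lambda,p)=((A^{\o}_\star)^{\lambda-1})_{g(p)g_{j_1},g_{j_1}}\,a_{j_1}$ and $a_{j_1}=(A_\star)_{g_{j_1}\o}$), i.e.\ precisely the families to which the hypothesis ``the paths of $(a_0,\dots,a_{2d})$ are $\theta$-controlled'' applies. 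The sum over $\gamma'\sim\gamma$ splits into $\le N^{v}$ choices of distinct vertex labels times a sum over $\vec j$ ranging over a product $\bigotimes_{f\in\widehat E_\gamma}J_f$ of index sets, one per kernel edge $f$ (with $J_f$ the set of non-backtracking colourings of length equal to that of $f$, so $|J_f|\le(2d)^{\mathrm{len}(f)}$, and $k=\hat e=|\widehat E_\gamma|$ blocks). Applying Definition~\ref{def:Deltacontrol} in the product-set form, cf.\ Remark~\ref{rk:NCCS}, then gives $\theta^{\hat e}\prod_t Q_t$, and the $Q_t$ are controlled exactly as before: $\rho^{\lambda_t}$ for a type-(b) subpath, $\lambda_t\rho^{\lambda_t}$ for a kernel edge of multiplicity $\ge2$ (Lemma~\ref{le:normCk}), and $(2d)^{\mathrm{len}(f)/2}\lambda_t\rho^{\lambda_t}$ for a multiplicity-one kernel edge (one further use of Theorem~\ref{th:NCCS} for a singleton, $r=1,k=2$). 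Since $\sum_{f:\,m(f)=1}\mathrm{len}(f)=e_1$, $\hat e\le3\chi+2$ by \eqref{eq:bdhate}, and $r\le3\hat e$, summing over compositions contributes $\ell^{2r-1}\le\ell^{18\chi+11}$, which yields the displayed estimate.

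It then remains to combine the pieces as in the final step of the proof of Theorem~\ref{th:FKBlm}: from \eqref{eq:tr1},
$$
N\,\ABS{\dE\SBRA{\tau\PAR{B^{(\ell,m)}}}}\ \le\ \sum_{v,e_1}\ |\cP_m(v,e_1)|\ \bar w\ \max_{\gamma\in\cP_m(v,e_1)}\NRM{\sum_{\gamma'\sim\gamma}\delta_{\gamma'}\,a(\ell,\gamma')},
$$
into which I plug $|\cP_m(v,e_1)|\le m^{6\chi+4}$, $\bar w\le c\,e^{m\eta}N^{-m/2}\eta^{e_1}m^{2\chi}$ with $\eta=mN^{-1/4}$, and the estimate above, setting $r=m/2-v$ so $\chi=r+e_1/2$. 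Collecting powers, the summand becomes $c\,e^{m\eta}\,m^4\theta^2\ell^{11}\rho^\ell\cdot\Delta_2^{\,r}\Delta_1^{\,e_1}$ with explicit $\Delta_1,\Delta_2$ that, under $\ell\le\ell_0$ (i.e.\ $2(2d)^2\ell^{14}\theta^{3/2}\le N^{1/4}$), are both $\le\tfrac12$ while $e^{m\eta}\le e$; summing the two geometric series gives the first claimed bound. The second bound follows from \eqref{eq:normAltr} by summing over $1\le m\le\ell$, giving $c\,d^2\ell^{16}\theta^2N^{-1}\rho^\ell$, which $\ell\le\ell_0$ turns into $\le c\rho^\ell/\sqrt N$. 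Finally, Theorem~\ref{th:main2} follows from Theorem~\ref{th:FKBlmtheta} exactly as Theorem~\ref{th:main1} follows from Theorem~\ref{th:FKBlm} (linearisation plus Jensen), with Lemmas~\ref{le:discr4} and~\ref{le:NCCSuni} showing that the hypotheses of Theorem~\ref{th:main2} force the paths to be $1$- and $m$-controlled respectively.

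I expect the main obstacle to be precisely the $\theta$-controlled analogue of Lemma~\ref{le:sumagamma}: one has to check carefully that summing over a \emph{whole} coarse class $\pi$ — where the edge colours are no longer pinned down by a profile — produces exactly a family of the restricted ``product of $Y_t$'s'' form covered by the $\theta$-control hypothesis, and that the single factor $\theta^{\hat e}$ it supplies genuinely absorbs the $(2d)^{\mathrm{len}(f)}$-fold choice of colouring on each kernel edge. This is the one place where the argument genuinely differs from Theorem~\ref{th:FKBlm}; it is what lets one dispense with the profile refinement and thereby replace the admissible-range exponent $\ell^{3d+14}$ by $\ell^{14}$. Everything else is a routine re-run of the earlier bookkeeping with the improved exponents.
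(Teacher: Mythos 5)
Your proposal is correct and is essentially the paper's own argument: the paper likewise re-runs the Theorem \ref{th:FKBlm} machinery and the proof of Lemma \ref{le:sumagamma}, using precisely the arbitrary $\delta \in [-1,1]$ weights in Definition \ref{def:Deltacontrol} to absorb the non-constant Weingarten weights (and the class-membership constraints) and paying $\theta^{\hat e}\le \theta^{3\chi+2}$ via \eqref{eq:bdhate}, the only difference being that the paper works with an intermediate equivalence $\check\sim$ that also pins the terminal colour of each kernel edge, costing $(2d)^{3\chi+2}$ in the class count, rather than your raw $\sim$-classes. Your coarser variant goes through, but note that with no endpoint colours fixed the index $i$ of $C^{(\lambda,i)}_\star$ varies inside each block, so your displayed key estimate should carry an extra factor of order $(2d)^{\hat e/2}$ coming from the $Q_t$ bounds (sum over the possible first colours in the positive-semidefinite sums); this is exactly the delicate point you flagged, and it is harmless under the stated condition on $\ell_0$.
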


\begin{proof}
The idea is to use the $\theta$-control to modify the equivalence class $\hat \sim$ in a coarser partition of $\sim$.  Let $\gamma \in P_m$ with kernel graph $\widehat G_\gamma = (\widehat V_\gamma,\widehat E_\gamma)$.
The {\em terminal color} of the edge $f = [y_0,\pi,y_k]  \in \widehat E_\gamma$, $\pi = (y_0,j_1,y_1,\ldots,j_k, y_k)$ is the integer $j_k$. For $\gamma, \gamma'$ in $P_m$, We then say that $\gamma \check \sim \gamma'$ if $\gamma \sim \gamma'$ and the terminal colors of their kernel graphs coincide.

Let $\check \cP_m(v,e_1)$ be the set of equivalence classes for $\check \sim$ with $v$ vertices and $e_1$ edges of multiplicity one. Since there are at most $2d$ choices a terminal color, the proof of Lemma \ref{le:Plclass2} gives 
$$
| \check \cP_m(v,e_1) | \leq (2d)^{3\chi + 2} | \cP_m(v,e_1) |,
$$
with $\chi = m/2 +e_1/2 -v$. For $\gamma \in \check \cP_m(v,e_1)$, we define 
$$
\delta(\gamma) = \frac{w(\gamma)}{\max_{\gamma' \in  \cP_m(v,e_1)} |w(\gamma')|} \in [-1,1],
$$
(if the denominator is $0$, we set $\delta(\gamma) = 1$).

The proof of Lemma \ref{le:sumagamma} implies that for any  $\gamma \in \widehat \cP_m(v,e_1)$, we have 
$$
\check \rho (\ell,\gamma) = \NRM{ \sum_{ \gamma' : \gamma' \check \sim \gamma}  a(\ell,\gamma')  \delta(\gamma')}  \leq N^{v} \theta^{\hat e}\rho^\ell (2d)^{e_1/2} \ell^{18 \chi +11},
$$
where $\hat e$ is the number of edges of $\widehat G_\gamma$ (this corresponds to the number of blocks when we apply Theorem \ref{th:NCCS} - Definition \ref{def:Deltacontrol}). From \eqref{eq:bdhate}, we get 
$$
\check \rho(\ell,\gamma) \leq N^{v} \rho^\ell (2d)^{e_1/2} \theta^2 \ell^{11} (\theta^3 \ell^{18} ) ^{ \chi} .
$$
From \eqref{eq:trBlmf} we then get:
$$
N \ABS{ \dE \SBRA{ \tau (B^{\ell,m} )}} \leq    \sum_{v =1}^{m/2} \sum_{e_1 = 0}^m | \check \cP_{m} (v,e_1)| \max_{\gamma  \in \check \cP_{m}(v,e_1)}  \check \rho(\ell,\gamma) \max_{\gamma  \in  \cP_{m}(v,e_1)}|w(\gamma)| ,
$$
It remains to repeat the computations below \eqref{eq:trBlmf} with our slightly modified upper bounds $| \check \cP_{m} (v,e_1)|$ and $\check \rho(\ell,\gamma)$. With $r = m/2 - v$, we get this time
$$
\max_{\gamma  \in \check \cP_{m}(v,e_1)}  \check \rho(\ell,\gamma) \max_{\gamma  \in  \cP_{m}(v,e_1)}|w(\gamma)| \leq c e^{m\eta} \theta^2 \ell^{11}  \rho^\ell \Delta_1^ {e_1 } \Delta_2 ^r,
$$
and 
$$
|\check \cP_{m} (v,e_1) |  \max_{\gamma  \in \check \cP_{m}(v,e_1)}  \check \rho(\ell,\gamma)  \max_{\gamma  \in \check \cP_{m}(v,e_1)}  w(\gamma) \leq   (2 d) ^2  m^4 c e^{m \eta}   \theta^2 \ell^{11}  \rho^\ell  ((2 d)^{3/2} m^3 \Delta_1)^{e_1} ( (2 d)^3 m^6\Delta_2) ^r. 
$$
where 
$$
\Delta_1 = \eta m \sqrt{2d}  \theta^{3/2}  \ell^{9} \AND \Delta_2 = N^{-1} m^2   \theta^3 \ell^{18}.
$$
The conclusion follows.
\end{proof}

\section{Extension to random permutations}

\label{sec:perm}

\subsection{Main technical result: proof of Theorem \ref{th:main3}}
We consider the setting of Section \ref{sec:FK}.  
We fix a sequence $(a_0,a_1, \ldots, a_{2d})$  in $\cA_1 \subset \cB(H)$ satisfying the symmetry condition \eqref{eq:symai}.
 Recall the definition of $A_\star$ in \eqref{eq:defA*} and set 
$
\rho = \| A_\star\|. 
$
We fix an integer $N \geq 3$ and consider $(U_1,\ldots, U_d)$ be independent uniformly distributed permutation matrices in $\dS_N$ and set $U_{i+d}  = U_{i^*} = U_i^*$ for $i \in \INT{d}$. We then consider the operator $A \in \cA_1 \otimes M_N(\dC)$ as in \eqref{eq:defA} and the corresponding operators $B^{(k,m)}$ defined by \eqref{eq:defBkmbis} (we omit the dependency in $N$). We denote by $\tau = \tau_1 \otimes \tr_N$ the trace on $\cA_1 \otimes M_N(\dC)$ with $\tr_N = \frac 1 {N} \tr$. 

Recall the definition of the projection operator $\Pi = \Pi_N$ defined in \eqref{eq:defPiN}. We have that $A \Pi = \Pi A = \Pi A \Pi$. Hence, for $\ell, p \geq 1$ integers,  we have $(A \Pi)^{\ell p} = (A^{\ell} \Pi)^p$.  From Lemma \ref{th:powerIB} and the triangle inequality, we deduce, if $p$ is even,
\begin{equation}\label{eq:normAltrP}
 \| A \Pi\|_{\ell p }^{\ell}  \leq \sum_{m=0}^{\ell} \|  B^{(\ell,m)} \Pi \|_p.
\end{equation}
For $m=0$, from \eqref{eq:defBkmbis}, we find
\begin{equation}\label{eq:Bl0P}
\| B^{(\ell,0)} \Pi \|_p  \leq  \rho^\ell.
\end{equation}

We are able to upper bound $\|  B^{(\ell,m)} \Pi \|_p$ and $ \| A \Pi\|_{\ell p}^{\ell}$ on an event of large probability $E$ which we will define in the next subsection. The main result of this section is the following upper bound.

\begin{theorem}\label{th:FKBlmP}
Fix $0 < \veps < 1$. Assume $d \leq \log N / (20 \log \log N)$. There exists $c(\veps) >0$ and an event $E = E_N( \veps, U_1,\ldots,U_d)$ of probability at least $1- c(\veps) N^{-1+\veps}$ such that the following holds for some numerical constant $c >0$. 
Assume that $(a_0,a_1,\ldots,a_{2d})$ satisfy  the symmetry condition \eqref{eq:symai}.  
Let 
\begin{equation}\label{eq:defh}
\ell = \lfloor \frac{ \veps}{5} \log _{2d -1} (N) \rfloor 
\end{equation}
Then, for all $1 \leq m \leq \ell$ and even integer $2 \leq q \leq \log N / (20 \log \log N)$, 
$$
\dE \SBRA{\|  B^{(\ell,m)} \Pi  \|_{q} ^{q}  \IND_{E} }\leq (\ell+1)^{6q}   d^{5q}  ( c qm)^{(4d +2)q} \rho^{q \ell} ,
$$
and, 
$$
\dE \SBRA{ \| A \Pi\|_{q\ell}^{q\ell } \IND_E } \leq   d^{5q}  ( c q \ell)^{(4d +10)q} \rho^{q \ell}.
$$
\end{theorem}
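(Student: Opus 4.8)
The plan is to run a permutation analogue of the high-trace computation used for Theorem \ref{th:FKBlm}, but with two additional features forced by the presence of $\Pi$ and by the phenomenon of tangles. First I would expand $\|B^{(\ell,m)}\Pi\|_q^q = \dE\,\tau\big((B^{(\ell,m)}\Pi)^{q/2}((B^{(\ell,m)}\Pi)^*)^{q/2}\big)$ and, after expanding $\tau = \tau_1\otimes\tr_N$ and the entries of the permutation matrices, rewrite the expression as a sum over closed ``meta-paths'' built from $q$ concatenated non-backtracking segments of length $m$, decorated by vertices in $\INT{N}$ and colors in $\INT{2d}$, with the projection $\Pi$ inserting a centering $\underline U_i = U_i - \IND_N\otimes\IND_N$ between consecutive segments. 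Concretely I would follow Steps 2--4 of the overview in Section 2: on the event $E$ I replace $B^{(\ell,m)}$ by the tangle-free truncation $\widetilde B^{(\ell,m)}$ (Subsection \ref{subsec:PDP}, Lemma \ref{le:decomp}), which cancels on paths visiting more than one cycle; this truncation is harmless because $\ell,m = O(\log_{2d-1}N)$, so the event $E$ (to be constructed in the next subsection, with $\PP(E^c)\le c(\veps)N^{\veps-1}$) controls the error. Then I express $\widetilde B^{(\ell,m)}\Pi$ in terms of the centered $\underline U_i$'s.

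The core estimate is then, exactly as in the proof of Theorem \ref{th:FKBlm}: organize the meta-paths into equivalence classes $\hat\sim$ (respecting vertex relabelings, color relabelings and profiles of kernel edges), bound the number of classes of a given genus via the analogues of Lemmas \ref{le:Plclass} and \ref{le:Plclass2}, bound the probabilistic weight $w(\gamma)$ for random permutations (this is where the inputs from \cite{MR3792625,MR4024563} enter: a tangle-free closed path of length $qm$ with $v$ vertices and $e_1$ simple edges contributes at most $N^{-qm/2}$ times a polynomial-in-$(qm)$ factor raised to the genus $\chi = qm/2 + e_1/2 - v$), and bound the operator part $\NRM{\sum_{\gamma'\hat\sim\gamma} a(q\ell,\gamma')}$ using the non-commutative Cauchy--Schwarz inequality Theorem \ref{th:NCCS} together with the norm bounds on $C_\star^{(k,i)}$ from Lemma \ref{le:normCk}. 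Multiplying the three factors (number of classes) $\times$ (probabilistic weight) $\times$ (operator norm), the $N$-powers cancel against $N^v$ as in Section \ref{sec:FK}, and summing the geometric series in the genus parameter $\chi$ under the constraint $2\le q \le \log N/(20\log\log N)$ and $\ell \le \frac{\veps}{5}\log_{2d-1}N$ gives the stated bound $(\ell+1)^{6q} d^{5q}(cqm)^{(4d+2)q}\rho^{q\ell}$; the second displayed inequality follows by inserting this into \eqref{eq:normAltrP} (with $\ell p$ there equal to $q\ell$ here), using \eqref{eq:Bl0P} for the $m=0$ term, and absorbing the sum over $m\in\{0,\dots,\ell\}$ and the $(\ell+1)^{6q}$ into the constant $(cq\ell)^{(4d+10)q}$.

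The main obstacle — and the genuinely new work compared with Theorem \ref{th:FKBlm} — is Step 3: the operations ``remove the tangled paths'' (passing to $\widetilde B^{(\ell,m)}$) and ``project by $\Pi$'' do not commute, so one cannot simply substitute $\underline U_i$ for $U_i$ in the tangle-free sum. I would handle this via Lemma \ref{le:decomp}: write $\widetilde B^{(\ell,m)}\Pi$ as the operator obtained from $B^{(\ell,m)}\Pi$ by deleting tangled paths, plus a controlled remainder coming from the non-commutation, and show the remainder is small on $E$ and contributes only lower-order terms after the trace expansion. Once this decomposition is in place, the rest is a careful but routine re-run of the genus bookkeeping of Section \ref{sec:FK}, now with the extra combinatorial constraint of tangle-freeness keeping the genus series convergent even though $q$ is as large as $\log N/(20\log\log N)$; the bookkeeping of powers of $d$, $m$, $\ell$, $q$ is tedious but mechanical, so I would defer the exact exponents to the detailed proof.
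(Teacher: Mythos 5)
Your plan follows essentially the same route as the paper's proof: the event $E$ is tangle-freeness of the Schreier graph (Lemma \ref{le:probtf}), the non-commutation of ``removing tangles'' and ``projecting by $\Pi$'' is handled exactly via Lemma \ref{le:decomp}, the centered term $\underline B^{(\ell,m)}$ and the remainder terms $R_k^{(\ell,m)}$ are then bounded by re-running the high-trace/genus bookkeeping of Section \ref{sec:FK} with the permutation weight estimates from \cite{MR3792625,MR4024563} and the operator bound from Theorem \ref{th:NCCS} and Lemma \ref{le:normCk}, and the second display follows from \eqref{eq:normAltrP}, \eqref{eq:Bl0P} and H\"older, just as in the paper. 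The only minor discrepancy is bookkeeping: for the permutation case the paper works with the connected-graph genus $g=e-v+1$ and a weight bound of the form $c^{q+g}N^{-e}\eta^{(e_1-4g-2q)_+}$ rather than the $\chi=qm/2+e_1/2-v$ normalization you quote, but this does not change the argument's structure.
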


Theorem \ref{th:main3} is an immediate corollary of Theorem  \ref{th:FKBlmP}. 

\begin{proof}[Proof of Theorem \ref{th:main3}]
Fix $0 < \veps < 1$. We wake $\ell$ as in \eqref{eq:defh}, $q = 2  \lfloor \log N / (40 \log \log N)\rfloor$ so that $q \ell = ( 1+o(1)) (\log N)^2 / (100 \veps (\log (2d -1) \log \log N)$ where $o(1)$ depends on $\veps$.  For some $ t  > 0$ to be defined later, we set $\theta = \exp ( 2t d \log \log N / \ell) = 1 + 2t (1 + o(1)) d \log \log N / \ell$.  Let $E = E(\veps)$ be the event of Theorem \ref{th:FKBlmP}. We write
$
\dP ( \| A \Pi\|_{q\ell} \geq \theta \rho ) \leq \dP ( E^c) + \dP ( \| A \Pi\|_{q\ell} \IND_{E} \geq \theta \rho ). 
$
From Markov inequality and Theorem \ref{th:FKBlmP}, we have 
$$
\dP ( \| A \Pi\|_{q\ell} \IND_{E} \geq \theta \rho ) \leq \frac{\dE \SBRA{ \| A \Pi\|_{q\ell}^{q\ell } \IND_E } }{ (\theta \rho ) ^{q\ell}} \leq  d^{5q}  ( c q \ell)^{(4d +10)q} e^{ - 2t d q \log \log N } .
$$
Since $c q\ell \leq (\log N)^2$ for $N$ large enough and $2 \leq d \leq \log N$, we get, for $ t  = 30$, 
$$
d^{5q}  ( c q \ell)^{(4d +10)q} e^{ - 2t d q \log \log N }  \leq e^{(dq \log \log N) ( 5/d  + 8 + 20 / d - 2 t )}  \leq e^{(dq \log \log N) ( 5/2  +  8 + 10 - 2 t )}\leq  e^{ - dq t \log \log N},
$$ 
for $N$ large enough. For our choice of $q$, the latter is at most $N^{-td (1+o(1))/20 } \leq N^{-t ( 1+ o(1)) / 10}  =  N^{-3( 1 + o(1))}$.  Adjusting the constant $c(\veps)$, the conclusion follows.
\end{proof}

\subsection{Path decomposition}
\label{subsec:PDP}

In this subsection, we define the event $E$ introduced in Theorem \ref{th:FKBlmP} and give an expression for $\| B^{(\ell,m)} \Pi \|_p$ when this event holds. We denote by $\sigma = (\sigma_1,\ldots,\sigma_d)$ the family of independent random uniform permutations in $\dS_N$ such that for all $i \in \INT{2d}$,
$$
(U_i)_{x,y} = \IND (\sigma_i (x) = y),
$$
with $\sigma_i = \sigma_{i^*}^{-1}$ for all $i \in \{d+1,\ldots,2d \}$. For $x,y \in \INT{N}$, the computation leading to \eqref{eq:tr1} gives 
\begin{equation}\label{eq:exBlmxy}
B^{(\ell,m)}_{x,y} =  \sum_{\gamma \in Q_{m,x,y}} a(\ell,\gamma) \prod_{t=1}^{m} (U_{i_t})_{x_{t-1} x_t} ,
\end{equation}
where $Q_{m,x,y}$ is the set of $\gamma = (x_0,i_1,\ldots,i_m,x_m)  \in \INT{N} \times \INT{2d} \times \ldots \times \INT{2d} \times \INT{N}$ with $x_0 = x$, $x_m = y$ and $i_{t} \ne i^*_{t+1}$ for all $t \in \INT{m-1}$ (if $m = 1$, this last constraint is empty). The operator $a(\ell,\gamma) \in \cA_1$ was defined in \eqref{eq:defapgamma}. We define $Q_m$ as the disjoint union of all $Q_{m,x,y}$, $x,y \in \INT{N}$ (in Section \ref{sec:FK}, $P_m$ is the disjoint union of all $Q_{m,x,x}$, $x \in \INT{N}$). Elements of $Q_m$ will be called \emph{paths}.

We are going to restrict the sum of paths in \eqref{eq:exBlmxy} to a subset of $Q_{m,x,y}$ (called $F_{m,x,y }$ below). To each element $\gamma \in Q_m$, we associate its colored graph $G_\gamma = (V_\gamma,E_\gamma)$ defined below Definition \ref{def:colorgraph} (there were defined for $\gamma \in P_m$ but the definition extends to $Q_m$). Finally, if $G = (V,E)$ is a graph, $x \in V$ is a vertex, and $h \geq 0$ real, we denote by $(G,x)_h$ the subgraph of $G$ spanned by vertices at graph distance at most $h$ from $x$.

\begin{definition}[Tangles]\label{def2}
Let $h \geq 0$. A graph $G$ is {\em tangle-free} if it contains at most one cycle, $G$ is {\em $h$-tangle-free} if for any vertex $x$, $(G,x)_{h}$ 
contains at most one cycle. We say that $\gamma  = (x_0,i_1,\ldots,x_m)\in Q_m$ is tangle-free
if $G_{\gamma}$ is. We say it is tangled otherwise.
\end{definition}

We consider the colored graph $G_\sigma = (\INT{N},E_\sigma)$ with colored edges $E _\sigma = \{ [x,i,\sigma_i(x)] :  x \in \INT{N}, i \in \INT{2d}\}$. In other words, $G_\sigma$ is the Schreier graph associated to the permutations $\sigma = (\sigma_i)_{i \in \INT{d}}$. Then \cite[Lemma 23]{MR4024563} implies the following.  
\begin{lemma}\label{le:probtf}
For any $0 < \veps < 1$, there exists a constant $c = c(\veps)$ such that if $d \geq 2$ and $h =  ( \veps / 5)\log _{2d -1} (N) $, then $G_\sigma$ is $h$-tangled-free with probability at least $1  - c n^{1-\veps}$.   
\end{lemma}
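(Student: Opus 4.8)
The plan is to deduce this directly from \cite[Lemma 23]{MR4024563}: that reference establishes exactly the $h$-tangle-freeness of the Schreier graph of $d$ independent uniform permutations for $h$ of the stated order, and the colored-graph formalism of Definition \ref{def2} matches the one used there, so nothing more is required. For completeness, let me outline the self-contained first-moment argument underlying it.

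The first step is to show that a tangle forces a \emph{small} and \emph{structurally rigid} witness. If $(G_\sigma,x)_h$ has at least two cycles, run a breadth-first exploration from $x$ to depth $h$; since $(G_\sigma,x)_h$ is connected with cyclomatic number at least $2$, the exploration produces at least two non-tree colored edges, and the two fundamental cycles they close are linearly independent in the cycle space and each have length at most $2h+1$ (the exploration tree has depth $\le h$). Gluing these two cycles to $x$ along the corresponding tree paths and then iteratively deleting degree-one vertices yields a connected subgraph $H\subseteq G_\sigma$ with $H\subseteq(G_\sigma,x)_h$, with $v(H)=O(h)$ vertices and \emph{exactly} $v(H)+1$ edges, whose topological type is one of the three ``excess-one'' types (figure-eight, theta, dumbbell); in particular, for each $v$ there are only $O(v^{3})$ possible shapes, since a shape is determined by three arc lengths.

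The second step is the union bound. For a fixed shape on $v$ vertices and $v+1$ edges, the number of colored subgraphs of the complete $2d$-colored structure on $\INT N$ realizing it is at most $N^{v}(2d-1)^{v+1}$, the $-1$ in the base reflecting the non-backtracking constraint $i_t\ne i_{t-1}^{*}$ along each arc; and for any such $H$, splitting over colors and using that each $\sigma_i$ is a uniform permutation gives $\dP(H\subseteq G_\sigma)\le(1+o(1))^{v+1}N^{-(v+1)}$ once $v+1=o(\sqrt N)$. Multiplying, each shape contributes $O((1+o(1))^{v}(2d-1)^{v+1}N^{-1})$; summing over the $O(v^{3})$ shapes and over $v=O(h)$ gives a geometric series dominated by its last term, so $\dP(G_\sigma\text{ is }h\text{-tangled})\le\mathrm{poly}(h)\,N^{-1}(2d-1)^{O(h)}$. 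Choosing $h=(\veps/5)\log_{2d-1}N$ makes $(2d-1)^{O(h)}$ a small power of $N$, and the surviving factor $N^{-1}$ delivers the $N^{\veps-1}$ rate after adjusting $c(\veps)$.

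The only genuine difficulty is bookkeeping: one must keep the witness of size $O(h)$ with cyclomatic number \emph{exactly} two (so the probability factor is $N^{-(v+1)}$ and one gains the decisive $N^{-1}$, rather than $N^{-v}$ which would be worthless), count the shapes by only $\mathrm{poly}(h)$ (an exponential-in-$v$ overcount of shapes would destroy the estimate for $d$ as small as $2$), and track the implied constants closely enough that the full range $0<\veps<1$ and the exact rate both survive. This calibration is precisely what \cite[Lemma 23]{MR4024563} carries out, which is why we simply invoke it.
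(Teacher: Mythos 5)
Your proposal is correct and takes exactly the paper's route: the paper gives no independent argument for this lemma and simply derives it from \cite[Lemma 23]{MR4024563}, which is what you do. Your supplementary first-moment sketch (minimal witness of cyclomatic number two of size $O(h)$, union bound over shapes and colors, calibration of $h=(\veps/5)\log_{2d-1}N$) is the standard argument behind that cited lemma, and you rightly defer the delicate constant-tracking to the reference.
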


Let us now define an operator $\widetilde B^{(\ell,m)} \in \cA_1 \otimes M_N(\dC)$ for $1 \leq m \leq \ell$.  For all $x,y \in \INT{N}$, let 
\begin{equation}\label{eq:defwtBlm}
\widetilde B^{(\ell,m)}_{x,y} =  \sum_{\gamma \in F_{m,x,y}} a(\ell,\gamma) \prod_{t=1}^{m} (U_{i_t})_{x_{t-1} x_t} ,
\end{equation}
where 
$
F_{m,x,y} = \{ \gamma \in Q_{m,x,y} : \gamma \hbox{ tangle-free}\}.
$
Note that $F_{m,x,y}$ is deterministic (it does not depend on the permutations $\sigma$). We denote by $F_m \subset Q_m$ the disjoint union of all $F_{m,x,y}$, $x,y \in \INT{N}$. For $m=0$, we set $\widetilde B^{(\ell,0)} = B^{(\ell,0)} = a(\ell) \otimes 1_N$.

If $G_\sigma$ is $\ell$-tangle-free then 
\begin{equation}\label{eq:wttoP}
\widetilde B^{(\ell,m)}  =  B^{(\ell,m)}.
\end{equation}
Indeed, for any $\gamma \in Q_m \backslash F_m$, then 
$$\prod_{t=1}^{m} (U_{i_t})_{x_{t-1} x_t} = \prod_{t=1}^{m} \IND ( \sigma_{i_t}(x_{t-1}) = x_t) = 0,$$
because $G_\gamma$ has two cycles in a ball of radius $h$. Since  $G_\sigma$ is $\ell$-tangle-free $G_\gamma$ cannot be a subgraph of $G_\sigma$ and thus  $\prod_{t=1}^{m} \IND ( \sigma_{i_t}(x_{t-1}) = x_t) = 0$.

Our next goal is to obtain an expression for $\widetilde B^{(\ell,m)} \Pi$. For $i \in \INT{2d}$, we define 
\begin{equation}\label{eq:defS}
\underline U_{i }  = U_{i } (1_N - \IND_N \otimes \IND_N)  = U_{i}- \IND_N \otimes \IND_N ,
\end{equation}
where $\IND_N$ is the unit vector with all coordinates equal to $1/\sqrt N$. Note that $\underline U_i$ is the orthogonal projection of $U_i$ onto $\IND_N^\perp$ and that $ \dE \underline U_i = 0$. 

We introduce the operator $\underline B^{(\ell,m)}$ in $\cA_1 \otimes M_N(\dC)$ defined by, for all $x,y \in \INT{N}$,
\begin{equation}\label{eq:defdefub}
(\underline B^{(\ell,m)})_{x,y} = \sum_{\gamma \in F_{m,x,y}}  a(\ell,\gamma)  \prod_{t =1}^m ( \underline U_{i_t} )_{x_{t-1} x_{t} }.
\end{equation}
For $m=0$, we set $\underline B^{(\ell,0)} =  B^{(\ell,0)} = a(\ell) \otimes 1_N$.
Let $b_t,c_t$, $t \in \INT{m}$, be elements in a ring. We have the identity
$$
\prod_{t=1}^m b_t = \prod_{t=1}^m c_t + \sum_{k=1}^{m}\PAR{ \prod_{t=1}^{k-1} b_t } ( b_k - c_k) \PAR{\prod_{t = k+1}^{m} c_t},
$$
where by convention, a product over an empty set is equal to the unit of the ring. We apply this last identity to each $\gamma \in F_{m,x,y}$ in \eqref{eq:defwtBlm} with $b_t = (U_{i_t})_{x_{t-1} x_t}$, $c_t = (\underline U_{i_t})_{x_{t-1} x_t}$ and $b_t - c_t = 1/N$. We get 
\begin{equation*}
(\widetilde B^{(\ell,m)})_{x,y} = ( \underline B^{(\ell,m)})_{x,y} + \sum_{k=1}^m \sum_{\gamma \in F_{m,x,y}}  a(\ell,\gamma) \PAR{ \prod_{t=1}^{k-1} ( \underline U_{i_t} )_{x_{t-1} x_{t} } } \PAR{\frac 1 N } \PAR{\prod_{t = k+1}^{m} ( U_{i_t} )_{x_{t-1} x_{t} }}.
\end{equation*}
Recall from \eqref{eq:defapgamma} that $a(\ell,\gamma) =  (A_\star^\ell)_{g(\gamma) , \o}$, where $g(\gamma) = g_{i_m} \cdots g_{i_1} \in S_m$ is an element of $\dF_d$ of reduced length $m$. If $F_{g,x,y} = \{ \vec x = (x_0,\ldots, x_m) \in \INT{N}^{m+1} : (x_0,i_1,\ldots,i_{m},x_{m}) \in F_{m,x,y}\}$, we get 
\begin{equation}\label{eq:decomp1}
\widetilde B^{(\ell,m)} =  \underline B^{(\ell,m)} + \sum_{k=1}^m \sum_{g \in S_m} (A_\star^\ell)_{g\o} \otimes C^{(k)}_g,
\end{equation}
where  $C^{(k)}_g$ in $M_N(\dC)$ is defined for $x,y$ by 
$$
(C^{(k)}_g)_{x,y} =  \sum_{\vec x \in F_{g,x,y}} \PAR{ \prod_{t=1}^{k-1} ( \underline U_{i_t} )_{x_{t-1} x_{t} } } \PAR{\frac 1 N } \PAR{\prod_{t = k+1}^{m} ( U_{i_t} )_{x_{t-1} x_{t} }}.
$$
 For fixed $g = g_{i_m} \cdots g_{i_1} \in S_m$ and $k \in \INT{m}$, we consider the matrices in $M_N(\dC)$, whose entry $x,y$ are
\begin{align*}
(\underline U_{g}^{(k)} )_{x,y}= \sum_{\vec x  \in F^{k,1}_{g,x,y}}\prod_{t=1}^{k-1} ( \underline U_{i_t} )_{x_{t-1} x_{t} }  \AND (\widetilde U_{g}^{(k)} )_{x,y}= \sum_{\vec x \in {F}^{k,2}_{g,x,y}}\prod_{t=1}^{m-k} (  U_{i_t} )_{x_{t-1} x_{t} }, 
\end{align*}
where $F^{k,1}_{g,x,y} = \{ \vec x = (x_0,\ldots ,x_{k-1} )  : (x_0,i_1,\ldots,i_{k-1},x_{k-1}) \in F_{k-1,x,y} \}$, ${F}^{k,2}_{g,x,y} = \{ \vec x  = (x_0,\ldots,x_{m-k}) : (x_0,i_{k+1},\ldots,i_{m},x_{m-k}) \in F_{m-k,x,y}  \}$. For $\veps = 1,2$, we denote by $F^{k,\veps}_g$ the disjoint union of all $F^{k,\veps}_{g,x,y}$, $x,y \in \INT{N}$. In words, $F_g^{k,\veps}$ are tangle-free paths compatible with the beginning and end of $g$. By the definition of matrix products, 
$$
(C^{(k)}_g) = \underline U_{g}^{(k)} \IND_N \otimes \IND_N \widetilde U_g^{(k)} - \frac {D^{(k)}_g} N 
$$
 where 
 $$
( D^{(k)}_g)_{x,y} = \sum_{\vec x  \in T^{k}_{g,x,y}}  \PAR{ \prod_{t=1}^{k-1} ( \underline U_{i_t} )_{x_{t-1} x_{t} } } \PAR{\prod_{t = k+1}^{m} ( U_{i_t} )_{x_{t-1} x_{t} }},
 $$
and $T^{k}_{g,x,y}$ is the set of $(x_0,\ldots,x_m)$ such that, with $\gamma = (x_0,i_1,\ldots,x_m) = (\gamma_1,i_k,\gamma_2)$, $\gamma_\veps \in F_g^{k,\veps}$, $\veps = 1,2$ and $\gamma \in Q_{m,x,y} \backslash F_{g,x,y}$. In words, $T_{g,x,y}^{k}$ are tangled paths from $x$ to $y$, which are a union of tangled-free paths and which are compatible with $g$, see Figure \ref{fig:Gamma3} for the $3$ topological possibilities.

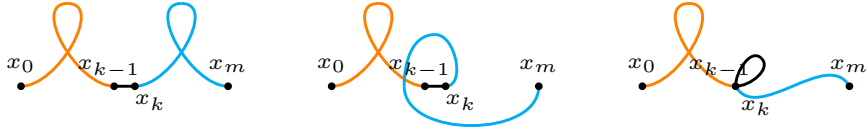
\begin{figure}[htb]
\begin{center}  
\resizebox{12cm}{!}{
\begin{tikzpicture}[main node/.style={circle,fill , text = black, thick}]

\draw[orange , -,thick] (0.0,0)  [out = 0, in = 0] to (0.45,0.8)  ; 
\draw[orange , -,thick] (0.45,0.8) [out = 180 , in = 180]   to (0.9,0)   ; 
\draw[ -,thick] (0.9,0) to (1.1,0)  ; 
\draw[ cyan,  -,thick] (1.1,0) [out = 0, in = 0]  to (1.55,0.8)  ; 
\draw[ cyan , -,thick] (1.55,0.8) [out = 180 , in = 180]  to (2,0) ;  

\node[above]  at (0,0) (1) {\tiny{$x_0$}} ;
\node[above]  at (2,0) (2)   {\tiny{$x_m$}} ;
\node[above]  at (0.85,-0.05) (3)  {\tiny{$ x_{k-1}$}} ;
\node[below]  at (1.25,0.05) (4) {\tiny{$x_{k}$}} ;

\draw [fill] (0,0) circle [radius=0.03] ;
\draw [fill] (0.9,0) circle [radius=0.03] ;
\draw [fill] (1.1,0) circle [radius=0.03] ;
\draw [fill] (2,0) circle [radius=0.03] ;

\draw[orange , -,thick] (3.0,0)  [out = 0, in = 0] to (3.45,0.8)  ; 
\draw[orange , -,thick] (3.45,0.8) [out = 180 , in = 180]   to (3.9,0)   ; 
\draw[ -,thick] (3.9,0) to (4.1,0)  ; 
\draw[ cyan,  -,thick] (4.1,0) [out = 0, in = 0]  to (4,0.5)  ; 
\draw[ cyan , -,thick] (4,0.5) [out = 180 , in = 90]  to (3.7,0) ;  
\draw[ cyan , -,thick] (3.7,0) [out = -90 , in = -90]  to (5,0) ;  

\node[above]  at (3,0) {\tiny{$x_0$}} ;
\node[above]  at (5,0)  {\tiny{$x_m$}} ;
\node[above]  at (3.85,-0.05)  {\tiny{$ x_{k-1}$}} ;
\node[below]  at (4.25,0.05)  {\tiny{$x_{k}$}} ;

\draw [fill] (3,0) circle [radius=0.03] ;
\draw [fill] (3.9,0) circle [radius=0.03] ;
\draw [fill] (4.1,0) circle [radius=0.03] ;
\draw [fill] (5,0) circle [radius=0.03] ;

\draw[orange , -,thick] (6.0,0)  [out = 0, in = 0] to (6.45,0.8)  ; 
\draw[orange , -,thick] (6.45,0.8) [out = 180 , in = 180]   to (6.9,0)   ; 
\draw[ -,thick] (6.9,0)[out = 0, in = -45]  to (7.15,0.27)  ;
 \draw[ -,thick] (7.15,0.27)[out = 135, in = 90]  to (6.9,0)  ;
\draw[ cyan,  -,thick] (6.9,0) [out = -60, in = -240]  to (8,0) ;  

\node[above]  at (6,0)  {\tiny{$x_0$}} ;
\node[above]  at (8,0)  {\tiny{$x_m$}} ;
\node[above]  at (6.75,-0.05)  {\tiny{$ x_{k-1}$}} ;
\node[below]  at (7.1,0.0)  {\tiny{$x_{k}$}} ;

\draw [fill] (6,0) circle [radius=0.03] ;
\draw [fill] (6.9,0) circle [radius=0.03] ;
\draw [fill] (8,0) circle [radius=0.03] ;

\end{tikzpicture}}
\caption{Tangle-free paths whose union is tangled.} \label{fig:Gamma3}
\end{center}\end{figure}

\vspace{-10pt}

We then use the definition of tangles again. If $G_\sigma$ is $\ell$-tangle free then 
$$
 \widetilde U_{g}^{(k)}  = U_{g}^{(k)}, 
\, \hbox{ with } \; ( U_{g}^{(k)} )_{x,y}=  \sum_{(x_1,\ldots,x_{m-1})\in \INT{N}^{m-1} }\prod_{t=1}^{k-1} (  U_{i_t} )_{x_{t-1} x_{t} } = ( U_{i_{k+1}} \cdots U_{i_m} )_{x,y} , 
$$
with $x_0 = x$, $x_m= y$. 
We have $U_g^{(k)} \IND_N = (U_g^{(k)})^* \IND_N = \IND_N$. We deduce that if $G_\sigma$ is $\ell$-tangle free then 
$$
C^{(k)}_g ( 1- \IND_N \otimes \IND_N ) = - \frac {D^{(k)}_g} N   ( 1- \IND_N \otimes \IND_N ).
$$

Since $\Pi = 1_{H} \otimes (1- \IND_N \otimes \IND_N )$, from \eqref{eq:decomp1}, we arrive at the following identity, 
$$
B^{(\ell,m)} \Pi = \underline B^{(\ell,m)} \Pi - \frac 1 N \sum_{k=1}^m R_k^{(\ell,m)}  \Pi, 
$$
with $R_k^{(\ell,m)} \in \cA_1 \otimes M_N(\dC)$ is defined for all $x,y \in \INT{N}$ by 
\begin{equation}\label{eq:defRk}
(R_k^{(\ell,m)})_{x,y} = \sum_{\gamma \in T^k_{m,x,y}} a(\ell,\gamma) \PAR{ \prod_{t=1}^{k-1} ( \underline U_{i_t} )_{x_{t-1} x_{t} } } \PAR{\prod_{t = k+1}^{m} ( U_{i_t} )_{x_{t-1} x_{t} }},
\end{equation}
and 
$
T^k_{m,x,y} = \{ \gamma = (x_0,i_1,\ldots,i_m,x_m) = (\gamma_1,i_k,\gamma_2) \in Q_{m,x,y} \backslash F_{m,x,y} : \gamma_1 \in F_{k-1}, \gamma_2 \in F_{k-m}\}.
$

Using the triangle inequality and the fact that $\Pi$ is a contraction, we get the following lemma which is the conclusion of this subsection.
\begin{lemma}\label{le:decomp}
If $G_\sigma$ is the $\ell$-tangle-free, for any $1 \leq m \leq \ell$ and $p \geq 1$, we have 
$$
\| B^{(\ell,m)} \Pi \|_p  \leq \| \underline B^{(\ell,m)} \|_p  + \frac 1 N \sum_{k=1}^m \| R_k^{(\ell,m)}  \|_p.$$
\end{lemma}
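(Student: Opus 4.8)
The plan is to read the lemma off the operator identity established in the lines immediately preceding its statement, namely that on the event that $G_\sigma$ is $\ell$-tangle-free,
\begin{equation*}
B^{(\ell,m)} \Pi = \underline B^{(\ell,m)} \Pi - \frac 1 N \sum_{k=1}^m R_k^{(\ell,m)} \Pi .
\end{equation*}
So the first step is just to recall how this identity arises: from \eqref{eq:defwtBlm} one telescopes each tangle-free product $\prod_t (U_{i_t})_{x_{t-1}x_t}$ into the corresponding product of the centered matrices $\underline U_{i_t}$ plus $m$ correction terms of size $1/N$ indexed by the cut position $k$, which produces \eqref{eq:decomp1}; then, on the $\ell$-tangle-free event, \eqref{eq:wttoP} identifies $\widetilde B^{(\ell,m)}$ with $B^{(\ell,m)}$, the factor $\widetilde U_g^{(k)}$ with the genuine permutation product $U_g^{(k)}$, and, because $U_g^{(k)}$ fixes the vector $\IND_N$, the rank-one part $\IND_N \otimes \IND_N$ is killed against $1 - \IND_N \otimes \IND_N$ inside $\Pi$; the only terms left are $\underline B^{(\ell,m)} \Pi$ and the tangled remainders $R_k^{(\ell,m)} \Pi$.

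Granting the identity, the lemma is a one-line consequence. Applying the triangle inequality for $\| \cdot \|_p$ to the displayed equation gives
\begin{equation*}
\| B^{(\ell,m)} \Pi \|_p \leq \| \underline B^{(\ell,m)} \Pi \|_p + \frac 1 N \sum_{k=1}^m \| R_k^{(\ell,m)} \Pi \|_p ,
\end{equation*}
and, since $\Pi$ is an orthogonal projection and hence a contraction, $\| \underline B^{(\ell,m)} \Pi \|_p \leq \| \underline B^{(\ell,m)} \|_p$ and $\| R_k^{(\ell,m)} \Pi \|_p \leq \| R_k^{(\ell,m)} \|_p$, which is the claimed bound.

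All the substance sits in the identity rather than in the lemma, so if I had to single out a main difficulty it would be the verification that \emph{deleting the tangled paths} and \emph{applying $\Pi$} are compatible. The operator $C_g^{(k)}$ produced by the telescoping step is not itself supported on tangle-free paths, so one cannot simply reuse \eqref{eq:wttoP}; instead the $\ell$-tangle-free hypothesis has to be used a second time --- once to see that $\widetilde U_g^{(k)}$ collapses to the permutation matrix $U_g^{(k)}$, which fixes $\IND_N$, so that after projecting only the $1/N$-rescaled tangled matrix $D_g^{(k)}$ survives --- and one then has to recognise $\sum_{g \in S_m} (A_\star^\ell)_{g \o} \otimes D_g^{(k)}$ as exactly $R_k^{(\ell,m)}$. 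This is precisely the computation carried out just above the statement, after which the lemma follows immediately.
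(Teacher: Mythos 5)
Your proposal is correct and follows the paper's own route: the identity $B^{(\ell,m)} \Pi = \underline B^{(\ell,m)} \Pi - \frac 1 N \sum_{k=1}^m R_k^{(\ell,m)} \Pi$ on the $\ell$-tangle-free event (obtained via the telescoping into $\underline U_i$'s, the collapse $\widetilde U_g^{(k)} = U_g^{(k)}$, and $U_g^{(k)}$ fixing $\IND_N$), followed by the triangle inequality and the contractivity of $\Pi$, is exactly the argument in the paper. You also correctly locate where the tangle-free hypothesis is used twice, which is the only subtle point.
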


In view of Lemma \ref{le:decomp}, we will now upper bound $\dE \| \uB^{(\ell,m)}\|^p_p$ and  $\dE \| R_k^{(\ell,m)}\|^p_p$ for $p$ large enough. This can be done by adapting to random uniform permutations the strategy followed for the proof of Theorem \ref{th:FKBlm} in Subsection \ref{subsec:th7} for random Haar unitaries. 

\subsection{Norm of $\underline B^{(\ell,m)}$}

The main result of this subsection is the following bound.

\begin{proposition}\label{prop:FBB}
Assume that $(a_0,a_1,\ldots,a_{2d})$ satisfy  the symmetry condition \eqref{eq:symai} and $N \geq 3$. There exists a numerical constant $c > 0$ such that if $d \leq \log N / (20 \log \log N)$ and $1 \leq m \leq \ell \leq  \log N$. Then for even integer $2 \leq q \leq \log N / (20 \log \log N)$, 
$$
\dE \| \underline B^{(\ell,m)} \|_{q}^{q} \leq   \ell^{12q}   d^{5q}  ( c qm)^{(4d +1)q} \rho^{q \ell}.
$$ 
\end{proposition}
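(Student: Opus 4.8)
The plan is to mimic, for random uniform permutations, the argument used to prove Theorem \ref{th:FKBlm}. Since $q$ is even and $\uB = \uB^{(\ell,m)}$ need not be self-adjoint, I would start from
$$
\dE \| \uB \|_q^q = \dE\, \tau\PAR{ (\uB \uB^*)^{q/2} },
$$
and expand the normalized trace $\tau = \tau_1 \otimes \tr_N$ together with the $q$ matrix products. Using the expression \eqref{eq:defdefub} for the entries of $\uB$, this produces a sum over closed decorated walks $\Gamma = (\gamma_1,\ldots,\gamma_q)$ of total length $qm$ on $\INT{N}$: each $\gamma_j$ is a tangle-free non-backtracking path of length $m$ with colours in $\INT{2d}$, consecutive segments are glued at their endpoints, and the walk closes up. Each such $\Gamma$ contributes the product of an operator weight $a(\ell,\gamma_1)\, a(\ell,\gamma_2)^*\, a(\ell,\gamma_3)\cdots \in \cA_1$ — a product of $q$ factors, each of norm at most $\rho^\ell$ by Lemma \ref{le:normCk} — and of a probabilistic weight $w(\Gamma) = \dE \prod_{t} (\underline U_{i_t})_{x_{t-1} x_t}$. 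As in the unitary case, it is crucial to keep the norm outside the sum and to invoke Theorem \ref{th:NCCS} rather than the triangle inequality.

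Next I would estimate $w(\Gamma)$ using the moment estimates for products of (centred) uniform permutation matrices from \cite{MR3792625,MR4024563}, which here play the role of Lemma \ref{cor:WG2} and Corollary \ref{le:pgamma}: $w(\Gamma)$ vanishes unless every edge of the colored graph $G_\Gamma$ is traversed at least twice (which in particular bounds $|V_\Gamma|$), and otherwise $|w(\Gamma)|$ is at most $N^{-|E_\Gamma|}$ times a factor polynomial in $qm$ produced by edges of high multiplicity. The tangle-free hypothesis on each of the $q$ segments $\gamma_j$ is used exactly as in Subsection \ref{subsec:PDP}: it forces the excess $\mu = |E_\Gamma| - |V_\Gamma| + \cc(G_\Gamma)$ (equivalently the number of ``important times'' in a canonical exploration) to stay $O(q\ell)$, which is what keeps the subsequent combinatorial count polynomial.

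The remaining steps parallel Lemma \ref{le:Plclass}, Lemma \ref{le:Plclass2} and Lemma \ref{le:sumagamma}. First, group the walks into equivalence classes under relabelling of vertices and recolouring, refined by the profiles of the kernel graphs $\widehat G_{\gamma_j}$ as in Definition \ref{def:profile}; counting canonical representatives of length $qm$ and excess $\mu$ gives at most $(2d\,(qm)^{d})^{O(\mu)}$ classes. Second, for a fixed class, decompose each $a(\ell,\gamma_j)$ along the maximal paths of $\widehat G_{\gamma_j}$ via Lemma \ref{le:pathdecomFd}, reduce each block to a $C^{(\lambda,i)}_\star$-operator (absorbing the adjoints in the odd/even factors), and apply Theorem \ref{th:NCCS} exactly as in the proof of Lemma \ref{le:sumagamma}: the norm of the sum over a class of the operator weights is at most $N^{|V_\Gamma|}\, \rho^{q\ell}$ times a polynomial in $qm$ and in the $\lambda_t \le \ell$, with no $\sqrt n$-type loss. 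Multiplying the number of classes by the operator bound and by the bound on $|w(\Gamma)|$, the factor $N^{|V_\Gamma|}\cdot N^{-|E_\Gamma|}$ together with the $1/N$ coming from $\tr_N$ leaves $N^{-\mu}$ against a geometric series in $\mu$ whose ratio is $(qm)^{O(1)}/N < 1$ in the stated range $m,q \le \log N/(20\log\log N)$. Bookkeeping the powers of $\ell$, $d$, $q$ and $m$ produced at each step yields the claimed bound $\ell^{12q} d^{5q} (cqm)^{(4d+1)q}\rho^{q\ell}$.

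The main obstacle is the simultaneous bookkeeping, on the single graph $G_\Gamma$ of length $qm$, of three distinct structures: the per-segment tangle-free constraint (which is what makes $w(\Gamma)\neq 0$ possible at all, bounds the excess, and is the source of the restriction $m,\ell = O(\log_{2d-1}N)$), the global multigraph structure (which governs the probabilistic weight and the vertex-placement count $N^{|V_\Gamma|}$), and the per-segment kernel/profile decomposition needed to feed the operator weight into Theorem \ref{th:NCCS}. Getting the dependence in $q$ down to $(\mathrm{poly})^{O(q)}$ rather than something worse requires that the number of canonical walks be $(\mathrm{poly}(qm,d))^{O(\mu)}$ with $\mu = O(q\ell)$, so one must check carefully that tangle-freeness of each of the $q$ pieces really does control the \emph{global} excess; a minor additional point is that the alternating product $a(\ell,\gamma_1)a(\ell,\gamma_2)^*\cdots$ must be treated as the generic product to which Theorem \ref{th:NCCS} applies, the adjoints being harmless.
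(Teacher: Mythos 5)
Your overall architecture is the paper's: expand $\tau$ over $q$ glued tangle-free segments, group the resulting decorated walks into (profile-refined) equivalence classes, bound the probabilistic weight by the permutation estimates of \cite{MR3792625,MR4024563}, bound the summed operator weights by combining Lemma \ref{le:pathdecomFd}, Lemma \ref{le:normCk} and Theorem \ref{th:NCCS} with the norm kept outside the sum, and finish with a geometric series in the excess, exactly as in the proofs of Lemmas \ref{le:PlclassP}--\ref{le:sumagammaP}. (A small point: under \eqref{eq:symai} the operator $\uB^{(\ell,m)}$ is in fact self-adjoint, so the paper works directly with $\tau\PAR{(\uB^{(\ell,m)})^{q}}$; your route via $(\uB^{(\ell,m)}(\uB^{(\ell,m)})^*)^{q/2}$ is equally workable and is how the paper treats $R_k^{(\ell,m)}$.)

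Two of your quantitative claims, however, are wrong, and they sit precisely where the work lies. First, for centred permutation matrices the weight does \emph{not} vanish when some edge has multiplicity one: for distinct $x,x',y,y'$ one has $\dE\SBRA{(\underline U_i)_{xy}(\underline U_i)_{x'y'}} = \tfrac{1}{N^2(N-1)} \neq 0$. The correct estimate (Lemma \ref{le:isopath}, i.e.\ \cite[Lemma 27]{MR4024563}) is $|w(\gamma)| \le c^{q+g} N^{-e}\, \eta^{(e_1-4g-2q)_+}$, and this decay in the number $e_1$ of multiplicity-one edges is not a refinement but a necessity: the singleton blocks in the application of Theorem \ref{th:NCCS} produce the factor $(2d)^{e_1/2}$ in Lemma \ref{le:sumagammaP}, and without the $\eta$-factor and the subsequent optimization over $e_1$ carried out in the proof of Proposition \ref{prop:FBB}, the sum over classes blows up like $(2d)^{qm/2}$; your sketch never tracks $e_1$ at all (also, the bound $v\le e+1$ comes from connectivity via the boundary conditions, not from any vanishing of $w$). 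Second, tangle-freeness does not ``control the excess'': $g\le qm$ trivially, and $g$ is in any case summed out against the $N^{-g}$ left after cancelling $N^{v}N^{-e}N^{-1}$. Its actual role, through the short-/long-cycling-time encoding of Lemma \ref{le:PlclassP}, is to bound the number of marked times \emph{per segment} by the excess, so that the class count is of order $(qm)^{O(qg)}\,(2dm^{d})^{O(g+q)}$; note the extra factor $q$ (and $d$) in the exponent. Consequently the geometric ratio is of order $(cqm)^{O(d+q)}/N$ rather than the $(qm)^{O(1)}/N$ you state, and it is exactly this dependence that forces the hypotheses $q,d\le \log N/(20\log\log N)$, a constraint your bookkeeping as written cannot produce.
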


In a nutshell, the proof of Theorem \ref{th:FKBlm} works in the context of Proposition \ref{prop:FBB} up to a few technical issues, most of which have been dealt with in \cite{MR4203039,MR4024563}. 

The symmetry condition \eqref{eq:symai} guarantees that $\underline B^{(\ell,m)}$ is self-adjoint. Indeed, if $\gamma  =(x_0,i_1,\ldots,x_m) \in F_{m,x,y}$ and $\gamma^*$ is defined as $\gamma^* = (x_m,i_m^*,\ldots,x_1)$ then $\gamma^* \in F_{m,y,x}$ and $a(\ell,\gamma^*) = a(\ell,\gamma)^*$, where $a(\ell,\gamma)$ was defined in \eqref{eq:defapgamma}.  Hence, for even integer $q \geq 1$,
$$
 \| \underline B^{(\ell,m)} \|_{q}^{q} = \tau \PAR{ ( \underline B^{(\ell,m)})^{q}} .
$$ 
Recalling that $\tau = \tau_1 \otimes \tr_N$, we next expanding $\tr_N$. The computation leading to \eqref{eq:tr1} is modified by considering $q$ paths. More precisely,  from \eqref{eq:defdefub}, we get, with the convention that $y_{q+1} = y_1$ at the first line,
\begin{eqnarray*}
N \tau \PAR{ ( \underline B^{(\ell,m)})^{q}} 
 & =  &   \tau_1 \PAR{ \sum_{( y_1,\ldots, y_{q}) \in \INT{N}^{q}}   \prod_{p =1}^{q} ( \underline B^{(\ell,m)})_{y_{p} y_{p+1}} } \\
 & = &  \tau_1 \PAR{ \sum_{\gamma \in W_{m,q}} \prod_{p=1}^{q}  a(\ell,\gamma_p ) \prod_{p=1}^{q} \prod_{t=1}^{m} (\underline U_{i_{p,t}} )_{x_{p,t-1},x_{p,t}}   },
 \end{eqnarray*}
where the above non-commutative product over elements in $\cA_1$ is taken from left to right  and $W_{m,q}$ is the set of $\gamma = (\gamma_1,\ldots, \gamma_q)$ with $\gamma_p = (x_{p,0},i_{p,1},\ldots,i_{p,m},x_{p,m}) \in F_m$ and satisfying the boundary conditions: for all $p \in \INT{q}$
\begin{equation}\label{eq:boundary}
x_{{p+1},0} = x_{p,m} 
\end{equation}
with the convention that $\gamma_{q+1} = \gamma_1$. Taking expectation, we obtain the following analog of \eqref{eq:tr1}
\begin{equation}\label{eq:tr1P}
N \ABS{\dE  \tau \PAR{ ( \underline B^{(\ell,m)})^{q}} } \leq \NRM{\sum_{\gamma \in W_{m,q}}   a(\ell,\gamma)w(\gamma) },
\end{equation}
where for $\gamma \in W_{m,q}$ (or more generally in $Q_m^q$),
\begin{equation}\label{eq:defapgammaP}
a(\ell,\gamma) = \prod_{p=1}^{q}  a(\ell,\gamma_p ) = \prod_{p=1}^q (A_{\star}^\ell)_{g(\gamma_p),\o}  \AND w(\gamma) = \dE \prod_{p=1}^{q} \prod_{t=1}^{m} (\underline U_{i_{p,t}} )_{x_{p,t-1},x_{p,t}}.
\end{equation}
In the above, $g(\gamma_p) = g_{i_{p,m}} \cdots g_{i_{p,1}}$ is a reduced word of length $m$.

The equivalence class $\gamma \sim \gamma'$  on $P_m$ introduced above Definition \ref{def:colorgraph} extends to the larger set of $q$ paths $Q_m^q \supset W_{m,q}$. The colored graph $G_\gamma = (V_\gamma,E_\gamma)$ is also defined similarly. The kernel graph $\widehat G_\gamma$ is defined precisely as above Definition \ref{def:profile} expect that its vertex set is 
$$
\widehat V_\gamma = V_{\geq 3} \cup  \bigcup_{p=1}^{q} \{ x_{p,0} \},
$$
where $V_{\geq 3}$ is the set of vertices of $G_\gamma$ with degree at least $3$. The profile of a path and the finer equivalence class $\gamma \hat \sim \gamma'$ are defined without any change. Beside Lemma \ref{le:newclass} continues to hold: if $\gamma \hat \sim \gamma'$ then  $
w(\gamma) = w(\gamma')$. Indeed in the proof of Lemma \ref{le:newclass}, we have only used the invariance $U $ has the same law as $S U T$ for all permutation matrices $S, T$ (this property obviously continues to hold for uniform permutations $U$ and $\underline U = U - \IND_N \otimes \IND_N$).

Now, we sort the paths in terms of the number of edges and vertices of  $\gamma \in G_\gamma$. Assume that $G_\gamma$ has $v$ vertices, $e$ edges, and $e_1$ edges of multiplicity one. The sum of edge multiplicities is $q m$.
Arguing as in \eqref{eq:bdee1}, we get 
\begin{equation}\label{eq:bdee1P}
e \leq (q m + e_1 )/2. 
\end{equation}
Moreover, from the boundary condition \eqref{eq:boundary} $G_\gamma$ is a connected graph. This implies that
\begin{equation}\label{eq:genusP}
g = e - v +1 \geq 0.
\end{equation}
Note that here the notion of genus $g$ adapted to $W_{m,q}$ differs from the genus $\chi$ defined in \eqref{eq:genus}. For integers $v, e_1$ such that \eqref{eq:genusP} holds. We define $\cW_{m,q} (v,e)$ and $\widehat \cW_{m,q} (v,e)$ as the set of equivalence classes with $v$ vertices and $e$ edges for the equivalence relations $\sim$ and $\hat \sim$ respectively. 

\begin{lemma}\label{le:PlclassP}
With $g  = e - v +1 $, we have, 
$$
|\cW_{m,q} (v,e) |\leq (qm)  ^{ 3q g +4 } \AND |\widehat \cW_{m,q} (v,e) | \leq ((2d) m^{d})^{6g +4q-6} |\cW_{m,q} (v,e) |.
$$
\end{lemma}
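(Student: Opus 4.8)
The plan is to prove the two bounds separately, both by adapting the combinatorial arguments already used for Haar unitaries in Lemmas \ref{le:Plclass} and \ref{le:Plclass2}, together with the analysis of tangle-free walks from \cite{MR4024563,MR4203039}. Throughout I fix $(v,e)$, write $g = e-v+1$, and recall that by \eqref{eq:bdee1P} and \eqref{eq:genusP} the graph $G_\gamma$ of an element $\gamma=(\gamma_1,\dots,\gamma_q)$ of $W_{m,q}$ is connected with $e\le (qm+e_1)/2$ and $g\ge 0$.

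For the first bound I would run the canonical-representative / exploration argument of Lemma \ref{le:Plclass}, now on the concatenated walk $(x_{1,0},i_{1,1},x_{1,1},\dots,i_{q,m},x_{q,m})$ of total length $qm$. In each $\sim$-class pick the lexicographically smallest element $\gamma$; for such a canonical $\gamma$ one has $x_{1,0}=1$, $V_\gamma=\INT{v}$, $i_{1,1}=1$, and the vertices of $\INT{v}$ are discovered in increasing order along the walk, so it suffices to bound the number of canonical elements. Exploring the walk step by step, grow a spanning tree $T$ of $G_\gamma$ (with $v-1$ edges) and classify the $qm$ steps as \emph{first times} (they discover a new vertex; exactly $v-1$ of them, the colour being forced by canonicity as in Lemma \ref{le:Plclass}), \emph{tree times} (they re-traverse a tree edge along the unique non-backtracking stretch of $T$ between two already-visited vertices, hence forced by its endpoints), and \emph{important times}, this last class also absorbing the $q$ steps that join $\gamma_{p-1}$ to $\gamma_p$, where the non-backtracking constraint may fail. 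The key point, exactly as in \cite{MR4024563}, is that each segment $\gamma_p$ is non-backtracking and $G_{\gamma_p}$ is tangle-free — here one uses $m\le \ell\le h$, so that $G_{\gamma_p}$, of diameter $\le m\le h$, contains at most one cycle — hence $\gamma_p$ is, after deleting an initial and a terminal geodesic stretch in the tree parts, a bounded number of full turns around the unique cycle of $G_{\gamma_p}$; this rigidity bounds the number of important times together with the boundary marks by $O(qg)$. A canonical $\gamma$ is then reconstructed from the positions and marks of these exceptional steps, each mark lying in a set of size $\le (qm)^{3}$ (a position $\le qm$, together with an edge or a pair of adjacent edges of $G_\gamma$, $\le (qm)^{2}$), plus an overhead of size $O(1)$ for the last colours; this yields $|\cW_{m,q}(v,e)| \le (qm)^{3qg+4}$.

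For the second bound I would follow Lemma \ref{le:Plclass2} essentially verbatim. Each $\sim$-class of $\cW_{m,q}(v,e)$ splits into at most $((2d)^2 m^{2d})^{\hat e}$ classes of $\widehat{\cW}_{m,q}(v,e)$, where $\hat e=|\widehat E_\gamma|$, because each kernel edge carries a profile $(j_1,j_k,(\prof_i)_{i\in\INT{2d}})$ with at most $2d$ choices for each of $j_1,j_k$ and at most $m$ for each $\prof_i$. So it suffices to bound $\hat e$. Suppressing a degree-$2$ vertex removes one vertex and one edge, so $\hat e-\hat v = e-v = g-1$ with $\hat v=|\widehat V_\gamma|$. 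Now $\widehat V_\gamma = V_{\ge 3}\cup\{x_{1,0},\dots,x_{q,0}\}$: the vertices of $V_{\ge 3}$ have degree $\ge 3$ in $\widehat G_\gamma$, while the at most $q$ remaining vertices have degree $\ge 1$, whence $2\hat e = \sum_{w\in\widehat V_\gamma}\deg w \ge 3\hat v - 2q$, i.e. $\hat v\le (2\hat e+2q)/3$. Combining with $\hat v=\hat e-g+1$ gives $3\hat e-3g+3\le 2\hat e+2q$, that is $\hat e\le 3g+2q-3$, so $((2d)^2 m^{2d})^{\hat e}=((2d)m^d)^{2\hat e}\le((2d)m^d)^{6g+4q-6}$, as claimed.

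The hard part will be the first bound: one must carefully re-run the exploration bookkeeping with $q$ concatenated, individually tangle-free, segments and verify that all the exceptional data — the important steps and the behaviour at the $q$ junctions, where non-backtracking fails — is encoded by $O(qg)$ marks in sets of polynomial size, so that the exponent comes out to be $3qg+4$; this is the genus-sensitive, tangle-free analogue of the counting in \cite{MR4024563,MR4203039}. By contrast the second bound is a routine adaptation of Lemma \ref{le:Plclass2}, the only new feature being the $q$ extra boundary vertices of the kernel graph, which is exactly why the exponent picks up the additive $2q$.
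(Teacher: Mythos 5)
Your second bound is correct and is essentially the paper's argument: split each $\sim$-class into at most $((2d)^2m^{2d})^{\hat e}$ classes of $\hat\sim$ via the edge profiles, then bound $\hat e$ by a degree count on the kernel graph using that the at most $q$ boundary vertices $x_{p,0}$ may have degree $<3$; your computation $2\hat e\ge 3\hat v-2q$ together with $\hat e-\hat v=e-v=g-1$ gives $\hat e\le 3g+2q-3$, exactly as in the paper (which does the same count on $G_\gamma$ and cancels the degree-$2$ vertices). Your only slip there is cosmetic: the "terminal colour'' data in this lemma is the full profile, as in Lemma \ref{le:Plclass2}, and your bound matches.

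For the first bound there is a genuine gap at exactly the point you flag as "the hard part''. Your rigidity claim is wrong as stated: a tangle-free segment $\gamma_p$ is \emph{not} "a bounded number of full turns around the unique cycle of $G_{\gamma_p}$'' — it can wind around a short cycle of $G_{\gamma_p}$ up to order $m$ times, and each traversal of the excess edge of that cycle is an important time (neither a first time nor a tree time), so the number of important times is not $O(qg)$; for $g=1$ it can be of order $m$. If you encode every important time by a mark, the count becomes $(qm)^{O(qm)}$, which destroys the genus-sensitive exponent $3qg+O(q)$ needed downstream (the bound must be summable in $g$ against $N^{-g}$). The paper's proof supplies the missing device: within each segment it singles out one \emph{short cycling} time, whose enriched mark records in addition the time $t_2$ at which the segment exits its unique cycle — this single mark encodes the arbitrarily many turns — plus at most $g-1$ \emph{long cycling} times per segment with ordinary marks, all remaining important times being \emph{superfluous} and reconstructible for free; together with one starting mark per segment (to handle the junctions, where non-backtracking may fail) this yields $m^{qg}(qm)^{2q}(qm)^{2q(g-1)}\bigl(m(qm)^3\bigr)^q$, i.e. the claimed bound. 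Your proposal names the right ingredients (canonical representatives, exploration, tangle-freeness of each $\gamma_p$ — which, note, holds by the very definition of $F_m$, not because of any property of $G_\sigma$ or of $h$) but does not contain, and indeed contradicts, the short-cycling bookkeeping that makes the exponent come out as $3qg+O(q)$.
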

\begin{proof}
Up to minor modifications, the first statement is essentially \cite[Lemma 25]{MR4024563}. We start by repeating the proof of Lemma \ref{le:Plclass}. Recall the definition of canonical paths in Lemma \ref{le:Plclass}. The goal is to build an encoding of the canonical paths.  Set $m_0 = m-1$. We equip the set $T = \INT{q} \times \INT{m_0}$ with the lexicographic order (that is $(p,t)\preceq (p+1,t')$ and 
$(p,t)\preceq(p,t+1)$).  We think of an element $s = (p,t) \in T$ as a time. We define $(p, t)^-$ as the largest time smaller than $(p, t)$, i.e. $(p, t)^- = (p, t - 1)$ if 
$t\geq 2$, $ (p, 1)^- = (p- 1, m-1) $ if $j \geq 2$ and, by convention, $(1, 1)^- = (1, 0)$.

Let $T_{(1,0)}$ to be the tree with a single vertex $\{1\}$ and no edge. 
As in Lemma \ref{le:Plclass}, for $s  = (p,t) \in T$, we consider $(T_s)$ the non-increasing sequence of trees obtained by adding iteratively the edge $[x_{p,t-1},i_{p,t},x_{p,t}]$ if its addition does not create a cycle in $T_{s^-}$. Tree edges, excess edges, first time, tree time, and important times are defined as in Lemma \ref{le:Plclass}. The number of tree edges is $v-1$, and the number of excess edges is $g$ by construction.

As in Lemma \ref{le:Plclass}, due to the non-backtracking constraint $i_{t+1} \ne i_t^*$, for each $p \in \INT{q}$, the sequence $\gamma_p$ can thus be decomposed as 
the successive repetitions of: $(i)$ a sequence of first times (possibly empty), $(ii)$ an important time, 
$(iii)$ a sequence of tree times (possibly empty).

For each $p \in \INT{q}$, we mark each important time $s = (p,t)$ by the vector $(i_{p,t},x_{p,t}, x_{p,\tau-1},i_{p,\tau})$ where $\tau > t$ is the next time in $\gamma_p$ which is not a tree time if this time exists. Otherwise, $t$ is the last important time of $\gamma_p$, and we set $\tau = m$). We also add a {\em starting mark} for each $p \geq 2$, given by $(x_{p,0},x_{p,\tau-1},i_{p,\tau})$ where $(p,\tau)$ is the smallest important of $\gamma_p$ (if this time exists). For $p=1$, the starting mark is $i_{q,m}$.

As in Lemma \ref{le:Plclass}, the canonical sequence $\gamma$ is 
uniquely determined by starting marks, the positions of the important times, their marks.  However, thanks to the assumption that $\gamma_p$ is tangle-free for each $p \in \INT{q}$, it is possible to reduce the number of important times that have to be considered. For each $p \in \INT{q}$, we partition important times into three categories, {\em short cycling}, 
{\em long cycling} and {\em superfluous} times. For each $p \in \INT{q}$, we consider the first occurrence of a time $(p,t_1)$ 
such that $x_{p,t_1} \in \{ x_{p,0},  \ldots, x_{p,t_1-1} \}$. If such $t_1$ exists, the last important time $(p,t_s) \preceq (p,t_1)$ 
will be called the short cycling time (this important time exists since we have met a cycle).  Let $1 \leq t_0 \leq t_1$ be such that $x_{j,t_0-1} = x_{j,t_1}$. 
By assumption, $C_p = (x_{p,t_0-1},i_{p,t_0},\ldots, x_{p,t_1})$ will be the unique cycle visited by $\gamma_p$.  
We denote by $(p,t_2)$ the next $t_2 >  t_1$ such that $x_{p,t_2}$ in not in $C_p$ (by convention $t_2 = m$ if 
$\gamma_p$ remains in $C_p$). 
We modify the mark of the short cycling time $(p,t_s)$ as  $(i_{p,t_s},x_{p,t_s}, x_{p,t_1-1}, t_2 ,x_{p, \tau-1},i_{p,\tau})$, where 
$(p,\tau) \succeq (p, t_2)$ is the next time in $\gamma_p$, which is not a tree time (if this time exists, otherwise we set $\tau = m$). 
Important times $(p,t)$ with $1 \leq t < t_s$ or $\tau \leq t < m$ are called long cycling times, they receive the usual mark 
$(i_{p,t},x_{p,t},x_{p,\tau-1},i_{p,\tau})$. The other important times are called superfluous. By convention, if there is no short cycling time, 
we call, anyway, the last important time of $\gamma_p$ the short cycling time. We observe that for each $p$, the number of long cycling times 
of $\gamma_p$ is bounded by  $g-1$ (since there is at most one cycle, no edge of $E_\gamma$ can be seen twice 
outside those of $C_p$, the $-1$ coming from the fact the short cycling time is an excess edge).

We have our encoding. We can reconstruct $\gamma$ from the starting marks, the positions of the long cycling 
and the short cycling times, and their marks. For each $p$, there are at most $1$ short cycling time and $ g-1$ long cycling 
times. There are at most $  m^{qg}$ ways to position them. There are at most $(qm)^2$ different possible marks for a
long cycling time (there are most $q m$ edges and $qm$ vertices in $G_\gamma$) and $m (qm)^3$ possible marks for a short cycling time. Finally, there are $ (q m) ^2 $ possibilities for a 
starting mark. We deduce that    
$$
| \cW _{m,q} (v,e) | \leq    m^{q g} (  q m )^{2q}   ( q m ) ^{2q (g-1)}(m ( qm)^3 )^{q}. 
$$
We obtain the claimed bound  on $| \cW _{m,q} (v,e) |$. 

For the second statement, arguing as in the proof of Lemma \ref{le:Plclass2}, it suffices to prove that for any $\gamma \in W_{m,q}$ with $\hat v$ vertices and $\hat e$ edges in $\widehat G_\gamma$, we have $\hat e \leq 3g +2q -3$.

The proof is the same as in Lemma \ref{le:Plclass2}. The vertices $x_{p,0} \in \widehat V_\gamma$ have a degree at least $1$, all other elements of $\widehat V_\gamma$ have a degree at least $3$, and vertices of $V_\gamma \backslash \widehat V_\gamma$ have degree $2$. Setting  $u = |V_\gamma \backslash \widehat V_\gamma|$, we deduce that
$$\hat v + u  = v \quad \hbox{ and } \quad 3 (\hat v -q) + 2 u  + q \leq \sum_{x \in V_\gamma} \mathrm{deg}(x) = 2 e.$$ 
Canceling  $u$, we get 
 $
 \hat v \leq 2 e - 2 v +2q.
 $ 
Then, using  \eqref{eq:e-v}, we obtain
\begin{equation}\label{eq:bdhateP}
\hat e = \hat v + (e - v) \le 3 e - 3 v + 2 q  = 3 g + 2 q -3 .
\end{equation}
The conclusion follows.
\end{proof}

We next estimate $w(\gamma)$ defined in \eqref{eq:defapgammaP}. The following lemma is  \cite[Lemma 27]{MR4024563}. For real $x$, set $(x)_+ = \max (x,0)$. 

\begin{lemma}\label{le:isopath}
There exists a numerical constant $c >0$ such that for any $\gamma\in W_{m,q}(v,e)$ and $ q m  \leq \sqrt{ N}$, 
$$
 |w(\gamma) | \leq  c^{q + g} N^{-e}  \eta  ^{  (e_1 - 4 g - 2  q)_+}, 
$$
where $\eta = 3 qm / \sqrt N$, $g = e - v +1$ and $e_1$ is the number of edges of $E_\gamma$ with multiplicity one. 
\end{lemma}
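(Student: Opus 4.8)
This is the permutation analogue of Lemma~\ref{cor:WG2}, and I would prove it by the standard route for moments of centred permutation matrices (it is exactly \cite[Lemma~27]{MR4024563}). First reduce to a single permutation: since $\sigma_1,\dots,\sigma_d$ are independent and $\sigma_{i^*}=\sigma_i^{-1}$, the weight $w(\gamma)$ in \eqref{eq:defapgammaP} factors as $\prod_{i=1}^d w_i(\gamma)$, where $w_i(\gamma)$ is the expectation of a product of entries of $\underline U_i=U_i-\IND_N\otimes\IND_N$ and of $\underline U_i^*$. A traversal of a colored edge contributes, regardless of its direction (here one uses $(U_i^{-1})_{yx}=(U_i)_{xy}$), a single constraint of the form $\IND(\sigma_i(a)=b)-N^{-1}$ with $(a,b)$ depending only on the colored edge. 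Expanding the product over the steps of color $i$ into $2^{k_i}$ terms indexed by subsets $S$ of steps (indicator kept on $S$, factor $-N^{-1}$ off $S$) and using the exact identity
$$
\dE\prod_{s\in S}\IND(\sigma_i(a_s)=b_s)=\frac{(N-r(S))!}{N!}\cdot\IND(\text{the constraints on }S\text{ form a partial injection}),
$$
where $r(S)$ is the number of \emph{distinct} constraints in $S$, brings everything down to a finite combinatorial sum governed by the colored graph $G_\gamma=(V_\gamma,E_\gamma)$.

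\textbf{Crude bound.} The distinct constraints carried by $S$ are in bijection with the edges of $G_\gamma$ touched by $S$; an edge of multiplicity $\mu$ contributes to $r(S)$ at most once and contributes a factor $-N^{-1}$ for each of its untouched steps. Grouping the sum over $S$ edge by edge, an entirely untouched edge of multiplicity $\mu$ gives $(-N^{-1})^\mu$, while a touched edge gives, after summing over which of its steps lie in $S$, a factor of size at most $\binom{\mu}{1}N^{-1}+\binom{\mu}{2}N^{-2}+\dots\le N^{-1}(1+O(\mu/N))$; using $\sum_{e\in E_\gamma}\mu_e=qm\le\sqrt N$ and $g=e-v+1\ge 0$, the product of the $(1+O(\mu_e/N))$ factors and of the bookkeeping constants is bounded by $c^{q+g}$ (indeed by a universal constant, but the $c^{q+g}$ slack is convenient). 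This already yields $|w(\gamma)|\le c^{q+g}N^{-e}$, which proves the lemma whenever $e_1\le 4g+2q$, so from now on assume $e_1>4g+2q$.

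\textbf{Extracting the $\eta$-factors.} For the gain over multiplicity-one edges I would run a telescoping/pairing argument. Fix a multiplicity-one edge $e$ with unique traversing step $s_0$, and, in the expansion, pair each term with $s_0\notin S$ to the term with $S\cup\{s_0\}$. Since $e$ has multiplicity one and $r(S\cup\{s_0\})\le r(S)+1\le e+1\ll N$, the step $s_0$ adds a genuinely new constraint, and the two paired contributions are $(-N^{-1})\frac{(N-r)!}{N!}(\cdots)$ and $\frac{(N-r-1)!}{N!}(\cdots)$; their sum equals $\frac{(N-r)!}{N!}\cdot\frac{r}{N(N-r)}(\cdots)$. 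Thus $e$ contributes, in place of the single $N^{-1}$ allotted to it in the crude bound, a factor of size $\le \frac{2r}{N^2}=N^{-1}\cdot\frac{2r}{N}\le N^{-1}\min(1,\eta)$, using $r\le qm\le\sqrt N$ and $\tfrac{2qm}{N}\le\min\!\big(1,\tfrac{3qm}{\sqrt N}\big)$ for $N\ge 4$. Iterating this over the multiplicity-one edges (the factor $\tfrac{r}{N(N-r)}$ shifts by $\pm1$ in $r$ along the iteration, which is harmless since it stays $\le 2qm/N^2$ throughout) produces one factor $\min(1,\eta)$ per such edge, and since $\min(1,\eta)\le\eta$ this gives $|w(\gamma)|\le c^{q+g}N^{-e}\,\eta^{(e_1-4g-2q)_+}$ once we have argued that at most $4g+2q$ multiplicity-one edges must be excluded from the telescoping.

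\textbf{Main obstacle.} The crux is exactly this last point: the pairing requires that $s_0$ be a ``free'' step, which can fail at a bounded number of places — at the $q$ path-boundary constraints \eqref{eq:boundary}, at the $g$ excess (cycle-closing) edges, and where a colored edge coincides with the reverse-coloured copy of another edge — and one must show, via the connectivity of $G_\gamma$ together with $e\le (qm+e_1)/2$ and $g=e-v+1$, that the total number of multiplicity-one edges lost to these obstructions is at most $4g+2q$ (a generous constant times $g+q$). Establishing this combinatorial accounting is the one delicate part; the rest is bookkeeping. Finally, reassembling $w(\gamma)=\prod_{i=1}^d w_i(\gamma)$ multiplies the per-color bounds, and since the edge counts $e$, $e_1$, the genus $g$ and the constraint counts are additive over colors, the stated bound for $w(\gamma)$ follows (absorbing the product of the color-wise constants into a single $c^{q+g}$).
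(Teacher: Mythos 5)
The paper's own ``proof'' of this lemma is a citation: it is taken verbatim from \cite[Lemma 27]{MR4024563}, which you correctly identify, and your general strategy (factor over colors, expand the centred entries, use the exact partial-injection formula $\frac{(N-r(S))!}{N!}$, then extract a cancellation of order $qm/N$ from each multiplicity-one edge) is indeed the strategy of that reference. Judged as a self-contained argument, however, your sketch has a genuine gap exactly where you flag it: the claim that at most $4g+2q$ multiplicity-one edges must be exempted from the telescoping. This is not peripheral bookkeeping — it is the substance of the lemma, since without it the exponent $(e_1-4g-2q)_+$ is unsupported. Proving it requires a color-by-color analysis of when the added constraint $\sigma_i(a_{s_0})=b_{s_0}$ fails to be ``fresh'' (same source or target already constrained in color $i$, the $2q$ boundary identifications \eqref{eq:boundary}, cycle-closing edges), and then relating the total count of such failures to the \emph{global} excess $g=e-v+1$ of the connected graph $G_\gamma$ via inequalities of the type $e\le(qm+e_1)/2$ and the kernel-graph bound on branching vertices. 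None of this is carried out, so the proposal does not establish the stated bound.

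Two subsidiary claims are also inaccurate as written. First, your ``crude bound'': grouping the expanded sum edge by edge gives, for each multiplicity-one edge, a factor of size $2N^{-1}$ (touched plus untouched term), so without performing the cancellation on those edges the term-by-term absolute-value bound is $c\,2^{e_1}N^{-e}$, not $c^{q+g}N^{-e}$; the $c^{q+g}$ form only emerges after the cancellation is run on all but the $O(g+q)$ bad multiplicity-one edges, i.e.\ it already presupposes the missing accounting. Second, in the final reassembly you assert that the genus is additive over colors; it is not — the color-$i$ subgraph may be disconnected, and its cyclomatic number involves its own component count, so $\sum_i(e_i-v_i+c_i)$ is not controlled by $g$ without an argument (this is another face of the same global-to-local accounting). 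Finally, the iterated pairing deserves more than ``harmless'': after one pairing the coefficient is no longer of the form $\pm N^{-1}\frac{(N-r)!}{N!}$, so subsequent cancellations must be re-derived for the modified coefficients (in the source this is avoided by working from an exact formula rather than by iterated pairing). Given that the paper itself simply invokes \cite[Lemma 27]{MR4024563}, citing that lemma is a complete proof; attempting to reprove it requires closing the combinatorial step above.
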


The final lemma is the analog of Lemma  \ref{le:sumagamma}.

\begin{lemma}\label{le:sumagammaP}
Let $\gamma \in \widehat \cW_{m,q}(v,e)$, $g = e -v +1$ with $e_1$ edges of multiplicity one. We have 
$$
\rho (\ell,\gamma) = \NRM{ \sum_{ \gamma' : \gamma' \hat \sim \gamma}  a(\ell,\gamma') }  \leq N^{v} \rho^{ q\ell} (2d)^{e_1/2}  \ell^{18 g +12 q -19}.
$$
\end{lemma}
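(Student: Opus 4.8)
The plan is to reproduce the proof of Lemma~\ref{le:sumagamma} almost verbatim, tracking the three changes caused by having $q$ paths instead of one. As there, I would use the kernel graph $\widehat G_\gamma=(\widehat V_\gamma,\widehat E_\gamma)$ as the backbone of the closed walk $\gamma=(\gamma_1,\ldots,\gamma_q)$, which is closed because of the boundary condition $x_{p+1,0}=x_{p,m}$ read cyclically. Since every base point $x_{p,0}$ lies in $\widehat V_\gamma$, decomposing this walk along successive visits of the edges of $\widehat E_\gamma$ produces a decomposition $\gamma=(p_1,\ldots,p_r)$ that automatically refines the partition of the walk into the $q$ individual paths $\gamma_p$ (a maximal subpath cannot have $x_{p,0}$ as an interior vertex). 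Sorting the subpaths into type (a), following an edge of $\widehat E_\gamma$ on its first or last visit, and type (b), following a run of intermediate visits, exactly as in Lemma~\ref{le:sumagamma}, gives $r\le 3\widehat e$ with $\widehat e=|\widehat E_\gamma|$.

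Next I would decompose the operator weight $a(\ell,\gamma)=\prod_{p=1}^{q}(A_\star^\ell)_{g(\gamma_p),\o}$ by applying Lemma~\ref{le:pathdecomFd} to each factor, splitting $(A_\star^\ell)_{g(\gamma_p),\o}$ along the subpaths of $\gamma_p$ (which is legitimate since $g(\gamma_p)\in S_m$ is a reduced word). Multiplying these out over $p$ yields
$$
a(\ell,\gamma)=\sum_{(\lambda_1,\ldots,\lambda_r)}a_{\delta_r}(\lambda_r,p_r)\cdots a_{\delta_1}(\lambda_1,p_1),
$$
where the $q$ subpaths initiating a $\gamma_p$ carry $\delta_t=\emptyset$ (factor $(A_\star^{\lambda_t})_{g(p_t)\o}$) and all others carry $\delta_t=1$ (factor $(C_\star^{(\lambda_t,\cdot)})_{g(p_t)\o}$ as in~\eqref{eq:defAi}), the sum being over $\lambda_t\ge1$ with $\sum_t\lambda_t=q\ell$ subject to the $q$ decoupled constraints that the $\lambda_t$'s, summed over the subpaths of each fixed $\gamma_p$, equal $\ell$. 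Summing over all $\gamma'\hat\sim\gamma$ then factors over the $N(N-1)\cdots(N-v+1)\le N^v$ choices of distinct vertices and over the product set $J=\bigotimes_{f\in\widehat E_\gamma}J_f$ exactly as in Lemma~\ref{le:sumagamma}; the openness hypothesis of Theorem~\ref{th:NCCS} holds for the same reason (a type-(b) subpath depends only on the $J_f$'s visited once so far and revisited later), so Theorem~\ref{th:NCCS}, in the form of Remark~\ref{rk:NCCS}, gives $\NRM{\sum_{\gamma'\hat\sim\gamma}a_{\delta_r}(\lambda_r,p'_r)\cdots a_{\delta_1}(\lambda_1,p'_1)}\le N^v\prod_{t\in\INT{r}}Q_t$.

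The factors $Q_t$ are bounded as in Lemma~\ref{le:sumagamma}: $Q_t\le\rho^{\lambda_t}$ for $t\in T$ (using $\|(A_\star^{\lambda_t})_{g\o}\|\le\rho^{\lambda_t}$ and~\eqref{eq:defAinrm}); $Q_t\le\lambda_t\rho^{\lambda_t}$ for $t\in S$ in a block of two elements via Lemma~\ref{le:normCk}; and $Q_t\le(2d)^{k_t/2}\lambda_t\rho^{\lambda_t}$ for $t\in S$ in a singleton block whose edge has length $k_t$, by applying Theorem~\ref{th:NCCS} again (for $r=1$, $k=2$) together with the bound $(2d)^{k_t}$ on the number of relevant reduced words and then Lemma~\ref{le:normCk}; the case $\delta_t=\emptyset$ does not affect these bounds. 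Since $e_1$ is also the total length of the multiplicity-one edges of $\widehat E_\gamma$, the product of the $(2d)$-factors is $(2d)^{e_1/2}$, so $\prod_tQ_t\le(2d)^{e_1/2}\rho^{q\ell}\prod_t\lambda_t$. It then remains to sum over $(\lambda_1,\ldots,\lambda_r)$: writing $r_p+1$ for the number of subpaths inside $\gamma_p$ (so $\sum_p(r_p+1)=r$), the number of admissible tuples is at most $\prod_p\ell^{r_p}=\ell^{r-q}$ and $\prod_t\lambda_t\le\ell^r$, hence $\sum_{(\lambda)}\prod_t\lambda_t\le\ell^{2r-q}\le\ell^{6\widehat e-q}$. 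Finally, inserting $\widehat e\le 3g+2q-3$ from~\eqref{eq:bdhateP} gives $6\widehat e-q\le 18g+11q-18\le 18g+12q-19$ because $q\ge1$, which is the claimed exponent of $\ell$, and combining everything yields $\rho(\ell,\gamma)\le N^v\rho^{q\ell}(2d)^{e_1/2}\ell^{18g+12q-19}$.

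I expect the only genuine subtlety to be bookkeeping: checking that the subpath decomposition really respects the boundaries between the $q$ constituent paths (so that the factors $(A_\star^\ell)_{g(\gamma_p)\o}$ decompose independently and the $\lambda_t$-constraints split into $q$ decoupled blocks), and then carrying those $q$ extra constraints through the counting of $(\lambda)$-tuples so that the power of $\ell$ comes out no larger than $18g+12q-19$. The non-commutative Cauchy--Schwarz step and the estimation of the $Q_t$'s are unchanged from Lemma~\ref{le:sumagamma}.
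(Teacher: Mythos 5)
Your overall architecture matches the paper's proof exactly (kernel graph as backbone, decomposition into type-(a)/(b) subpaths, Lemma~\ref{le:pathdecomFd} to split $a(\ell,\gamma)$, Theorem~\ref{th:NCCS} with the same $Q_t$ bounds, the factor $(2d)^{e_1/2}$, and \eqref{eq:bdhateP} at the end), but there is a genuine gap at precisely the point where the $q$-path setting differs from Lemma~\ref{le:sumagamma}, and it is the point you yourself flag as the main subtlety. Your claim that the decomposition along successive kernel-edge visits ``automatically refines'' the partition into the $\gamma_p$'s, justified by ``a maximal subpath cannot have $x_{p,0}$ as an interior vertex,'' is false: it conflates interior vertices of a single edge of $\widehat E_\gamma$ (which indeed have degree $2$ and cannot be any $x_{p,0}$) with interior vertices of a type-(b) subpath, which is a concatenation of several kernel-edge traversals whose junctions are kernel vertices and can perfectly well equal $x_{p,m}=x_{p+1,0}$ (this happens whenever the kernel edge ending $\gamma_p$ and the one beginning $\gamma_{p+1}$ are both on intermediate visits). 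The paper states this explicitly (``a path of type (b) may overlap over several successive $\gamma_p$'s'') and handles it by attaching to such a subpath a composite factor $a_{\delta_s}(l_s,q_s)\cdots a_{\delta_1}(l_1,q_1)$, one piece per $\gamma_p$ it meets, which still has norm at most $\rho^{\lambda_t}$ by Lemma~\ref{le:normCk} and submultiplicativity; Theorem~\ref{th:NCCS} is then applied with these composite factors and $r\le 3\hat e$ is preserved.

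As written, your displayed factorization (one factor per subpath, with $\delta_t=\emptyset$ exactly for the $q$ subpaths initiating a $\gamma_p$) and your decoupled-constraint count $\prod_p \ell^{r_p}=\ell^{r-q}$ therefore rest on an assumption that fails in general. Note also that the obvious repair — forcing the decomposition to cut at the $q$ boundary vertices so that every subpath lies in a single $\gamma_p$ — does not recover your exponent: the extra cuts only split type-(b) subpaths, so the number of subpaths can grow to $r'\le 3\hat e+q$, and your own counting then yields $\sum_{(\lambda)}\prod_t\lambda_t\le \ell^{2r'-q}\le \ell^{6\hat e+q}\le \ell^{18g+13q-18}$, which exceeds the claimed $\ell^{18g+12q-19}$ by a factor $\ell^{q+1}$. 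The correct route is the paper's: keep the decomposition along kernel-edge visits (so $r\le 3\hat e$), allow the boundary-crossing type-(b) subpaths to carry composite operator factors of norm at most $\rho^{\lambda_t}$, and perform the $\lambda$-counting at the level of the per-$\gamma_p$ pieces; the rest of your argument (openness, the three $Q_t$ estimates, and the final use of \eqref{eq:bdhateP}) is then exactly as in the paper.
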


\begin{proof}
We repeat the proof of Lemma \ref{le:sumagamma}. Consider the kernel graph $\widehat G_\gamma$ of $\gamma$ with $\hat e$ edges. Since for each $p \in \INT{q}$, the paths $\gamma_p$ are non-backtracking, we may decompose it into successive visits of the edges of $\widehat E_\gamma$.  We decompose $\gamma = (\gamma_1,\ldots,\gamma_q)$ as $ (p_{1}, p_{2}, \ldots , p_{r})$ with $$p_t =   (y_{t,0},j_{t,1},y_{t,1},\ldots, j_{t,k_t},y_{t,k_t}),$$  where either (a) $p_{t} $ 
follows an edge of $\widehat E_\gamma$ which is visited for the first or last time, or (b) $p_{t}$ follows a sequence  of edges of 
$\widehat E_\gamma$ which are not visited for the first or last time. Note that a path of type $(b)$ may overlap over several successive $\gamma_p$'s. By construction, there are at most $2\hat e$ subpaths $p_{t}$ of type (a), and thus at most $ \hat e$ subpaths of type (b) (any subpath of type (b) is preceded by a subpath of (a)). Hence, $r \leq 3 \hat e$ as in \eqref{eq:ubr}. We also have 
$
\sum_t k_t = q m.
$

We next decompose the value of $a(\ell,\gamma)$ over each subpath. Using Lemma \ref{le:pathdecomFd}, we may write 
\begin{equation}\label{eq:decompalgammaP}
a(\ell,\gamma) =  \sum_{(\lambda_1, \ldots ,\lambda_r) } \tilde a_{r}(\lambda_r,p_r) \cdots  \tilde a_{2}(\lambda_2,p_2) \tilde a(\lambda_1,p_1),
\end{equation}
where the sum is over all $(\lambda_1,\ldots,\lambda_r)$ such that $\sum_{t} \lambda_t = q \ell$ and, for $p = (y_0,j_1,\ldots,y_k)$: $\tilde a_1 (\lambda,p) = a(\lambda,p)$, for $2 \leq t \leq r$ of type (a)   $\tilde a_t(\lambda,p) = a_{1} (\lambda,p)$ defined by \eqref{eq:defAi}. Finally for $2  \leq t \leq r$ of type (b), $\tilde a_t(\lambda,p)$  is a product of the form $a_ {\delta_s} (l_s,q_s)\ldots a_{\delta_1} (l_1,q_1)$ where $(q_1,\ldots,q_s)$ is a decomposition of $p$, $l_1 + \ldots + l_s = \lambda$ and $a_{\delta_j}(l_j,q_j)$ is either $a(l_j,q_j)$ or $a_1(l_j,q_j)$ (corresponding to the successive $\gamma_p$'s intersecting $p_t$). 
Recall that $\| a(\lambda,p) \| \leq \rho^{\lambda}$ and $\| a_1(\lambda,p) \| \leq \rho^{\lambda}$. By Lemma \ref{le:normCk},
\begin{equation*}\label{eq:defAinrmP} 
\| \tilde a_t (\lambda,p) \| \leq  \rho^{\lambda},
\end{equation*} 
which extends \eqref{eq:defAinrm}. Thus, we can repeat the rest of the proof of Lemma \ref{le:sumagamma}. We find:
$$
\rho (\ell,\gamma) \leq N^v \rho^{\ell} (2d)^{e_1/2} \ell^{6 \hat e -1 }.
$$
The claim of the lemma follows then from \eqref{eq:bdhateP}.
\end{proof}

We are finally ready for the proof of Proposition \ref{prop:FBB}.
\begin{proof}[Proof of Proposition \ref{prop:FBB}]
From \eqref{eq:tr1P}, with $\rho(\ell,\gamma)$ as in Lemma \ref{le:sumagammaP} and $\widehat \cW_{m,q} (v,e)$ as in Lemma \ref{le:PlclassP}, we find 
\begin{equation}\label{eq:trBlmfP}
 \ABS{ \dE \SBRA{ \tau (B^{(\ell,m)} )}} \leq \frac 1 N   \sum_{e =1}^{qm} \sum_{v = 1}^{e+1} | \widehat \cW_{m,q} (v,e)| \max_{\gamma  \in \widehat \cW_{m,q}(v,e)}  \rho(\ell,\gamma) | w(\gamma)| ,
\end{equation}
where we have used \eqref{eq:genusP}. Next, we use Lemma \ref{le:isopath} and  Lemma \ref{le:sumagammaP}, for some numerical constant $c >0$,
\begin{equation*}
\rho(\ell,\gamma) |w(\gamma)|  \leq c^{q+g} N^{ -  e}  \eta^{(e_1-4g -2q)_+}  N^{v} (2d)^{e_1/2} (\ell+1)^{18 g + 12q -19 } \rho^{q\ell} ,
\end{equation*}
with $\eta = 3qm/\sqrt N$ and $g = e - v  +1$. From \eqref{eq:bdee1P}, we have $e_1 \geq 2 e - qm$. Assuming $\veps = \eta \sqrt{2d}  \leq 1$, the above expression is maximized in $e_1$ for 
$$e_1 = ( 4 g + 2q) \vee ( 2e - qm).$$ 
From Lemma \ref{le:PlclassP}, we deduce, with $q' = q+2$,
\begin{align*}
& |\widehat \cW_{m,q} (v,e) |  \max_{\gamma  \in \widehat \cW_{m,q}(v,e)}  \rho(\ell,\gamma) | w(\gamma)| \\
& \quad \leq  N^{1-g} (qm)  ^{ 3q g +4 } ( 2d m^{d})^{6g +4q-6} c^{q+g} (2d)^{2g + q} \veps^{(2e - q'm-4g)_+}  \ell^{18 g + 12q -19 } \rho^{q\ell}.
\end{align*}
Making the change of variable $v \to g = e - v +1$, we find, 
$$
\ABS{ \dE \SBRA{ \tau (B^{(\ell,m)} )}} \leq L \rho^{q\ell} \sum_{e= 1}^{qm} \sum_{g \geq 0} \Delta^g \veps^{(2e - q'm-4g)_+},
$$
with, for some new constant $c>0$,
$$
L =   \ell^{12q}   d^{5q}  ( c qm)^{4dq} \AND \Delta = \frac{ \ell^{18 }d^{8} ( c qm)^{6d + 3q} }{N}.
$$
Summing over $e$, if $\veps^2 \leq 1/2$, we obtain 
\begin{eqnarray*}
 \ABS{ \dE \SBRA{ \tau (B^{(\ell,m)} )}} & \leq & L  \rho^{q\ell} \sum_{g \geq 0} \Delta^g   \PAR{ qm + \sum_{k\geq 1} \veps^{2k} }, \\
& \leq &  (qm +1)L  \rho^{q\ell} \sum_{g \geq 0} \Delta^g. 
\end{eqnarray*}
Finally, if in addition $\Delta \leq 1/2$, we get 
$$
 \ABS{ \dE \SBRA{ \tau (B^{(\ell,m)} )}} \leq 2 (qm +1)L  \rho^{q\ell}.
$$
If $N$ is large enough and $\max(\ell,q,d) \leq  \log N / (20 \log \log N)$ then it is immediate to check that $\veps^2 \leq 1/2$ and $\Delta \leq 1/2$. By adjusting the constant $c >0$, Proposition \ref{prop:FBB} follows.
\end{proof}

\subsection{Norm of $R_k^{(\ell,m)}$}

In this subsection, we prove a rough estimate on the norm of $R_k^{(\ell,m)} $. 

\begin{proposition}\label{prop:FBR}
Assume that $N \geq 3$. There exists a numerical constant $c > 0$ such that for any $1 \leq k \leq  m \leq \ell \leq  \log N$ and any even integer $2 \leq q \leq \log N / (20 \log \log N)$, 
$$
\dE \| R_k^{(\ell,m)} \|_{q}^{q} \leq  (c qm)^{17 q}  (2d-1)^{qm} \rho ^{q \ell}.
$$ 
\end{proposition}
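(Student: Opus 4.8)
The plan is to follow the same high-trace strategy used for Proposition \ref{prop:FBB}, but now with a much cruder bookkeeping: since $R_k^{(\ell,m)}$ only needs a \emph{rough} upper bound (it will be divided by $N$ in Lemma \ref{le:decomp}), we do not need to track the genus-saving factors $\eta^{e_1-4g-2q}$ carefully. First I would observe that $R_k^{(\ell,m)}$ is not self-adjoint in general, so instead of $\tau((R_k^{(\ell,m)})^q)$ I would bound $\|R_k^{(\ell,m)}\|_q^q = \tau((R_k^{(\ell,m)}(R_k^{(\ell,m)})^*)^{q/2})$, or more simply pass to $\check R = \begin{pmatrix} 0 & R \\ R^* & 0\end{pmatrix}$ as in the proof of Theorem \ref{th:main1} so that the operator becomes self-adjoint and $\|\check R\|_q = \|R\|_q$; then expand $\tau(\check R^q)$ over $q$ closed paths exactly as in \eqref{eq:tr1P}. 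Each of the $q$ factors is now an $R_k$-path, i.e. a path $\gamma_p \in T^k_{m}$ which is a concatenation $(\gamma_{p,1}, i_{p,k}, \gamma_{p,2})$ of two tangle-free pieces with the union tangled (the three topological types in Figure \ref{fig:Gamma3}), and carrying the centered entries $(\underline U_{i_t})$ for $t<k$ and the plain entries $(U_{i_t})$ for $t>k$.

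The key steps, in order: (1) expand the trace into a sum over $\gamma = (\gamma_1,\dots,\gamma_q) \in (T^k_m)^q$ with the cyclic boundary conditions, obtaining $N|\dE\tau(\check R^q)| \le \|\sum_\gamma a(\ell,\gamma) w(\gamma)\|$ with $a(\ell,\gamma) = \prod_p a(\ell,\gamma_p)$ and $w(\gamma)$ a product of $2d$ expectations of products of mixed centered/plain permutation entries; (2) organize the sum by the equivalence classes $\sim$ and $\hat\sim$ on the graph $G_\gamma$, exactly as before — note $G_\gamma$ is connected so $g = e-v+1 \ge 0$, but now each $\gamma_p$ may contribute an extra cycle (because it is tangled), so the relevant bound is $g \le q$ roughly, i.e. there is no clean genus constraint and we simply sum over all $g \ge 0$; (3) count equivalence classes: a verbatim repeat of Lemma \ref{le:PlclassP}'s encoding argument (or an even cruder one: each $R_k$-path of length $m$ is determined by at most a bounded number of marks per copy, giving something like $|\cW| \le (qm)^{cq}$ and $|\widehat\cW| \le ((2d)m^d)^{cq}|\cW|$ uniformly in $e,v$); (4) bound $w(\gamma)$ by a crude permutation-moment estimate — here, since entries are $0/1$-valued, $|w(\gamma)| \le N^{-e}$ up to the centering corrections, and one can afford to just use $|w(\gamma)| \le c^{q+g} N^{-e}$ without the $\eta$-powers (this is where Lemma \ref{le:isopath} or its proof in \cite{MR4024563} is invoked, discarding the $\eta$-factor); (5) bound $\|\sum_{\gamma'\hat\sim\gamma} a(\ell,\gamma')\|$ by $N^v \rho^{q\ell} (2d)^{e_1/2}\ell^{O(\hat e)}$ via Theorem \ref{th:NCCS} and Lemma \ref{le:normCk}, exactly as in Lemma \ref{le:sumagammaP} — the presence of the plain entries rather than centered ones changes nothing in the operator-norm estimate since $a(\ell,\gamma)$ is the same; (6) combine: $N|\dE\tau(\check R^q)| \le \sum_{e,v} |\widehat\cW|\cdot N^{v-e}\rho^{q\ell}(2d)^{e_1/2}(\cdot) \le N^{1}\rho^{q\ell}\cdot(\text{powers of }qm, d)$, using $e_1 \le qm$ so that $(2d)^{e_1/2} \le (2d-1)^{qm/2}\cdot(\dots)$, and a geometric sum over the excess $g$ (convergent once $N$ is large relative to the $d\le\log N/(20\log\log N)$ and $q\le\log N/(20\log\log N)$ constraints). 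Cancelling the $N$ on both sides yields the stated bound $\dE\|R_k^{(\ell,m)}\|_q^q \le (cqm)^{17q}(2d-1)^{qm}\rho^{q\ell}$, where the factor $(2d-1)^{qm}$ comes from $(2d)^{e_1/2}\cdot(2d)^{(\text{class count})}$ with $e_1 \le qm$ and the exponent $17q$ absorbs all the polynomial-in-$qm$ and $\ell^{O(\hat e)}$ factors (using $\ell \le \log N$ and $\hat e = O(g+q)$, summed geometrically).

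The main obstacle — and really the only non-routine point — is Step (2)–(3): correctly setting up the equivalence-class count for \emph{tangled} $q$-tuples of the special form in $T^k_m$, and verifying that because each $\gamma_p$ is a union of two tangle-free pieces it still admits a bounded-per-copy encoding, so that $|\widehat\cW_{m,q}(v,e)|$ does not blow up faster than $((2d)m^d)^{O(q)}$ times a polynomial. This is essentially covered by the encoding in Lemma \ref{le:PlclassP} and by the analysis of tangled-from-tangle-free paths already done in \cite{MR3792625,MR4024563}, so I would cite those and only spell out the modification: each $\gamma_p$ now has at most two cycles instead of one, so the ``short cycling time'' bookkeeping is done twice per copy, costing an extra factor of at most $(qm)^{O(q)}$ overall — harmless for the crude target exponent $17q$. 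Everything else (Weingarten-free permutation moment bound, the operator-norm estimate via the non-commutative Cauchy–Schwarz inequality, the geometric summation) transfers mechanically from the proofs of Proposition \ref{prop:FBB} and Lemma \ref{le:sumagammaP}.
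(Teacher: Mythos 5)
Your skeleton (expand $\tau\bigl((R_k^{(\ell,m)}(R_k^{(\ell,m)})^*)^{q/2}\bigr)$ over $q$ tangled paths, count equivalence classes, use a crude permutation-moment bound, sum geometrically over the excess) is the same as the paper's, but the paper is deliberately much cruder exactly where you keep the heavy machinery. After taking expectations it immediately bounds every operator coefficient by $\| a(\ell,\gamma_p)\| \le \rho^{\ell}$ via the triangle inequality, so Theorem \ref{th:NCCS}, the refined classes $\hat\sim$ and any analogue of Lemma \ref{le:sumagammaP} are not used at all for $R_k^{(\ell,m)}$; the factor $(2d-1)^{qm}$ then comes simply from the fact that a $\sim$-class has at most $N^v(2d)(2d-1)^{e-1}$ labelled representatives (choices of edge colours), not from $(2d)^{e_1/2}$ together with profile counting as you suggest. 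This is more than a matter of taste: in your version the $\hat\sim$-class count costs $((2d)m^{d})^{O(q+g)}$, so the geometric sum over the excess $g$ only converges under a hypothesis of the type $d\le \log N/(20\log\log N)$, which the proposition does not assume and which the paper's argument never needs, since all $d$-dependence is swallowed wholesale into $(2d-1)^{qm}$.

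The one step of your plan that is wrong as stated is the genus bookkeeping. The $k$-th step of each $\gamma_p$ carries no matrix entry in \eqref{eq:defRk}, so the weight $w_k(\gamma)$ only sees the subgraph $G^k_\gamma$ spanned by the steps $t\ne k$, and it is $e=|E^k_\gamma|$ that enters the bound $|w_k(\gamma)|\le c^{q+g}N^{-e}$ of Lemma \ref{le:isopathR}, not $|E_\gamma|$. That subgraph is in general disconnected, so your ``$G_\gamma$ is connected, hence $g=e-v+1\ge 0$'' does not give what is needed; the correct inequality $v\le |E^k_\gamma|$ (i.e.\ $g=e-v\ge 0$) follows because, by the definition of $T^k_m$, each $G^k_{\gamma_p}$ is either connected with a cycle or has two components each containing a cycle (Figure \ref{fig:Gamma3}), so every component of $G^k_\gamma$ contains a cycle, which is \eqref{eq:vehat}. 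Without this, the $N^{v}$ from the vertex labels is not dominated by the $N^{-e}$ from the weight and you can lose up to a factor $N^{q}$. Your class-count modification (short-cycling bookkeeping done twice per copy) is in the spirit of Lemma \ref{le:enumpathR}, though note the paper first flips every other $\gamma_p$ so as to restore the boundary conditions of Lemma \ref{le:PlclassP}, which are altered by the $R_k(R_k)^*$ alternation.
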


The proof is very similar to the proof of Proposition \ref{prop:FBB}. It will, however, be simpler since we do not need a sharp estimate (the factor $(2d -1)^{qm} \rho^{q\ell}$ is sub-optimal). Let $q$ be an even integer. The proof is a repetition of a similar computation found in \cite{MR4203039,MR4024563}. For the sake of completeness, we include a proof.

Expanding $\tr_N$, we may write: 
\begin{eqnarray*}
N \| R_k^{(\ell,m)} \|_{q}^{q}  = N \tau \PAR{ \PAR{ R_k^{(\ell,m)} (R_k^{(\ell,m)})^* }^{q/2}}  = \tau_1 \PAR{ \sum_{( y_1,\ldots, y_{q}) \in \INT{N}^{q}}   \prod_{p =1}^{q} ( (R_k^{(\ell,m)})^{\veps_p} )_{y_{p} y_{p+1}} },
 \end{eqnarray*}
 where  $y_{q+1} = y_1$ and $(R_k^{(\ell,m)})^{\veps_p} = R_k^{(\ell,m)}$  or $(R_k^{(\ell,m)})^{*}$ depending on the parity of $p$.  We next use the expression of $R^{(\ell,m)}_k$ from \eqref{eq:defRk}. We denote by $T^k_{m}$ the disjoint union of $T^k_{m,x,y}$, $x,y \in \INT{N}$ (defined below \eqref{eq:defRk}). We get
 \begin{eqnarray*}
N \| R_k^{(\ell,m)} \|_{q}^{q}  & =  &   \tau_1 \PAR{ \sum_{\gamma \in W^k_{m,q}} \prod_{p=1}^{q}  a(\ell,\gamma_p )^{\veps_p} \prod_{p=1}^{q} \PAR{ \prod_{t=1}^{k-1} (\underline U_{i_{p,t}} )_{x_{p,t-1},x_{p,t}} \prod_{t=k+1}^m (U_{i_{p,t}} )_{x_{p,t-1},x_{p,t}}   }},
 \end{eqnarray*}
where the above non-commutative product over elements in $\cA_1$ is taken from left to right for example and $W^k_{m,q}$ is the set of $\gamma = (\gamma_1,\ldots, \gamma_q)$ with $\gamma_p = (x_{p,0},i_{p,1},\ldots,i_{p,m},x_{p,m}) \in T^k_m$ and satisfying the boundary conditions: for all $2p \in \INT{q}$
\begin{equation}\label{eq:boundaryR}
x_{2p+1,0} = x_{2p,0} \AND x_{{2p-1},m} = x_{2p,m} 
\end{equation}
with the convention that $\gamma_{q+1} = \gamma_1$. We then take expectation and use the rough upper bound  $ \|  a(\ell,\gamma_p \| \leq \rho^\ell$. We arrive at 
\begin{equation}\label{eq:tr1R}
\dE \| R_k^{(\ell,m)} \|_{q}^{q}  \leq \frac {\rho^{q \ell } }{N} \sum_{\gamma \in W^k_{m,q}}   | w_k(\gamma) |,
\end{equation}
where for $\gamma \in W^k_{m,q}$,
\begin{equation}\label{eq:defapgammaR}
w_k(\gamma) = \dE \prod_{p=1}^{q} \PAR{ \prod_{t=1}^{k-1} (\underline U_{i_{p,t}} )_{x_{p,t-1},x_{p,t}} \prod_{t=k+1}^m (U_{i_{p,t}} )_{x_{p,t-1},x_{p,t}}   }.
\end{equation}

To evaluate \eqref{eq:tr1R}, we associate to each $\gamma \in W^k_{m,q}$, the subgraph $G^k_\gamma$ of $G_\gamma$ spanned by colored edges which appear in the product in \eqref{eq:defapgammaR}. More precisely, for each $p \in \INT{q}$, set $\gamma'_p = (x_{p,0}, i_{p,1}, \ldots, x_{p,k-1}) \in F^{k-1}$ and 
$\gamma''_p = (x_{p,k}, \ldots, x_{p,m})\in F^{m -k}$. Then, 
the vertex set of $G^k_\gamma$  is $V_\gamma   =\bigcup_p  V_{\gamma'_p} \cup V_{\gamma''_p}$ and the edge set is 
$E^k_\gamma  =\bigcup_p   E_{\gamma'_p} \cup E_{\gamma''_p}$ (for example, the black edge in  Figure \ref{fig:Gamma3} is not part of $E^k_\gamma$).  Note that the graph $G^k_\gamma$ may not be connected; however, due to the constraint on $\gamma$, it cannot have more vertices than edges. More precisely, let 
$G^k_{\gamma_p}$ denote the colored graph with vertex and edge sets $V_{\gamma'_p} \cup V_{\gamma''_p}$ 
and $E_{\gamma'_p} \cup E_{\gamma''_p}$, by the assumption that $\gamma_p \in T^k_m$, it follows that either 
$G^k_{\gamma_p}$ is a connected graph that contains a cycle, or it has two connected components that both contain a cycle (see Figure \ref{fig:Gamma3}).  
Notably, since $G^k_{\gamma}$ is the union of these graphs, any connected component of $G^k_{\gamma}$  has a cycle, it implies that 
\begin{equation}\label{eq:vehat}
|V_\gamma | \leq |E^k_\gamma |.
\end{equation}

We define $\cW^k_{m,q}(v,e)$ as the set of equivalence classes of $\gamma \in W^k_{m,q}$ such that $G^k_\gamma$ has $v$ vertices and $e$ edges. 

\begin{lemma}\label{le:enumpathR}
With $g = e -v \geq 0$, we have  
$$
| \cW^k _{m,q} (v,e) | \leq   (q m )^{6q g   + 17 q }.
$$
\end{lemma}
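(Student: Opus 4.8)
The argument is a variant of the encoding used for Lemma \ref{le:PlclassP} (itself modelled on \cite[Lemma 25]{MR4024563}); since Proposition \ref{prop:FBR} only needs a crude bound, we do not optimise constants. As in Lemma \ref{le:Plclass}, it suffices to count the \emph{canonical} elements of $W^k_{m,q}$ lying in a fixed class of $\cW^k_{m,q}(v,e)$ --- those $\gamma=(\gamma_1,\ldots,\gamma_q)$ that are minimal for the lexicographic order in their $\sim$-class, so that $x_{1,0}=1$, $V_\gamma=\INT{v}$, and the vertices appear in increasing order of first visit --- and to exhibit an injective encoding of them.

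The key point is to scan, not the $q$ paths $\gamma_p$, but the $2q$ tangle-free pieces $\gamma'_p=(x_{p,0},i_{p,1},\ldots,x_{p,k-1})$ and $\gamma''_p=(x_{p,k},i_{p,k+1},\ldots,x_{p,m})$, in the order $\gamma'_1,\gamma''_1,\ldots,\gamma'_q,\gamma''_q$, while respecting the boundary conditions \eqref{eq:boundaryR}. We also record the $q$ middle colours $i_{p,k}$ (at most $(2d)^q\le(qm)^q$ choices), which, together with the endpoints $x_{p,k-1},x_{p,k}$ produced by the scan, recover the $q$ edges of $\bigcup_p G_{\gamma_p}$ that are absent from $G^k_\gamma$. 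Since $G^k_\gamma$ is the union of the $2q$ connected graphs $G_{\gamma'_p},G_{\gamma''_p}$, it has at most $2q$ connected components, and $g=e-v\ge 0$ by \eqref{eq:vehat}. Scanning the $2q$ pieces, we grow a spanning forest of $G^k_\gamma$ and split the steps of each piece into first times, tree times and important times exactly as in Lemma \ref{le:Plclass}; since each piece is tangle-free, its important times split further into one short cycling time, some long cycling times, and superfluous times, exactly as in Lemma \ref{le:PlclassP}. As there, the number of long cycling times of each piece is controlled by the number of excess edges, so that over the $2q$ pieces they number $O(qg)$; each piece also contributes one short cycling time and one starting mark (for $\gamma''_p$ the starting mark additionally locates $x_{p,k}$); and the superfluous times need not be recorded, since the cycle of a piece is determined by its short cycling time together with the exit time from that cycle.

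It remains to count the encodings as in Lemma \ref{le:PlclassP}: the positions of the $O(qg)$ long cycling times (at most $m$ each), their marks (a pair of vertices in $\INT{qm}$ and a pair of colours, at most $(qm)^2$ each, the intervening first/tree-time stretches being forced by the forest, the canonical order and the non-backtracking constraint $i_{p,t+1}\ne i_{p,t}^*$), the $2q$ enriched marks of short cycling times (at most $m(qm)^3$ each), the $2q$ starting marks (at most $(qm)^2$ each), and the $q$ middle colours. Multiplying and using $m\le qm$, the long cycling data contributes $(qm)^{6qg}$ (positions and two-vertex marks together account for the factor $6$ in the exponent) and every remaining factor is absorbed into $(qm)^{17q}$, whence $|\cW^k_{m,q}(v,e)|\le(qm)^{6qg+17q}$. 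The one genuinely new point --- and the main potential pitfall --- is to check that doubling the number of pieces keeps the additive exponent linear in $q$, i.e. that the per-piece count of long cycling times remains governed by $g=e-v$, as in the connected case of Lemma \ref{le:PlclassP}, rather than by $g+2q$; the remaining verification, that the reconstruction is injective once the pieces are glued along \eqref{eq:boundaryR} instead of a single cyclic chain, is routine given Lemma \ref{le:Plclass}.
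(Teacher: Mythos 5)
Your overall strategy is the same as the paper's: canonical representatives of the $\sim$-classes, a growing spanning forest with first/tree/important times as in Lemma \ref{le:Plclass}, short and long cycling times for each tangle-free half as in Lemma \ref{le:PlclassP}, starting marks, and cheap separate bookkeeping for the $q$ middle colours. The only structural difference is the gluing: the paper applies the involution reversing the even-indexed paths, so that the flipped paths $\check\gamma_p$ obey the chain boundary condition \eqref{eq:boundary} and the encoding of Lemma \ref{le:PlclassP} runs on the connected graph $G_{\check\gamma}$, the middle times being recorded as ``extra important times''; you instead scan the $2q$ tangle-free halves directly on the possibly disconnected graph $G^k_\gamma$, subject to \eqref{eq:boundaryR}.

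The point you flag at the end and leave unchecked is, however, precisely the step that yields an exponent $6qg+O(q)$, and it does not transfer ``as in the connected case''. In your scan, $G^k_\gamma$ may have up to $2q$ connected components, so the number of excess (non-forest) edges is $e-(v-c)=g+c\le g+2q$, where $c$ is the number of components; the argument of Lemma \ref{le:PlclassP} bounds the long cycling times of a tangle-free half only by the number of excess edges it can meet outside its own cycle, so the naive count allows up to $2q(g+2q)$ long cycling times, which would give a bound of the form $(qm)^{6qg+O(q^2)}$ rather than $(qm)^{6qg+17q}$; in addition a new type of important time appears (a new edge reaching an already visited vertex of another forest component, which is a forest edge and not an excess edge) and must also be recorded. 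Both points can be repaired, and you should do so explicitly: every component of $G^k_\gamma$ contains a cycle (the fact behind \eqref{eq:vehat}), so writing the excess of the $i$-th component as $1+s_i$ one has $\sum_i s_i=g$; a half is a connected path, hence lies in a single component, traverses each excess edge of that component outside its own cycle at most once (by tangle-freeness, as in Lemma \ref{le:PlclassP}), and its cycle, when present, contains an excess edge, so it has at most $s_i+1$ long cycling times of the excess type; component-merging important times number at most $2q$ over the whole scan. This gives at most $2qg+O(q)$ recorded times in total and restores the claimed exponent (note that the reduction via the flipped paths needs similar care, since the genus of $G_{\check\gamma}$ is only bounded by $g+q+1$). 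Without this component-wise count, your plan asserts but does not prove the stated inequality.
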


\begin{proof}
We argue as in Lemma \ref{le:PlclassP}. Consider the map $\iota : \gamma \mapsto \iota(\gamma)$ on $W_{m,q}^k$ which flips the paths $\gamma_p$ for even $p$: $\gamma_p \mapsto \gamma^*_p = (x_{p,m},i^*_{p,m},\ldots,x_{p,0})$ and leave the paths $\gamma_p$ unchanged for $p$ odd. This map is an involution. We build an encoding for $\check \gamma = \iota(\gamma)$. The advantage of $\check \gamma$ over $\gamma$ is that $\check \gamma$ satisfies the same boundary condition \eqref{eq:boundary} used in the proof of Lemma \ref{le:PlclassP}. The graph $G_{\check \gamma}$ has $v$ vertices and $|E_\gamma | = \check e$ edges. We have $e \leq 
\check e \leq e + q$.
We set 
$$
\hat g = \hat e - v + 1 \leq g + q + 1.
$$ 

We may thus build the same encoding used in Lemma \ref{le:PlclassP} for $\check \gamma$ with important times and the starting marks. The main difference with the situation considered in Lemma \ref{le:PlclassP} is that $\check \gamma_p$ is not tangled-free; however, it is composed of two tangle-free paths $(\check \gamma'_p,\check \gamma''_p)$. We thus introduce short cycling and long cycling times for each tangle-free path $(\check \gamma'_p,\check \gamma''_p)$. Also, for each odd $p$ (resp. even $p$), the time $(p,k)$  (resp. $(p,m-k)$) could be an important time, which we call an extra important time.
 
We can reconstruct $\gamma$ from the starting marks, the possible extra important times, the positions of the long cycling 
and the short cycling times, and their marks. For each $p$, there are at most $2$ short cycling time and $2( \hat g-1)$ long cycling 
times. There are at most $  m^{ 2q \hat g}$ ways to position them. There are at most $(qm)^2$ different possible marks for a
long cycling time and extra important time and $m (qm)^3$ possible marks for a short cycling time. Finally, there are $ (q m) ^2 $ possibilities for a 
starting mark. We deduce that    
$$
| \cW^k _{m,q} (v,e) | \leq    m^{2 q\hat  g} (  q m )^{2q}   ( q m ) ^{4q \hat g}(m ( qm)^3 )^{2q} \leq (qm)^{6q \hat g + 10q}. 
$$
We obtain the claimed bound on $| \cW^k _{m,q} (v,e) |$.  \end{proof}

We need a final lemma on $w_k(\gamma)$ defined in \eqref{eq:defapgammaP}. The following lemma is contained in the proof of \cite[Lemma 27]{MR4024563}. 

\begin{lemma}\label{le:isopathR}
There exists a numerical constant $c >0$ such that for any $\gamma\in W^k_{m,q}(v,e)$ and $ q m  \leq \sqrt{ N}$, with $g = e - v$,
$$
 |w_k(\gamma) | \leq  c^{q + g} N^{-e}. 
$$

\end{lemma}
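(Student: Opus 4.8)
This is the random-permutation counterpart of Lemma~\ref{le:isopath} with the $\eta$-savings discarded, and, as indicated, it is a byproduct of the proof of \cite[Lemma~27]{MR4024563}; the plan is to run that argument while keeping track of the features specific to the graph $G^k_\gamma$.

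I would first record two elementary facts about a single uniform permutation $\sigma$ of $\INT{N}$. If a family of constraints ``$\sigma(a_t)=b_t$'' is pairwise distinct and consistent (defines a partial injection on $s$ points) then $\dE\prod_t\IND(\sigma(a_t)=b_t)=(N-s)!/N!$, and the expectation vanishes otherwise; moreover $(N-s)!/N!\le 2N^{-s}$ as soon as $s^2\le N$, which holds here since $s\le qm\le\sqrt N$. A step from $x$ to $y$ of color $i\le d$ contributes $\IND(\sigma_i(x)=y)$ and a step of color $i+d$ contributes $\IND(\sigma_i(y)=x)$, so each colored edge of $G^k_\gamma$ encodes exactly one constraint on exactly one of the $\sigma_i$, independently of the traversal direction, and distinct edges encode distinct constraints.

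By independence of $\sigma_1,\dots,\sigma_d$ the weight $w_k(\gamma)$ factorizes over colors. Within a color I would expand the centered factors: writing $\underline U_i=U_i-\tfrac1N\IND_N\otimes\IND_N$ and using $\IND_f^2=\IND_f$, the product of all centered traversals of one color-$i$ edge $f$ collapses to $\alpha_f\IND_f+\beta_f$ with $|\alpha_f|\le1$ and $|\beta_f|\le N^{-1}$, whereas an edge carrying at least one uncentered traversal simply produces $\gamma_f\IND_f$ with $|\gamma_f|\le1$. Distributing the factor $(N-s)!/N!$ across the surviving constraints (each costing at most $2/N$) then presents the color-$i$ expectation as a product over the color-$i$ edges of quantities of size $O(N^{-1})$. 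The substantive input imported from \cite[Lemma~27]{MR4024563} is the bookkeeping that keeps the resulting multiplicative constant of the form $c^{(\#\text{components})+g}$ rather than $c^{\#\text{edges}}$: using that a centered traversal located at a pendant vertex forces the expectation to vanish, the edges that genuinely cost more than a bare $1/N$ are charged to the excess $g=e-v$ and to the boundaries of the tangle-free pieces.

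Finally I would feed in the structure of $W^k_{m,q}$. Every $\gamma\in W^k_{m,q}$ is obtained by concatenating the $2q$ tangle-free pieces $\gamma'_p\in F^{k-1}$ and $\gamma''_p\in F^{m-k}$, with the middle step $t=k$ omitted, so $G^k_\gamma$ has at most $2q$ connected components, and by the definition of $T^k_m$ (see Figure~\ref{fig:Gamma3}) each of these components contains a cycle; hence $v\le e$, i.e. $g=e-v\ge0$, and the number of components is $\le 2q$. Combining this with the per-component estimate above and absorbing constants gives $|w_k(\gamma)|\le c^{q+g}N^{-e}$. The main obstacle, exactly as in \cite{MR4024563}, is controlling the proliferation coming from the centerings: a naive term-by-term expansion costs a constant raised to the number of edges of $G^k_\gamma$, which is ruinous once the weight is multiplied by the path count $|\cW^k_{m,q}(v,e)|$ and summed in the proof of Proposition~\ref{prop:FBR}; the work is in absorbing all but $O(q+g)$ of those constants using the non-backtracking, tangle-free geometry of the pieces.
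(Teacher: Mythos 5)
The paper does not actually prove this lemma: it is stated as being ``contained in the proof of \cite[Lemma 27]{MR4024563}'', and your proposal, after its preliminary reductions, defers the decisive bookkeeping to exactly the same source, so at the level of strategy the two coincide. Your elementary steps are also correct and match what the paper records: the falling-factorial bound $\dE\prod_t \IND(\sigma(a_t)=b_t)=(N-s)!/N!\leq c\,N^{-s}$ for $s\le qm\le\sqrt N$, the collapse of repeated traversals of an edge using $\IND_f^2=\IND_f$, the factorization over colors, and the structural facts that $G^k_\gamma$ has at most $2q$ components each containing a cycle, whence $v\le e$ (this is \eqref{eq:vehat}).

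However, as a self-contained argument there is a genuine gap, and the one mechanism you offer for closing it is wrong. For uniform permutations a single centered traversal at a pendant vertex does \emph{not} force the expectation to vanish: entries of a permutation matrix in distinct rows and columns are correlated, e.g.\ $\dE\bigl[(\underline U)_{12}\,\IND(\sigma(3)=4)\bigr]=\tfrac{1}{N(N-1)}-\tfrac{1}{N^2}\neq 0$. Exact vanishing of ``unbalanced'' terms is the Haar-unitary/Weingarten phenomenon (Lemma \ref{cor:WG2}); in the permutation setting an isolated centered edge only produces a small factor of order $qm/\sqrt N$ — that is precisely the $\eta$ appearing in Lemma \ref{le:isopath}, and it is neither available as a vanishing statement nor needed for the present lemma. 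Consequently your route from the naive expansion bound, which is of the form $(c')^{e}N^{-e}$, to the stated $c^{q+g}N^{-e}$ is asserted rather than proved: ``charging'' the surplus constants to the excess $g=e-v$ and to the boundaries of the $2q$ tangle-free pieces is exactly the content one would have to establish (or extract explicitly from \cite[Lemma 27]{MR4024563}). Incidentally, the claim that the cruder $(c')^{e}$ bound would be ruinous downstream is an overstatement: since $e\le qm$ and Proposition \ref{prop:FBR} already tolerates a factor $(2d-1)^{qm}$, an extra $(c')^{qm}$ only costs another $N^{O(\veps)q}$ for the paper's choice of $\ell$; but it is not the bound the lemma states, and the exponent $q+g$ is the part your proposal does not deliver.
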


We are ready to prove Proposition \ref{prop:FBR}.

\begin{proof}[Proof of Proposition \ref{prop:FBR}] Let $q$ be an even integer,  \eqref{eq:vehat} $|V_\gamma| \leq |E^k_\gamma | \leq q m$. We deduce from \eqref{eq:tr1R} that
\begin{eqnarray*}
\dE \| R_k^{(\ell,m)} \|_{q}^{q}  \leq \frac {\rho^{q \ell } }{N} \sum_{e=1}^{qm} \sum_{v =1}^e |\cW^k_{m,q} (v,e) |\max_{ \gamma \in W^k_{m,q} (v,e) } | w_k(\gamma) | \max_{ \gamma  \in W^k_{m,q} (v,e)} C(\gamma), \end{eqnarray*}
where $N(\gamma)$ is the number of $\gamma'$ in $W^k_{m,q}$ such that $\gamma'\sim \gamma$. 
If $\gamma \in \cW_{\ell,m} (v,e)$, the following simple bound holds:
$$
C(\gamma) \leq N^v (2d) (2d-1)^{e-1},
$$
(indeed, $N^v$ bounds the possible choices for the vertices in $V_\gamma$ and $2d(2d-1)^e$ the possible choices for the colors 
of the edges in $E_\gamma$). Hence, using Lemma \ref{le:enumpathR} and Lemma \ref{le:isopathR}, we find, for some numerical constant $c>0$ changing from line to line,
\begin{eqnarray*}
\dE \| R_k^{(\ell,m)} \|_{q}^{q}  & \leq  & \frac {\rho^{q \ell } }{N} \sum_{e=1}^{qm} \sum_{v =1}^e   (q m )^{6q g   + 17 q } c^{q + g} N^{-e} N^v (2d) (2d-1)^{e-1} .
\\
& = &  \frac{\rho^{ q \ell}}{N} (c qm)^{17 q}   \sum_{e=1}^{qm} \sum_{e =1}^{qm}  (2d-1)^e \sum_{ g = 0 } ^{\infty} \PAR{  \frac{ ( c q m) )^{6 q } }{N} }^{g}.
 \end{eqnarray*}
For our choice of $q$ and $m$, the geometric rate of the geometric series is as vanishing in $N$. It follows that for some new constant $ c > 0$, 
$$
\dE \| R_k^{(\ell,m)} \|_{q}^{q}   \leq  N^{-1} (c qm)^{17 q}  \rho^{ q \ell} (2d-1)^{qm}.
$$
It concludes the proof.
\end{proof}

\subsection{Proof of Theorem \ref{th:FKBlmP}}

Fix $0 < \veps < 1$. Let $E$ denote the event that $G_\sigma$ is $\ell$-tangle free with $\ell$ defined in \eqref{eq:defh}. By Lemma \ref{le:probtf}, this event has probability at least $1 - c(\veps) N^{\veps-1}$. Also by Lemma  \ref{le:decomp} and H\"older inequality,
$$
\dE [ \| B^{(\ell,m)} \Pi \|_q^q  \IND_{E} ] \leq (m+1)^{q-1} \PAR{ \dE \| \underline   B^{(\ell,m)} \|_q ^q + \frac{1}{N^q} \sum_{k=1}^m \dE \| R^{(\ell,m)}_k \|_q^q  }. 
$$
Let $2 \leq q \leq \log N / (20 \log \log N)$ be even.  We next use Proposition \ref{prop:FBB} and Proposition \ref{prop:FBR}, adjusting the constant $c >0$, we find
$$
\dE [ \| B^{(\ell,m)} \Pi \|_q^q  \IND_{E} ] \leq \rho^{q\ell} \PAR{ (\ell+1)^{6q}   d^{5q}  ( c qm)^{(4d +2)q}  + \frac{1}{N^{q+1}} \sum_{k=1}^m  (c qm)^{18 q} (2d-1)^{qm} }. 
$$
For our choice of $m\leq \ell$, $(2d-1)^{m} \leq N^{\veps/4}$ and $(cqm)^{18q} \leq N^2$. It follows that for some new $c >0$,
$$
\dE [ \| B^{(\ell,m)} \Pi \|_q^q  \IND_{E} ] \leq \rho^{q\ell}  (\ell+1)^{6q}   d^{5q}  ( c qm)^{(4d +2)q} . 
$$
This is the first claim of Theorem \ref{th:FKBlmP}. For the second claim, from \eqref{eq:normAltrP} and H\"older inequality, we have
$$
\| A \Pi\|_{\ell q }^{q \ell}  \leq (\ell+1)^{q-1} \sum_{m=0}^{\ell} \|  B^{(\ell,m)} \Pi \|^q_q.
$$
For $m=0$, we use \eqref{eq:Bl0P}. For $1 \leq m \leq \ell$, we use the first statement of Theorem \ref{th:FKBlmP}. 

\section{Lower bounds on the operator norm}

\subsection{Haagerup's inequality and exactness of the free group algebra}

\subsubsection{Haagerup's inequality}
\label{subsubsec:haagerup}
In this paragraph, we recall how Haagerup's inequality \cite[Lemma 1.4]{MR520930} classically allows to upper bound the ratio $ \| A _\star \|^p  /  \|A_\star\|_p ^p$. It can be stated as follows. Recall that we equip $M_n(\dC)$ with its normalized tracial state $\tr_n$. 

\begin{lemma}\label{le:Haagerup}
For any operator $T = \sum_{g \in \dF_d} a(g) \otimes \lambda(g)$ in $\cA_1 \otimes \cA_\star$ supported on elements $g \in \dF_d$ of word length $l$, we have
$$
\| T \| \leq (l+1) \sqrt{ \sum_{g \in \dF_d} \| a(g) \|^2}. 
$$
In particular, if $\cA _1 = M_n(\dC)$, we find $\| T \| \leq (l+1) \sqrt n \| T \|_2$. 

\end{lemma}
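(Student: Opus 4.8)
The statement to prove is Haagerup's inequality (Lemma~\ref{le:Haagerup}): for $T = \sum_{g \in \dF_d} a(g) \otimes \lambda(g)$ supported on words of length exactly $l$,
$$\|T\| \leq (l+1)\sqrt{\sum_g \|a(g)\|^2},$$
with the $M_n(\dC)$ refinement $\|T\| \leq (l+1)\sqrt{n}\,\|T\|_2$ following since $\|a(g)\| \leq \sqrt{n}\,\|a(g)\|_{\mathrm{HS}}$ and $\tr_n$ is normalized.

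Let me think about how to prove this.

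The classical Haagerup inequality: for a function $f$ supported on words of length $l$ in $\mathbb{F}_d$, $\|f * \|_{B(\ell^2)} \leq (l+1)\|f\|_2$. The matrix-valued / operator-valued version is standard. Let me think about the proof approach.

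**Plan.**

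The plan is to decompose $T$ according to word length and use the fact that convolution by words of a fixed length $l$ can be written as a sum over "ways to split $l = l_1 + l_2$" of operators that are, in a suitable sense, products of a "creation-like" and "annihilation-like" piece, each of which has norm controlled by the $\ell^2$ norm of the coefficients.

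Concretely: First I would reduce to the case $\cA_1 = \dC$ being replaced by a general $\cA_1$, treating $a(g)$ as coefficients in a $C^*$-algebra — the argument is purely Hilbert-space-theoretic in the $\ell^2(\dF_d)$ variable and the $\cA_1$ part just rides along. The key object: for $0 \le s \le l$, consider the operator $T_s$ on $\cA_1 \otimes \ell^2(\dF_d)$ defined by restricting $T$ to act "as if the word $g$ of length $l$ cancels $s$ letters from the right of the input and appends $l-s$ letters." More precisely, write $\delta_h$ with $h$ of length $k$; then $\lambda(g)\delta_h = \delta_{gh}$ and one decomposes according to how much cancellation occurs: $g$ of length $l$, $h$ arbitrary, $gh$ has length $|h| + l - 2s$ where $s$ is the cancellation. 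So $T = \sum_{s=0}^{l} T_s$ where $T_s$ is the part of $T$ acting on $\delta_h$ that cancels exactly $s$ letters. Then $\|T\| \le \sum_{s=0}^l \|T_s\|$, giving $l+1$ terms, and it remains to show $\|T_s\| \le \sqrt{\sum_g \|a(g)\|^2}$ for each $s$.

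**The main estimate.** For fixed $s$, I would bound $\|T_s\|$ by a Cauchy–Schwarz / "block matrix" argument. Write $g = g' g''$ where $g''$ is the length-$s$ suffix (the part that gets cancelled) and $g'$ is the length-$(l-s)$ prefix (the part that survives and gets prepended). Grouping: $T_s \delta_{w h} = \sum_{g' : |g'| = l-s} a(g' (w)^{-1}) \otimes \delta_{g' h}$ where $w$ ranges over length-$s$ words and $h$ over reduced words not starting so as to cancel with $w$. The operator $T_s$ factors through the "length-$s$ boundary": it is a composition of a partial isometry-like map restricting $\delta_{wh} \mapsto$ (indexed data) followed by a "matrix" whose $(g', w)$ entry is $a(g'w^{-1})$, followed by re-embedding. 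Using that the restriction and re-embedding are contractions (orthogonality of the $\delta$'s across different $h$), $\|T_s\|$ is bounded by the norm of the matrix $M = (a(g'w^{-1}))_{g', w}$ acting between $\ell^2(\text{len }s) \otimes \cA_1$-columns and $\ell^2(\text{len }l-s) \otimes \cA_1$-columns; and by the standard bound $\|M\| \le \|M\|_{\mathrm{HS-row/col}}$ combined with the structure that each $a(\cdot)$ appears exactly once in each row and column of $M$, one gets $\|M\|^2 \le \sum_g \|a(g)\|^2$. This last step is exactly a Cauchy–Schwarz estimate of the form already used in Theorem~\ref{th:NCCS} of this paper (indeed, one could invoke Theorem~\ref{th:NCCS} directly with $k=1$, treating $M$ as a single-block product), so I would cite that rather than redo it.

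**Main obstacle.** I expect the bookkeeping around cancellation — correctly identifying $T_s$, verifying the restriction/re-embedding maps are contractions, and checking that each coefficient $a(g)$ contributes to exactly one $(g',w)$-slot in the matrix $M$ for each $s$ (so there is no overcounting) — to be the delicate part. The norm estimate $\|M\| \le (\sum_g\|a(g)\|^2)^{1/2}$ itself is routine once the combinatorics is set up, and the final $M_n(\dC)$ statement is immediate from $\|a\| \le \sqrt{n}\,\|a\|_{\mathrm{HS}} = \sqrt{n}\,\sqrt{\tr_n(a^*a)} \cdot \sqrt{n}$... more carefully, $\|a\|^2 \le \Tr(a^*a) = n \tr_n(a^*a)$, so $\sum_g \|a(g)\|^2 \le n \sum_g \tr_n(a(g)^*a(g)) = n \tr_n\big(\sum_g a(g)^*a(g)\big) = n\,\|T\|_2^2$ by orthonormality of $\{\lambda(g)\}$ in $L^2(\cA_\star)$, which yields the claimed $\|T\| \le (l+1)\sqrt{n}\,\|T\|_2$.
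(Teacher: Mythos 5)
Your proposal is correct and is essentially the paper's own argument: the paper proves Lemma \ref{le:Haagerup} simply by citing Haagerup's classical proof (and Buchholz's refinement) and noting it extends verbatim to operator coefficients, and your sketch — splitting $T=\sum_{s=0}^{l}T_s$ by the amount of cancellation and bounding each piece by $\bigl(\sum_g\|a(g)\|^2\bigr)^{1/2}$ via Cauchy--Schwarz, then using $\|a\|^2\le n\,\tr_n(a^*a)$ for the $M_n(\dC)$ case — is exactly that classical argument adapted to $\cA_1$-valued coefficients. Minor wording aside (each $a(g)$ occupies exactly one entry of the matrix $M$, not one per row and column), the outline matches the intended proof.
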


\begin{proof}The classical Haagerup's inequality \cite[Lemma 1.4]{MR520930} is for $\cA_1 = \dC$ but his proof extends verbatim to this setting. We also refer to Buchholz \cite[Theorem 2.8]{MR1476122} for a refinement.
\end{proof}

As a corollary, we obtain the following. 
\begin{corollary}\label{cor:HaagB}
If $(a_1,\ldots,a_{2d}) \in \cA_1$ then for any even integer $p \geq 2$,
$$
\| A_\star \|^p  \leq p^{3} \sqrt{\sum_{g \in \dF_d}\|(A^{p/2}_\star)_{g \o} \|^2 }. 
$$
In particular, if $\cA_1 = M_n(\dC)$, $\| A_\star \|^p  \leq 2 n p^{3} \|A_\star\|_p ^p$.
\end{corollary}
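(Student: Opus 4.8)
The plan is to apply Haagerup's inequality (Lemma~\ref{le:Haagerup}) to the single element $A_\star^{p/2}$ and then to invoke self-adjointness of $A_\star$. (The statement does require self-adjointness, that is the symmetry condition \eqref{eq:symai}: otherwise it fails, as the example $A_\star=a_1\otimes\lambda(g_1)$ with $a_1$ a nilpotent matrix shows --- there $\|A_\star\|>0$ while $A_\star^{p/2}=0$ once $p/2\ge n$ --- so I would assume \eqref{eq:symai} throughout, as is the case wherever the corollary is used.) First I would expand $A_\star^{p/2}$ as in \eqref{eq:Akgo0}: it equals $\sum_{g\in\dF_d}(A_\star^{p/2})_{g\o}\otimes\lambda(g)$, where only group elements $g$ of word length at most $p/2$ occur, since a word of length $k$ in the generators reduces to an element of length $\le k$. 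Lemma~\ref{le:Haagerup} with $l=p/2$ then gives $\|A_\star^{p/2}\|\le(\tfrac p2+1)\big(\sum_{g}\|(A_\star^{p/2})_{g\o}\|^2\big)^{1/2}$; and since $A_\star$ is self-adjoint its norm is multiplicative along powers, so $\|A_\star^{p/2}\|=\|A_\star\|^{p/2}$. Together with $\tfrac p2+1\le p^3$ this is the first asserted bound (with $\|A_\star\|^{p/2}$ on the left-hand side).

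For the statement with $\cA_1=M_n(\dC)$ I would use instead the second half of Lemma~\ref{le:Haagerup}, which in this case reads $\|A_\star^{p/2}\|\le(\tfrac p2+1)\sqrt n\,\|A_\star^{p/2}\|_2$, the $\|\cdot\|_2$-norm being taken with respect to $\tau$. It then remains only to identify this $L^2$-norm: since $A_\star$ is self-adjoint and $p$ is even, $A_\star^{p/2}(A_\star^{p/2})^*=A_\star^{p}=|A_\star|^{p}$, whence $\|A_\star^{p/2}\|_2^2=\tau(|A_\star|^{p})=\|A_\star\|_p^p$. Squaring the Haagerup bound and using again $\|A_\star^{p/2}\|=\|A_\star\|^{p/2}$ yields $\|A_\star\|^p\le(\tfrac p2+1)^2\,n\,\|A_\star\|_p^p\le 2np^3\|A_\star\|_p^p$. (If one wants this last bound for a non-self-adjoint $A_\star$ as well, it still follows by applying Lemma~\ref{le:Haagerup} to the positive element $(A_\star A_\star^*)^{p/2}$, which is supported on words of length $\le p$, and combining $\|X\|_2^2\le\|X\|\,\|X\|_1$ with $\|(A_\star A_\star^*)^{p/2}\|_1=\|A_\star\|_p^p$.)

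I do not expect a serious obstacle here; the one genuinely load-bearing step is the identity $\|A_\star^{p/2}\|=\|A_\star\|^{p/2}$, exact only because $A_\star$ is self-adjoint, which is what turns a bound on a single power into a bound on $\|A_\star\|$ itself. Everything else --- the length bookkeeping in the expansion of $A_\star^{p/2}$, the computation $\|A_\star^{p/2}\|_2^2=\|A_\star\|_p^p$ from self-adjointness and the evenness of $p$, and the crude estimates on the constants --- is routine.
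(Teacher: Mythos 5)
Your argument is the paper's argument: expand $A_\star^{p/2}$ over $\dF_d$, control its operator norm by Haagerup's inequality, and use self-adjointness to convert $\|A_\star^{p/2}\|$ into $\|A_\star\|^{p/2}$ and $\|A_\star^{p/2}\|_2$ into $\|A_\star\|_p^{p/2}$; your remark that the symmetry condition \eqref{eq:symai} must be assumed (with the nilpotent counterexample) is correct and matches the paper, whose proof begins by reducing to the symmetric case via the $M_2(\dC)$ trick.

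The one inaccuracy is the invocation of Lemma \ref{le:Haagerup} with $l=p/2$ applied directly to $A_\star^{p/2}$: that lemma is stated for operators supported on elements of a \emph{single} word length $l$ (a sphere), whereas $A_\star^{p/2}$ is supported on the whole ball of radius $p/2$, so the bound $\|A_\star^{p/2}\|\le(\tfrac p2+1)\bigl(\sum_g\|(A_\star^{p/2})_{g\o}\|^2\bigr)^{1/2}$ does not follow from the lemma as cited (and a ball version with constant $l+1$ is not something you can take for granted). The paper handles this by writing $A_\star^{p/2}=\sum_{l=0}^{p/2}B_\star^{(p/2,l)}$ with $B_\star^{(p/2,l)}$ supported on word length exactly $l$, applying Lemma \ref{le:Haagerup} to each piece and then the triangle and Cauchy--Schwarz inequalities, which costs $(\tfrac p2+1)^{3/2}$ in place of your $(\tfrac p2+1)$; since $(\tfrac p2+1)^{3/2}\le p^{3}$ and, for the matrix case, $(\tfrac p2+1)^{3}n\le 2np^{3}$, your final constants are unaffected, so the repair is immediate. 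With that substitution your proof coincides with the paper's, including the identification $\|A_\star^{p/2}\|_2^2=\tau(A_\star^{p})=\|A_\star\|_p^p$ for even $p$ in the self-adjoint case.
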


\begin{proof}
Arguing as in the proof of Theorem \ref{th:main1}, we may assume that the symmetry condition \eqref{eq:symai} holds, up to replacing $\cA_1$ by $\cA_1 \otimes M_2(\dC)$. Then, if $k \geq 1$ is an integer, 
\begin{equation}\label{eq:patch00}
\| A_\star \|^k = \| A_\star^k \| \AND \| A_\star \|_{2k}^{k} = \| A_\star^k \|_2. 
\end{equation}
Next, we write, if $S_l \subset \dF_d$ is the set of elements of word length $l$ and $a(k,g) = (A^{k}_\star)_{g \o}$,
$$
A_\star^k  = \sum_{g \in \dF_d} a(k,g)\otimes \lambda( g)   = \sum_{l=0}^k \PAR{\sum_{ g \in S_l} a(k,g)\otimes \lambda( g)}  = \sum_{l=0}^k  B_\star^{(k,l)},  
$$
and $B_\star^{(k,l)}$ (defined in \eqref{eq:defBkmbis} with $u(g) = \lambda(g)$) is supported on words length $l$ in the sense of Lemma \ref{le:Haagerup}. From the triangle inequality and Cauchy-Schwarz inequality, we get 
$$
\| A_\star^k \|  \leq \sum_{l=0}^k (l+1)  \sqrt{ \sum_{g \in S_l} \| a(k,g) \|^2  }  \leq (k+1)^{3/2}   \sqrt{\sum_{g \in \dF_d}  \| a(k,g) \|^2 }.
$$
For the second claim, it remains to use \eqref{eq:patch00} and take $p= 2k$.
\end{proof}

\subsubsection{Quantitative exactness}

Corollary \ref{cor:HaagB} is not sufficient to get an effective lower bound on $\|A_N\|_p$ for all $C^*$-algebras $\cA_1$. It is not useful for example when $\cA_1 = M_n(\dC) $ and $n \geq N$.  Thankfully, we can use the exactness of the free group algebra $\cA_\star$ to bypass this limitation.   We refer to the monograph of Brown and Ozawa \cite[Chapter 5]{MR2391387} for background on exactness. One key is the following quantitative exactness theorem:
\begin{theorem}\label{th:exactness}
Let $0 < \veps < 1/2$ and $d\geq 2$ be integers, $H$ a Hilbert space and $(a_{0},\ldots ,a_d) \in \cB(H)$.  Consider $A_\star \in \cB(H\otimes \ell^2(\dF_d))$ defined by \eqref{eq:defA*}.
Let  $r = \lfloor (2d)^{4/\varepsilon} \rfloor$, there exists a self-adjoint projection $p\in \cB(\ell^2(\dF_d))$ of rank at most $r$ such that 
\begin{equation}\label{eq:NO}
\|( 1_H\otimes p ) \cdot A_\star \cdot ( 1_H\otimes p) \|\ge (1-\varepsilon ) \| A_\star \|.
\end{equation}
In addition, there exists a self-adjoint projection $q\in \cB(H)$ of rank at most $r$ such that
$$\| ( q \otimes 1_{\ell^2(\dF_d)} ) \cdot A_\star \cdot (q \otimes 1_{\ell^2(\dF_d)}) \|\ge (1-\varepsilon )\|A_\star \|.$$
\end{theorem}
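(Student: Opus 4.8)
The plan is to obtain both assertions from a single quantitative (``effective'') form of the exactness of $C^*_r(\dF_d)$, which is proved from the action of $\dF_d$ on its Cayley tree. Write $\rho=\NRM{A_\star}$ and note first that $\NRM{a_i}\le\rho$ for every $i$ (compare $A_\star(h\otimes\delta_\o)$ with its $\delta_{g_i}$--component). I would begin by reducing the statement about $q$ to the one about $p$. Suppose $p\in\cB(\ell^2(\dF_d))$ of rank $k$ satisfies \eqref{eq:NO} with a parameter $\veps'<\veps$; since the compressed operator has range and co-range in $H\otimes p(\ell^2(\dF_d))$, there are unit vectors $\alpha,\beta\in H\otimes p(\ell^2(\dF_d))$ with $|\langle A_\star\alpha,\beta\rangle|\ge(1-\veps')\rho$. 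Expanding $\alpha,\beta$ in an orthonormal basis of $p(\ell^2(\dF_d))$ and letting $q\in\cB(H)$ be the projection onto the span of all the resulting $H$--coordinates (so $\mathrm{rank}(q)\le 2k$), we have $\alpha=(q\otimes p)\alpha$, $\beta=(q\otimes p)\beta$, hence $\NRM{(q\otimes 1)A_\star(q\otimes 1)}\ge\NRM{(q\otimes p)A_\star(q\otimes p)}\ge|\langle A_\star\alpha,\beta\rangle|\ge(1-\veps')\rho$. Thus it suffices to prove the $p$--statement with a rank bound below $\tfrac12\lfloor(2d)^{4/\veps}\rfloor$; the construction below yields rank $\le(2d)^{O(1/\veps)}$ with a small enough implied constant, which leaves room for the factor $2$ inside the exponent $4/\veps$.

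The core is the following effective exactness statement: \emph{for every $\delta\in(0,1)$ there are a subspace $\cL\subset\ell^2(\dF_d)$ with $\dim\cL\le(2d)^{c_0/\delta}$ ($c_0$ numerical), with inclusion $P\colon\cL\hookrightarrow\ell^2(\dF_d)$, and a unital completely positive map $\psi\colon\cB(\cL)\to\cB(\ell^2(\dF_d))$ such that $\NRM{\psi(P^*\lambda(g_i)P)-\lambda(g_i)}\le\delta$ for all $i\in\INT{2d}$} (note that $x\mapsto P^*xP$ is automatically unital completely positive on $C^*_r(\dF_d)$ and $\psi(1_\cL)=1$). In other words, the embedding $C^*_r(\dF_d)\hookrightarrow\cB(\ell^2(\dF_d))$ factors, up to $\delta$ on the generating unitaries, through $\cB(\cL)$, with the first leg a genuine compression. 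Granting this, set $\delta$ of order $\veps$ (the error operators $R_i:=\psi(P^*\lambda(g_i)P)-\lambda(g_i)$ in the construction share enough structure — they can be arranged to vanish on a common subspace carrying the vectors used below — that one need not pay the naive factor $2d$), and let $p$ be the projection onto $\cL$. Put $T:=(\mathrm{id}_H\otimes\psi)\bigl((\mathrm{id}_H\otimes P^*(\cdot)P)(A_\star)\bigr)$, a completely positive image of $A_\star$; then $\NRM{T-A_\star}\le\sum_{i}\NRM{a_i}\NRM{R_i}\le\tfrac{\veps}{2}\rho$ after moving the relevant near-maximizer into that subspace, so $\NRM{T}\ge(1-\tfrac{\veps}{2})\rho$. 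Writing $\psi=\sum_j W_j^*(\cdot)W_j$ in Kraus form with $W_j\colon\ell^2(\dF_d)\to\cL$ and $\sum_j W_j^*W_j\le 1$, one has $T=\sum_j(1\otimes PW_j)^*A_\star(1\otimes PW_j)$ with $\mathrm{ran}(PW_j)\subseteq\mathrm{ran}(p)$ and $\sum_j(PW_j)^*(PW_j)\le 1$. Taking unit $\alpha,\beta$ with $|\langle T\alpha,\beta\rangle|\ge(1-\tfrac{3\veps}{4})\rho$ and amplifying — $\widetilde\alpha:=\sum_j(1\otimes PW_j)\alpha\otimes e_j$, $\widetilde\beta:=\sum_j(1\otimes PW_j)\beta\otimes e_j$, which are contractions supported in $H\otimes p(\ell^2(\dF_d))\otimes\ell^2$ — we get $\NRM{(1\otimes p)A_\star(1\otimes p)}=\NRM{(1\otimes p\otimes1)(A_\star\otimes1)(1\otimes p\otimes1)}\ge|\langle(A_\star\otimes1)\widetilde\alpha,\widetilde\beta\rangle|=|\langle T\alpha,\beta\rangle|\ge(1-\veps)\rho$, i.e.\ \eqref{eq:NO}, with $\mathrm{rank}(p)=\dim\cL\le(2d)^{O(1/\veps)}$.

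To construct $\cL$ and $\psi$ I would use the geodesic flow on the $2d$--regular tree $\dF_d$. For $g=g_{j_1}\cdots g_{j_m}$ in reduced form, the reversed inverses of its suffixes $u_t(g):=(g_{j_{m-t+1}}\cdots g_{j_m})^{-1}$, $0\le t\le\min(n,m)$, lie in the ball $B_n$ of radius $n$, and $\widetilde\zeta(g):=\bigl(\min(n,m)+1\bigr)^{-1/2}\sum_t\delta_{u_t(g)}\in\ell^2(B_n)$ is a unit vector depending only on the last $\min(n,m)$ letters of $g$; a direct check shows $\widetilde\zeta(g_ig)=\widetilde\zeta(g)$ whenever $|g|\ge n+1$ (and equality up to $O(1/n)$ in general), so the letter dynamics is eventually trivial on this finite--dimensional factor. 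The isometry $\delta_g\mapsto\delta_g\otimes\widetilde\zeta(g)$, combined with a right translation pushing the finitely many relevant basis vectors out of $B_n$ — where the flow degenerates — and with the identification of sectors, assembles into the map $\psi$, with $\cL$ a subspace of $\ell^2(B_n)$, $\dim\cL\le|B_n|\le(2d)^{n}$ and $\delta=O(1/n)$, i.e.\ $n=O(1/\delta)$.

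The main obstacle is exactly this construction. The naive choice $\cL=\ell^2(B_n)$ with the compression $x\mapsto P_{B_n}\,x\,P_{B_n}$ does not work, because a near-maximizer of $A_\star$ with operator coefficients can be spread over a set of arbitrarily large diameter and cannot be recentred by the group symmetries alone (right translations move supports by right multiplication, not toward $\o$; left translations change $A_\star$), so $(1\otimes P_{B_n})A_\star(1\otimes P_{B_n})$ need not be close to $A_\star$. What rescues the factorization through a bounded--rank compression is the self-similarity of the tree — equivalently the amenability of the boundary action — which permits ``folding'' the spread-out data into $\cB(\cL)$ while keeping $\psi$ unital completely positive. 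Pinning down the quantitative modulus (error $O(1/n)$ against $\dim\cL\le(2d)^{n}$), handling the degeneracy of the flow near $\o$, and verifying both unitality of $\psi$ and the common vanishing of the $R_i$ used to kill the factor $2d$, is where the bulk of the argument lies, and it must be carried out with enough slack that the implied constant keeps both rank bounds below $\lfloor(2d)^{4/\veps}\rfloor$.
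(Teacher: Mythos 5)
Your overall strategy coincides with the paper's: a quantitative exactness statement for the reduced free group algebra, realized as a unital completely positive factorization of the inclusion through the compression to a finite ball, from which \eqref{eq:NO} follows by complete contractivity (your Kraus-operator amplification is just a longer way of saying that $\mathrm{id}_H\otimes\psi$ is completely contractive), and your passage from the projection $p$ to the projection $q$ via near-maximizing vectors is essentially the paper's own second step (there one takes singular value decompositions of $x$ and of $(1_H\otimes p)A_\star x$ and absorbs the resulting factor $2$ into the constant $r$). Those reductions, and the observation $\NRM{a_i}\le\NRM{A_\star}$, are fine.

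The gap is the core lemma itself. The ``effective exactness'' statement — a ucp $\psi:\cB(\cL)\to\cB(\ell^2(\dF_d))$ with $\dim\cL\le(2d)^{O(1/\delta)}$ and an error on $A_\star$ of order $\delta\NRM{A_\star}$ \emph{without} a factor $2d$ — is asserted, not proved, and the sketch you give does not deliver it. The isometry $\delta_g\mapsto\delta_g\otimes\widetilde\zeta(g)$ most naturally produces the map $x\mapsto W^*(1\otimes x)W$, which is diagonal in the group basis and therefore cannot approximate the off-diagonal operators $\lambda(g_i)$; the additional devices you invoke (``right translation pushing basis vectors out of $B_n$'', ``identification of sectors'') are unspecified, and the claim that the errors $R_i$ can be arranged to vanish on a common subspace, so that one avoids paying the factor $2d$, is unsubstantiated. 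This factor is not cosmetic: with only a per-generator bound one needs $\delta\approx\veps/d$, giving rank $(2d)^{O(d/\veps)}$ rather than the stated $r=\lfloor(2d)^{4/\veps}\rfloor$. The paper's mechanism is concretely different: it constructs an explicit transport $\mu(x,y)$ supported on $B_{l-1}$ with pointwise defect $\eta_i(x)\le 2/l$, builds the ucp map $\Phi$ from Schur multipliers by rank-one projections composed with right translations (so that $\Phi\circ\Psi$ is ucp with $\Psi$ the corner map), and then estimates the error for the \emph{whole} operator at once, showing $A_\star-(1_H\otimes(\Phi\circ\Psi))A_\star=\frac1l(T_1+T_2)$ with $T_1=A_\star P_1$ and $T_2=P_2T_2P_2$ compressions of $A_\star$, hence each of norm at most $\NRM{A_\star}$; this is exactly what makes the bound $2/l$ independent of $d$ and yields the stated rank. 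Until you produce $\psi$ explicitly and prove such a $d$-free estimate — which you yourself flag as ``where the bulk of the argument lies'' — the proof is incomplete.
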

We are indebted to Narutaka Ozawa for suggesting us to use the exactness of the reduced $C^*$-algebra of the free group $\cA_\star$ and for outlining the proof of the theorem. 

\begin{proof}[Proof of Theorem \ref{th:exactness}]
Let $|\cdot |$ be the reduced word length on $\dF_d$ with free generators $(g_1,\ldots,g_d)$ and set $g_{d+i} = g_i^{-1}$ for all $i \in \INT{d}$. Fix $l\geq 1$ an integer and let $B_{l-1} \subset \dF_d$ be the subset of elements $x$ with  $|x| <l$. We introduce a function $\mu: \dF_d\times \dF_d \to [0,1]$ defined as follows. We set $\mu (x,x)=1-(|x|\wedge l)/l$ and $\mu (x,y)=1/l$ if $x = y z$ with $|x|=|y|+|z|$ and $y \in B_{l-1}$. In all other cases, we set $\mu(x,y) =0$.   
This function satisfies the following three properties:
\begin{enumerate}[(i)]
\item \label{it:p1} $\mu (x,y)=0$ if $y \notin B_{l-1}$.
\item \label{it:p2} For all $x \in \dF_d$, $\sum_y \mu (x,y)=1$
\item \label{it:p3}  For $x \in \dF_d$, set $\eta_i(x) = 1-  \sum_{y \in \dF_d}\sqrt{\mu(g_i x  , g_i y )\mu(x ,  y)}$. We have $\eta(\o) = 0$, $\eta_i(x) = 1/l$ if $0 < |x| < l-1$ or \{$|x| =l-1$ and $|g_i x | = l-2$\}, and $\eta_i(x) = 2/l$ otherwise. 
\end{enumerate}
The first two properties follow directly from the definition. 
As for the third property, for $x \ne \o$, we write $x = g_{i_1} \cdots g_{i_n}$ in reduced form. Observe that $\mu (g_i x ,g_i y)=\mu (x,y)$ for all $y$ except when (1) $y = \o$ and $i = i_1^*$ or $y  = g_i^*$ and $i \ne i^*_1$, and (2) if $n \geq l$, $y =  g_{i_1}\cdots g_{i_{l}}$ and $i = i_1^*$ or  if $n \geq l-1$, $y =  g_{i_1}\cdots g_{i_{l-1}}$ and $i \ne i_1^*$. In all theses  cases, $ \{ \mu (g_i x, g_i y),\mu (x,y)\} = \{0,1/l\}$. Property \eqref{it:p3} follows.

The function $\mu$ can be seen as a mass transport map on $\dF_d$. The map $x\to \mu (x,\cdot )$ is a probability measure on $\dF_d$ with support on $B_{l-1}$ and the action of the canonical generators of $\dF_d$ on these measures is $(1/l)$-invariant with respect to the total variation distance. We refer to Brown and Ozawa \cite[Proposition 5.1.8]{MR2391387} for more context.

We next consider the operator
$\Phi : \cB(\ell^2(B_{l-1}))\to \cB(\ell^2(\dF_d))$ given by 
$$\Phi (e_{x,y})=\sum_{ g \in \dF_d}\sqrt{\mu ( x g ,x )\mu (y g,y)}e_{x g , y g},$$ 
where $e_{x,y} = \delta_{x} \otimes \delta_y$. This function is well-defined since the sum can be expressed as a finite sum from property \eqref{it:p1}. We first claim that this function is completely positive. Indeed, for any $g\in \dF_d$, consider the operator $\cB(\ell^2(\dF_d))\to \cB(\ell^2(\dF_d))$ given 
by 
$$e_{x,y} \mapsto \sqrt{\mu (x g ,x)\mu (y g, y )}  e_{x,y}.$$
It is the Schur multiplier with the rank one self-adjoint projection $(\sqrt{\mu (xg,x)\mu (yg,y)})_{x,y}$.  It is thus completely positive. 
It follows, after conjugating by the appropriate unitary translation, that
 the operator 
$\cB(\ell^2(\dF_d))\to \cB(\ell^2(\dF_d))$ defined by 
$$e_{x,y} \mapsto \sqrt{\mu ( xg,x)\mu (yg,y)}e_{x g ,y g}$$
is also completely positive. 
Therefore its restriction  $\cB(\ell^2(B_{l-1})) \to \cB(\ell^2(\dF_d))$ is again completely positive. Finally, since $\Phi$ is a finite sum of such operators, it is completely positive.
Moreover, one checks directly from property \eqref{it:p2} that $\Phi (1_{B_{l-1}})= 1_{\ell^2(\dF_d)}$. Therefore $\Phi$ is a completely positive unital map, and thus, it is completely contractive. 

Next, let $\Psi: \cB(\ell^2(\dF_d)) \to \cB(\ell^2(\dF_d))$ be the corner map defined by, for $T \in \cB(\ell^2(\dF_d)) $,
$$\Psi ( T ) =  p  \cdot T \cdot p,$$ where $p$ is the self-adjoint projection onto $B_{l-1}$. This map is also completely contractive.

We now claim that for $ i \in \INT{2d}$, we have
\begin{equation}\label{eq:clm2}
\| ( 1_H\otimes (\Phi\circ \Psi ) ) A_\star -A_\star \|\le  \frac{2}{l}\|A_\star\|.
\end{equation}
Indeed, since $\lambda(g_i) = \sum_y e_{g_i y , y}$, we have, setting $S = B_{l-1} \cap g_i^{-1} B_{l-1}$,
\begin{eqnarray*}
(\Phi \circ \Psi )\lambda (g_i)  & = & \sum_{y \in S} \Phi ( e_{g_i y,y} )  \nonumber\\
& = & \sum_{y \in \dF_d,g \in \dF_d} \sqrt{\mu(g_i y g , g_i y  )\mu(y g ,  y) } e_{g_i y g  , y g} \nonumber\\ 
& = & \sum_{x \in \dF_d} e_{g_i x ,x} \PAR{ \sum_{y \in \dF_d}\sqrt{\mu(g_i x  , g_i y)\mu(x  ,  y )} }, \label{eq:clm200} 
\end{eqnarray*}
where at the second line we have used property \eqref{it:p1} (the summand is $0$ for all $y \in \dF_d \backslash S$). 

Next, recalling the definition of $\eta_i(x)$ in property \eqref{it:p3}, we find, 
\begin{eqnarray*}
 A_\star -   (1_H \otimes (\Phi  \circ \Psi)) A_\star   &  =&   \sum_{i=1}^{2d} \sum_{x \in \dF_d} a_i \otimes e_{g_i x ,x} \eta_i(x) \\
  & =& \frac 1 l \sum_{x \in \dF_d \backslash \{\o\} } \sum_{i=1}^{2d}  a_i \otimes e_{g_i x ,x} + \frac 1 l \sum_{x \in \dF_d \backslash B_{l-2}} \sum_{i=1}^{2d}  a_i \otimes e_{g_i x ,x} ( 1 -\IND_{|x| = l-1, x_1 = i^{*}}) \\
  & = & \frac{T_1 + T_2}{ l } ,
\end{eqnarray*}
where for $x \ne \o$ written in reduced form as $x =g_{i_1}\ldots g_{i_n}$, we have set $x_1 = i_1$ (if $l=1$, $\IND_{|x| = l-1, x_1 = i^{*}} = 0$ by convention). Now, we observe that $T_1 = A_\star P_1$ with $P_1$ orthogonal projection onto $\ell^2( \dF_d \backslash \{o\})$ and $T_2 = P_2 T_2 P_2$ where $P_2$ orthogonal projection onto $\ell^2( \dF_d \backslash B_{l-2})$. Therefore, $\| T_i \| \leq \| A_\star \|$ and we deduce that \eqref{eq:clm2} holds.

By the triangle inequality, \eqref{eq:clm2} implies
$$\|( 1_H\otimes (\Phi\circ \Psi ) ) A_\star\|\ge  (1-\frac{2}{l}) \|A_\star\|.$$
Complete contractivity of $\Phi$ implies
$$\|(1_H\otimes \Psi ) A_\star\|\ge \|(1_H\otimes (\Phi\circ \Psi ))A_\star \| \geq (1-\frac{2}{l}) \|A_\star\|.$$
Thus, by complete contractivity of $\Psi$ ,
\begin{equation}\label{eq:NO2}
\|A_\star \|\ge \| (1_H\otimes \Psi ) A_\star \|\ge (1- \frac{2}{l} ) \|A_\star\|.
\end{equation}
In other words, this argument shows that the projection onto $B_{l-1}$ is within $2/l$ of a complete isometry. Since  $\Psi (T) = p T p$, this complete the proof of \eqref{eq:NO} by taking $l = \lceil 2 / \veps \rceil$ and using that $ | B_{l-1}| \leq (2d)^l$.

To prove the second assertion, let $\eta >0$ and
consider a unit vector $x\in H\otimes \ell^2(B_{l-1})$ such that  
$$\|(1_H\otimes \Psi ) A_\star  x\|_2\ge (1-\eta )\|(1_H\otimes \Psi ) A_\star\|.$$
Let $x=\sum_{i=1}^r \lambda_i e_i\otimes f_i$ be a singular value decomposition: $\lambda_i \geq 0$, $e_i \in H$, $f_i \in \ell^2(B_{l-1}) $ and $r = |B_{l-1}|$. Similarly, $y = (1_H \otimes \Psi)A_\star x  = (1_H \otimes p ) A_\star x\in H \otimes \ell^2(B_{l-1})$ admits a singular value decomposition $y = \sum_{i=1}^r \mu_i u_i \otimes v_i$. 
Let $q \in \cB(H)$ be the orthogonal projection onto
the span of $e_i$, $u_i$, $1 \leq i \leq r$. 
By the construction of the projection $q$, we have
\begin{eqnarray*}
\| (q \otimes 1_{\ell^{2}(\dF_d)}) \cdot A_\star \cdot (q \otimes 1_{\ell^{2}(\dF_d)}) x \|_2 \geq  \|(1_H\otimes \Psi ) A_\star  x\|_2 \geq (1-\eta )\|(1_H\otimes \Psi ) A_\star\|.
\end{eqnarray*}
By the definition of the operator norm, we have shown
$$\|(q \otimes 1_{\ell^{2}(\dF_d)}) \cdot A \cdot (q \otimes 1_{\ell^{2}(\dF_d)})\|\ge  (1-\eta)\|(1_H\otimes \Psi ) A_\star\|.$$
It remains to use \eqref{eq:NO2} with $l = \lceil 2/ \veps \rceil$ and $(1-\veps)(1-\eta) \geq (1 - 1.1\veps)$ for $\eta =\veps / 10$ and adjust the constant $r$.
\end{proof}

\subsection{Case of Haar unitaries: proof of Theorem \ref{th:LBU1}}

We may use Theorem \ref{th:FKBlm} to get a lower bound on $\| A_N \|_p$ thanks to the concentration of measure phenomenon, which in our context follows from Gromov and Milman \cite{MR708367} or Bakry-Emery curvature condition \cite{MR889476}. 

\begin{lemma}\label{le: GM}
Let $(a_0,\ldots,a_{2d})$ in $\cA_1$ and $A_\star$, $A_N$ be as in \eqref{eq:defA*}-\eqref{eq:defA} with  $U_1,\ldots, U_d$ be independent and Haar-distributed elements on $\dU_N$. There exists a numerical constant $c >0$ such that for any $t >0$ and any $2 \leq p \leq \infty$,
$$
\dP \PAR{| \| A_N \|_p - \dE  \| A_N \|_p  | \geq t  \| A_\star\|} \leq 2  \exp ( - c N^{1 + 2/p} t^2/ d), 
$$
(for $p = \infty$, $\| \cdot \|_\infty = \| \cdot \|$ and $N^{2/p} = 1$).
\end{lemma}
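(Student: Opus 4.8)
The plan is to treat $\|A_N\|_p$ as a Lipschitz function on $\dU_N^d$ and to invoke the concentration of measure phenomenon on the unitary group. Write $f(U_1,\dots,U_d)=\|A_N\|_p$ with $A_N=a_0\otimes 1+\sum_{i=1}^{2d}a_i\otimes U_i$, $U_{i+d}=U_i^*$, and equip $\dU_N^d$ with the metric $d(U,U')^2=\sum_{i=1}^d\|U_i-U_i'\|_{\mathrm{HS}}^2$ formed from the \emph{unnormalized} Hilbert--Schmidt distances. The two ingredients I would assemble are: (i) an explicit Lipschitz bound for $f$ in this metric; and (ii) the fact that $\dU_N^d$ with this metric enjoys Gaussian concentration at rate $\asymp N$, independently of $d$.

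For (i), I would first apply the triangle inequality for the noncommutative $L^p$-norm $\|\cdot\|_p$ attached to $\tau=\tau_1\otimes\tr_N$ (valid since $p\ge1$) to get, for two tuples $U,U'$,
$$
|f(U)-f(U')|\le \|A_N(U)-A_N(U')\|_p\le \sum_{i=1}^d\bigl(\|a_i\otimes(U_i-U_i')\|_p+\|a_{i+d}\otimes(U_i^*-(U_i')^*)\|_p\bigr).
$$
Then I would use $|a\otimes M|=|a|\otimes|M|$ and $\tau=\tau_1\otimes\tr_N$, which give $\|a\otimes M\|_p=\|a\|_{\tau_1,p}\,\|M\|_{\tr_N,p}$, together with: the bound $\|a_i\|_{\tau_1,p}\le\|a_i\|\le\|A_\star\|=\rho$ (the last inequality because $a_i=(A_\star)_{g_i,\o}$ is a matrix entry of $A_\star$ viewed as an operator on $H_1\otimes\ell^2(\dF_d)$, so $\|a_i\|\le\|A_\star\|$); the equality $\|U_i^*-(U_i')^*\|_{\tr_N,p}=\|U_i-U_i'\|_{\tr_N,p}$; and the Schatten monotonicity $\|M\|_{\tr_N,p}\le N^{-1/p}\|M\|_{\mathrm{HS}}$ valid for $p\ge2$ (from $\|s\|_{\ell^p}\le\|s\|_{\ell^2}$ on the singular values plus the normalization of $\tr_N$; read $N^{-1/p}=1$ when $p=\infty$). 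Combining these with Cauchy--Schwarz and $\sum_{i=1}^d(\|a_i\|+\|a_{i+d}\|)^2\le2\sum_{i=1}^{2d}\|a_i\|^2\le4d\rho^2$ gives
$$
|f(U)-f(U')|\le 2\sqrt d\,\rho\,N^{-1/p}\,d(U,U'),
$$
so $f$ is $L$-Lipschitz with $L=2\sqrt d\,\rho\,N^{-1/p}$.

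For (ii), by the Gromov--Milman concentration theorem for compact Lie groups \cite{MR708367} — equivalently, the Bakry--Émery criterion \cite{MR889476} applied to $\dSU_N$, whose Ricci curvature in the Hilbert--Schmidt metric is bounded below by a universal constant times $N$ — the group $\dU_N$ with normalized Haar measure satisfies a logarithmic Sobolev inequality with constant of order $N$. Since log-Sobolev inequalities tensorize with no loss in the number of factors, $\dU_N^d$ with the $\ell^2$-combined metric satisfies such an inequality as well, whence $\dP(|F-\dE F|\ge s)\le2\exp(-c_0Ns^2)$ for every $1$-Lipschitz $F$ and a universal $c_0>0$. Applying this to $F=f/L$ and $s=t\rho/L$ gives
$$
\dP\bigl(|\,\|A_N\|_p-\dE\|A_N\|_p\,|\ge t\rho\bigr)\le 2\exp\!\bigl(-c_0N(t\rho/L)^2\bigr)=2\exp\!\Bigl(-\tfrac{c_0}{4}\,\frac{N^{1+2/p}t^2}{d}\Bigr),
$$
which is the stated inequality with $c=c_0/4$.

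There is no genuine conceptual difficulty; the point that requires care is keeping the two metrics aligned and tracking the power of $N$. Concretely, one must state the Lipschitz estimate for exactly the metric (the $\ell^2$-sum of \emph{unnormalized} HS distances) for which the concentration inequality has rate $O(N)$ with no $d$-dependence, so that the entire $d$-dependence of the tail is carried by the factor $\sqrt d$ in $L$; and one must correctly extract the gain $N^{-1/p}$ in passing from the operator (or HS) norm of $U_i-U_i'$ to its $\tr_N$-normalized Schatten-$p$ norm, since this is precisely what upgrades the rate $N$ to $N^{1+2/p}$.
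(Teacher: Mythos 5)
Your proof is correct and follows essentially the same route as the paper: the identical Lipschitz estimate $|f(U)-f(V)|\le 2\sqrt{d}\,\|A_\star\|\,N^{-1/p}\,\|U-V\|_2$ (via the entry bound $\|a_i\|\le\|A_\star\|$ and $\|M\|_p\le N^{-1/p}\|M\|_{\mathrm{HS}}$ for $p\ge 2$), followed by Gaussian concentration at rate $N$ for Lipschitz functions on $\dU_N^d$ with the $\ell^2$-combined Hilbert--Schmidt metric. The only difference is the source you invoke for that concentration input: the paper cites \cite[Theorem 5.17]{MR3971582}, whereas your derivation ``Bakry--\'Emery on $\dSU_N$, hence LSI for $\dU_N$'' glosses over the flat central direction of $\dU_N$ (whose Ricci curvature is not bounded below by $cN$), a point that requires the small standard transfer argument from $\dSU_N$ to $\dU_N$ contained precisely in the cited reference.
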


\begin{proof}
 We introduce the Hilbert-Schmidt norm on $M_N^d(\dC)$ defined by
$$
\| (M_1,\ldots,M_d) \|_{2} = \sqrt{ \sum_{i=1}^d \Tr (M_i M_i^*)  },
$$
(note that the trace is not normalized here). This norm coincides with the Euclidean norm when identifying $M^d_N(\dC)$ with $\dR^k$, $k = 2dN^2$. Next, we consider the function  $f : (M_N)^d \to \dR_+$ $f(U_1,\ldots,U_d) = \| A_N \|_p$ with $A_N = A_N(U)$ as in \eqref{eq:defA} and $U_{i+d} = U_i^*$ as usual. This function is Lipshitz. Indeed, for $U= (U_1,\ldots,U_d)$, $V = (V_1,\ldots,V_d)$ in $M_N^d(\dC)$, we have, from the triangle inequality,
$$
| f (U ) -f (V) | \leq \| \sum_{i\in \INT{2d}} a_i \otimes (U_i-V_i ) \|_p \leq  \sum_{i\in \INT{2d}} \| a_i \otimes (U_i-V_i ) \|_p \leq \sqrt{ \sum_{i \in \INT{2d}} \|a_i\|_p^2 } \sqrt { 2\sum_{i \in \INT{d}}  \| U_i-V_i \|^2_p}, 
$$
where we have used that $\| a \otimes x \|_p =\| a \|_p\|x \|_p $ and $\| x^* \|_p = \| x\|_p$. We have $\| a_i\|_p \leq \|a_i \| \leq \| A_\star\| $ (since $(A_\star)_{g_i \o} = a_i$). Moreover, for any matrix $M \in M_N(\dC)$ with singular value vector $s = ( s_1,\ldots s_N)$, we have for $p \geq 2$,
$$
\PAR{\Tr  | M |^p }^{1/p} = \PAR{ \sum_{i=1}^N s_i^p }^{1/p} \leq  \PAR{ \sum_{i=1}^N s_i^2 }^{1/2} = \sqrt{ \Tr (M M^*) }. 
$$
Since $\| M \|_p = N^{-1/p} \PAR{\Tr | M |^p }^{1/p}$, it follows that 
$$
| f (U ) -f (V) | \leq  \frac{2 \sqrt d \| A_\star\| }{ N^{1/p} } \| U - V \|_2.
$$
It remains to apply \cite[Theorem 5.17]{MR3971582} (we can take $c = 96$).
\end{proof}

As a corollary of Theorem \ref{th:FKBlm} and Lemma \ref{le: GM}, we obtain the following. 

\begin{corollary}\label{cor:LBU1}
Let $(a_0,\ldots,a_{2d})$ in $\cA_1$ and $A_\star$, $A_N$ be as in \eqref{eq:defA*}-\eqref{eq:defA} with  $(U_1,\ldots, U_d)$ are independent Haar-distributed elements on $\dU_N$. There exists a numerical constant $c >0$ such that for all even integers $2 \leq p \leq N^{1/(15d + 50)}$, 
$$
\dE \| A_N \|_p \geq \|A_\star\|_p  \PAR{ 1 -   \frac{c \sqrt{d p } \| A _\star \|^p }{\sqrt N \|A_\star\|_p ^p} }_+^{1/p} ,
$$
where $(x)^{1/p}_+ = (\max(x,0))^{1/p}$ and the convention $0/0 =1$.
\end{corollary}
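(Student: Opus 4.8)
The plan is to sandwich $\dE\|A_N\|_p^p$ between $\|A_\star\|_p^p$ (using Theorem \ref{th:FKBlm}) and $(\dE\|A_N\|_p)^p$ (using the concentration of Lemma \ref{le: GM}), and then to take $p$-th roots. First I would invoke the linearization trick exactly as in the proof of Theorem \ref{th:main1} to reduce to the symmetry condition \eqref{eq:symai}: passing to $M_2(\dC)\otimes\cA_1$ and $\check a_i=\begin{pmatrix}0&a_i\\a_{i^*}^*&0\end{pmatrix}$ leaves $\|A_\star\|$, $\|A_\star\|_p$ and $\|A_N\|_p$ unchanged for $p$ even and makes $\check A_N=\begin{pmatrix}0&A_N\\A_N^*&0\end{pmatrix}$ self-adjoint. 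Writing $\rho=\|A_\star\|$ and $\mu=\dE\|A_N\|_p$, the second estimate of Theorem \ref{th:FKBlm} applied with $\ell=p$ (which lies below $\ell_0$ for $p$ in the stated range and $N$ large in terms of $d$; smaller $N$ are absorbed by the $(\cdot)_+$) together with $\|A_\star\|_p\le\rho$ gives a numerical constant $c_1$ with
\begin{equation}\label{eq:LBU1-sandwich}
\|A_\star\|_p^p-\frac{c_1}{\sqrt N}\rho^p\;\le\;\dE\|A_N\|_p^p\;\le\;\PAR{1+\frac{c_1}{\sqrt N}}\rho^p .
\end{equation}
In particular Jensen's inequality and the right-hand bound give $\mu\le\rho\,(1+c_1/\sqrt N)^{1/p}\le\rho\,e^{c_1/(p\sqrt N)}$, so $\mu^q\le2\rho^q$ for all $q\le p$ once $N$ is large.

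The main work is to show that $\dE\|A_N\|_p^p$ is within $O(dp^2\rho^p/N)$ of $\mu^p$. I would set $Z=\|A_N\|_p-\mu$, so that $\dE Z=0$, and use Lemma \ref{le: GM} to see that $Z$ is sub-Gaussian, $\dP(|Z|\ge s)\le2\exp(-s^2/(2\rho_0^2))$ with $\rho_0=\rho\sqrt{d/(2c_{\mathrm{GM}}N^{1+2/p})}\le\rho\sqrt d\,N^{-1/2}$, whence $\dE|Z|^k\le(C_2\rho_0\sqrt k)^k$ for all $k\ge1$ and a universal $C_2$. Expanding $\|A_N\|_p^p=(\mu+Z)^p=\sum_{k=0}^p\binom pk\mu^{p-k}Z^k$, using $\dE Z=0$ to delete the $k=1$ term, and bounding the rest with $\binom pk\le(ep/k)^k$, $\mu^{p-k}\le2\rho^{p-k}$ and $(ep/k)^k k^{k/2}=(ep/\sqrt k)^k$ yields
\begin{equation*}
\ABS{\dE\|A_N\|_p^p-\mu^p}\;\le\;\sum_{k=2}^p\binom pk\mu^{p-k}\dE|Z|^k\;\le\;2\rho^p\sum_{k=2}^p\PAR{\frac{\alpha}{\sqrt k}}^{k},\qquad\alpha:=\frac{eC_2\,p\,\rho_0}{\rho}.
\end{equation*}
Since $p\le N^{1/(15d+50)}$ forces $\alpha\le eC_2\sqrt d\,N^{1/(15d+50)-1/2}\le\tfrac12$ for $N$ large, the series is at most $\sum_{k\ge2}\alpha^k\le2\alpha^2$, so $\ABS{\dE\|A_N\|_p^p-\mu^p}\le4\rho^p\alpha^2\le C_3\,dp^2\rho^p/N$.

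Finally, combining this with \eqref{eq:LBU1-sandwich} gives $\mu^p\ge\|A_\star\|_p^p-(c_1/\sqrt N+C_3dp^2/N)\rho^p$. Because $p\ge2$ forces $N\ge2^{15d+50}$ and hence $dp^3\le N$, one has $C_3dp^2/N\le C_3\sqrt{dp}/\sqrt N$, so $\mu^p\ge\|A_\star\|_p^p-c\sqrt{dp}\,\rho^p/\sqrt N$ with $c=c_1+C_3$. As $\mu=(\mu^p)^{1/p}\ge0$ and $x\mapsto x^{1/p}$ is nondecreasing, this rearranges to $\dE\|A_N\|_p\ge\|A_\star\|_p\bigl(1-c\sqrt{dp}\,\|A_\star\|^p/(\sqrt N\|A_\star\|_p^p)\bigr)_+^{1/p}$, as claimed (with $0/0=1$ when $\|A_\star\|_p=0$). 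The hard part will be the middle step: one must produce an error of order $dp^2/N$, strictly smaller than the target $\sqrt{dp}/\sqrt N$, and this is possible only by exploiting the exact vanishing of $\dE Z$ together with the sub-Gaussian moment growth of $Z$ and the a priori bound $\mu\le\rho(1+o(1))$ from Theorem \ref{th:FKBlm}; the cruder estimate $\dE\|A_N\|_p^p-\mu^p\le p\,\dE[\|A_N\|_p^{p-1}(Z)_+]$ would only give an error of order $p^{3/2}\rho^p/\sqrt N$, which is too large.
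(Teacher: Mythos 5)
Your proposal is correct and follows essentially the same route as the paper: reduce to the symmetry condition \eqref{eq:symai} by the linearization trick, lower-bound $\dE\|A_N\|_p^p$ by $\|A_\star\|_p^p-c\|A_\star\|^p/\sqrt N$ via Theorem \ref{th:FKBlm}, and use the concentration inequality of Lemma \ref{le: GM} to compare $\dE\|A_N\|_p^p$ with $(\dE\|A_N\|_p)^p$. The only difference is that where the paper compresses the concentration step into the single display \eqref{eq:eop}, you carry it out explicitly through the centered binomial expansion with sub-Gaussian moment bounds, obtaining an error $O(dp^2\|A_\star\|^p/N)$ which is absorbed into $\sqrt{dp}\,\|A_\star\|^p/\sqrt N$ since $dp^3\le N$ in the stated range; this is a careful (and rigorous) rendering of that step, and your closing remark correctly identifies why the cruder one-term comparison would not suffice.
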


\begin{proof}[Proof of Corollary \ref{cor:LBU1}]
Arguing as in the proof of Theorem \ref{th:main1}, we may assume that the symmetry condition \eqref{eq:symai} holds. Lemma \ref{le: GM} implies that for some new numerical constant $c >0$, for all $p \geq 1$, 
\begin{equation}\label{eq:eop}
\dE [ \| A_N \|^p_p ] - (\dE \| A_N \|_p  )^p \leq  \PAR{ \frac{c \sqrt{d p } \| A _\star \|}{N^{1/2+ 1/p}} }^p,
\end{equation}
We next apply Theorem \ref{th:FKBlm}, we get, for integer $p \leq \ell_0$ even, for some new $c >0$,
$$
(\dE \| A_N \|_p  )^p \geq \|A_\star\|_p ^p  - \PAR{ \frac{c \sqrt{d p } \| A _\star \|}{\sqrt N } }^p.
$$
The conclusion follows.  
\end{proof}

\begin{remark}\label{rk:HaarON}
We have focused on the unitary group $\dU_N$. For the orthogonal group $\dO_N$, the situation is rather different as it has two disconnected components. The subgroup $\dSO_N$ satisfies the Bakry-Emery curvature condition, and Lemma \ref{le: GM} holds for $(U_1,\ldots,U_d)$ Haar distributed on $\dSO_N^d$. We could, however, extend Theorem \ref{th:LBU1} to $\dO_N^d$ by following any of these two strategies. (1) extend Theorem \ref{th:FKBlm} to $\dSO_N^d$, this can be done by justifying that Lemma \ref{cor:WG2} holds for $\dSO_N$ which in turn follows from the fact that the Weingarten calculus is identical for $\dO_N$ and $\dSO_N$ for products of size less than $N$, see \cite{collins2021weingarten}. (2) The proof of Theorem \ref{th:FKBlm} can be adapted to prove that $\dE ( \tau_N (A_N^\ell) - \tau(A_\star^\ell) )^2 $ is bounded by $d^8 (\ell +1)^{8d +16} \rho^{2 \ell} / N^2$ for $\ell$ as in Theorem \ref{th:FKBlm}. We then directly obtain the concentration of $\tau_N (A_N^\ell) $. Note that this last upper bound improves the order of fluctuations on $\| A_N\|_p$.  
\end{remark}

We are now ready to prove Theorem \ref{th:LBU1}. 

\begin{proof}[Proof of Theorem \ref{th:LBU1}]
Up to adjusting the constant $c$, we may assume without loss of generality $d \leq N^{\alpha}$ for any fixed $\alpha >0$.  Let $\veps > 0$. Let $q$ be the orthogonal projection given by Theorem \ref{th:exactness}. Let $\tilde A_N$ and $\tilde A_\star$ be given by \eqref{eq:defA} and \eqref{eq:defA*} with $a_i = q a_i q$. By Theorem \ref{th:exactness}, we may see $\tilde A_N$ as an element of $M_r(\dC) \otimes M_N(\dC)$ and similarly for $\tilde A_\star$ in $M_r(\dC) \otimes \cA_\star$. From the definition of the operator norm, for any $p \geq 1$,
$$
\| A_N \| \geq \| \tilde A_N\| \geq \| \tilde A_N\| _p.
$$
(the norm $\| \cdot \|_p$ is in $M_r(\dC) \otimes M_N(\dC)$). Next, we may use Corollary \ref{cor:LBU1}, for $2 \leq p \leq N^{1/(15d + 50)}$, we find
 $$
 \dE \| A_N \| \geq  \PAR{ \|\tilde A_\star\|^p_p  -  \frac{ c \sqrt{d p } \| \tilde A _\star \|^p }{\sqrt N } }_+^{1/p}.
$$
Since $q a_i q$ has rank at most $r \leq (2d)^{4/ \veps}$,   we may use Haagerup's inequality: from Corollary \ref{cor:HaagB}, 
$$
 \dE \| A_N \| \geq  \| \tilde A_\star \| \PAR{ \frac{(2d)^{-4 / \veps}}{2 p^3}   -  \frac{ c \sqrt{d p }  }{\sqrt N } }_+^{1/p}.
$$
From Theorem \ref{th:exactness} and our choice of $q$, we finally obtain 
$$
 \dE \| A_N \| \geq  \| A_\star \| (1- \veps) \PAR{ \frac{(2d)^{-4 / \veps}}{2 p^3}   -  \frac{ c \sqrt{d p }  }{\sqrt N } }_+^{1/p}.
$$
It remains to take $\veps = 16 \log (2d) / \log N$ (if less than $1$) and $p =  N^{1/(15d + 50)}$. Then the first term in parenthesis is  $N^{1/4}/(2 p^{3})$ which is twice larger that $(dp/N)^{-1/2}$ for all $N$ large enough if $\alpha >0$ is small enough (recall $d \leq N^{\alpha}$).  Adjusting the constant $c$, the theorem follows. 
\end{proof}

\subsection{Case of permutations: proof of Theorem \ref{th:LBP}}

We start with a preliminary lemma.

\begin{lemma}\label{le:LBP}
Let $(a_0,\ldots,a_{2d})$ in $\cA_1$ and $A_\star$, $A_N$ be as in \eqref{eq:defA*}-\eqref{eq:defA} with  $(U_1,\ldots, U_d)$ permutation matrices of $\sigma = (\sigma_1,\ldots, \sigma_d)$. Let $p \geq 2$ be an even integer and assume that there exists $x,y \in \INT{N}$ such that  $(G_\sigma,x)_p$ and $(G_\sigma,y)_p$ are disjoint and without cycle. Then, 
$$
\| A_N \Pi_N \| \geq \| A_\star \|_{p}
$$
\end{lemma}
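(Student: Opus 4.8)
The plan is to exhibit, for each unit vector $\psi\in H_1$, an explicit unit vector $v\in H_1\otimes\IND_N^\perp$ for which $\langle v,(A_N\Pi_N)^p v\rangle$ can be evaluated exactly in terms of $A_\star$, and then to optimise over $\psi$. First I would reduce to the case where the symmetry condition \eqref{eq:symai} holds, exactly as in the proof of Theorem \ref{th:main1}: replacing $\cA_1$ by $M_2(\dC)\otimes\cA_1$ and $a_i$ by $\check a_i$ changes neither $\|A_N\Pi_N\|$ nor $\|A_\star\|_p$, and makes $A_N$ and $A_\star$ self-adjoint, so that (as $p$ is even) $A_N^p$ and $A_\star^p$ are positive. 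Since $A_N$ commutes with $\Pi_N$ and $\Pi_N^2=\Pi_N$, one has $(A_N\Pi_N)^p=A_N^p\Pi_N$, which is self-adjoint and of norm $\|A_N\Pi_N\|^p$; in particular, for any unit vector $v\in\operatorname{ran}\Pi_N$,
\[
\|A_N\Pi_N\|^p\;\ge\;\langle v,(A_N\Pi_N)^p v\rangle\;=\;\langle v,A_N^p v\rangle .
\]

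For a unit vector $\psi\in H_1$ I would take $v=\psi\otimes\tfrac{1}{\sqrt2}(\delta_x-\delta_y)$, where $\delta_x,\delta_y\in\dC^N$ are coordinate vectors. Since the two balls are disjoint we have $x\neq y$ and $\langle\delta_x-\delta_y,\IND_N\rangle=0$, so $v$ is a unit vector in $H_1\otimes\IND_N^\perp=\operatorname{ran}\Pi_N$. Expanding $A_N^p=\sum_{w\in W_p}a(w)\otimes u([w])$ as in \eqref{eq:Akgo0} and using that $u([w])\delta_z=\delta_{\sigma([w])(z)}$, where $\sigma\colon\dF_d\to\mathrm{Sym}(\INT{N})$ is the homomorphism with $\sigma(g_i)=\sigma_i$ (this uses the homomorphism property of $\sigma$, not a per-letter identity), one gets
\[
\langle v,A_N^p v\rangle=\tfrac12\sum_{w\in W_p}\langle\psi,a(w)\psi\rangle\,\big\langle\delta_x-\delta_y,\ \delta_{\sigma([w])(x)}-\delta_{\sigma([w])(y)}\big\rangle .
\]
The key step, and the main obstacle, is to prove that for every $w\in W_p$,
\[
\big\langle\delta_x-\delta_y,\ \delta_{\sigma([w])(x)}-\delta_{\sigma([w])(y)}\big\rangle=2\,\IND\big([w]=\o\big).
\]
For this, note first that the endpoint $\sigma([w])(x)$ of the walk $x,\sigma_{w_1}(x),\dots,\sigma_{w_p}\cdots\sigma_{w_1}(x)$ lies in $(G_\sigma,x)_p$ (each step moves along at most one edge of $G_\sigma$), and likewise $\sigma([w])(y)\in(G_\sigma,y)_p$; disjointness of the two balls then forces $\sigma([w])(x)\neq y$ and $\sigma([w])(y)\neq x$, killing the cross terms. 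For the diagonal terms I would pass to the reduced form $[w]=g_{j_k}\cdots g_{j_1}$, whose length is $k=|[w]|\le p$; the walk $x=z_0,z_1,\dots,z_k$ with $z_t=\sigma_{j_t}\cdots\sigma_{j_1}(x)$ is non-backtracking (reducedness is exactly $j_{t+1}\neq j_t^*$) and stays inside $(G_\sigma,x)_p$, so since $(G_\sigma,x)_p$ is a forest and the non-backtracking walk between two vertices of a forest is unique (the geodesic), $z_k=z_0=x$ can occur only when $k=0$, i.e.\ $[w]=\o$; the same applies at $y$, and the converse ($[w]=\o\Rightarrow\sigma([w])=\mathrm{id}$) is trivial.

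Plugging this in gives $\langle v,A_N^p v\rangle=\sum_{w:[w]=\o}\langle\psi,a(w)\psi\rangle=\langle\psi,(A_\star^p)_{\o\o}\,\psi\rangle$, where $(A_\star^p)_{\o\o}\in\cA_1$ is the $(\o,\o)$-compression of the positive operator $A_\star^p$ and is therefore itself positive. Taking the supremum over unit $\psi\in H_1$ in the resulting inequality $\langle\psi,(A_\star^p)_{\o\o}\psi\rangle\le\|A_N\Pi_N\|^p$ yields $\|(A_\star^p)_{\o\o}\|\le\|A_N\Pi_N\|^p$, and since $\tau_1$ is a state, $\|A_\star\|_p^{\,p}=\tau_1\big((A_\star^p)_{\o\o}\big)=\tau\big(A_\star^p\big)\le\|(A_\star^p)_{\o\o}\|\le\|A_N\Pi_N\|^p$. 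Taking $p$-th roots gives $\|A_N\Pi_N\|\ge\|A_\star\|_p$, as desired. Beyond the combinatorial claim, the only points requiring care are the identity $(A_N\Pi_N)^p=A_N^p\Pi_N$ together with the positivity of $(A_\star^p)_{\o\o}$, both of which are routine, and the observation that the length of $[w]$ for $w\in W_p$ is at most $p$ so that the hypothesis on balls of radius exactly $p$ is precisely what is used.
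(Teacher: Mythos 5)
Your proof is correct and follows essentially the same route as the paper's: the same reduction to the symmetry condition \eqref{eq:symai}, the same test vector $\psi\otimes(\delta_x-\delta_y)/\sqrt{2}$ (which $\Pi_N$ fixes), and the same geometric input, namely that walks of length at most $p$ stay in the two disjoint, cycle-free balls so that only the trivial word contributes. The only cosmetic difference is the endgame: you expand the quadratic form $\langle v, A_N^p v\rangle$ word by word and conclude from positivity of $(A_\star^p)_{\o\o}$ together with the state property of $\tau_1$, whereas the paper identifies the ball around $x$ with the corresponding ball in $\dF_d$, applies Pythagoras to $\| A_N^p f\otimes(\delta_x-\delta_y)\|_2$, and invokes the GNS correspondence --- the two computations amount to the same thing.
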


\begin{proof}
It is sufficient to prove the claim  when the the symmetry condition \eqref{eq:symai} holds.  Consider the homomorphism $U: \dF_d \to \dS_N$ such that $U(g_i) = U_i$ for all $i \in \INT{d}$ (it is also defined in the proof of Theorem  \ref{th:resIB}). For $x \in \INT{N}$, let $V_{\sigma,x}$  be the vector subspace of $\dC^N$ spanned by $U(g)e_x $ for all $g \in \dF_d$ of word length at most $p$. Define similarly, $V_{\star}$ be the vector subspace of $\ell^2 (\dF_d)$ spanned  by $e_{g}$, for all $g \in \dF_d$ of word length at most $p$. We observe that if $(G_\sigma,x)_p$  has no cycle then $V_{\sigma,x}$ and $V_\star$ are isomorphic then for any vector $f \in H_1$
$$
\| A_N ^p f \otimes e_x \|_2 = \| A_\star ^p f \otimes \delta_{\o} \|_2,
$$
(this can also be seen directly from Theorem \ref{th:powerIB}). 

Let $x \ne y \in \INT{N}$, we note that  $\IND_N \otimes \IND_N (\delta_x - \delta_y) = 0$.
Hence, for any unit vector $f \in H_1$, $\Pi_N f \otimes  (\delta_x - \delta_y) = f \otimes  (\delta_x - \delta_y)$ and
$$
\| A_N \Pi_N   \|^p =  \| (A_N \Pi_N)^p   \| = \| A^p _N \Pi_N    \| \geq \| A_N ^p f \otimes \frac{ \delta_x - \delta_{y}}{\sqrt 2} \|_2 .
$$
Moreover,  if $(G_\sigma,x)_p$ and  $(G_\sigma,y)_p$ are disjoint then the vector spaces $A_N ^p H_1 \otimes \delta_x $ and $A_N ^p H_1 \otimes \delta_{y} $ have a trivial intersection.  Hence, for any $f \in H_1$, from Pythagoras' theorem,
$$
\| A_N^p f \otimes ( \delta_x - \delta_{y})\|^2_2 =  \| A_N^p f \otimes \delta_x\|^2_2 + \| A_N^p f \otimes \delta_{y}\|^2_2.
$$ 
We obtain that if $(G_\sigma,x)_p$ and  $(G_\sigma,y)_p$ are disjoint and have no cycle, then for any unit $f \in H_1$,
$$
\| A_N \Pi_N   \|^p  \geq \| A_\star ^p f \otimes \delta_{\o} \|_2,
$$
The conclusion of Lemma \ref{le:LBP} follows from the GNS correspondence. 
\end{proof}

We are ready for the proof of Theorem \ref{th:LBP}. 
\begin{proof}[Proof of Theorem \ref{th:LBP}]
The proof goes as the proof of Theorem \ref{th:LBU1}. Let $\veps > 0$. Let $q$ be the orthogonal projection given by Theorem \ref{th:exactness}.
Let $\tilde A_N$ and $\tilde A_\star$ be given by \eqref{eq:defA} and \eqref{eq:defA*} with $a_i = q a_i q$. From the definition of the operator norm and Lemma \ref{le:LBP},
$$
\| A_N \Pi_N \| \geq \| \tilde A_N \Pi_N \| \geq \| \tilde A_\star \|_p .
$$
Then from Corollary \ref{cor:HaagB}, 
$$
 \| A_N \Pi_N \| \geq  \| \tilde A_\star \|  \PAR{ \frac{(2d)^{-4 /   \veps}}{2 p^3}}^{1/p} \geq\|  A_\star \|  ( 1 - \veps) \PAR{ \frac{(2d)^{-4/ \veps}}{2 p^{3}}}^{1/p}.
$$
It remains to take $\veps =1 \wedge  \sqrt {\log (2 d) / p }$ and adjust the constant $c$. \end{proof}

\subsection{Universal Alon-Boppana lower bound: proof of Theorem \ref{th:AB}}

The proof is a variant of the previous subsection. Arguing as in the proof of Theorem \ref{th:LBP}. It is sufficient to prove the following lemma. 

\begin{lemma}\label{le:AB}
Let $(a_0,\ldots,a_{2d})$ in  $\cA_1$ with non-negative joint moments and $A_\star$, $A_N$ be as in \eqref{eq:defA*}-\eqref{eq:defA} with  $(U_1,\ldots, U_d)$ permutation matrices on $N \geq 2$. Assume that $p =   (1/4) \log (N)/ \log (2d) \geq 1$, 
$$
\| A_N \Pi_N \| \geq \| A_\star \|_{p}
$$
\end{lemma}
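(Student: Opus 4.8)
The plan is to bound $\|A_N\Pi_N\|$ from below by evaluating a large even power $(A_N\Pi_N)^{2k}$ on a unit vector of the form $\xi=f\otimes\frac{\delta_x-\delta_y}{\sqrt2}$, where $f\in H_1$ is arbitrary and $x\ne y$ are two vertices of the Schreier graph $G_\sigma$ far enough apart that the off-diagonal blocks of $A_N^{2k}$ between $x$ and $y$ vanish. The non-negativity of the joint moments of $(a_0,\dots,a_{2d})$ will enter only \emph{after} one averages the resulting $\cA_1$-valued quadratic form against $\tau_1$, which is what makes the statement a trace-type one and forces $2k$ to be of order $\log_{2d}N$.

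First I would reduce, by the linearization trick of the proof of Theorem~\ref{th:main1} (replace $\cA_1$ by $M_2(\dC)\otimes\cA_1$ and each $a_i$ by the anti-diagonal $\check a_i=\bigl(\begin{smallmatrix}0&a_i\\ a^*_{i^*}&0\end{smallmatrix}\bigr)$), to the case where the symmetry condition \eqref{eq:symai} holds, so that $A_N$, $A_\star$ and $A_N\Pi_N$ are self-adjoint; as in that proof this reduction leaves $\|A_\star\|_p$ and $\|A_N\Pi_N\|$ unchanged, and one checks that the non-negative joint moments pass to $(\check a_0,\dots,\check a_{2d})$ — a product of the $\check a_i$'s of odd length is $(\tr_2\otimes\tau_1)$-traceless, and of even length is diagonal with diagonal entries equal to products of elements of $\{a_j,a_j^*\}$. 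Next I would fix an even integer $2k$ with $p\le 2k$ and $(2d)^{2k+1}\le N$: since the hypothesis reads $N=(2d)^{4p}$ with $p\ge1$, this asks for $2k\in[p,4p-1]$, an interval of length $\ge2$, hence containing an even integer. As every vertex of $G_\sigma$ has at most $2d$ neighbours, every ball of radius $2k$ has fewer than $(2d)^{2k+1}\le N$ vertices, hence misses some $y\ne x$ with $d_{G_\sigma}(x,y)>2k$. Writing $A_N^{2k}=\sum_{w\in W_{2k}}a(w)\otimes U(w)$ with $U(w)=U_{w_{2k}}\cdots U_{w_1}$ and $U_0=1_N$, one has $(A_N^{2k})_{uv}=\langle\delta_u,A_N^{2k}\delta_v\rangle=\sum_{w:\,U(w)v=u}a(w)\in\cA_1$; since $U(w)v=u$ traces a walk of length $2k$ from $v$ to $u$ in $G_\sigma$, the distance condition gives $(A_N^{2k})_{xy}=(A_N^{2k})_{yx}=0$.

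Then, using $(A_N\Pi_N)^{2k}=A_N^{2k}\Pi_N$ and self-adjointness, for any unit $f\in H_1$ the vector $\xi=f\otimes\frac{\delta_x-\delta_y}{\sqrt2}$ lies in $H_1\otimes\IND_N^\perp$, is a unit vector, and satisfies
$$\|A_N\Pi_N\|^{2k}=\|(A_N\Pi_N)^{2k}\|\ \ge\ \langle\xi,A_N^{2k}\xi\rangle\ =\ \tfrac12\,\langle f,\bigl[(A_N^{2k})_{xx}+(A_N^{2k})_{yy}\bigr]f\rangle .$$
Taking the supremum over $f$ yields $\|A_N\Pi_N\|^{2k}\ge\tfrac12\bigl\|(A_N^{2k})_{xx}+(A_N^{2k})_{yy}\bigr\|\ge\tfrac12\,\tau_1\bigl((A_N^{2k})_{xx}+(A_N^{2k})_{yy}\bigr)$, the last step because that element is positive in $\cA_1$ and a state is dominated by the norm on positives. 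Finally $\tau_1((A_N^{2k})_{xx})=\sum_{w:\,U(w)x=x}\tau_1(a(w))$: the words $w$ with $[w]=\o$ in $\dF_d$ satisfy $U(w)=\mathrm{Id}$ (so $U(w)x=x$) and contribute $\tau_1((A_\star^{2k})_{\o\o})=\tau(A_\star^{2k})=\|A_\star\|_{2k}^{2k}$, while every remaining term $\tau_1(a(w))=\tau_1(a_{w_{2k}}\cdots a_{w_1})\ge0$ by the non-negative-moments hypothesis; hence $\tau_1((A_N^{2k})_{xx})\ge\|A_\star\|_{2k}^{2k}$, and likewise for $y$, so $\|A_N\Pi_N\|^{2k}\ge\|A_\star\|_{2k}^{2k}$. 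Since $q\mapsto\|A_\star\|_q$ is non-decreasing (an $L^q$-norm of the spectral measure of $|A_\star|$, by Jensen) and $2k\ge p$, we conclude $\|A_N\Pi_N\|\ge\|A_\star\|_{2k}\ge\|A_\star\|_p$.

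The main obstacle is precisely the averaging step and its interaction with the choice of $2k$: the non-negative-moments hypothesis cannot be used vertex-by-vertex before passing to $\tau_1$, since the ``extra'' walks $w$ with $U(w)x=x$ but $[w]\ne\o$ carry operators $a(w)$ that need not be positive, so $\langle f,a(w)f\rangle$ may be negative for a fixed $f$; only after replacing the scalar quadratic form by $\tau_1$ of the positive block $(A_N^{2k})_{xx}$ do the usable quantities $\tau_1(a(w))$ appear. Combining this with the requirement that the cross blocks $(A_N^{2k})_{xy}$ vanish — which forces $2k$ below roughly $\log_{2d}N$ — is exactly what pins down the exponent $p$ in the lemma.
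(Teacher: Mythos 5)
Your proof is correct and follows essentially the same route as the paper's: pick two vertices of the Schreier graph farther apart than the (even) power used, test $(A_N\Pi_N)^{2k}$ on $f\otimes(\delta_x-\delta_y)/\sqrt2$, and use the non-negative joint moments to bound $\tau_1\bigl((A_N^{2k})_{xx}\bigr)$ below by $\tau(A_\star^{2k})=\|A_\star\|_{2k}^{2k}$, then conclude by monotonicity of $q\mapsto\|A_\star\|_q$. The only (harmless) deviations are bookkeeping: you kill the cross blocks via $d_{G_\sigma}(x,y)>2k$ instead of Pythagoras on disjoint balls, you replace the paper's GNS step by taking the supremum over $f$ and using $\tau_1\le\|\cdot\|$ on positive elements, and you explicitly check that non-negative joint moments survive the linearization, a point the paper leaves implicit.
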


\begin{proof} From the linearization trick, we can assume that the symmetry condition \eqref{eq:symai} holds. Let $(\sigma_1,\ldots,\sigma_{d})$ be the associated permutation matrices and $G_\sigma$ the corresponding Schreier graph on $\INT{N}$. We set $h = 2 \lceil p/2 \rceil $. Since the $(G,x)_h$ has at most $(2d)^h$ vertices, their exist two vertices $x,y \in \INT{N}$ such that  $(G,x)_h$ and $(G,y)_h$ are disjoint. 

We then argue as in Lemma \ref{le:LBP}. Since $x \ne y \in \INT{N}$,  $\IND_N \otimes \IND_N (\delta_x - \delta_{y}) = 0$ and for any unit vector $f \in H_1$, $\Pi_N f \otimes  (\delta_x - \delta_{y}) = f \otimes  (\delta_x - \delta_{y})$ and
$$
\| A_N \Pi_N   \|^{2h} = \| A^{h} _N \Pi_N    \|^2 \geq \| A_N ^{h} f \otimes \frac{ \delta_x - \delta_{y}}{\sqrt 2} \|^2_2,
$$
and, since $(G_\sigma,x)_h$ and  $(G_\sigma,y)_h$ are disjoint, from Pythagoras' theorem,
$$
\| A_N^{h} f \otimes ( \delta_x - \delta_{y})\|^2_2 =  \| A_N^{h} f \otimes \delta_x\|^2_2 + \| A_N^{h} f \otimes \delta_{y}\|^2_2.
$$ 
From the GNS correspondence, we deduce that 
$$
\| A_N \Pi_N \|^{2h} \geq \frac{1}{2} \PAR{\tau_1 ( (A_N^{2h} )_{x x} )   + \tau_1 ( (A_N^{2h})_{y y}) }.  
$$ 
Now,  consider the homomorphism $U: \dF_d \to \dS_N$ such that $U(g_i) = U_i$ for all $i \in \INT{d}$ (it is also defined in the proof of Theorem  \ref{th:resIB}). Consider the elements $T_N \in \cA_1 \otimes M_N(\dC)$ and $T \in \cA \otimes \cA_\star$ of the form 
$$
T_N = \sum_{g \in \dF_d} a(g) \otimes U(g) \AND T = \sum_{g \in \dF_d} a(g) \otimes \lambda(g).
$$
where the sum is finite and $a(g)$ is a sum of terms of the form $a_{i_1}\cdots a_{i_n}$.  From the assumption that $(a_0,\ldots, a_{2d})$ have non-negative joint moments, we have $\tau(a(g)) \geq 0$. Also, $U(g)_{xx} \geq 0$ since $U(g)$ is a permutation matrix. Hence, 
$$
\tau_1(  (T_N)_{xx} ) = \sum_g \tau_1(a(g)) U(g)_{xx} \geq  \tau_1 (a(\o)) =\tau (T).  
$$
The conclusion follows by applying this to $T_N = A_N^{2h}$ and $T =A_\star^{2h}$.
\end{proof}

\section{Polynomial of unitaries}
\label{sec:poly}

\subsection{Linearization trick for unitaries}

In this subsection, we consider an extension of our main model defined in Section \ref{sec:model}. Recall that $\cA_\star$ is the left group algebra of the free group $\dF_d$ with free generators $g_1, \ldots ,g_d$ and their inverses.   We denote by $B_l \subset \dF_d$ the set of elements of word length at most $l$. Let $(a_g)_{g \in B_l}$ be a collection of operators in a unital $C^*$-algebra $\cA_1 \subset \cB(H_1)$. We consider the operator in $ \cA_1 \otimes \cA_\star$:
\begin{equation*}\label{eq:defP*}
P_\star   =\sum_{g \in B_l} a_g \otimes \lambda(g).
\end{equation*}
Let $(U_1,\ldots,U_d)$ be   unitaries in $\dU_N$, its corresponding  operator in $\cA_1 \otimes M_N(\dC)$,
\begin{equation*}
P_N  =   \sum_{g \in B_l }a_g \otimes U(g),
\end{equation*}
where $U : \dF_d \to \dU_N$ is the  group homomorphism defined by $U(g_i) = U_i$. The degree of $P_\star$ and $P_N$ is the minimal $l$ such that they can be expressed in this form. The operators $A_\star$ and $A_N$ in \eqref{eq:defA*}-\eqref{eq:defA} are operators of degree (at most) $l= 1$.

The goal of this subsection is to leverage estimates on $\|A_N\| / \| A_\star \|$  into estimates on $\| P_N \| / \| P_\star\|$.  This can be done thanks to the linearization trick. In a nutshell, it is possible to reduce the degree of $P_N$ iteratively at the cost of replacing $\cA_1$ by $\cA_1 \otimes M_k(\dC)$ for some integer $k \geq 1$. For ease of exposition, we generalize the framework. Let $\Gamma$ be a finite generated group and let $S$ be a finite symmetric set of generators of $\Gamma$. We denote by $B_l$ the ball of radius $l$ for the word metric associated to $S$. Let $H_2$ be an Hilbert space and let 
\begin{equation}\label{eq:defP}
P_\Gamma = P_\Gamma ((a_g)_{g \in B_l} ) = \sum_{g \in B_l} a_g \otimes \lambda(g) \AND  P = P((a_g)_{g \in B_l} ) = \sum_{g \in B_l} a_g \otimes u(g),
\end{equation} where $\lambda$ is the left-regular representation $\Gamma$ and $g \mapsto u(g)$ is a group homomorphism from $\Gamma$ to $\dU(H_2)$. 
We have the following lemma, which is a quantitative version of the linearization trick of Pisier for unitaries \cite[Proposition 6]{MR1401692}; see also Lehner \cite[Section 5]{MR1738412} and the appendix in \cite{BC2}.

\begin{lemma}\label{le:linea}
Let $l\geq 2$ be an even integer and a collection $(a_g)_{g \in B_l}$ in $\cA_1$. There exist $(b_g)_{g \in B_{l/2}}$ with $b_g \in \cA_1 \otimes M_{2 |B_{l/2}|}(\dC)$ and $ \theta \geq 0$ such that, if $P = P((a_g)_{g\in B_l})$ and $Q = P ( (b_g)_{g \in B_{l/2}})$ for some group homomorphism $g \mapsto u(g)$ from $\Gamma$ to $\dU(H_2)$, we have 
$$
 \| P \| = \| Q \|^2 - \theta.
$$
Moreover, if $P_\Gamma$ is as in \eqref{eq:defP}, we have $\theta  \leq |B_{l/2}| \| P_\Gamma \|$.
\end{lemma}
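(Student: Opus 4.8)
The plan is to halve the degree by folding the word $g\in B_l$ in half, realizing the Hermitian dilation of $P$ as a compression of a positive operator built from a degree $\le l/2$ element; the $u$-independent defect $\theta$ then comes out automatically.

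\emph{Step 1: dilation and factorization.} First I would pass to $\hat P := \begin{pmatrix} 0 & P \\ P^* & 0 \end{pmatrix} = \sum_{g\in B_l}\hat a_g\otimes u(g)$ with $\hat a_g := \begin{pmatrix} 0 & a_g \\ a_{g^{-1}}^* & 0\end{pmatrix}\in M_2(\cA_1)$ (using $u(g^{-1}) = u(g)^*$). Then $\hat P = \hat P^*$, $\|\hat P\| = \|P\|$, and since conjugation by $\mathrm{diag}(1,-1)$ sends $\hat P$ to $-\hat P$ its spectrum is symmetric, so $\lambda_{\max}(\hat P) = \|P\|$. Because $l$ is even and $S = S^{-1}$, every $g\in B_l$ is of the form $g = h^{-1}k$ with $h,k\in B_{l/2}$; put $r := |B_{l/2}|$ and $N_g := \#\{(h,k)\in B_{l/2}^2 : h^{-1}k = g\}\in\{1,\dots,r\}$, noting $N_e = r$ and $N_g = N_{g^{-1}}$. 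Define a degree $0$ matrix $\hat D\in M_r(M_2(\cA_1))$ and the ``universal'' column $C$ with entries in $M_2(\cA_1)\otimes\cB(H_2)$ by
\[ \hat D_{h,k} := \tfrac{1}{N_{h^{-1}k}}\,\hat a_{h^{-1}k}, \qquad C := \big(1\otimes u(h)\big)_{h\in B_{l/2}}. \]
One checks $\hat D = \hat D^*$ (from $\hat a_g^* = \hat a_{g^{-1}}$ and $N_g = N_{g^{-1}}$), $C^*C = r\cdot 1$ (independent of $u$, since $u(h)^*u(h)=1$), and $C^*\hat DC = \sum_{h,k\in B_{l/2}}\hat D_{h,k}\otimes u(h)^*u(k) = \sum_{g\in B_l}\big(\sum_{h^{-1}k = g}\tfrac{1}{N_g}\big)\hat a_g\otimes u(g) = \hat P$.

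\emph{Step 2: the identity $\|P\| = \|Q\|^2 - \theta$.} Set $V := r^{-1/2}C$, so $V^*V = 1$ and $\hat P = r\,V^*\hat DV$; let $E := (\hat D + \|\hat D\|\cdot 1)^{1/2}$, a fixed self-adjoint degree $0$ matrix, and $Q := r^{1/2}\,EV$, regarded — after padding by zero blocks — as a square matrix of size $2r = 2|B_{l/2}|$ over $\cB(H_2)$; thus $Q = \sum_{g\in B_{l/2}}b_g\otimes u(g)$ with $b_g\in\cA_1\otimes M_{2|B_{l/2}|}(\dC)$ (the factor $2$ being the dilation, the factor $|B_{l/2}|$ the column index), and $b_g$ does not depend on $u$. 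Using $V^*V = 1$,
\[ Q^*Q = r\,V^*E^2V = r\,V^*\hat DV + r\|\hat D\|\,V^*V = \hat P + r\|\hat D\|\cdot 1 \ \ge\ 0 , \]
the zero blocks affecting none of this. Since the right-hand side is a positive operator, $\|Q\|^2 = \|Q^*Q\| = \lambda_{\max}(\hat P) + r\|\hat D\| = \|P\| + r\|\hat D\|$. Hence, with $\theta := r\|\hat D\| = |B_{l/2}|\,\|\hat D\| \ge 0$, we obtain $\|P\| = \|Q\|^2 - \theta$, and this holds for \emph{every} homomorphism $u$ because the identity $Q^*Q = \hat P + \theta\cdot 1$ and the positivity $\hat P + \theta\cdot 1\ge 0$ are both uniform in $u$.

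\emph{Step 3: the bound $\theta\le |B_{l/2}|\,\|P_\Gamma\|$, and the main obstacle.} It remains to show $\|\hat D\|\le\|P_\Gamma\|$. I would observe that $\hat D$ is essentially a compression of the left-regular model $\hat P_\Gamma = \begin{pmatrix} 0 & P_\Gamma \\ P_\Gamma^* & 0\end{pmatrix}$: the $B_{l/2}\times B_{l/2}$ matrix with $(h,k)$-entry $\hat a_{h^{-1}k}$ equals, after the bijective reindexing $h\mapsto h^{-1}$, the corner of $\hat P_\Gamma$ cut out by the projection onto $\ell^2(B_{l/2})$, hence has norm $\le\|\hat P_\Gamma\| = \|P_\Gamma\|$; and $\hat D$ is obtained from it by Schur multiplication with the symmetric matrix $\big(1/N_{h^{-1}k}\big)_{h,k\in B_{l/2}}$, whose diagonal is $1/N_e = 1/r$. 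The technical heart — the step I expect to be hardest — is to show that this reciprocal-multiplicity matrix is a Schur-multiplier contraction, which yields $\|\hat D\|\le\|P_\Gamma\|$; this should rest on the positive-definiteness structure of $g\mapsto N_g = \langle \mathbf{1}_{B_{l/2}},\rho(g)\mathbf{1}_{B_{l/2}}\rangle$ on $B_l$ (with $\rho$ the right regular representation). Alternatively one can absorb the multiplicities $N_g$ into a non-uniform rescaling of the column $C$, replacing the Schur estimate by a convolution/deconvolution bound on $B_l$; either way this last estimate is where the work concentrates.
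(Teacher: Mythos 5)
Your Steps 1 and 2 are, up to cosmetic differences, the paper's own construction: your $\hat D$ is exactly the paper's matrix $\tilde a$ with entries $a_{g^{-1}h}$ divided by the multiplicity $N_{g^{-1}h}$ (the paper reaches the self-adjoint situation by its preliminary linearization, you by the Hermitian dilation), your $E=(\hat D+\|\hat D\|\cdot 1)^{1/2}$ is the paper's $\tilde b$, and the computation $Q^*Q=\hat P+r\|\hat D\|\cdot 1$, hence $\|P\|=\|Q\|^2-\theta$ with $\theta=r\|\hat D\|$ independent of the homomorphism $u$, is the same as the paper's $Q^*Q=e_{\o,\o}\otimes(P+\theta 1)$. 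Your use of the symmetry of the spectrum of the dilation to identify $\|\hat P+\theta\|$ with $\lambda_{\max}(\hat P)+\theta=\|P\|+\theta$ is a correct substitute for the paper's reduction to the case where $\|P\|$ is the rightmost spectral point. So the first assertion of the lemma is established by your argument, with the right matrix size $2|B_{l/2}|$.

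The gap is Step 3, which is precisely the ``Moreover'' part of the lemma: you never prove $\|\hat D\|\le\|P_\Gamma\|$, and the mechanism you single out as the technical heart cannot work, because the Schur multiplier with symbol $\bigl(1/N_{h^{-1}k}\bigr)_{h,k\in B_{l/2}}$ is in general \emph{not} a contraction. Take $\Gamma=\dF_2$ with free generators $a,b$ and $l=2$, so $B_1=\{e,a^{\pm1},b^{\pm1}\}$, $N_e=5$, while $N_w=1$ for every reduced word $w$ of length $2$. If the multiplier were contractive, Haagerup's factorization criterion would give vectors $x_h,y_k$ of norm at most $1$ with $1/N_{h^{-1}k}=\langle x_h,y_k\rangle$; then $\langle x_a,y_b\rangle=\langle x_a,y_{a^{-1}}\rangle=1$ forces $x_a=y_b=y_{a^{-1}}$ (equality in Cauchy--Schwarz), so the columns indexed by $b$ and by $a^{-1}$ would coincide, contradicting $1/N_{b^{-1}b}=1/5\neq 1=1/N_{b^{-1}a^{-1}}$. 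For the same reason $w\mapsto 1/N_w$ is not positive definite (off-diagonal values exceed the diagonal value $1/r$), so the positive definiteness of $w\mapsto N_w$ does not transfer to its reciprocal, and your alternative ``deconvolution'' suggestion is left entirely unspecified. The paper's proof handles this point by a different and short argument with no Schur multipliers: it bounds $\|\tilde a\|^2$ by $\bigl\|\sum_{g,h\in B_{l/2}}\tilde a_{g,h}\tilde a_{g,h}^*\bigr\|$ (a partial-trace bound), notes that each $w$ occurs exactly $N_w$ times among the $g^{-1}h$ so that this sum equals $\sum_w a_wa_w^*/N_w\le\sum_w a_wa_w^*=(P_\Gamma P_\Gamma^*)_{\o\o}$, and concludes with $\|(P_\Gamma P_\Gamma^*)_{\o\o}\|\le\|P_\Gamma\|^2$. (A cruder bound is easy from your setup --- each fixed $w$ contributes a partial permutation pattern, so $\|\hat D\|\le\sum_w\|a_w\|/N_w\le|B_l|\,\|P_\Gamma\|$ --- but that loses a factor $|B_l|$ relative to the constant stated in the lemma.) As it stands, your proposal proves the identity but not the quantitative bound on $\theta$, which is the part actually used in the iteration of Corollary 7.2 and Theorem 7.3.
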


\begin{proof} By the standard linearization trick (see proof of Theorem \ref{th:main1}), we can assume that the symmetry condition is met at the cost of replacing $\cA_1$ by $\cA_1 \otimes M_2 (\dC)$ and also that $\|P \|$ is equal to the rightmost point in the spectrum of $P$. We repeat the proof of  \cite[Lemma 29]{BC2} with the present notation. Recall $\cA_1 \subset \cB(H_1)$.

Consider the self-adjoint element $\tilde a \in  M_{B_{l/2}} (\dC) \otimes \cA_1$ defined in matrix form as 
$
\tilde a = (\tilde a_{g,h})_{g,h \in B_{l/2}}
$
with 
$$
\tilde a_{g,h} = a_{g^{-1} h} / |\{(g',h')\in B_{l/2}, (g')^{-1}h'=g^{-1} h\}|.
$$
In particular, for all $w \in B_l$, $$\sum_{g,h\in B_{l/2}, g^{-1} h=w}\tilde a_{g,h}=a_w.$$ 
We claim that $\| \tilde a \| \leq \| P_\Gamma \|$. Indeed, if $\Tr$ denotes the partial trace on $M_{B_{l/2}} (\dC)$ (non-normalized), we have 
$$
 \| \tilde a \|^2 \leq \| \Tr  ( \tilde a \tilde a^*  ) \|  = \NRM{\sum_{g,h \in B_{l/2}} \tilde a_{g,h} \tilde a_{g,h}^*  } \leq \NRM{\sum_{w \in B_{l}}  \tilde a_{w} \tilde a_{w}^*  } = \NRM{ (P_\Gamma P_\Gamma ^*)_{\o \o} } \leq \| P_\Gamma \|^2,
$$ 
where $\o$ is the unit of $\Gamma$. The operator $\tilde a + \| \tilde a \| 1$ is positive semi-definite and let
 $\tilde b \in M_{B_{l/2}} (\dC) \otimes \cA_1$ be a self-adjoint square root of $\tilde a + \| \tilde a \| 1$.
For $g \in B_{l/2}$,  we then set $$b_g = \tilde b \cdot ( e_{g,\o} \otimes 1_{H_1})   \in M_{B_{l/2}} (\dC) \otimes \cA_1,$$ where for $g,h \in B_{l/2}$, $e_{g,h}  = \delta_g \otimes \delta_h \in M_{B_{l/2}} (\dC) $ and $\o$ is the unit of $\Gamma$.  Then, the operator $Q=\sum_{g\in B_{l/2}} b_g\otimes u(g)$ satisfies
\begin{eqnarray*}  Q^*   Q & = &\sum_{g,h  \in B_{l/2}}   (e_{\o,g} \otimes 1_{H_1}) \tilde b ^2 ( e_{h,\o} \otimes 1_{H_1}) \otimes u(g^{-1} h) \\
 &= & \sum_{g,h  \in B_{l/2}} e_{\o,\o} \otimes ( \tilde a_{g,h} + \| \tilde a \| \IND_{g = h} \cdot 1_{H_1}) \otimes u(g^{-1} h)  \\
 &=& e_{\o,\o} \otimes (P + \theta 1),
\end{eqnarray*}
  where we have set $\theta =\| \tilde a \| |B_{l/2}|$.
Finally, since $\theta \geq 0$ and the rightmost point in the spectrum of $P$ is equal to $\|P\|$, we find 
$
\| Q\|^2 = \| P + \theta  1\| = \|P \| + \theta
$. 
 \end{proof}

We note that $|B_l| \leq (2d)^l$ and thus $\prod_{k=0}^{m-1} (  2 |B_{2^k}|) \leq 2^{m} (2d)^{2^m}$. 
By iteration, we deduce the following corollary. 

\begin{corollary}\label{cor:linea}
Let $l \geq 2$ be an integer, $m = \lceil \log_2 l \rceil $ and $P$ as above. For each $k \in \INT{m}$, there  is an integer $n_k \geq 1$, elements $(a^k_g)_{g \in B_{2^{m-k}}}$ with $a^k_g \in \cA_1 \otimes M_{n_k}(\dC)$ and $\theta_k \geq  0$ such that $n_m \leq 2l(2d)^{2l}$ and, setting $Q_0 = P$ and $Q_k = P ( (a^k_g)_{g \in B_{2^{m-k}}})$, we have
$$
 \| Q_{k-1} \| = \| Q_k \|^2 - \theta_k.
$$
Moreover, $\theta_k \leq \| P_\Gamma ((a^{k-1}_g)_{g \in B_{2^{m-k+1}}}) \| (2d)^{2^{m-k}}$ (for $k=1$, $P_\Gamma$ is as in \eqref{eq:defP}).
\end{corollary}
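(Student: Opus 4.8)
The plan is to deduce Corollary~\ref{cor:linea} from Lemma~\ref{le:linea} by a straightforward induction on the dyadic scales. First I would set $m = \lceil \log_2 l \rceil$, so that $2^m \geq l$, and agree that we pad $P$ to degree $2^m$ by appending zero coefficients $a_g = 0$ for $g \in B_{2^m} \setminus B_l$; this does not change $P$ or $P_\Gamma$ but lets us apply Lemma~\ref{le:linea} at an even degree $2^{m-k+1}$ at each step. The base of the induction is $Q_0 = P = P((a^0_g)_{g \in B_{2^m}})$ with $\cA_1 \otimes M_{n_0}(\dC)$, $n_0 = 1$. For the inductive step, suppose $Q_{k-1} = P((a^{k-1}_g)_{g \in B_{2^{m-k+1}}})$ with $a^{k-1}_g \in \cA_1 \otimes M_{n_{k-1}}(\dC)$ has been constructed; apply Lemma~\ref{le:linea} to the ground algebra $\check \cA_1 = \cA_1 \otimes M_{n_{k-1}}(\dC)$, the even integer $l' = 2^{m-k+1}$, and the coefficients $(a^{k-1}_g)_{g \in B_{l'}}$. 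The lemma produces coefficients $(b_g)_{g \in B_{l'/2}}$ with $b_g \in \check\cA_1 \otimes M_{2|B_{l'/2}|}(\dC) = \cA_1 \otimes M_{n_{k-1} \cdot 2 |B_{2^{m-k}}|}(\dC)$ and a scalar $\theta_k \geq 0$ such that, for $Q_k := P((b_g)_{g \in B_{2^{m-k}}})$ built with the \emph{same} group homomorphism $u$, we have $\|Q_{k-1}\| = \|Q_k\|^2 - \theta_k$. Setting $a^k_g = b_g$ and $n_k = 2 n_{k-1} |B_{2^{m-k}}|$ closes the induction.

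The remaining two points are the dimension bound and the bound on $\theta_k$. For the dimension, unrolling the recursion $n_k = 2 n_{k-1} |B_{2^{m-k}}|$ from $n_0 = 1$ gives $n_m = \prod_{k=1}^m (2 |B_{2^{m-k}}|) = \prod_{j=0}^{m-1} (2 |B_{2^j}|)$, and using $|B_{2^j}| \leq (2d)^{2^j}$ we get $n_m \leq 2^m (2d)^{\sum_{j=0}^{m-1} 2^j} = 2^m (2d)^{2^m - 1} \leq 2^m (2d)^{2^m}$; since $2^m \leq 2l$ (because $m = \lceil \log_2 l\rceil$ forces $2^{m-1} < l$, hence $2^m < 2l$), this yields $n_m \leq 2l (2d)^{2l}$ as claimed. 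For $\theta_k$, Lemma~\ref{le:linea} gives $\theta_k \leq |B_{l'/2}| \cdot \|P_\Gamma((a^{k-1}_g)_{g\in B_{l'}})\| = |B_{2^{m-k}}| \cdot \|P_\Gamma((a^{k-1}_g)_{g\in B_{2^{m-k+1}}})\|$, and bounding $|B_{2^{m-k}}| \leq (2d)^{2^{m-k}}$ gives exactly the stated inequality; for $k=1$ this $P_\Gamma$ is the one in \eqref{eq:defP} since $a^0 = a$ (after the harmless zero-padding, which does not affect $\|P_\Gamma\|$ because the extra terms vanish).

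One small point of care is that Lemma~\ref{le:linea} is stated with the concrete $\cA_\star$ replaced by a general finitely generated group $\Gamma$ with symmetric generating set $S$ and an arbitrary homomorphism $u : \Gamma \to \dU(H_2)$, and the coefficients at each stage live in a larger matrix amplification of $\cA_1$; one must check that applying the lemma with ground algebra $\cA_1 \otimes M_{n_{k-1}}(\dC)$ is legitimate, which it is, since the lemma is valid for any unital $C^*$-algebra in place of $\cA_1$ and the homomorphism $u$ (equivalently the unitaries $U(g)$) is never modified across the iteration. I expect no real obstacle here: the only thing to get right is bookkeeping — the exact recursion for $n_k$, the identity $\prod_{j=0}^{m-1}(2|B_{2^j}|)$, and making sure $P_\Gamma$ at step $k=1$ coincides with the original one in \eqref{eq:defP} after zero-padding. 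The content is entirely carried by Lemma~\ref{le:linea}; Corollary~\ref{cor:linea} is its $m$-fold iteration with explicit constants.
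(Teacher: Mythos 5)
Your proposal is correct and is exactly the argument the paper intends: the corollary is obtained by iterating Lemma \ref{le:linea} $m$ times (with the ground algebra amplified to $\cA_1\otimes M_{n_{k-1}}(\dC)$ at each step and the homomorphism $u$ unchanged), using the paper's noted bound $\prod_{k=0}^{m-1}(2|B_{2^k}|)\leq 2^m(2d)^{2^m}\leq 2l(2d)^{2l}$ and the lemma's estimate $\theta\leq |B_{l/2}|\,\|P_\Gamma\|$. Your bookkeeping of $n_k$, the zero-padding to degree $2^m$, and the $\theta_k$ bound all match the paper's (essentially unwritten) proof.
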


The main result of this section is the following theorem. 

\begin{theorem}\label{le:P2A}
Let $l \geq 2$ be an integer, $m = \lceil \log_2 l \rceil $  and $P_\Gamma, P$ be as in \eqref{eq:defP}. Set $a_0 = a^m_{\o}$ and for $i \in \INT{2d}$, $a_i = a^m_{g_i}$ where $(a^m_g)_{g \in B_1}$ is as in Corollary \ref{cor:linea}. Let $A_\Gamma , A $ be as in \eqref{eq:defP} with $l=1$. For $\veps >0$, if $\| A \| \leq \| A_\Gamma \| ( 1+ \veps)$ and $\veps \leq (2d)^{-l} l ^{-2}$ then 
$$
\| P \| \leq \| P_\Gamma \| ( 1 + 4 l^2 (2d)^{2l} \veps ).
$$ 
Conversely, if $\| A\| \geq \| A_\Gamma \| ( 1- \veps)$ then 
$$
\| P \| \geq \| P_\Gamma \| ( 1 - 4 l^2 (2d)^{2l} \veps ).
$$ 
\end{theorem}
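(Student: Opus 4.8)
Here the plan is to run the iterated linearization of Corollary~\ref{cor:linea} on the coefficient family $(a_g)_{g\in B_l}$ of $P$ (equivalently, of $P_\Gamma$, since they share the same coefficients) and track how the comparison of operator norms propagates along the resulting chain. The structural observation that makes this work is that the coefficients $(a^k_g)$ produced at each stage, and the nonnegative scalars $\theta_k$, depend only on the $(a_g)$ and not on the chosen unitary representation of $\Gamma$: this is visible in the proof of Lemma~\ref{le:linea}, where $\theta=\|\tilde a\|\,|B_{l/2}|$ with $\tilde a$ built from the $a_g$ alone. Consequently the same chain, with the same $\theta_k$, simultaneously gives
\[
\|Q_{k-1}\|=\|Q_k\|^2-\theta_k,\qquad \|Q^\Gamma_{k-1}\|=\|Q^\Gamma_k\|^2-\theta_k,\qquad k=1,\dots,m,
\]
where $Q_0=P$, $Q_m=A$, $Q^\Gamma_0=P_\Gamma$, $Q^\Gamma_m=A_\Gamma$, and $0\le\theta_k\le \|Q^\Gamma_{k-1}\|\,(2d)^{2^{m-k}}$; here $m=\lceil\log_2 l\rceil$, so $2^m<2l$. (If $\|P_\Gamma\|=0$ then all $a_g=0$ and $\|P\|=0$, and the statement is trivial, so I may assume $\|P_\Gamma\|>0$, which forces $\|Q^\Gamma_k\|>0$ for all $k$.)

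For the forward direction I would set $\delta_k=\big|\,\|Q_k\|-\|Q^\Gamma_k\|\,\big|/\|Q^\Gamma_k\|$, so $\delta_m\le\veps$. Subtracting the two identities and factoring the difference of squares, then dividing by $\|Q^\Gamma_{k-1}\|=\|Q^\Gamma_k\|^2-\theta_k$ and using $\|Q^\Gamma_k\|^2/\|Q^\Gamma_{k-1}\|=1+\theta_k/\|Q^\Gamma_{k-1}\|\le 1+(2d)^{2^{m-k}}$ together with $\|Q_k\|+\|Q^\Gamma_k\|\le \|Q^\Gamma_k\|(2+\delta_k)$, one gets the recursion
\[
\delta_{k-1}\;\le\;(2+\delta_k)\bigl(1+(2d)^{2^{m-k}}\bigr)\,\delta_k .
\]
I would iterate this downward. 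The product $\prod_{k=1}^m\bigl(1+(2d)^{2^{m-k}}\bigr)$ equals $(2d)^{\sum_{k=1}^m 2^{m-k}}=(2d)^{2^m-1}$ up to the universal constant $C_0=\prod_{i\ge 0}\bigl(1+(2d)^{-2^i}\bigr)=\tfrac{2d}{2d-1}\le 2$, and the factors $2$ from $(2+\delta_k)$ contribute $2^m$, giving the main term $2^mC_0(2d)^{2^m-1}\le 4l^2(2d)^{2l}$. The hypothesis $\veps\le(2d)^{-l}l^{-2}$ is used exactly to keep $\delta_k<1$ for all $k\ge1$: the crude bound $\delta_{m-j}\le 3^jC_0(2d)^{2^j-1}\veps$ together with $2^j-1\le 2^{m-1}-1\le l-2$ for $j\le m-1$ forces $\delta_{m-j}\le 3^{m-1}C_0(2d)^{-2}l^{-2}<1$; this makes each correction factor $(2+\delta_k)=2(1+\delta_k/2)$ cost only a bounded amount overall, and collecting the estimates yields $\delta_0\le 4l^2(2d)^{2l}\veps$, i.e. $\|P\|\le\|P_\Gamma\|(1+4l^2(2d)^{2l}\veps)$ (valid whether or not $\delta_0<1$, since $\delta_0$ is by definition the relative discrepancy).

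For the converse I would carry the one-sided bound $\|Q_k\|\ge\|Q^\Gamma_k\|(1-\nu_k)$ with $\nu_m\le\veps$ along the same chain. Whenever $\nu_k<1$ one may square, and then
\[
\|Q_{k-1}\|=\|Q_k\|^2-\theta_k\ge\|Q^\Gamma_{k-1}\|\bigl((1-\nu_k)^2-2\nu_k(2d)^{2^{m-k}}\bigr)\ge\|Q^\Gamma_{k-1}\|\bigl(1-2\nu_k(1+(2d)^{2^{m-k}})\bigr),
\]
so $\nu_{k-1}\le 2(1+(2d)^{2^{m-k}})\nu_k$. There is no quadratic term now, which makes this direction cleaner and needs no smallness hypothesis: if $4l^2(2d)^{2l}\veps\ge1$ the claimed inequality is vacuous, and otherwise the accumulated factor $2^mC_0(2d)^{2^m-1}\le 4l^2(2d)^{2l}$ keeps every $\nu_k$ strictly below $1$, so the downward induction goes through and gives $\nu_0\le 4l^2(2d)^{2l}\veps$, hence $\|P\|\ge\|P_\Gamma\|(1-4l^2(2d)^{2l}\veps)$.

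The main obstacle is the bookkeeping in the forward direction. Each of the $m\approx\log_2 l$ steps squares a quantity near $1$, so the relative error is multiplied by roughly $2$, and, worse, it picks up the factor $1+(2d)^{2^{m-k}}$, which is doubly exponential in $k$; one has to see that the geometric-type product of these factors telescopes to only $(2d)^{2^m-1}$ (and not something like $(2d)^{m\,2^{m-1}}$, which a naive ``square everything at each stage'' bound on $\|Q^\Gamma_k\|$ would produce), and one has to verify that the quadratic corrections $(2+\delta_k)$ never leave the regime $\delta_k<1$ — both points depending on the precise calibration of $\veps\le(2d)^{-l}l^{-2}$ against the target constant $4l^2(2d)^{2l}$.
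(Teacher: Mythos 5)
Your route is the same as the paper's: iterate Corollary \ref{cor:linea}, use that the coefficients $(a^k_g)$ and the shifts $\theta_k$ are representation-independent so the identities $\|Q_{k-1}\|=\|Q_k\|^2-\theta_k$ and $\|Q^\Gamma_{k-1}\|=\|Q^\Gamma_k\|^2-\theta_k$ hold simultaneously, propagate a relative-error recursion of the form $\veps_{k-1}\le 4(2d)^{2^{m-k}}\veps_k$, and collect the product $4^m(2d)^{2^m-1}\le 4l^2(2d)^{2l}$, with the hypothesis $\veps\le (2d)^{-l}l^{-2}$ only needed in the forward direction to keep the errors in the linear regime; your constant bookkeeping and the treatment of the converse (no smallness needed, vacuous otherwise) check out and match the paper's argument.

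One slip to fix in the forward direction: you initialize the \emph{two-sided} discrepancy $\delta_m=\bigl|\,\|A\|-\|A_\Gamma\|\,\bigr|/\|A_\Gamma\|$ by "$\delta_m\le\veps$", but the hypothesis is only the one-sided bound $\|A\|\le\|A_\Gamma\|(1+\veps)$; nothing prevents $\|A\|$ from being much smaller than $\|A_\Gamma\|$, so this inference is unjustified as written. The repair is immediate: work with the one-sided excess $\delta_k^+=(\|Q_k\|-\|Q^\Gamma_k\|)_+/\|Q^\Gamma_k\|$ (or simply propagate the inequality $\|Q_k\|\le\|Q^\Gamma_k\|(1+\veps_k)$ as the paper does). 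Since squaring is monotone on nonnegative numbers, your recursion uses only the upper bound $\|Q_k\|\le\|Q^\Gamma_k\|(1+\delta_k)$ — both in $\|Q_k\|+\|Q^\Gamma_k\|\le\|Q^\Gamma_k\|(2+\delta_k)$ and in the final conclusion — so the identical recursion holds for $\delta_k^+$, with $\delta_m^+\le\veps$ now following directly from the hypothesis, and the rest of your argument goes through unchanged.
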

\begin{proof}
For $k \in \INT{m}$, let $(a^k_g)_{g \in B_{2^{m-k}}}$ be as in Corollary \ref{cor:linea}. Let $Q_{k} = P ((a^{k}_g)_{g \in B_{2^{m-k}}}) $ and $Q_{\Gamma,k} = P_\Gamma ( (a^{k}_g)_{g \in B_{2^{m-k}}})) $ be the corresponding operators. We set $Q_0 = P$ and $Q_{\Gamma,0} = P_\Gamma$. With this notation, we have $Q_m = A$ and $Q_{\Gamma,m} = A_\Gamma$.

For some $k \in \INT{m}$, assume that $\| Q_k \| \leq \| Q_{\Gamma,k} \| ( 1+ \veps_k)$ with $0 \leq \veps_k \leq  1$. Then, applying Corollary \ref{cor:linea} twice, we get
\begin{eqnarray*}
\| Q_{k-1} \| - \| Q_{\Gamma,k-1} \|  =  \| Q_{k} \|^2 - \| Q_{\Gamma,k} \|^2 
 \leq   \veps_k (1 + \veps_k) (\|Q_{\Gamma,k-1} \| + \theta_k ) 
 \leq  \veps_{k-1} \| Q_{\Gamma,k-1 } \|,
\end{eqnarray*}
with $\veps_{k-1} = 4 \veps_k (2d)^{2^{m-k}}$. It follows that 
$$
\veps_0 \leq \veps_m \prod_{k=1}^{m} 4 (2d)^{2^{m-k}} \leq  4^m ( 2d)^{2^m} \veps \leq  4 l^2 (2d)^{2l} \veps,
$$
provided that $4^{m-1} (2d)^{2^{m-1}} \veps \leq l^2 (2d)^{l} \veps \leq 1$.

Conversely, assume $\| Q_k \| \geq \| Q_{\Gamma,k} \| ( 1 - \veps_k)$, then we have
$
\| Q_{\Gamma,k-1} \| - \| Q_{k-1} \|  =  \| Q_{\Gamma,k} \|^2 - \| Q_{k} \|^2. 
$
It follows that if $\| Q_{k} \| \geq \| Q_{\Gamma,k}\|$ then $\| Q_{k-1}\| \geq \| Q_{\Gamma,k-1}\|$. Otherwise, $ \| Q_{\Gamma,k} \| ( 1 - \veps_k) \leq \| Q_k \| \leq \| Q_{\Gamma,k} \|$ and we get
$$
\| Q_{\Gamma,k-1} \| - \|Q_{k-1 }\| \leq \veps_k \| Q_{\Gamma,k}\|( \| Q_{\Gamma,k} \| + \| Q_{k}\|) \leq  2 \veps_k (\| Q_{\Gamma,k-1} \| + \theta_k ) \leq \veps_{k-1}\| Q_{\Gamma,k-1} \|,
$$
with $\veps_{k-1} = 4 \veps_k (2d)^{2^{m-k}}$ as above. Hence, in both cases,  $\| Q_{k-1} \| \geq \| Q_{\Gamma,k-1} \| ( 1 - \veps_{k-1})$.
\end{proof}

In conjunction with Theorem \ref{th:main1} and Theorem \ref{th:LBU1}, Theorem \ref{le:P2A} gives quantitative estimates on $\|P_N\| / \|P_\star \|$ for $(U_1,\ldots,U_d)$ independent Haar unitaries on $\dU_N$ or $\dO_N$. For example, in the case where $\cA_1 = \dC$ and $d = O(1)$, we obtain $\| P_N \| \leq ( 1 + o(1) ) \|P_\star\|$ with high probability as soon as  $l \leq c \log N$ for some constant $c >0$. Conversely, we get $\|P_N\| \geq ( 1 - o(1) ) \|P_\star \|$ with high probability if $l \leq c \log (\log N)$. Similarly, if $(U_1,\ldots,U_d)$ are independent random permutation matrices, we may use Theorem \ref{th:main3} and Theorem \ref{th:LBP}. If $\cA_1 = \dC$  and $ d = O(1)$, we find $\| P_N \Pi_N \| = ( 1 + o(1) ) \|P_\star\|$ with high probability as soon as  $l \leq c \log (\log N)$ for some constant $c >0$.

\subsection{Hausdorff distance for spectra}

Let $P$ and $P_\Gamma$ be as in \eqref{eq:defP}-\eqref{eq:defP}. Assume that further that $a(g)^* =  a(g^{-1})$ for all $g \in B_l \subset \dF_d$. Then $P$ and $P_\Gamma$ are self-adjoint, and it turns out that it is straightforward to compare the spectrum (as a set) of $P$ and $P_\Gamma$ in terms of norm estimates of polynomials with degrees doubled, which in turn can be linearized thanks to Theorem \ref{le:linea}.

\begin{proposition}
Let $l \geq 1$ be an integer  and $P_\Gamma, P$ be as in \eqref{eq:defP} such that $a(g^{-1}) = a(g)^*$ for all $g \in B_l$. Let $x\leq  y $ be real numbers. There exists $(b_g)_{g \in B_{2l}}$ in $\cA_1$ such that if $Q = P (  (b_g)_{g \in B_{2l}})$ and $Q_\Gamma = P_\Gamma ( (b_g)_{g \in B_{2l}})$ the following holds. 

For $\veps >0$, set $\eta = 4 \veps \| P_\Gamma \|^2 / (y -x)$ if $\| Q \| \leq (1+ \veps) \|Q_\Gamma\|$,  $(x,y) \subset \sigma (P_\Gamma)^c$ and $y > x + 2\eta$, then $(x- \eta,y-\eta ) \subset \sigma(P)^c$.

Conversely, if $0 < \veps < 1$, $\| Q \| \geq (1-  \veps) \|Q_\Gamma\|$ and $[x,y] \in \sigma(P)$ then $\mathrm{dist}([x,y], \sigma (P) ) \leq \eta$. If $x = y$ we have $\mathrm{dist}(x, \sigma (P) )  \leq \sqrt{2 \veps \| P_\Gamma \| }$.  
\end{proposition}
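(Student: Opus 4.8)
The idea is to reduce this comparison of \emph{spectra} to a single comparison of \emph{norms}, by encoding ``distance to the nearest point of the spectrum'' as an operator norm; this is where the doubling of the degree comes from, and it is mild enough that the linearization of $Q$ via Lemma~\ref{le:linea} together with the main theorems still applies. Since $a(g^{-1}) = a(g)^*$, both $P$ and $P_\Gamma$ are self-adjoint. Fix $x \le y$, set $c = (x+y)/2$, and fix $K$ to be an absolute multiple of $\|P_\Gamma\|^2$, chosen large enough that $\|P_\Gamma - c\| \le \sqrt K$ (and, for the converse, also $\|P - c\| \le \sqrt K$, using the crude a priori bound $\|P\| = O(\|P_\Gamma\|)$ that holds in all the settings of this section). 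Expanding $K\cdot 1 - (\,\cdot - c)^2$ in the generators of $\Gamma$ produces coefficients $(b_g)_{g\in B_{2l}}$ with $b_g \in \cA_1$, $b_{g^{-1}} = b_g^*$, such that, since $\lambda$ and $u$ are group homomorphisms and hence forming the square commutes with applying them,
\[
Q_\Gamma = P_\Gamma\big((b_g)_{g\in B_{2l}}\big) = K\cdot 1 - (P_\Gamma - c)^2 , \qquad Q = P\big((b_g)_{g\in B_{2l}}\big) = K\cdot 1 - (P - c)^2 .
\]
These are the announced $(b_g)$; they depend only on $(a_g)$, on $c$ and on $K$, not on the representation, so the case $x=y$ simply amounts to $c = x$.

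The next step is the spectral-mapping identities. By the spectral theorem applied to the self-adjoint $P_\Gamma$ one has $\sigma(Q_\Gamma) = \{\,K - (\mu - c)^2 : \mu \in \sigma(P_\Gamma)\,\}$, and because $0 \le \inf_{\mu\in\sigma(P_\Gamma)}(\mu-c)^2 = \mathrm{dist}(c,\sigma(P_\Gamma))^2 \le \|P_\Gamma-c\|^2 \le K$, all of $\sigma(Q_\Gamma)$ is nonnegative and $\|Q_\Gamma\| = K - \mathrm{dist}(c,\sigma(P_\Gamma))^2$; in particular $\|Q_\Gamma\| \le K$, with equality when $c \in \sigma(P_\Gamma)$. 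For $P$, the same spectral mapping gives the harmless lower bound $\|Q\| \ge |K - (\lambda - c)^2| = K - (\lambda-c)^2$ for every $\lambda \in \sigma(P)$ with $(\lambda - c)^2 \le K$, and, once the a priori bound $\|P-c\| \le \sqrt K$ is imposed, the exact identity $\|Q\| = K - \mathrm{dist}(c,\sigma(P))^2$ (this last formula is the only delicate point: without control of $\|P\|$ the norm of $Q$ could a priori be attained on the negative part of $\sigma(Q)$, which would make the argument vacuous, so $K$ must be calibrated up front).

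The two assertions then follow by elementary estimates. For the first, $(x,y)\cap\sigma(P_\Gamma) = \emptyset$ forces $\mathrm{dist}(c,\sigma(P_\Gamma)) \ge \tfrac{y-x}{2}$, so if some $\lambda \in \sigma(P)$ lay in a slightly shrunk interval around $c$ we would get $K - (\lambda-c)^2 \le \|Q\| \le (1+\veps)\|Q_\Gamma\| \le (1+\veps)\big(K - \tfrac{(y-x)^2}{4}\big)$, hence $(\lambda - c)^2 \ge \tfrac{(y-x)^2}{4} - \veps K$; using $y > x + 2\eta$ so that the right-hand side is positive, and the elementary inequality $\sqrt{a^2 - b} \ge a - b/a$ (valid for $0 \le b \le a^2$), this forces $|\lambda - c| \ge \tfrac{y-x}{2} - O\big(\tfrac{\veps K}{y-x}\big)$, i.e.\ $\sigma(P)$ omits an interval about $c$ of half-width $\tfrac{y-x}{2} - O(\eta)$ — a spectral gap of $P$ of essentially the same position and width as $(x,y)$, which is the claim after absorbing the absolute constants into $\eta = 4\veps\|P_\Gamma\|^2/(y-x)$. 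The converse is the identical computation with the inequality reversed: taking $c = t$ for $t \in \sigma(P_\Gamma)$ one has $\|Q_\Gamma\| = K$, so $\|Q\| \ge (1-\veps)K$ and, by the exact formula for $\|Q\|$, $\mathrm{dist}(t,\sigma(P))^2 \le \veps K$, giving the crude universal bound $\mathrm{dist}(t,\sigma(P)) = O(\sqrt{\veps}\,\|P_\Gamma\|) \le \sqrt{2\veps\|P_\Gamma\|}$ of the statement; run along an interval $[x,y]$ with the same $\sqrt{a^2-b}$ estimate it yields the sharper $\eta$-bound. I expect the only real obstacle to be the bookkeeping around that exact norm formula (and the choice of $K$) rather than anything conceptual; everything else is routine.
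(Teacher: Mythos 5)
Your construction is, after completing the square, exactly the paper's: the paper takes $Q=f(P)$, $Q_\Gamma=f(P_\Gamma)$ with the concave quadratic $f(\lambda)=\theta+(y-\lambda)(\lambda-x)$, $\theta=2\|P_\Gamma\|^2$, which is your $K-(\lambda-c)^2$ with $c=(x+y)/2$ and $K=\theta+(y-x)^2/4$, and both arguments then run on polynomial functional calculus / spectral mapping for the self-adjoint $P,P_\Gamma$. So the route is the same; what separates your sketch from a complete proof is bookkeeping at two places. First, the constant: because you insist that $\sigma(Q_\Gamma)\ge 0$, i.e. $K\ge\|P_\Gamma-c\|^2$, you are pushed to $K\approx 4\|P_\Gamma\|^2$, and with your inequality $\sqrt{a^2-b}\ge a-b/a$ you obtain a forbidden interval of half-width $(y-x)/2-2\eta$ rather than $(y-x)/2-\eta$; you cannot ``absorb the absolute constants into $\eta$'' because $\eta=4\veps\|P_\Gamma\|^2/(y-x)$ is fixed by the statement. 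This is repaired either by the sharper $\sqrt{a^2-b}\ge a-b/(2a)$, or, as the paper does, by not requiring positivity at all (only $|f|\le\theta$ on $[-\|P_\Gamma\|,\|P_\Gamma\|]\setminus(x,y)$ is needed to get $\|Q_\Gamma\|\le\theta$) and using the chord/concavity bound $f(\lambda)\ge\theta+(\lambda-x)(y-x)/2$ on $[x,(x+y)/2]$, which gives $\lambda-x\le 2\veps\theta/(y-x)=\eta$ on the nose and without the side condition $\veps K\le (y-x)^2/4$ implicit in the square-root estimate.

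Second, the converse: your exact formula $\|Q\|=K-\mathrm{dist}(c,\sigma(P))^2$ requires $K\ge\|P-c\|^2$, and you import this through ``the crude a priori bound $\|P\|=O(\|P_\Gamma\|)$ that holds in all the settings of this section.'' That bound is not among the hypotheses of the proposition: with only $\|Q\|\ge(1-\veps)\|Q_\Gamma\|$, the norm of $Q$ could a priori be attained where $f$ is very negative, i.e. by spectrum of $P$ far from $[x,y]$, and then the conclusion would not follow. The paper's own treatment of the converse is a one-line ``this is similar'' and glosses over the same point, so you are not worse off than the source, but you should state explicitly that you are invoking an additional norm bound on $P$ (or, equivalently, the companion upper bound $\|Q\|\le(1+\veps)\|Q_\Gamma\|$, which in all intended applications is available simultaneously) rather than fold it in silently as part of the proof of the stated proposition.
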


\begin{proof}
Set $c = \|P_\Gamma\|$. We consider the quadratic function $f(\lambda ) = \theta + ( y - \lambda) ( x -a)$ and set $Q = f(P)$ and $Q_\Gamma = f(P_\Gamma)$, these operators are of the required form. If $\theta = 2 c^2$, for all $\lambda  \in [-c,c] \backslash (a,b)$, $|f(\lambda)| \leq f(a) = f(b) = \theta$. Hence, by functional calculus,  $\| Q_\star \| = \theta$ and $\| Q \| = \max (|f(\lambda)|, \lambda \in \sigma(P))$. Moreover, by concavity, if $\lambda \in [a,(a+b)/2]$ then $f(\lambda ) \geq  \theta + (\lambda - a)(b-a)/2 $. This gives the first claim.   For the second claim, this is similar (with the same function $f$).
\end{proof}

\section{Application to strong convergence}

\subsection{Strong convergence}
\label{subsec:strong}

Fix an integer $d\geq 1$ and let $(x_1,\ldots,x_d)$ be elements of a unital $C^*$-algebra $(\cA,\tau)$ with faithful normal state $\tau$ and associated operator norm $\| \cdot \|$. For integer $N \geq 1$, let $(X_{N,1},\ldots,X_{N,d})$ be elements of $(\cA_N, \tau_N) $ a sequence of unital $C^*$-algebra with faithful normal states $\tau_N$ and operator norm also denoted by $\| \cdot \|$. 

We shall say that $(X_{N,1},\ldots,X_{N,d})$ {\em converges strongly} to $(x_1,\ldots,x_d)$ if for any non-commutative $*$-polynomial $P \in \dC[\dF_d]$,  we have
\begin{eqnarray}\label{eq:strong1}
\lim_{N \to \infty} \| P ( X_{N,1},\ldots, X_{N,d}) \| = \| P ( x_{1},\ldots, x_{d}) \|. 
\end{eqnarray}

In fact, provided that $(\cA,\tau)$ is simple and $\tau$ is the unique tracial state of $\cA$, it is sufficient to prove that 
\begin{eqnarray}\label{eq:strong2}
\limsup_{N \to \infty} \| P ( X_{N,1},\ldots, X_{N,d}) \| \leq \| P ( x_{1},\ldots, x_{d}) \|, 
\end{eqnarray}
(see the proof of Theorem 1.1 in \cite{louder2023strongly}). In fact, in this last case, 
by a compactness argument,
it also implies that for any $P \in \dC[\dF_d]$,
\begin{eqnarray*}
\lim_{N \to \infty} \tau_N (  P ( X_{N,1},\ldots, X_{N,d}) ) = \tau (  P ( x_{1},\ldots, x_{d})) .
\end{eqnarray*}

If $(X_{N,1},\ldots,X_{N,d})$ are random elements of $(\cA_N,\tau_N)$, we will say that $(X_{N,1},\ldots,X_{N,d})$ converge strongly toward $(x_1,\ldots,x_d)$ in probability, if for any $P \in \dC[\dF_d]$, the convergence \eqref{eq:strong1} holds in probability.
Note that if $(X_{N,1},\ldots,X_{N,d})$ and $(x_1,\ldots,x_d)$ are unitaries, then Lemma \ref{le:linea} implies that it is sufficient to prove that for any integer $n\geq 1$ and any $(a_0,\ldots,a_{2d}) \in M_n(\dC)$ with the symmetry condition \eqref{eq:symai}, we have 
$$
\lim_{N \to \infty} \NRM{ a_0 \otimes 1 + \sum_{i=1}^{2d} a_i \otimes X_{N,i} } = \NRM{ a_0 \otimes 1 + \sum_{i=1}^{2d} a_i \otimes x_{i} },
$$
where for $i \in \INT{d}$, we have set $X_{N,i+d} = X_{N,i}^*$ and $x_{i+d} = x_{i}^*$. 
For unitaries, we observe finally  that, by Lemma \ref{le:linea}, to prove that \eqref{eq:strong2} holds (in probability), it is sufficient to prove that for any integer $n \geq 1$ and $(a_0,\ldots,a_{2d}) \in M_n(\dC)$, (in probability) 
\begin{equation}\label{eq:strong2A}
\limsup_{N \to \infty} \NRM{ a_0 \otimes 1 + \sum_{i=1}^{2d} a_i \otimes X_{N,i} } \leq \NRM{ a_0 \otimes 1 + \sum_{i=1}^{2d} a_i \otimes x_{i} }.
\end{equation}

Strong convergence of unitaries is particularly natural and important for unitary representations of a group. More precisely, let $\Gamma$ be a finitely generated group and let $\cA_\Gamma \subset \cB( \ell^2(\Gamma))$ be its reduced $C^*$-algebra generated by the left-regular representation, $\lambda(g)$, $g \in \Gamma$. Let $\rho_N$ be a unitary representation of $\Gamma$ of finite dimension $k_N$, say on $\dC^{k_N}$. Let $(g_1,\ldots,g_d)$ be finite generating set of $\Gamma$. Along a sequence $N \to \infty$, we say that $\rho_N$ converges strongly toward the regular representations of $\Gamma$ if $(\rho_N(g_1),\ldots,\rho_N(g_d)) \in \dU_{k_N}$ converges strongly toward $(\lambda(g_1),\ldots,\lambda(g_d))$. Note that this notion does not depend on the particular choice of the generating set. For example, in this language, \cite{MR3205602} implies that the random unitary representation of the free group $\dF_d$ defined by $\rho_N (g_i) = U_i$ with $(U_1,\ldots,U_d)$ independent Haar distributed on $\dU_N$ converge strongly in probability toward the regular representations of $\dF_d$. Similarly, \cite{MR4024563} implies that the random action of the free group defined by $\rho_N(g_i) = U_i$ with $(U_1,\ldots,U_d)$ independent uniformly distributed on $\dS_N$ converges strongly on $\IND_N^\perp$ toward the regular representations of $\dF_d$. 

\subsection{Absorption principle}

Let $\cA_1,\cA_2$ be unital $C^*$-algebra equipped with tracial faithful normal states.   A well-known property of free Haar unitaries $(u_1,\ldots , u_d)$ in $\cA_1$ is that for any unitaries $(v_1,\ldots v_d)$ in $\cA_2$, $(v_1\otimes u_1,\ldots,v_d \otimes u_d)$ remain free Haar unitaries in $\cA_1 \otimes \cA_2$.  Here, we say $(u_1,\ldots,u_d)$ are free Haar unitaries if they have the same distribution as $(\lambda(g_1), \ldots, \lambda(g_d))$ in the free group reduced $C^*$-algebra $\cA_\star$.
This property implies immediately that $(v_1\otimes U_1,\ldots,v_d\otimes U_N)$ are asymptotically free as $N \to \infty$ if $(U_1,\ldots,U_d)$ are independent Haar unitaries on $\dU_N$. 
For more details about this property (a variant of an absorption principle), we refer to \cite{MR1479116,MR3573218,collins2022asymptotic}. 

Theorem \ref{th:main2} for independent Haar unitaries on $\dU_N$ or $\dO_N$ and Theorem \ref{th:main3} for independent random permutations matrices on $\dS_N$ allow to extend quantitatively this to strong convergence. We obtain, for example, the following result. 

\begin{theorem}
Fix $d \geq 2$ and let $(V_{1},\ldots,V_{d})$ be unitaries in $\dU_n$ and let $N = N_n$  be a diverging integers sequence. We assume that either (i) $(U_1,\ldots,  U_d)$ are independent Haar unitaries on $\dU_N$ or $\dO_N$ and $N \gg (\log n)^{80}$ or (ii) $(U_1,\ldots,  U_d)$ are the restriction to $\IND_N^\perp$ of independent and uniform permutation matrices in $\dS_N$ and $\log N  \gg  \sqrt{(\log \log n) \log n}$.
Then $(U_1 \otimes V_1,\ldots, U_d \otimes V_d)$ converges strongly in probability as $n\to \infty$ toward free Haar unitaries.
\end{theorem}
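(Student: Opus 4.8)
The plan is to reduce the statement to the linear models already controlled by Theorem~\ref{th:main2} (case (i)) and Theorem~\ref{th:main3} (case (ii)), using the linearization trick for unitaries, and then to identify the limiting norm via the absorption principle. Since $d\geq 2$, the algebra $\cA_\star=C^*_{\mathrm{red}}(\dF_d)$ is simple with a unique tracial state, so by Subsection~\ref{subsec:strong} it is enough to prove, for every integer $m\geq 1$ and every tuple $(a_0,\dots,a_{2d})\in M_m(\dC)$ satisfying \eqref{eq:symai}, the one-sided bound
$$
\limsup_{n\to\infty}\ \NRM{a_0\otimes 1+\sum_{i=1}^{2d}a_i\otimes (V_i\otimes U_i)}\ \leq\ \NRM{a_0\otimes 1+\sum_{i=1}^{2d}a_i\otimes\lambda(g_i)}
$$
in probability (with $V_{i+d}=V_i^*$, $U_{i+d}=U_i^*$, and, in case (ii), $U_i$ the restriction to $\IND_N^\perp$). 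Absorbing the fixed unitaries into the coefficients, I would set $\cA_1=M_m(\dC)\otimes M_n(\dC)$, $b_0=a_0\otimes 1_n$ and $b_i=a_i\otimes V_i$ for $i\in\INT{2d}$; then $(b_i)$ still satisfies \eqref{eq:symai} and the operator on the left is precisely $A_N$ of \eqref{eq:defA} (case (i)), respectively $A_N\Pi_N$ (case (ii)), with coefficients in $\cA_1$. The key point is that these coefficients have exactly the product form of case (ii) of Theorem~\ref{th:main2}, with $\cA_0=M_n(\dC)$ and $\theta=m$.

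For the right-hand side, the absorption principle enters. Writing $A_\star=b_0\otimes 1+\sum_i b_i\otimes\lambda(g_i)$ and reorganizing $\cA_1\otimes\cA_\star\cong M_m(\dC)\otimes\big(M_n(\dC)\otimes\cA_\star\big)$, one has $A_\star=a_0\otimes 1+\sum_i a_i\otimes(V_i\otimes\lambda(g_i))$; by the absorption principle (see the previous subsection and \cite{MR1479116,MR3573218,collins2022asymptotic}) the family $(V_i\otimes\lambda(g_i))_{i\in\INT{d}}$ is again a family of free Haar unitaries inside $(M_n(\dC)\otimes\cA_\star,\ \tr_n\otimes\tau_\star)$. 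Since the operator norm of a $*$-polynomial in a $C^*$-probability space with faithful trace depends only on the joint $*$-distribution (being the $p\to\infty$ limit of the $\|\cdot\|_p$'s), this gives $\|A_\star\|=\NRM{a_0\otimes 1+\sum_i a_i\otimes\lambda(g_i)}=:\rho_0$, which is the target right-hand side; the same computation at the level of mixed moments (using $V_{i_1}\cdots V_{i_k}=1$ whenever the word reduces to the identity of $\dF_d$) also yields the convergence of moments in probability needed to combine the one-sided bound above with the simplicity of $\cA_\star$.

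The final step is to pass from $\|\cdot\|_p$-estimates to the operator norm. In case (i), Theorem~\ref{th:main2} (case (ii)) with $\cA_1=M_{mn}(\dC)$ and $\theta=m$ — or directly its underlying moment bound, Theorem~\ref{th:FKBlmtheta} — will provide, for $N$ large and some even $\ell\asymp N^{1/80}$, the estimate $\dE\|A_N\|_\ell^\ell\leq\rho_0^\ell(1+c/\sqrt N)$; then \eqref{eq:TtoTp} gives $\|A_N\|\leq(mnN)^{1/\ell}\|A_N\|_\ell$ with $(mnN)^{1/\ell}\to 1$ precisely because $N\gg(\log n)^{80}$, and a Markov inequality applied to $\|A_N\|_\ell^\ell$ (or, alternatively, the concentration of Lemma~\ref{le: GM}, with Remark~\ref{rk:HaarON} for $\dO_N$) then forces $\limsup_n\|A_N\|\leq\rho_0$ in probability. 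In case (ii), I would apply Theorem~\ref{th:main3} with $\veps=\tfrac12$: on an event of probability at least $1-cN^{-1/2}$ one has $\|A_N\Pi_N\|_p\leq\rho_0\big(1+c(d\log d)(\log\log N)/\log N\big)$ for all $p\leq(\log N)^2/(c(\log d)\log\log N)$; choosing $p=p_n$ (even) as large as admissible makes the error factor tend to $1$, and a short case analysis (according to whether $\log N$ does or does not exceed $2\log n$) shows that $\log N\gg\sqrt{(\log\log n)(\log n)}$ forces $\log(mnN)=o(p_n)$, hence $(mnN)^{1/p_n}\to 1$ by \eqref{eq:TtoTp}. This yields $\limsup_n\|A_N\Pi_N\|\leq\rho_0$ in probability, and together with the first two paragraphs proves the required one-sided bound; by Subsection~\ref{subsec:strong} this is the asserted strong convergence in probability.

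The main obstacle is that the model is genuinely large: $n$ is allowed to grow super-polynomially in $N$, so a plain application of Theorem~\ref{th:main1} — whose moment exponent $1/(16d+80)$ depends on $d$, and whose passage to the operator norm would also leave an uncontrolled factor $(mn)^{1/p}$ — would only reach $N\gg(\log n)^{O(d)}$. One must therefore exploit the product structure $b_i=a_i\otimes V_i$ in an essential way, in order to invoke the $\theta$-controlled refinement (Theorem~\ref{th:main2} case (ii)/Theorem~\ref{th:FKBlmtheta}), whose estimate is governed by $\theta=m$ alone, uniformly in $n$ and with a $d$-independent exponent; for permutations, the general $\|\cdot\|_p$-estimate of Theorem~\ref{th:main3} already carries the needed uniformity in $n$. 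The remaining technical point — exactly what the two growth hypotheses relating $N$ and $n$ are calibrated for — is to pick the moment parameter so that the unavoidable $\ell^p\!\to\!\ell^\infty$ loss $(mnN)^{1/p}$ tends to $1$ while staying inside the admissible range of $p$.
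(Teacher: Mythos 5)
Your proposal is correct and follows essentially the same route as the paper: reduce to the one-sided bound \eqref{eq:strong2A} (sufficient since $\cA_\star$ is simple), absorb the fixed unitaries $V_i$ into coefficients of product form so that Theorem \ref{th:main2} case (ii) (with $\theta=m$) applies for Haar unitaries and Theorem \ref{th:main3} for permutations, then pass to the operator norm via \eqref{eq:TtoTp} using exactly the stated growth conditions, with the limiting norm identified by the absorption principle. The only cosmetic difference is that the paper also records the matching lower bound via Theorems \ref{th:LBU1} and \ref{th:LBP}, while noting, as you do, that simplicity of $\cA_\star$ already makes the upper bound sufficient.
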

\begin{proof}
We start with the upper bound \eqref{eq:strong2A}. For $\dO_N$ and $\dU_N$, we apply Theorem \ref{th:main2}  to $\cA_1 = M_k (\dC) \otimes M_n(\dC)$ for fixed $k$. For $\dS_N$, we apply Theorem \ref{th:main3}. Thanks to \eqref{eq:TtoTp}, it implies \eqref{eq:strong2A}. For the converse lower bound, it is guaranteed (as soon as $N\gg1$) by Theorem \ref{th:LBU1} and Theorem \ref{th:LBP}. Note that since $\cA_\star$ is simple \cite{MR374334}, the upper bound was, in fact, sufficient.
\end{proof}

\subsection{Random representations of $\dF_d \times \cdots \times \dF_d$ and a new proof of the Hayes' criterion}
\label{subsec:PT}

The Peterson-Thom conjecture \cite{MR2827095} says that any diffuse, amenable subalgebra of the free group factor $L(F_2)$ is contained in a unique maximal amenable subalgebra.  Hayes proved in \cite{MR4448584} that the following property implies this conjecture:
given a sequence of $2d$-tuples  $(U_{i})_{i \in \INT{2d}}$ of independent and Haar distributed random unitary in $\dU_N$, 
the family $(U_{1}\otimes 1_N,\ldots , U_{d}\otimes 1_N,1_N \otimes U_{d+1}
,\ldots , 1_N\otimes U_{2d})$ in $M_{N^2}(\dC)$ converges strongly in probability, as $N\to\infty$, to $(\lambda(g_1) \otimes 1 ,\ldots, \lambda(g_d) \otimes 1,1 \otimes \lambda(g_{d+1}) , \ldots, 1 \otimes \lambda(g_{d+1}))$ elements of the reduced $C^*$-algebra  of $\dF_d\times \dF_d$ where $(g_1,\ldots,g_d)$ (resp. $(g_{d+1},\ldots,g_{2d})$  are the free generators of the first copy (resp. second) copy of $\dF_d$. In other words, Hayes's criterion amounts to prove that a random unitary representation of dimension $N^2$ of $\dF_d \times \dF_d$ converges strongly in probability toward the regular representations of $\dF_d \times \dF_d$.

As mentioned in the introduction, Hayes found equivalent reformulations of this criterion, and the proof of 
one reformulation involving GUEs was recently done by Belinschi and Capitaine \cite{belinschi2022strong}. The purpose of this subsection is to prove a result that is substantially more general than Hayes' criterion in the sense that more legs of a tensor could be involved and that the Haar unitaries could be replaced by uniform permutation matrices. In other words, we consider random unitary representations and random actions of $\dF_d \times \cdots \times \dF_d$.

We start with the case of Haar unitaries. Let $d \geq 2$ and $k \geq 1$ be integers.  Let $(U_{i,j}), i \in \INT{d}, j \in \INT{k}$ be independent Haar distributed on $\dO_N$ or $\dU_N$. We introduce random unitary matrices in $\dU_N^{\otimes k} \subset \dU_{N^k}$  by setting for $i \in \INT{d}, j \in \INT{k}$,
\begin{equation}\label{eq:Vij}
V_{i,j} = 1_N ^{\otimes(j-1)} \otimes U_{i,j} \otimes 1_N^{\otimes (k-j)},
\end{equation}
that is $V_{i,1} = U_{i,1} \otimes 1_N^{\otimes(k-1)}$, $V_{i,2} = 1_N \otimes U_{i,2} \otimes 1_N^{\otimes (k-2)}$, $\ldots$. By construction, the unitaries $V_{i,j}$ and $V_{i',j'}$ commute if they are on a different leg, that is, $j \ne j'$.

These matrices $(V_{i,j})$ define a random unitary representation of $\dF_d^k$. More precisely, we denote by $(g_{i,j})$ the standard generators of $\dF_d^k$ ($g_{i,j}$ and $g_{i',j}$ free and for $j\ne j'$, $i\ne i'$ $g_{i,j}$ and $g_{i',j}$ Abelian free). The map $\rho_N : g_{i,j} \to V_{i,j}$ defines a unitary representation on $\dU_{N}^{\otimes k}$. Identifying $ \cB(\ell^{2}(\dF^k_d))$ and $\cB(\ell^2(\dF_d)^{\otimes k})$, the left-regular representation of $\dF^k_d$ can be written in $\cA_\star^{\otimes k }$ as
$$
\lambda(g_{ij}) = 1_{\ell^2(\dF_d)}^{\otimes {j-1}} \otimes \lambda(g_i) \otimes 1_{\ell^2(\dF_d)}^{\otimes {k-j}}, 
$$
where as usual $\lambda(g_1), \ldots, \lambda(g_{d})$ in $\cA_\star \subset \cB(\ell^2(\dF_d))$ are the free Haar unitaries associated with the free generators of $\dF_d$. 

The following theorem asserts that the random representation $\rho_N$ converges strongly in probability toward the regular representations of $\dF_d^k$. 
\begin{theorem}\label{th:PTH}
For any fixed integers $d \geq 2$ and $k \geq 1$, as $N\to \infty$, the unitary matrices $(V_{i,j})_{i \in \INT{d}, j \in \INT{k}}$ converges strongly in probability toward $(\lambda(g_{ij}))_{i \in \INT{d}, j \in \INT{k}}$.
\end{theorem}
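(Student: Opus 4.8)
The plan is to reduce Theorem~\ref{th:PTH} to the linearized estimates already established, following the three-step template laid out in \S\ref{subsec:strong}: it suffices to prove, in probability, the one-sided bound \eqref{eq:strong2A} for an arbitrary $n$ and arbitrary coefficient matrices $(a_g)_{g\in B_1}$ of the generating set of $\dF_d^k$, since $\cA_\star^{\otimes k}$ is simple with unique trace (it is the reduced $C^*$-algebra of the i.c.c.\ group $\dF_d^k$, so $\cA_\star$ simple \cite{MR374334} plus a standard tensor argument gives simplicity and unique trace, hence the lower bound is automatic). The matching lower bound in probability, if one wants the full strong convergence \eqref{eq:strong1}, will come from Theorem~\ref{th:LBU1} applied on the enlarged coefficient algebra, exactly as in the proof of the absorption theorem in \S\ref{subsec:strong}.

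First I would set up the bookkeeping so that the tensor structure is absorbed into the coefficient algebra. Fix $n$ and coefficients, and consider the self-adjoint operator
$$
P_N \;=\; a_0\otimes 1 \;+\; \sum_{i=1}^{2d}\sum_{j=1}^{k} a_{i,j}\otimes V_{i,j}
$$
on $\cA_1\otimes M_{N^k}(\dC)$ with $\cA_1=M_n(\dC)$, and its free counterpart $P_\star$ on $\cA_1\otimes\cA_\star^{\otimes k}$. The key observation is that $V_{i,j}$ only acts on the $j$-th leg, so I would peel off the legs one at a time. Condition on all $U_{i,j'}$ with $j'\neq 1$; then the first leg carries $d$ genuinely Haar unitaries $U_{i,1}$ and the remaining $k-1$ legs together with $\cA_1$ form a fixed (random, but frozen) $C^*$-algebra $\cA_1' \subseteq M_n(\dC)\otimes M_{N^{k-1}}(\dC)$ which plays the role of the coefficient algebra. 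Theorem~\ref{th:main1} (or Corollary~\ref{cor:main1}), applied with this $\cA_1'$ in place of $\cA_1$, gives $\dE[\,\|P_N\|_p \mid \text{leg }\geq 2\,] \le (1+o(1))\,\|\widehat P_\star\|$, where $\widehat P_\star$ is the operator obtained by replacing only the first-leg unitaries by free Haar unitaries; here $p=N^{1/(16d+80)}$ and $n$ is allowed to be as large as $\exp(N^{\alpha})$, which easily accommodates the frozen dimension $n N^{k-1}$ since $k,n$ are fixed. Iterating this over the $k$ legs — at each stage replacing one more family of concrete Haar unitaries by abstract free Haar unitaries and invoking Theorem~\ref{th:main1} relative to the current coefficient algebra — and noting that $\|\widehat P_\star\|$ at the final stage is exactly $\|P_\star\|$ (because free Haar unitaries on distinct legs generate the tensor-product reduced algebra $\cA_\star^{\otimes k}$, which is precisely the reduced $C^*$-algebra of $\dF_d^k$), I obtain $\dE\|P_N\|_p \le (1+o(1))\|P_\star\|$. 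Converting $\|P_N\|_p$ back to $\|P_N\|$ via \eqref{eq:TtoTp} with the dimension $nN^k$ costs a factor $(nN^k)^{1/p}=1+o(1)$ for fixed $n,k$ and $p\to\infty$, and Lemma~\ref{le: GM} (concentration of $\|P_N\|$ about its mean) upgrades the expectation bound to convergence in probability.

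The main obstacle is making the leg-by-leg conditioning rigorous: after freezing legs $\geq 2$, the coefficients $a_{i,j}\otimes V_{i,j}$ for $j\geq 2$ are \emph{not} of the form required by \eqref{eq:defA} on the first leg unless one genuinely regards $1_N^{\otimes(j-1)}\otimes U_{i,j}\otimes 1_N^{\otimes(k-j)}$ as a constant element of the enlarged coefficient algebra $\cA_1'$ — which is legitimate, but one must check that the resulting $A_N$ on leg $1$ is exactly in the scope of Theorem~\ref{th:main1}, i.e.\ that the first-leg generators appear linearly with coefficients in $\cA_1'$ and that the symmetry condition \eqref{eq:symai} can be arranged by the standard linearization device (doubling $\cA_1'$). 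A second, more delicate point is the identification $\|\widehat P_\star^{(\text{after all }k\text{ legs})}\| = \|P_\star\|$: one must verify that the strong limit is the genuine reduced-norm on $\cA_\star^{\otimes k}$ and not some larger completion, which follows because the minimal tensor product of exact (here, even reduced-free-group) $C^*$-algebras is again the reduced $C^*$-algebra of the product group, and because at each stage the abstract free Haar unitaries introduced are free from everything produced so far — this is exactly the absorption principle recalled in \S\ref{subsec:strong}. Once these two points are in place, the proof is a finite iteration of Theorem~\ref{th:main1} plus Lemma~\ref{le: GM}, with the permutation case being identical but invoking Theorem~\ref{th:main3} and Theorem~\ref{th:LBP} in place of Theorem~\ref{th:main1} and Theorem~\ref{th:LBU1}, and the quantitative hypotheses $N\gg(\log n)^{80}$ resp.\ $\log N\gg\sqrt{(\log\log n)\log n}$ being precisely what the corresponding theorems demand on the frozen dimension.
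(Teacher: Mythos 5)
Your overall strategy is the same as the paper's: peel off the legs one at a time, condition on the remaining random legs, absorb everything else into the coefficient algebra, apply Theorem \ref{th:main1} at each stage (Theorem \ref{th:main3}/\ref{th:LBP} in the permutation case), and dispose of the lower bound either by simplicity of $\cA_\star^{\otimes k}$ or by iterating Theorem \ref{th:LBU1}; this is exactly the interpolation $A_{N,0},\ldots,A_{N,k}$ used in Lemma \ref{le:PTq}. However, there is a genuine gap in how you close the iteration. Theorem \ref{th:main1} controls the conditional expectation of a $p$-norm by the \emph{operator} norm of the partially-freed operator, so after stage one you are left with $\dE\,\|\widehat P^{(1)}\|$, where $\widehat P^{(1)}$ lives in $M_n(\dC)\otimes\cA_\star\otimes M_{N^{k-2}}(\dC)\otimes\cdots$, i.e.\ its coefficient algebra now contains an infinite-dimensional free leg. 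To feed this into the next application of Theorem \ref{th:main1} you must convert that operator norm back into a $p$-norm, and \eqref{eq:TtoTp} is no longer available (there is no dimension to raise to the power $1/p$, and for a general element of such an algebra no inequality $\|T\|\le C\|T\|_p$ holds at all — the trivial inequality $\|T\|_p\le\|T\|$ goes the wrong way). Your single conversion at the end, ``$(nN^k)^{1/p}$ via \eqref{eq:TtoTp}'', only handles the fully matricial operator $P_N$ and cannot repair the intermediate steps.

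The missing ingredient is precisely the tensorized Haagerup inequality, Lemma \ref{le:HaagerupFdk} (built on Corollary \ref{cor:HaagB}), which exploits that the intermediate operators are \emph{linear} in the free generators on each freed leg and yields
$$
\| A_{N,l} \| \;\le\; \| A_{N,l} \|_p \, \bigl( 2\, n\, N^{\,k-l}\, p^{\,3l} \bigr)^{1/p},
$$
a factor that tends to $1$ for $p=N^{1/(16d+80)}$. With this per-stage conversion the induction of Lemma \ref{le:PTq} closes (and Fubini plus Lemma \ref{le: GM} then give the statement in probability). Without it, the chain of inequalities you describe does not connect one stage to the next, so you should add this rapid-decay step explicitly; the rest of your outline (conditioning legitimacy, symmetry via the standard doubling, identification of the limit norm with the reduced norm on $\cA_\star^{\otimes k}$, and the lower bound) is in line with the paper.
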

Before proving Theorem \ref{th:PTH}, let us make a few comments. The analog result for $k= 2$ and for GUE matrices has been proved recently in \cite{belinschi2022strong}. Thanks to \cite{MR4448584}, these results both complete the proof of the Peterson-Thom conjecture. The quantitative version is given in Lemma \ref{le:PTq} below. This result admits multiple variants. For example, we can replace the Haar unitaries with Haar orthogonal matrices or assume that $U_{i,j} \in \dU_{N_j}$, the same result holds, for example, if $\log N_j / \log N_{j'}$ is bounded uniformly from above (and below) for all $j\ne j'$.

 As usual for $i \in \INT{d}$, we set $V_{i+d,j}= V_{i^*,j} = V_{i,j}^*$. Let $n \geq 1$ be an integer and let $(a_0,(a_{i,j})_{i \in \INT{2d}, j \in \INT{k}})$ be matrices in $M_n(\dC)$. We are interested in the matrix in $M_n (\dC) \otimes M_N(\dC)^{\otimes k}$
\begin{equation*}
A_N = a_0 \otimes 1_N^{\otimes k}  + \sum_{i=1}^{2d} \sum_{j=1}^k a_{i,j} \otimes V_{i,j}.
\end{equation*}
In $M_n(\dC) \otimes \cA_\star^{\otimes k}$, we consider the operator 
\begin{equation}\label{eq:defA*PT}
A_{\star^k} = a_0 \otimes 1_N^{\otimes k}  + \sum_{i=1}^{2d} \sum_{j=1}^k a_{i,j} \otimes \lambda(g_{i,j}).
\end{equation}
As explained in Subsection \ref{subsec:strong}, the proof of the theorem amounts to prove that $\|A_N\|$ converges in probability toward $\|A_{\star^k}\|$ (in fact,  since $\cA^{\otimes k}_\star$ is simple by \cite{MR374334}, the upper bound $\limsup_N \|A_N\| \leq \|A_{\star^k}\|$ in probability is sufficient). Therefore, Theorem \ref{th:PTH} follows from the following claim: 

\begin{lemma}\label{le:PTq} Assume $d^{70} \leq N$. For $2 \leq p \leq  N^{1/(16d + 80)}$ even integer and $c >0$ the numerical constant in Theorem \ref{th:main1}, we have
\begin{equation*}\label{eq:PTq}
  \dE \|A_{N} \|  \leq   \| A _{\star^k} \| (2n)^{\frac{k+1}{p}} N^{\frac{k(k+1)}{2p} } p^{\frac{3k(k+1)}{2p}}\PAR{ 1 + \frac{c}{\sqrt  N}}^k.
\end{equation*}
Moreover, if $c >0$ is the numerical constant from Theorem \ref{th:LBU1}, we have 
\begin{equation*} 
  \dE \|A_{N} \|  \geq  \| A _{\star^k} \| \PAR{ 1 - \frac{c \log (2d)}{\log N}}^k.
\end{equation*}
\end{lemma}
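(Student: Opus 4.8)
The strategy is to iterate the single-leg estimates of Theorem \ref{th:main1} (upper bound) and Theorem \ref{th:LBU1} (lower bound) over the $k$ tensor legs, peeling off one leg at a time. The key observation is that for a fixed leg $j$, the matrix $A_N$ can be viewed as an operator of the form \eqref{eq:defA} over the enlarged coefficient algebra $\cA_1^{(j)} := M_n(\dC) \otimes \cA_\star^{\otimes (j-1)} \otimes M_N(\dC)^{\otimes(k-j)}$, where the ``random'' generators are $U_{1,j},\ldots,U_{d,j}$ (independent Haar unitaries on $\dU_N$ or $\dO_N$) and all the other $V_{i,j'}$ with $j' \ne j$ are absorbed into the fixed coefficients $a_i \in \cA_1^{(j)}$. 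The point is that this $\cA_1^{(j)}$ is a genuine unital $C^*$-algebra with a faithful normal trace (product of $\tr_n$, the canonical trace on $\cA_\star$, and $\tr_N$), so Theorem \ref{th:main1} applies verbatim with this choice of $\cA_1$.

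\textbf{Upper bound.} First I would set up the peeling. Write $A_N^{(0)} = A_N$ (all $k$ legs random) and, for $0 \le j \le k$, let $A_N^{(j)}$ be the operator obtained by replacing $U_{i,j'}$ with $\lambda(g_{i,j'})$ for all $j' \le j$ and keeping $U_{i,j'}$ random for $j' > j$; thus $A_N^{(k)} = A_{\star^k}$. At step $j$ (replacing leg $j$), apply Theorem \ref{th:main1} conditionally on the remaining random unitaries $(U_{i,j'})_{j' > j}$, with coefficient algebra $\cA_1^{(j)}$ as above and $p = N^{1/(16d+80)}$: this gives $\dE_j \|A_N^{(j-1)}\|_p \le \|A_N^{(j)}\|\,(1 + c/\sqrt N)$, where $\dE_j$ is expectation over leg-$j$ randomness and $\|\cdot\|_p$ is the $\|\cdot\|_p$-norm on $\cA_1^{(j)} \otimes M_N(\dC)$ (which equals the $\|\cdot\|_p$-norm on $M_n(\dC) \otimes M_N(\dC)^{\otimes k} \otimes \cA_\star^{\otimes(j-1)}$). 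The subtlety is that Theorem \ref{th:main1} controls $\dE \|\cdot\|_p$, not $\dE\|\cdot\|$, so I cannot directly chain operator norms. I would instead chain the $\|\cdot\|_p$-norms: using $\|A_N^{(j)}\|_p \le \|A_N^{(j)}\|$ when passing to the next step, and at the very end converting $\|\cdot\|_p$ back to $\|\cdot\|$ via the inequality \eqref{eq:TtoTp} generalized to $M_n(\dC) \otimes M_N(\dC)^{\otimes k}$, namely $\|T\| \le (n N^k)^{1/p}\|T\|_p$. However $A_{\star^k}$ has a free factor, so I must also control $\|A_{\star^k}\|$ versus $\|A_{\star^k}\|_p$; this is exactly where Haagerup's inequality enters. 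Indeed, at the intermediate steps $A_N^{(j)}$ lives in $M_n(\dC) \otimes M_N(\dC)^{\otimes(k-j)} \otimes \cA_\star^{\otimes j}$, an operator of degree $1$ in $j$ copies of the free group. Applying Corollary \ref{cor:HaagB} (or rather its proof, with the free part) $j$ times — once per free leg — costs a factor $2 n' p^3$ per leg where $n'$ is the matrix dimension seen by that leg; tracking the dimensions gives the stated $(2n)^{(k+1)/p} N^{k(k+1)/(2p)} p^{3k(k+1)/(2p)}$ factor (the quadratic exponents $k(k+1)/2$ come from the fact that when we Haagerup-estimate leg number $\ell$, the ambient matrix dimension is $n N^{k-\ell}$, and $\sum_{\ell=1}^{k}(k-\ell) = k(k-1)/2$, plus the $N^k$ from the final \eqref{eq:TtoTp} conversion, plus the $p^3$ powers). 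The bookkeeping here is the main technical chore; I would organize it as: (i) chain $\dE_k \cdots \dE_1 \|A_N\|_p \le \|A_{\star^k}\| (1+c/\sqrt N)^k$ using Jensen at each step to move $\|\cdot\|_p^{?}$ around, and (ii) absorb the dimension factors by one application of Haagerup at the end rather than leg-by-leg — actually cleaner: use $\|A_N\| \le (nN^k)^{1/p}\|A_N\|_p$ and $\dE\|A_N\|_p \le \|A_{\star^k}\|_p (1 + c/\sqrt N)^k$ iteratively (the latter does not need Haagerup since each intermediate object is compared in $\|\cdot\|_p$), then finally bound $\|A_{\star^k}\|_p$ from above trivially by $\|A_{\star^k}\|$. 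Wait — that would not produce the dimension factors at all; re-examining, the dimension factors in the statement are precisely the price of going from $\dE\|A_N\|_p$ to $\dE\|A_N\|$, i.e. they come from \eqref{eq:TtoTp} with $nN^k$ replaced by the sharper $(2n)^{k+1} N^{k(k+1)/2} p^{3k(k+1)/2}$ bound one gets by noticing that on the free side, Haagerup lets one replace powers of the ambient dimension by powers of the much smaller "effective rank". So the correct route is: $\dE\|A_N\| \le \dE[(\text{effective factor})^{1/p}\|A_N\|_p]$ where the effective factor is obtained by applying Haagerup's inequality (Lemma \ref{le:Haagerup} / Corollary \ref{cor:HaagB}) leg-by-leg to $A_{\star^k}$, bounding $\|A_N\| / \|A_N\|_p$ by $\prod_{j}(\text{per-leg Haagerup constant})$; each per-leg constant is $2 n_j p^3$ with $n_j$ the matrix size carried by leg $j$, and $\prod_j n_j$ telescopes to $n \cdot N^{k(k-1)/2}$-type expressions. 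I would then multiply by $\dE\|A_N\|_p \le \|A_{\star^k}\|(1+c/\sqrt N)^k$ from the iterated Theorem \ref{th:main1}.

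\textbf{Lower bound.} This is the easier half. I would again peel legs, but now from the other direction and using only operator norms (no $\|\cdot\|_p$, no Haagerup). For each leg $j$, condition on all other randomness and apply Theorem \ref{th:LBU1} with coefficient algebra $\cA_1^{(j)}$: since Theorem \ref{th:LBU1} gives $\dE_j \|A_N^{(j-1)}\| \ge \|A_N^{(j)}\| (1 - c\log(2d)/\log N)$ — here I use that the lower bound in Theorem \ref{th:LBU1} is stated for $\dU_N$; for $\dO_N$ one invokes Remark \ref{rk:HaarON}. Chaining over $j = 1,\ldots,k$ and using Fubini / the tower property of conditional expectation, together with Jensen's inequality $\dE \|A_N^{(j-1)}\| \ge \dE[\dE_j \|A_N^{(j-1)}\|] \ge \dE[\|A_N^{(j)}\|(1 - c\log(2d)/\log N)]$, gives $\dE\|A_N\| \ge \|A_{\star^k}\| (1 - c\log(2d)/\log N)^k$ directly, which is the claimed bound. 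One point to check: at intermediate stages $A_N^{(j)}$ is not purely a "matrix coefficient" operator but has a free $C^*$-algebra factor in its coefficients; Theorem \ref{th:LBU1} is stated for arbitrary unital $C^*$-algebra $\cA_1$ (indeed that is its whole point — "universal lower bound in the sense that it does not depend on $\cA_1$"), so this causes no difficulty.

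\textbf{Main obstacle.} The genuine difficulty is entirely in the upper bound, specifically in correctly tracking the dimension- and $p$-dependent prefactors through $k$ rounds of Haagerup's inequality applied to an operator that is degree-one in each of $k$ free-group copies. One must be careful that Corollary \ref{cor:HaagB} is stated for a single free group, so on a $k$-fold free product one applies it iteratively — treating $\cA_\star^{\otimes k} = \cA_\star \otimes \cA_\star^{\otimes(k-1)}$ and pushing the first $\cA_\star$ into the "matrix" part of a degree-one operator over $M_n(\dC)\otimes\cA_\star^{\otimes(k-1)}$ — and at each such application the "$n$" in the bound $\|T\| \le 2np^3\|T\|_p$ is the dimension of whatever finite-dimensional algebra is currently playing the role of $M_n(\dC)$, which grows by a factor $N$ each time we have not yet replaced that leg by $\lambda$. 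Getting the exponents $\frac{k+1}{p}$ on $2n$, $\frac{k(k+1)}{2p}$ on $N$, and $\frac{3k(k+1)}{2p}$ on $p$ to come out correctly is the crux; everything else is a routine combination of Jensen's inequality, the tower property, and the already-established Theorems \ref{th:main1} and \ref{th:LBU1}.
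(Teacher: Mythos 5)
Your overall strategy coincides with the paper's: interpolate $A_{N,0}=A_N,\dots,A_{N,k}=A_{\star^k}$ by freeing one leg at a time, apply Theorem \ref{th:main1} (resp.\ Theorem \ref{th:LBU1}) conditionally at each step with all other legs absorbed into the coefficient $C^*$-algebra, and chain via Fubini/the tower property. Your lower-bound argument, which iterates Theorem \ref{th:LBU1} directly in operator norm, is exactly the paper's proof and is correct as stated.

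For the upper bound, however, the route you finally settle on does not assemble, and the point you muddle is precisely where Haagerup's inequality has to enter. Theorem \ref{th:main1} bounds $\dE_l \|A_{N,l-1}\|_p$ by the \emph{operator} norm $\|A_{N,l}\|$ of the next interpolant, not by $\|A_{N,l}\|_p$; consequently the inequality you propose to "multiply by", namely $\dE\|A_N\|_p \le \|A_{\star^k}\|(1+c/\sqrt N)^k$, is not obtainable by plain iteration (your parenthetical claim that the iteration "does not need Haagerup since each intermediate object is compared in $\|\cdot\|_p$" is false). To close the chain one must, at \emph{every} intermediate step $l$, convert $\|A_{N,l}\|$ back into $\|A_{N,l}\|_p$, and this is exactly what the tensorized Haagerup inequality (the paper's Lemma \ref{le:HaagerupFdk}) does for the mixed operator in $M_n(\dC)\otimes M_N(\dC)^{\otimes(k-l)}\otimes \cA_\star^{\otimes l}$, at the cost $(2nN^{k-l}p^{3l})^{1/p}$; the dimension factors are therefore generated \emph{inside} the iteration, yielding $\dE\|A_N\|_p \le \|A_{\star^k}\|\prod_{l=1}^k(2nN^{k-l}p^{3l})^{1/p}(1+c/\sqrt N)^k$, after which only the plain \eqref{eq:TtoTp} factor $(nN^k)^{1/p}$ is used at the very end. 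By contrast, your corrected route attributes all prefactors to a leg-by-leg Haagerup bound on $\|A_N\|/\|A_N\|_p$, which cannot be right: $A_N$ has no free legs, so no Haagerup estimate applies to it and only $(nN^k)^{1/p}$ is available there. Your "main obstacle" paragraph does contain the correct mechanism (iterated Haagerup on the intermediate mixed objects, with the matrix dimension growing by a factor $N$ for each not-yet-freed leg), and note also that the $p$-power at step $l$ is $p^{3l}$ ($p^3$ per currently free leg), not a single $p^3$ per step, which is how the exponent $3k(k+1)/(2p)$ arises — so the ingredients are all present, but as written the chain does not close.
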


We first observe that Haagerup's inequality, Lemma \ref{le:Haagerup}, tensorizes. As usual, for an integer $l\geq 0$, we denote by $S_l \subset \dF_d$ the set of elements of word length $l$. 

\begin{lemma}\label{le:HaagerupFdk}
If $T = \sum_{g \in \dF_d^k} a(g) \otimes \lambda(g) \in M_n(\dC) \otimes \dF_d^k$ is supported on elements $g$ in $S_{l_1} \times \cdots \times S_{l_k}$ then  
$$
\| T \| \leq \sqrt n \PAR{\prod_{j=1}^k (l_j+1) } \| T\|_2.
$$
Moreover, if $A_{\star^k}$ is as in \eqref{eq:defA*PT}, then for any even integer $p \geq 2$, we have
$$
\| A_{\star^k} \|^p \leq 2 n p^{3k} \| A_{\star^k} \|^{p}_p.
$$ 
\end{lemma}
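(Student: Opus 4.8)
The plan is to establish the tensorized Haagerup inequality first, by induction on the number of legs $k$, and then deduce the power estimate on $A_{\star^k}$ by the same splitting-by-word-length argument used for Corollary~\ref{cor:HaagB}.

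For the first inequality I would argue by induction on $k$. The base case $k=1$ is precisely Lemma~\ref{le:Haagerup} applied with $\cA_1 = M_n(\dC)$, together with the elementary bound $\|a\| \le \sqrt n\,\|a\|_2$ for $a \in M_n(\dC)$ (with $\|a\|_2 = \sqrt{\tr_n(aa^*)}$) and the identity $\|T\|_2^2 = \sum_{g}\|a(g)\|_2^2$, which follows from the $L^2$-orthonormality of $\{\lambda(g) : g \in \dF_d\}$ under $\tau_\star$. For the inductive step I would isolate the first copy of $\dF_d$: after a leg-permutation of the tensor factors (which is simultaneously a $*$-isomorphism and trace-preserving, hence preserves both $\|\cdot\|$ and $\|\cdot\|_2$), write $T = \sum_{g_1 \in S_{l_1}} b(g_1)\otimes\lambda(g_1)$ with $b(g_1) = \sum_{(g_2,\dots,g_k)} a(g_1,\dots,g_k)\otimes\lambda(g_2)\otimes\cdots\otimes\lambda(g_k) \in M_n(\dC)\otimes\cA_\star^{\otimes(k-1)}$, supported on $S_{l_2}\times\cdots\times S_{l_k}$. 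Applying Lemma~\ref{le:Haagerup} with the (arbitrary) coefficient $C^*$-algebra $M_n(\dC)\otimes\cA_\star^{\otimes(k-1)}$ gives $\|T\| \le (l_1+1)\,\big(\sum_{g_1}\|b(g_1)\|^2\big)^{1/2}$, and the induction hypothesis applied to each $b(g_1)$ — now a genuine $(k-1)$-leg operator with coefficients in $M_n(\dC)$ — gives $\|b(g_1)\| \le \sqrt n\,\prod_{j=2}^k(l_j+1)\,\|b(g_1)\|_2$. Since $\sum_{g_1}\|b(g_1)\|_2^2 = \|T\|_2^2$, combining the two bounds yields $\|T\| \le \sqrt n\,\prod_{j=1}^k(l_j+1)\,\|T\|_2$.

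For the second inequality I would follow the proof of Corollary~\ref{cor:HaagB}. First, by the linearization trick (as in the proof of Theorem~\ref{th:main1}) reduce to the case where the symmetry condition \eqref{eq:symai} holds for the $(a_{i,j})$, at the cost of replacing $n$ by $2n$; then $A_{\star^k}$ is self-adjoint, so for even $p = 2q$ one has $\|A_{\star^k}\|^q = \|A_{\star^k}^q\|$ and $\|A_{\star^k}\|_{2q}^q = \|A_{\star^k}^q\|_2$, exactly as in \eqref{eq:patch00}. Since $A_{\star^k}$ has degree $1$ in each of the $k$ legs simultaneously, every term of $A_{\star^k}^q$ is a reduced word of length at most $q$ in each leg, so $A_{\star^k}^q = \sum_{0\le l_1,\dots,l_k\le q} B^{(l_1,\dots,l_k)}$, where $B^{(l_1,\dots,l_k)}$ collects the terms whose $j$-th coordinate has reduced length $l_j$; these summands have disjoint supports, hence are pairwise $L^2$-orthogonal, so $\sum_{l}\|B^{(l)}\|_2^2 = \|A_{\star^k}^q\|_2^2$. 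Applying the first part of the lemma to each $B^{(l)}$, then the triangle inequality and Cauchy--Schwarz over the index $l \in \{0,\dots,q\}^k$, together with the elementary estimate $\sum_{m=1}^{q+1}m^2 \le (q+1)^3$ (so that $\sum_l \prod_j(l_j+1)^2 \le (q+1)^{3k}$), gives $\|A_{\star^k}^q\| \le \sqrt{2n}\,(q+1)^{3k/2}\,\|A_{\star^k}^q\|_2$. Squaring and using $q+1 \le p$ for $p \ge 2$ yields $\|A_{\star^k}\|^p \le 2n\,p^{3k}\,\|A_{\star^k}\|_p^p$.

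The only delicate point is bookkeeping with the tensor legs in the induction: one must be sure the leg-permutation is both a $*$-isomorphism and trace-preserving, and that the induction hypothesis is invoked on an operator whose coefficient algebra is $M_n(\dC)$ rather than $M_n(\dC)\otimes\cA_\star^{\otimes(k-1)}$ (the latter $C^*$-algebra only enters through the single-leg Lemma~\ref{le:Haagerup}, which allows arbitrary coefficients). There is no analytic obstacle; everything reduces to Lemma~\ref{le:Haagerup} and the estimate $\|a\| \le \sqrt n\,\|a\|_2$ on $M_n(\dC)$.
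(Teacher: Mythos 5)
Your proof is correct and follows essentially the same route as the paper: the first bound by induction on $k$, peeling off one free-group leg (you take the first leg and permute tensor factors, the paper takes the last leg so no permutation is needed — immaterial) and applying Lemma \ref{le:Haagerup} with operator coefficients in $M_n(\dC)\otimes\cA_\star^{\otimes(k-1)}$. For the second bound the paper merely remarks it is a corollary of the first claim (or of running Corollary \ref{cor:HaagB} in the induction); your detailed argument — symmetrization, $\|A_{\star^k}\|_{2q}^{q}=\|A_{\star^k}^{q}\|_2$, decomposition by multi-word-length, triangle plus Cauchy--Schwarz with $\sum_{m=1}^{q+1}m^2\le (q+1)^3$ — is exactly that corollary spelled out, with the right constant $2np^{3k}$.
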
 
\begin{proof} 
We check the first claim. The proof is by induction over $k\geq 1$. For $k = 1$, this is Lemma \ref{le:Haagerup}. For general $k\geq 2$, we use that if $g = (x,y) \in \dF_d^k = \dF_d^{k-1} \times \dF_{d}$, we have $\lambda (g) = \lambda(x) \otimes \lambda(y)$ and we may thus write $T$ as 
$$
T =  \sum_{y \in S_{l_k}}  \PAR{ \sum_{x \in \dF_d^{k-1}} a(x,y) \otimes \lambda(x) } \otimes \lambda (y) = \sum_{y \in S_{l_k}}  \tilde a(y) \otimes \lambda (y),
$$ 
We may then apply Lemma \ref{le:Haagerup}.
The second claim is a corollary of the first claim, but it also follows from applying directly Corollary \ref{cor:HaagB} instead of Lemma \ref{le:Haagerup} in the above argument.
\end{proof} 

\begin{proof}[Proof of Lemma \ref{le:PTq}] We are going to make a repeated use of Theorem \ref{th:main1}. To that end, we design an interpolation between $A_N$ and $A_{\star^k}$ which replaces each leg of $\dC^N$ iteratively by $\ell^2(\dF_d)$ and the unitaries $U_{i,j}$ on the corresponding leg by free Haar unitaries $\lambda(g_i)$.   We rename $A_N=A_{N,0}$ and set for $l \in \INT{k}$,  
\begin{eqnarray*}
A_{N,l} & = & a_0 \otimes 1_N^{\otimes k-l}\otimes 1^{\otimes l}_{\ell^2(\dF_d)}  + \sum_{i=1}^{2d}\sum_{j=1}^{k-l}  a_{i,j} \otimes 1_N^{\otimes (j-1)} \otimes U_{i,j} \otimes 1_N^{\otimes (k-j-l)} \otimes 1^l_{\ell^2(\dF_d)}\\
& & \quad +\sum_{i=1}^{2d}\sum_{j=k-l+1}^{k}  a_{i,j} \otimes 1_N^{\otimes (k-l)} \otimes 1^{\otimes (l-k+j-1)}_{\ell^2(\dF_d)}  \otimes \lambda(g_{i}) \otimes 1^{k-j}_{\ell^2(\dF_d)}.
\end{eqnarray*}
The operator $A_{N,l}$ is an element of $M_n(\dC) \otimes M_N(\dC)^{\otimes(k-l)} \otimes \cA_\star^{\otimes l }$. By construction, we have $A_{\star^k} =A_{N,k}$. 

Next, according to Theorem \ref{th:main1}, for our choice of $p$, for each $l \in \INT{k}$,
$$
  \dE_{l} \|A_{N,l-1} \|_p  \leq  \| A_{N,l} \| \PAR{ 1 + \frac{c}{\sqrt  N}},
$$
where $\dE_{l}$ is the expectation conditional to the $\sigma$-algebra generated by the random unitaries  $(U_{i,j}), i \in \INT{d}, j \leq k-l$. In addition, it follows from Lemma \ref{le:HaagerupFdk} that
$$\| A_{N,l} \|\le \|A_{N,l} \|_p \PAR{ 2  n N^{k-l}p^{3l}}^{1/p},$$
so, by combining, we get 
$$
  \dE_{l} \|A_{N,l} \|_p  \leq  \| A_{N,l} \|_p \PAR{ 2 n N^{k-l}p^{3l}}^{1/p} \PAR{ 1 + \frac{c}{\sqrt  N}}.
$$
Iterating and using Fubini's Theorem, we obtain
\begin{eqnarray*}
 \dE \|A_{N} \|_p  & \leq &   \| A _{\star^k} \| \prod_{l=1}^k \PAR{2 n N^{k-l}p^{3l}}^{1/p} \PAR{ 1 + \frac{c}{\sqrt  N}}^k.
\end{eqnarray*}
Since $\| A_N \| \leq (n N^k)^{1/p} \|A_N\|_p$ by \eqref{eq:TtoTp}, we obtain the claimed bound. The proof of the lower bound is obtained by using instead that $
  \dE_{l} \|A_{N,l-1} \|  \geq  \| A_{N,l} \| \PAR{ 1 + c \log (2d) / \log  N},
$ from Theorem \ref{th:LBU1}.
\end{proof}

We conclude with an application to random actions of $\dF_d^k$. Let $(\sigma_{i,j}), i \in \INT{d}, j \in \INT{k},$ be independent and uniform permutations on $\dS_N$. We denote by $U_{ij}$ their permutations matrices and define $V_{ij}$ as in \eqref{eq:Vij}. This defines uniquely a random action of $\dF_d^k$ on $\INT{N}^k$ by setting:
$$
\rho_N (g_{ij}) = V_{ij}.
$$
Identifying $\dC^{N^k}$ with tensors with $k$ legs of dimension $N$, this representation corresponds to permuting the coordinates of each leg separately. 
We denote by $H_N \subset \dC^N$ the hyperplane orthogonal to $\IND_N$, the vector with constant coordinates (that is, sum zero on each leg). The vector space $H_N^{\otimes k}$ is an invariant subspace of $\rho_N$. The following theorem asserts that the representation $\rho^0_N$, the restriction of $\rho_N$ to $H_N^{\otimes k}$, converges strongly in probability toward the regular representations of $\dF_d^k$. 

\begin{theorem}\label{th:randomactionFdk}
For any fixed integers $d \geq 2$ and $k \geq 1$, as $N\to \infty$, the unitary matrices $((V_{i,j})_{|H_N^{\otimes k}})_{i \in \INT{d}, j \in \INT{k}}$ converges strongly in probability toward $(\lambda(g_{ij}))_{i \in \INT{d}, j \in \INT{k}}$.
\end{theorem}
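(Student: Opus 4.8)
The plan is to follow the proof of Theorem~\ref{th:PTH} verbatim, replacing each invocation of the Haar-unitary estimate Theorem~\ref{th:main1} by its permutation counterpart Theorem~\ref{th:main3}, and each invocation of Theorem~\ref{th:LBU1} by Theorem~\ref{th:LBP}. By the linearization trick and the discussion in \S\ref{subsec:strong}, together with the simplicity of $\cA_\star^{\otimes k}$ (Powers' theorem \cite{MR374334} and the exactness of $\cA_\star$, which also gives unique trace), it is enough to fix an integer $n\geq 1$ and matrices $(a_0,(a_{i,j})_{i\in\INT{2d},j\in\INT{k}})$ in $M_n(\dC)$, to let $A_N^0$ denote the restriction of the associated operator $A_N$ to $M_n(\dC)\otimes H_N^{\otimes k}$ and $A_{\star^k}$ be as in \eqref{eq:defA*PT}, and to prove $\limsup_{N\to\infty}\|A_N^0\|\le\|A_{\star^k}\|$ in probability. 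Thus the theorem reduces to a permutation analogue of Lemma~\ref{le:PTq}.

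To prove it, interpolate exactly as in the proof of Lemma~\ref{le:PTq}: for $l\in\{0,\ldots,k\}$ let $A_{N,l}^0$ be obtained from $A_N^0$ by replacing, on the legs $j=k-l+1,\ldots,k$, the permutation matrices $U_{i,j}$ by the free Haar unitaries $\lambda(g_i)$, keeping the restriction to $H_N$ on every leg still modelled on $\dC^N$; so $A_{N,0}^0=A_N^0$ and $A_{N,k}^0=A_{\star^k}$. Conditionally on the $\sigma$-algebra generated by the permutations on legs $j\le k-l$, the permutations on leg $k-l+1$ remain independent and uniform, and $A_{N,l-1}^0$ is, conditionally, an operator of the form treated in Theorem~\ref{th:main3}: its coefficient algebra acts on $H_N^{\otimes(k-l)}\otimes\ell^2(\dF_d)^{\otimes(l-1)}$, the distinguished unitary leg is leg $k-l+1$, the projection $\Pi_N$ there accounts precisely for that leg, and the free counterpart is $A_{N,l}^0$. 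Theorem~\ref{th:main3} then provides, for $p$ up to order $(\log N)^2/((\log d)\log\log N)$, an event $E_l$ depending only on leg $k-l+1$ and of probability at least $1-c(\veps)N^{\veps-1}$, on which $\|A_{N,l-1}^0\|_p\le\|A_{N,l}^0\|\bigl(1+c(\veps)(d\log d)(\log\log N)/\log N\bigr)$. Combining this with the tensorised Haagerup inequality of Lemma~\ref{le:HaagerupFdk} (applied with the $M_N(\dC)^{\otimes(k-l)}$-factor absorbed into the coefficient algebra, so that $\|A_{N,l}^0\|\le(2nN^{k-l}p^{3l})^{1/p}\|A_{N,l}^0\|_p$) and iterating over $l=1,\ldots,k$ along the single coupling, we obtain on $E=\bigcap_{l=1}^k E_l$, which has probability at least $1-kc(\veps)N^{\veps-1}$,
\begin{equation*}
\|A_N^0\|_p\le\|A_{\star^k}\|\prod_{l=1}^k\bigl(2nN^{k-l}p^{3l}\bigr)^{1/p}\Bigl(1+c(\veps)\frac{(d\log d)(\log\log N)}{\log N}\Bigr)^{k},
\end{equation*}
and finally $\|A_N^0\|\le(nN^k)^{1/p}\|A_N^0\|_p$ by \eqref{eq:TtoTp}.

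For fixed $d,k$ and $n$, choosing $p=p(N)\to\infty$ inside the admissible range (for instance $p=\lfloor(\log N)^{3/2}\rfloor$) makes every dimensional factor $N^{O(1)/p}$ and the factor $(1+o(1))^k$ tend to $1$, whence $\|A_N^0\|\le(1+o(1))\|A_{\star^k}\|$ on $E$, and $\dP(E)\to1$; this is the required bound $\limsup_N\|A_N^0\|\le\|A_{\star^k}\|$ in probability. (The matching lower bound $\liminf_N\|A_N^0\|\ge\|A_{\star^k}\|$ in probability follows by the same interpolation run with Theorem~\ref{th:LBP} in place of Theorem~\ref{th:main3}, using that for $p$ of order $\log_{2d-1}N$ a disjoint pair of tangle-free balls exists with high probability on each leg; by simplicity of $\cA_\star^{\otimes k}$ it is however not needed.) The main point to carry out carefully is precisely the conditional, leg-by-leg bookkeeping: one must check that restricting to $H_N$ on the not-yet-linearised legs can be pushed into the coefficient $C^*$-algebra, so that the single projection $\Pi_N$ of Theorem~\ref{th:main3} corresponds exactly to the leg currently being linearised, and that the tangle-free events $E_l$, which involve disjoint blocks of permutations, are genuinely independent of the conditioning $\sigma$-algebras, so that the final union bound is legitimate.
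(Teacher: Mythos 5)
Your proposal is correct and follows essentially the same route as the paper, which simply proves Theorem \ref{th:randomactionFdk} by rerunning the interpolation argument of Theorem \ref{th:PTH} (Lemma \ref{le:PTq}) with Theorem \ref{th:main3} and Theorem \ref{th:LBP} in place of Theorem \ref{th:main1} and Theorem \ref{th:LBU1}; your leg-by-leg conditional bookkeeping, the absorption of the restricted legs into the coefficient algebra, and the replacement of the Fubini/expectation iteration by an intersection of high-probability events with a union bound are exactly the details the paper leaves implicit. The only slight imprecision is calling each event $E_l$ a function of leg $k-l+1$ alone (it also depends on the conditioned coefficients), but since you only use the conditional probability bound from Theorem \ref{th:main3} plus the tower property and a union bound, this does not affect the argument.
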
 

The proof is the same as the proof of Theorem \ref{th:PTH} except that we now use Theorem \ref{th:main3} and Theorem \ref{th:LBP} (instead of Theorem \ref{th:main1} and Theorem \ref{th:LBU1}). Interestingly, if we consider a uniformly distributed random action of $\dF_d^k$ on $\INT{N}$ then the conclusion of Theorem \ref{th:randomactionFdk} on $\IND_N^\perp$ is not true, there is not even convergence in distribution, see \cite{313910}.

\section*{Appendix: Non-backtracking identities}
\label{sec:appendix-nonback}

In this appendix, we consider the general setting for operators of the form \eqref{eq:defA*}-\eqref{eq:defA}. Let $\cA_1$ and $\cA_2$ be two unital $C^*$-algebras of bounded operators on the Hilbert spaces $H_1$ and $H_2$. We define the operator in the minimal tensor product $\cA_1 \otimes \cA_2$,
\begin{equation}\label{eq:defAH}
A = a_0 \otimes 1_{H_2}  + \sum_{i=1}^{2d} a_i \otimes u_i,
\end{equation}
with $u_i \in \cA_2$ unitaries such that for all $i \in \INT{2d}$, $u_{i^*} = u_i^*$ and $a_i \in \cA_1$. Under the condition  \eqref{eq:symai}, the operator $A$ is self-adjoint.

Similarly, if $b_1,\ldots, b_{2d}$ are elements in $\cA_1$ and $E_{ij} = e_i \otimes e_j \in M_{2d} (\dC)$ denotes the canonical matrix, we consider the operator in $\cA_1 \otimes \cA_2 \otimes M_{2d}(\dC)$: 
\begin{equation}\label{eq:defB}
B = \sum_{(i,j) : i \ne j^*} b_i  \otimes u_i \otimes E_{ji}.
\end{equation}

As above, let $\cA_\star$ be the left group algebra of the free group $\dF_d$. We consider the operator $A_{\star}$ defined in \eqref{eq:defA*}. We denote by $\o$ the unit of $\dF_d$.  If $g \in \dF_d$, we define $\Pi_g$ : $H_1  \otimes \ell^2(\dF_d) \to H_1$ as the orthogonal projection onto $H_1 \otimes \delta_{g}$, where $\delta$ denotes a Dirac vector.  If $T$ is an operator in $\cA_1 \otimes \cA_\star$, we set  $T_{xy} = \Pi_x T \Pi^*_y \in \cA_1$. For example, for $x,y \in \dF_d$, we have  
$(A_\star)_{xy} = a_0 \langle  \delta_x,  \delta_y \rangle + \sum_i a_i \langle  \delta_x, \lambda(g_i)  \delta_y \rangle. 
$
For $i \in \INT{2d}$, we define similarly $\Pi_{g,i} :  H_1 \otimes \ell^2(\dF_d) \otimes \dC^{2d} \to H_1$ as the orthogonal projection onto  $H_1 \otimes \delta_{g} \otimes \delta_i$.

For $z \in \dC \backslash \sigma(A_ \star)$, we set 
$$
\gamma(z) = \left( ( z- A_{\star})^{-1} \right)_{\o \o}.
$$

Recall that $A_\star^{\o}$ is the restriction of the operator $A_\star$ to $H_1 \otimes \ell^2 (V)$ where $V =  \dF_d\backslash \{\o\}$. For $z \in \dC \backslash  \sigma (A^{\o}_\star) $, we set 
$$
\gamma_i (z) = \left( (z - A^{\o}_{\star} )^{-1} \right)_{g_i g_i}.
$$
We also define
$$\hat \sigma(A_\star) =  \sigma(A_\star)  \cup \sigma(A_\star^{\o}).$$
 Note that if the symmetry condition \eqref{eq:symai} holds then, from Courant-Fisher min-max theorem, $\hat \sigma(A_\star)$ is contained in the convex hull of $\sigma(A_\star)$ and, if $\dim(H_1) = n$, $\sigma(A^{\o}_\star)\backslash  \sigma(A_\star)$ is a finite set of at most $2dn$ points.

Coming back to $\cA_2$, for $z \notin \hat\sigma(A_\star)$, we set $B(z)$ as the non-backtracking operator \eqref{eq:defB} in $\cA_1 \otimes \cA_2 \otimes M_{2d}(\dC)$:   associated with the elements for all $i \in \INT{2d}$,
$$b_i(z)  = \gamma_{i}(z) a_i.$$  

 We finally introduce the canonical projection operator $ P : H_1 \otimes H_2 \otimes \dC^{2d} \to H_1 \otimes H_2 $ defined by, for all $\psi \in H_1 \otimes H_2$,  $ i \in \INT{2d}$,
 $$
P \psi \otimes \delta_i =  \psi.
$$
Since $P P^*  = 2 d 1$, it convenient to introduce the isometry 
$$
\hat P = \frac{P}{\sqrt{2d}}.
$$
Let $D$ be a bounded set in $\dC$, we define $\FULL( D ) = \dC \backslash U$ where $U$ is the unique infinite component of 
$\dC \backslash D$ (in loose words, $\FULL(D)$ fills the holes of $D$). For example, if $D \subset \dR$ 
or $D$ is simply connected, then $\FULL(D) = D$. The following formula holds. 
\begin{theorem}\label{th:resIB}
There exists $K>0$ such that for all $z, |z|>K$,
$$
(z - A)^{-1} =  \hat P \PAR{ \Bigm(1 - B(z) \Bigm)^{-1} \Bigm(  1 + \frac{B(z)}{2d-1} \Bigm) }  \hat P^*   ( \gamma(z) \otimes 1_{H_2}).
$$
\end{theorem}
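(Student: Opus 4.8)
The plan is to expand the right-hand side as a norm-convergent series indexed by $\dF_d$ and then verify, by a direct telescoping computation, that $(z-A)$ times this series equals the identity of $\cA_1\otimes\cA_2$; the only input specific to the free group will be a pair of \emph{first-return equations} for $\gamma(z)$ and $\gamma_i(z)$. Throughout I would fix $K = 2d\,\|A_\star\|$ and take $|z|>K$. Since $\|A_\star^{\o}\|\le\|A_\star\|$ and $\|a_i\|=\|(A_\star)_{g_i\o}\|\le\|A_\star\|$, one has $\|\gamma_i(z)\|\le(|z|-\|A_\star\|)^{-1}$ and $\|b_i(z)\|=\|\gamma_i(z)a_i\|\le\|A_\star\|(|z|-\|A_\star\|)^{-1}$, hence $(2d-1)\|b_i(z)\|<1$; so $1-B(z)$, $1-B_\star(z)$, $z-A$, $z-A_\star$ are invertible and every series below converges in operator norm.

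\emph{First-return equations.} First I would split $H_1\otimes\ell^2(\dF_d)=H_1\oplus\bigl(H_1\otimes\ell^2(\dF_d\setminus\{\o\})\bigr)$ and take the Schur complement of the $\o$-block of $z-A_\star$ (legitimate since $z-A_\star^{\o}$ is invertible). Using $(A_\star)_{\o\o}=a_0$, $(A_\star)_{\o,g_i}=a_{i^*}$, $(A_\star)_{g_i,\o}=a_i$, and the fact that $A_\star^{\o}$ is the direct sum of its restrictions to the $2d$ branch subtrees hanging off $\o$ (so $[(z-A_\star^{\o})^{-1}]_{g_i,g_j}=\delta_{ij}\gamma_j(z)$), one gets
\[\gamma(z)^{-1}=z-a_0-\sum_{i\in\INT{2d}}a_{i^*}\,\gamma_i(z)\,a_i .\]
Applying the Schur complement once more inside the branch subtree $V_i$ — rooted at $g_i$, with neighbours $g_kg_i$ for $k\ne i^*$, and whose own sub-branches are isomorphic to the $V_k$ as coloured graphs intertwining the right $\dF_d$-action, so that the relevant diagonal resolvent entries are again the $\gamma_k(z)$ — yields
\[\gamma_i(z)^{-1}=z-a_0-\sum_{k\ne i^*}a_{k^*}\,\gamma_k(z)\,a_k ,\]
and in particular $\gamma_i(z)^{-1}b_i(z)=a_i$ for every $i$.

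\emph{Expansion of the right-hand side.} For $m\ge1$ one has $B(z)^m=\sum b_{i_1}(z)\cdots b_{i_m}(z)\otimes u_{i_1}\cdots u_{i_m}\otimes E_{j_1i_1}E_{j_2i_2}\cdots E_{j_mi_m}$, and the product of matrix units vanishes unless $j_t=i_{t-1}$ for all $t\ge2$ — which forces the non-backtracking constraint $i_t\ne i_{t-1}^*$ and leaves only $j_1\ne i_1^*$ free — in which case it equals $E_{j_1i_m}$. Since $\widehat P(Y\otimes E_{ab})\widehat P^*=\tfrac1{2d}Y$ for every $Y$ and every $a,b$, summing over the $2d-1$ admissible values of $j_1$ gives $\widehat P\,B(z)^m\,\widehat P^*=\tfrac{2d-1}{2d}\,\Sigma_m$, where $\Sigma_m:=\sum b_{i_1}(z)\cdots b_{i_m}(z)\otimes u_{i_1}\cdots u_{i_m}$, the sum over non-backtracking tuples $(i_1,\dots,i_m)$; also $\widehat P\widehat P^*=1=:\Sigma_0$. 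Since $(1-B(z))^{-1}(1+\tfrac{B(z)}{2d-1})=1+\tfrac{2d}{2d-1}\sum_{m\ge1}B(z)^m$, the right-hand side of the theorem equals $T:=\bigl(\sum_{m\ge0}\Sigma_m\bigr)(\gamma(z)\otimes1_{H_2})=\sum_{g\in\dF_d}c(g)\otimes u(g)$, where $u(g)=u_{i_1}\cdots u_{i_m}$ and $c(g)=b_{i_1}(z)\cdots b_{i_m}(z)\gamma(z)$ when $g=g_{i_1}\cdots g_{i_m}$ is reduced (with $c(\o)=\gamma(z)$); the series converges absolutely because there are at most $2d(2d-1)^{m-1}$ reduced words of length $m$ and $(2d-1)\|b_i(z)\|<1$.

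\emph{Conclusion.} Since $z-A=(z-a_0)\otimes1-\sum_k a_k\otimes u_k$ is a finite sum, one may multiply it through $T$ term by term and regroup by $h:=g_kg$, using $u_k\,u(g)=u(g_kg)$:
\[(z-A)T=\sum_{h\in\dF_d}\Bigl[(z-a_0)\,c(h)-\sum_{k\in\INT{2d}}a_k\,c(g_{k^*}h)\Bigr]\otimes u(h).\]
For $h=\o$ the bracket is $\bigl(z-a_0-\sum_k a_k\,b_{k^*}(z)\bigr)\gamma(z)=\bigl(z-a_0-\sum_k a_{k^*}\,b_k(z)\bigr)\gamma(z)=\gamma(z)^{-1}\gamma(z)=1$ by the first first-return equation (after reindexing $k\mapsto k^*$). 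For $h=g_{i_1}\cdots g_{i_m}$ reduced with $m\ge1$, among the $g_{k^*}h$ only $g_{i_1^*}h=g_{i_2}\cdots g_{i_m}$ is shorter while the others are reduced of length $m+1$; factoring $b_{i_2}(z)\cdots b_{i_m}(z)\gamma(z)$ on the right, the bracket becomes $\bigl[\bigl(z-a_0-\sum_{k\ne i_1^*}a_{k^*}\,b_k(z)\bigr)b_{i_1}(z)-a_{i_1}\bigr]b_{i_2}(z)\cdots b_{i_m}(z)\gamma(z)=\bigl[\gamma_{i_1}(z)^{-1}b_{i_1}(z)-a_{i_1}\bigr]b_{i_2}(z)\cdots b_{i_m}(z)\gamma(z)=0$ by the second first-return equation. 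Hence $(z-A)T=1\otimes u(\o)=1_{H_1}\otimes1_{H_2}$, and since $z-A$ is invertible, $T=(z-A)^{-1}$. The main obstacle is the \emph{First-return equations} step: one must justify the iterated Schur complements for these operator-valued infinite matrices (invertibility of the compressions $z-A_\star^{\o}$, $z-A_\star^{\o}|_{V_i\setminus\{g_i\}}$ is immediate from $\|\cdot\|\le\|A_\star\|<|z|$, but one should check carefully that the sub-branches of $V_i\setminus\{g_i\}$ really are isomorphic to the $V_k$ as coloured graphs compatible with the right $\dF_d$-action, which is what makes the self-similarity $\gamma_k^{(i)}(z)=\gamma_k(z)$ hold); everything else is bookkeeping once the non-commutative orderings are tracked and $K$ is chosen large enough.
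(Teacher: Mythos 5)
Your proposal is correct, but it takes a genuinely different route from the paper's. The paper first uses the homomorphism $u$ to reduce to the case $\cA_2=\cA_\star$, $u_i=\lambda(g_i)$, and then proves the identity entry by entry: for each reduced $g=g_{i_l}\cdots g_{i_1}$ it computes the generating series $\sum_k (A^k)_{g\o}z^{-k}$ by decomposing every walk from $\o$ to $g$ at its last passage times through the prefixes of $g$, which directly yields the product $\hat\gamma_{i_l}a_{i_l}\cdots\hat\gamma_{i_1}a_{i_1}\hat\gamma\, z^{-l}$ and matches the coefficient read off from $P\hat B^k P^*$. You instead keep arbitrary unitaries $u_i$, expand the right-hand side as $\sum_g b_{i_1}(z)\cdots b_{i_m}(z)\gamma(z)\otimes u(g)$ (your computations of $\hat P B(z)^m\hat P^*$ and of $(1-B(z))^{-1}(1+\frac{B(z)}{2d-1})$ agree with the paper's bookkeeping), and then verify $(z-A)T=1$ by telescoping; the only nontrivial inputs are the two Schur-complement recursions $\gamma^{-1}=z-a_0-\sum_i a_{i^*}\gamma_i a_i$ and $\gamma_i^{-1}=z-a_0-\sum_{k\ne i^*}a_{k^*}\gamma_k a_k$, which do not appear in the paper. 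The branch self-similarity you flag as the main obstacle is indeed the point to check, and it holds for exactly the reason you indicate: $A_\star$ is a left convolution operator (with coefficients), so it commutes with the right regular representation, and right translation by $g_i$ intertwines the compression to $V_kg_i$ with the compression to $V_k$; block-diagonality of $A_\star^{\o}$ over the branches is immediate since left multiplication by a generator never changes the last letter of a nonempty reduced word. What each approach buys: the paper's walk decomposition avoids Schur complements and any invertibility discussion for compressions, while yours avoids the reduction to the free-group model altogether (your regrouping by $h=g_kg$ only uses that $u$ is a homomorphism plus absolute convergence) and produces the fixed-point equations for $\gamma,\gamma_i$ as a useful by-product.

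Two small quantitative points, neither a genuine gap since the theorem only asserts the existence of some $K$: with $K=2d\|A_\star\|$ you are not entitled to assert that $z-A$ is invertible (one only knows $\|A\|\le(2d+1)\|A_\star\|$, and for general $u_i$ the norm $\|A\|$ can exceed $\|A_\star\|$), and invertibility of $1-B(z)$ does not follow from submultiplicativity together with $(2d-1)\|b_i(z)\|<1$; it does follow from your own count of non-backtracking tuples in $B(z)^m$, which gives $\|B(z)^m\|^{1/m}\to\le(2d-1)\max_i\|b_i(z)\|<1$, i.e.\ a spectral-radius bound. Taking, say, $K=(2d+1)\|A_\star\|+1$ and invoking this spectral-radius estimate makes every assertion in your preliminary paragraph literally correct, and since your conclusion only uses $(z-A)T=1$ together with a priori invertibility of $z-A$, the argument then goes through as written.
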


Let us remark that we believe that this formula is true for all $ z  \notin  \FULL(\hat \sigma(A_\star)) \cup \sigma(A)$, where the left hand side is well defined and uniquely determined by analytic continuation by its values for $z, |z|>K$, however this statement requires estimates on the spectrum of the operators on the left hand side which we don't have in infinite dimension. 

This formula is closely related to various spectral identities known as Ihara-Bass identities; in our context, see notably \cite{MR1749978,NIPS2009_0420,anantharaman2017relations,MR4024563,MR3961083} and references therein. An interesting aspect of the present theorem is that it is in an identity in its strongest form, i.e., between two operators.

\begin{proof}[Proof of Theorem \ref{th:resIB}]
We start with a simple alternative reformulation of the right-hand side of the above identity. Recall that $P = \sqrt {2d} \hat P$.  Using the resolvent identity, we have as long as $1 -B(z)$ is invertible
$(1 -B(z) )^{-1}  - (2d)^{-1} 1 = (1 - B(z) )^{-1} ( 2d - 1 + B(z) ) (2d)^{-1}$ and thus
\begin{equation}\label{eq:resid1}
\hat P \PAR{ \Bigm(1 - B(z) \Bigm)^{-1} \Bigm(  1 + \frac{B(z)}{2d-1} \Bigm) }  \hat P^*   = \frac{1}{2d-1} P \PAR{ (1 - B(z) )^{-1} - \frac{1}{2d}  }  P^*.
\end{equation}

From the morphism property $u ( S T) = u(S) u(T)$ for $S, T \in \cM_\star$, it follows that it is sufficient to prove the claim of the theorem when $\cA_2 = \cA_\star$ and $u_i = \lambda(g_i)$.

We thus assume from now on that $H_2 = \ell^2 (\dF_d)$ and $u_i = \lambda(g_i)$. We set $A_\star = A$ and $B_\star (z)= B$ for ease of notation. Since $\gamma(z)$ and $(z- A)^{-1}$ are analytic in $z \in \dC \backslash \FULL(\hat \sigma(A_\star))$, it is sufficient to prove the claimed identity for $|z|$ large enough. It will ultimately follow from basic considerations on generating a series of walks in $\dF_d$. If $|z|$ is large enough,  we have the converging series expansion:
$$
(z - A) ^{-1} = z^{-1} \sum_{k\geq 0} \frac{A^k}{z^k}, \quad  \hbox{ and } \quad   P(1 - B(z)) ^{-1} P^* =   \sum_{k \geq 0} P B^k P^*.
$$
We set  $$\hat \gamma = z \gamma(z) = \sum_k \frac{(A^k)_{\o \o}}{z^k} \quad  \hbox{ and } \quad  \hat \gamma_i  =  z \gamma_i (z) =   \sum_k \frac{((A^{\o})^k)_{g_i g_i}}{z^k} $$  
We fix any $g$ in $\dF_d$ of length $|g| = l$ written in reduced form as  $g= g_{i_l} \cdots g_{i_1}$ (if $l \geq 1$). Hence, from \eqref{eq:resid1}, by translation invariance, to prove the theorem it suffices to prove that 
\begin{equation}\label{eq:formula1}
\sum_{k\geq 0} \frac{(A^k)_{ g \o} }{z^k} = \hat \gamma \IND_{g = \o} + \frac{1}{2d-1}\sum_{k\geq 1}  \frac{(P \hat B^k P^*) _{g \o}}{z^k}  \hat \gamma,
\end{equation}
where $\hat B$ is defined in \eqref{eq:defB} with $b_i = \hat \gamma_i a_i$ and we have used that  $PP^* = 2d 1$. 

By construction, we have for all $i \in \INT{2d}$ and $\psi \in H_1$, for integer $k \geq 1$.
\begin{equation}\label{eq:Bkdeta0}
(\hat B^k ) \psi \otimes \delta_{\o} \otimes \delta_i = \sum_{h = (g_{j_{k}}\cdots g_{j_1}), j_{k+1} \ne j_k^*} \hat \gamma_{j_k} a_{j_k} \cdots \hat \gamma_{j_1} a_{j_1}\psi \otimes \delta_{h} \otimes \delta_{j_{k+1}}, 
\end{equation}
where the sum is over all $h \in \dF_d$ of length $k$ written in reduced form $h = (g_{j_{k}}\cdots g_{j_1})$ with $j_1 = i$.

If follows that the summand on the right-hand side of \eqref{eq:formula1} is $0$ unless $k = l  = |g|$ is the length of $g =  (g_{i_{l}}\cdots g_{i_1})$. We thus deduce that \eqref{eq:formula1} is true for $g = e$. For $g \ne e$,  $l \geq 1$, we find that the right-hand side of \eqref{eq:formula1} is equal to 
$$
  \hat \gamma \IND_{g = \o} + \frac{1}{2d-1}\sum_{k\geq 1}  \frac{(P \hat B^k P^*) _{g \o}}{z^k}  \hat \gamma = \frac{1}{2d-1} \sum_{k \geq 1}  \frac{(P \hat B^k P^*) _{g \o}}{z^k}  \hat \gamma = \frac{\hat \gamma_{i_{l}} a_{i_l}\cdots \hat \gamma_{i_1 } a_{i_1} \hat \gamma}{z^{l}},
$$
where the factor $1/(2d-1)$ compensate the $2d-1$ possibilities for $j_{k+1}$ in \eqref{eq:Bkdeta0}.
On the other hand, Equation \eqref{eq:Akgo0} reads 
\begin{equation*}
(A^k)_{g \o} = \sum_{(j_1,\ldots,j_k) :  g = g_{j_k}\cdots g_{j_1} }a_{j_k} \cdots a_{j_1},
\end{equation*}
where the sum is over $(j_1,\ldots,j_k) \in \{0,\ldots,2d\}^k$ and $g_0 = \o$ by convention.

Set $V_{\o} = \dF_d$ and recall that, for $i \in \INT{2d}$, $V_i \subset \dF_d$ is the subset of group elements $h \in \dF_d$ written in reduced form $h = g_{j_k} \cdots g_{j_2} g_{j_1}$ with $j_1 = i$. 
A sequence $(j_1, \ldots, j_k)$ such that $g  = (g_{j_k}\cdots g_{j_1}) $  determines a walk of length $k$ in $\dF_d$ starting from $\o$ and ending at $g$. Assume that $g = g_{i_l}\cdots g_{i_1}$ in reduced form. By decomposing this walk at the last passage times at $\o$, $g_{i_1}$, $g_{i_2} g_{i_1}$, $\ldots$, $g_{i_{l}}\cdots g_{i_1} = g$, the sequence $(j_1, \ldots, j_k)$ is uniquely decomposed into $l +1 $ sequences $u_0, \ldots u_l$ possibly empty such that for $0 \leq t \leq l$ and $u_t = (j_{t,1}, \ldots , j_{t,n_t})$, $(g_{i_t}, g_{j_{t,1}} g_{i_t}, \cdots , g_{j_{t,n_t}} \cdots g_{j_{t,1}} g_{i_t} )$ is a walk from $g_{i_{t}}$ to  $g_{i_{t}}$ in $V_{i_{t}}$ and $(j_1,\ldots,j_k) = (u_0,i_1, u_1 , \ldots, i_l, u_l)$. We thus have 
$$ \sum_k \frac{(A^k)_{g \o}}{z^k} = \hspace{-0.4cm}\sum_{(j_1,\ldots,j_k) :  g = (g_{j_k}\cdots g_{j_1}) } \frac{a_{j_k} \cdots a_{j_1}}{z^k} = \sum_{u_0,\ldots,u_l}  \frac{a_{j_{l,n_l}} \cdots a_{j_{l,1}}}{z^{n_l}}\frac{a_{i_l}}{z} \cdots \frac{a_{j_{1,n_1}} \cdots a_{j_{1,1}}}{z^{n_1}} \frac{a_{i_1}}{z} \frac{a_{j_{0,n_0}} \cdots a_{j_{0,1}}}{z^{n_0}},$$
where the sum is over all sequences $(u_0,\ldots,u_l)$ as above.  The identity \eqref{eq:formula1} follows
for $z$ of modulus large enough. \end{proof}

\bibliographystyle{abbrv}
\bibliography{bib}

\vline

\noindent 
Charles Bordenave \\
{\em Institut de Math\'ematiques de Marseille \\
CNRS \& Aix-Marseille University \\
39, rue F. Joliot Curie, 13453 Marseille Cedex 13, France.} \\
Email: \href{mailto:charles.bordenave@univ-amu.fr}{charles.bordenave@univ-amu.fr}

\vline

\noindent
Beno\^it Collins \\
{\em Department of Mathematics, Graduate School of Science \\
Kyoto University \\
 Kyoto 606-8502, Japan.} \\
Email: \href{mailto:collins@math.kyoto-u.ac.jp}{collins@math.kyoto-u.ac.jp}
\end{document}